\documentclass[11 pt]{amsart}
\usepackage{color}


\usepackage[usenames,dvipsnames]{xcolor}
\usepackage{fullpage}
\usepackage{verbatim}
\usepackage{amssymb}
\usepackage{amscd}
\usepackage{amsmath}
\usepackage{eucal}
\usepackage{setspace}
\usepackage{url}
\usepackage[utf8]{inputenc}
\usepackage{bbm}
\usepackage{graphicx}
\usepackage{appendix}
\usepackage{ifpdf}
\usepackage{cite}
\usepackage{hyperref}

\newcommand{\tst}{\textstyle}

\newcommand{\fixmehide}[1]{}
\newcommand{\fixmelater}[1]{}
\newcommand{\fixmedone}[1]{}
\newcommand{\fixmehidden}[1]{}
\newcommand{\tmop}[1]{\ensuremath{\operatorname{#1}}}
\newcommand{\tmod}[1]{\ (\tmop{mod}\ #1)}

\def\pamod{\! \! \! \! \pmod}
\newcommand{\Mod}[1]{\ (\mathrm{mod}\ #1)}

\def\sumfund{\sideset{}{^\flat}\sum}
\def\sumprime{\sideset{}{^{'}}\sum}
\def\sumdprime{\sideset{}{^{''}}\sum}

\def\sumfund{\sideset{}{^\flat}\sum}
\def\pamod{\! \! \! \! \pmod}

\allowdisplaybreaks
\nonfrenchspacing
\def\eps{\varepsilon}
\newcommand{\Tr}{\mathrm{Tr}}

\newcommand{\ad}{\mathrm{ad}}
\newcommand{\Sym}{\mathrm{Sym}}
\newcommand{\sym}{\mathrm{sym}}
\renewcommand{\Im}{\mathrm{Im}}
\renewcommand{\Re}{\mathrm{Re}}
\newcommand{\Sp}{\mathrm{Sp}}
\renewcommand{\H}{\mathbb H}
\newcommand{\T}[1]{{}^t\!{#1}}

\newcommand{\A}{{\mathbb A}}

\newcommand{\Q}{{\mathbb Q}}
\newcommand{\Z}{{\mathbb Z}}
\newcommand{\R}{{\mathbb R}}
\newcommand{\C}{{\mathbb C}}

\newcommand{\bs}{\backslash}

\newcommand{\GL}{{\rm GL}}

\newcommand{\vol}{{\rm vol}}
\newcommand{\SL}{{\rm SL}}
\newcommand{\SO}{{\rm SO}}

\newcommand{\GSp}{{\rm GSp}}

\newcommand{\sump}{\mathop{{\sum}^{\raisebox{-3pt}{\makebox[0pt][l]{$*$}}}}}

\newtheorem{thm}{Theorem}[section]
\newtheorem{lem}[thm]{Lemma}
\newtheorem{prop}[thm]{Proposition}
\newtheorem{cor}[thm]{Corollary}
\newtheorem{rem}{Remark}
\newtheorem{conj}[thm]{Conjecture}

\newcommand{\E}{{\mathbb E}}

\newcommand{\disc}{{\rm disc}}
\newcommand{\cont}{{\rm cont}}

\renewcommand{\L}{\mathcal L}

\DeclareMathOperator{\Cl}{Cl}
\newcommand{\mat}[4]{{\setlength{\arraycolsep}{0.5mm}\left(\begin{array}{cc}#1&#2\\#3&#4\end{array}\right)}}

\newcommand{\forget}[1]{}

\AtBeginDocument{%
   \def\MR#1{}
}

\numberwithin{equation}{section}

\begin{document}

\title{Mass equidistribution for Saito-Kurokawa lifts}

\author{Jesse J\"{a}\"{a}saari}
\address{Department of Mathematics and Statistics\\
  University of Turku\\
  20014 Turku \\
  Finland}
  \email{jesse.jaasaari@utu.fi}

\author{Stephen Lester}
\address{Department of Mathematics \\ King's College London \\ London WC2R 2LS \\
  UK}
  \email{steve.lester@kcl.ac.uk}

\author{Abhishek Saha}
\address{School of Mathematical Sciences\\
  Queen Mary University of London\\
  London E1 4NS\\
  UK}
  \email{abhishek.saha@qmul.ac.uk}

\subjclass[2020]{Primary 11F46; Secondary 11F30, 11M41, 58J51}

\begin{abstract}
Let $F$ be a holomorphic cuspidal Hecke eigenform for $\Sp_4(\Z)$ of weight $k$ that is a Saito--Kurokawa lift. Assuming the Generalized Riemann Hypothesis (GRH), we prove that the mass of $F$ equidistributes on the Siegel modular variety as $k\longrightarrow \infty$. As a corollary, we show under GRH that the zero divisors of Saito--Kurokawa lifts equidistribute as their weights tend to infinity.
\end{abstract}

\maketitle

\section{Introduction}

\subsection{Background}

A central problem in quantum chaos is to understand the distribution of mass of high energy Laplace-Beltrami eigenfunctions on a Riemannian manifold $M$. The fundamental Quantum Ergodicity Theorem of Shnirel’man \cite{Shnirelman-1974}, Colin de Verdi\'ere \cite{Colin-de-Verdiere-1985} and Zelditch \cite{Zelditch-1987} asserts that if the geodesic flow is ergodic on the unit cotangent bundle of $M$, then any sequence of eigenfunctions with eigenvalues tending to infinity contains a density one subsequence whose mass equidistributes. In the case that $M$ is negatively curved, Rudnick and Sarnak \cite{Rudnick-Sarnak} made the stronger conjecture that the quantum limit is unique, that is, for every sequence $\varphi_\ell$ of eigenfunctions with eigenvalues tending to infinity, the mass $|\varphi_\ell|^2$ equidistributes with respect to the normalized Liouville measure. This is known as the Quantum Unique Ergodicity (QUE) conjecture and in full generality is regarded as extremely difficult, despite some remarkable partial results \cite{Anantharaman-2008,Anantharaman-Nonnenmacher-2007, Dyatlov-Jin-2018, Dyatlov-Jin-Nonnenmacher-2022}. However,  QUE has been proved for certain special arithmetic manifolds $M$ which arise as quotients of symmetric spaces by arithmetic groups and have additional symmetries in the form of a large commuting family of Hecke operators \cite{Lindenstrauss-2006, Soundararajan-2010, Silberman-Venkatesh-2007, Lester-Radziwill-2020, shemtov2022arithmetic}.


Since Laplace--Beltrami eigenfunctions on arithmetic manifolds are instances of automorphic forms, one can consider variants of QUE by replacing the family of Laplace--Beltrami eigenfunctions with a suitable family of automorphic forms with certain parameters (e.g., weight, level, etc.) tending to infinity. Perhaps the most natural variant here is obtained by taking the family of holomorphic cusp forms of weight $k$ (where we let $k\longrightarrow \infty$) on some fixed complex arithmetic manifold $M$. In the simplest rank 1 case that $M$ equals the modular surface $\mathrm{SL}_2(\Z)\backslash \H$, the corresponding mass equidistribution conjecture was first spelled out by Luo and Sarnak \cite{luo-sarnak-mass} and later proved by Holowinsky and Soundararajan \cite{Holowinsky-Soundararajan-2010} who combined a triple product $L$-function approach via Watson's formula \cite{Watson-2008} with one based on shifted convolutions sums. This result, known as holomorphic QUE, has the beautiful corollary, proved by Rudnick \cite{Rudnick-2005} that the zeros of all such Hecke cusp forms equidistribute. Holomorphic QUE on quotients of $\H$ by congruence subgroups (and more generally, quotients of $\H^m$ by congruence subgroups associated to a totally real number field of degree $m$) have now been established
in various aspects \cite{Marshall-2011, Nelson-2011, Nelson-2012, Nelson-Pitale-Saha-2014, Hu18} by building upon the approach of Holowinsky--Soundararajan. 


In this paper, we are interested in higher rank generalizations of holomorphic QUE. Precisely, let $\H_{n}$ denote the Siegel upper-half space of degree $n$ and let $S_k(\Sp_{2n}(\mathbb{Z}))$
be the space of holomorphic Siegel cusp forms
of weight $k$ transforming with respect to the subgroup $\Sp_{2n}(\Z) \subset \Sp_{2n}(\R)$.
Let $\mathrm d \mu := (\det Y)^{-n-1}\mathrm d X \mathrm d Y$ be the usual $\Sp_{2n}(\R)$-invariant measure on $\H_n$. The pushforward to $Y_{n}:=\Sp_{2n}(\mathbb{Z}) \backslash \H_{n}$
of the $L^2$-mass of $F \in S_k(\Sp_{2n}(\mathbb{Z}))$ is the finite measure given by
\[
\tst \mu_F(\phi) := \int\limits_{Y_n }
\lvert F (Z)\rvert ^2 \phi (Z) \, (\det Y)^{k}\,\mathrm d \mu
\]
 for each bounded measurable function $\phi$ on $Y_n$, and let
\[
D_F(\phi) :=
\frac{\mu_F(\phi)}{\mu_F(1)}
-
\frac1{\text{vol}(Y_n)}\int\limits_{Y_n} \phi(Z)\,\mathrm d \mu.
\]
\noindent The quantity $D_F(\phi)$ compares
the (normalized) measures attached to $\mu_F$ and $\mu$
against the test function $\phi$. The following conjecture is the natural generalization of holomorphic QUE to higher rank.
\begin{conj}\label{conj:quegenintro}Fix a bounded continuous function $\phi$ on $\Sp_{2n}(\mathbb{Z}) \backslash \H_{n}$. Let $F\in S_k(\Sp_{2n}(\mathbb{Z}))$ traverse a sequence of Hecke eigenforms. Then $D_F(\phi) \longrightarrow 0$  whenever $k \longrightarrow \infty$.
\end{conj}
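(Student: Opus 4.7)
The plan is to adapt to higher rank the spectral strategy employed by Luo--Sarnak and Holowinsky--Soundararajan in the genus one case. By Stone--Weierstrass together with a standard Sobolev-type approximation, it suffices to verify $D_F(\phi) \to 0$ for $\phi$ ranging over a dense family of testers produced by the spectral decomposition of $L^2(Y_n)$: namely (a) Hecke--Maass cusp forms on $Y_n$, (b) residues of Eisenstein series contributing to the discrete spectrum, and (c) incomplete Eisenstein series attached to each standard parabolic subgroup $P = MN$ of $\Sp_{2n}$. Since the constant function contributes exactly the reference measure, the content of the conjecture reduces to the estimate $\mu_F(\phi) = o(\mu_F(1))$ for every non-constant tester as $k \to \infty$.

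For a Hecke--Maass cusp form tester $\phi$ with spectral parameters of bounded size, I would seek a higher-rank analogue of Watson's formula expressing $|\mu_F(\phi)|^2 / \mu_F(1)^2$ in terms of a ratio of central $L$-values. The refined Gan--Gross--Prasad / Ichino--Ikeda period conjecture for $\Sp_{2n}$ predicts exactly such an identity, involving a central value of an automorphic $L$-function attached to the triple $(F,F,\phi)$ divided by the adjoint value $L(1,\ad F)$ that governs the Petersson norm. Granting this formula (or even an Ichino--Ikeda-shaped upper bound), GRH on the relevant completed $L$-functions converts the problem into standard convexity estimates that decay in $k$. For an incomplete Eisenstein series tester from a parabolic $P = MN$, the Rankin--Selberg unfolding method represents $\mu_F(\phi)$ as a Mellin transform of a known $L$-function of $F$: the Siegel parabolic yields the degree $2n+1$ standard $L$-function, while other parabolics produce spin or intermediate $L$-functions. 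In each of these cases, GRH again provides the sharp bound, so the Eisenstein contribution is the easier part of the argument.

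The decisive obstacle is the cuspidal contribution. First, the Ichino--Ikeda period formula in the generality required is still conjectural across most of $\Sp_{2n}$ and is fully proved only in special settings; without it, no explicit handle on $\mu_F(\phi)$ is available. Second, even granting such a formula, eigenforms $F$ with non-tempered Arthur parameter --- Saito--Kurokawa lifts, Ikeda lifts, and more general CAP forms --- produce $L$-functions that factor through lower-degree pieces, and the GRH bound on each factor is no longer enough to overcome the Petersson norm $L(1,\ad F)$ appearing in the denominator. One must either amplify the spectral argument, or else abandon the period formula and carry out a direct analysis via the Fourier--Jacobi or Bessel expansion of $F$, supplemented by shifted convolution input in the spirit of Holowinsky--Soundararajan and Nelson--Pitale--Saha. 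This exceptional CAP behaviour is precisely the technical phenomenon that forces the present paper to single out Saito--Kurokawa lifts and treat them separately under GRH; a complete proof of Conjecture~\ref{conj:quegenintro} in arbitrary genus thus appears contingent on both the establishment of the relevant period formulas for $\Sp_{2n}$ and a case-by-case treatment of all endoscopic and CAP packets, modelled on the Saito--Kurokawa analysis carried out below.
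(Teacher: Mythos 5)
The statement you were asked about is Conjecture \ref{conj:quegenintro}, and the paper does not prove it: it is stated as an open conjecture, and the paper's actual theorem (Theorem \ref{main-theorem}) establishes only the special case $n=2$ with $F$ restricted to Saito--Kurokawa lifts, under GRH. Your proposal likewise does not constitute a proof, and you essentially concede this in your final paragraph: the central ingredient you invoke for the cuspidal testers --- an Ichino--Ikeda/GGP-type period identity for $\Sp_{2n}$ playing the role of Watson's formula --- is not available, and the paper explicitly remarks that no analogue of Watson's formula is known \emph{nor expected} to exist when $n>1$. Beyond that, your own analysis identifies the CAP/non-tempered obstruction (Saito--Kurokawa and Ikeda lifts) for which even a hypothetical period formula plus GRH would not close the argument. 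So what you have written is a conditional programme resting on inputs strictly stronger than GRH, not a proof; the gap is not a fixable technical step but the absence of the main tool your strategy is built around.

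It is also worth noting that your route diverges from how the paper handles the one case it does settle. For Saito--Kurokawa lifts the paper does not spectrally decompose the tester into Maass forms and Eisenstein series on $Y_2$ and does not appeal to any triple-product period. Instead it approximates test functions uniformly by incomplete Siegel--Poincar\'e series $P_S^h$ (Section \ref{s:siegelpoinc}), unfolds to higher-rank shifted convolution sums of the Fourier coefficients $R(T)$, and then exploits the lift structure through Waldspurger's formula: the off-diagonal case becomes a GRH-conditional bound on a fractional moment of twisted central values $L(\tfrac12,f\otimes\chi_d)$ via Soundararajan's method, while the diagonal case is handled by a twisted first moment asymptotic together with Waldspurger's toric period formula and subconvexity for $L(\tfrac12,g\otimes\chi_d)$ (an equidistribution-of-Heegner-points input), rather than by a Rankin--Selberg unfolding against the standard $L$-function as you suggest for the Eisenstein testers. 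If you want to pursue the conjecture even in restricted cases, the lesson of the paper is that one should look for structure in the Fourier coefficients (lifts, Waldspurger-type formulas) rather than wait for period formulas of Watson type, which in this setting are not expected to exist.
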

\noindent The above conjecture (in a slightly different form) was first spelled out by
Cogdell and Luo \cite{Cogdell-Luo-2011}. When $n=1$, Conjecture \ref{conj:quegenintro} reduces to the holomorphic QUE conjecture mentioned above which was proved by Holowinsky and Soundararajan \cite{Holowinsky-Soundararajan-2010}.
However, there has been very little progress in the direction of Conjecture \ref{conj:quegenintro} in the higher rank setting $n>1$. To indicate the key difficulties, we note first that an analogue of Watson’s formula \cite{Watson-2008} is not known (nor expected) to exist if $n>1$. Consequently, the direct relation between holomorphic QUE and the subconvexity problem in the classical case does not carry over to the setting of higher rank holomorphic Siegel cusp forms. Secondly, the unconditional techniques of Holowinsky-Soundararajan \cite{Holowinsky-Soundararajan-2010} are not directly applicable since they rely crucially on the multiplicativity of the coefficients, and the Fourier coefficients of Siegel cusp forms of higher rank $n>1$ are highly non-multiplicative.\footnote{These two difficulties are also present in the case of half-integral weight forms and were overcome by the second-named author and Radziwi\l\l \cite{Lester-Radziwill-2020} under GRH; in Section \ref{s:otherwork} we discuss the relationship between their work and the present one.}

\subsection{Results}
Due to the difficulty of Conjecture \ref{conj:quegenintro} in general, it seems reasonable to attempt it first for Siegel cusp forms that are \emph{lifts} of some sort. Indeed, in the case $n=1$, mass equidistribution was initially proved for Eisenstein series \cite{Luo-Sarnak-1995} and for dihedral/CM forms \cite{Sarnak-2001,Liu-Ye-2002} (which are both lifts from characters). For $n>1$, the simplest lifts are the Saito--Kurokawa lifts, which exist for $n=2$. The Saito-Kurokawa lifts can be
explicitly constructed from classical half-integral weight forms via the theory
of Jacobi forms  \cite[\S6]{EZ85}; they may also be viewed as lifts of classical
integral weight forms thanks to the Shimura correspondence between
half integral weight and integral weight forms. Furthermore, from the representation theoretic point of view, the Saito--Kurokawa lifts may be understood as a special case of Langlands functoriality realized via the theta correspondence \cite{schsk}.

Our main result proves Conjecture \ref{conj:quegenintro} for Saito-Kurokawa lifts under the Generalized Riemann Hypothesis (GRH).

\begin{thm}\label{main-theorem}
Assume GRH. Let $F\in S_k(\mathrm{Sp}_4(\mathbb Z))$ traverse a sequence of Hecke eigenforms that are Saito--Kurokawa lifts. Then, for each bounded continuous function $\phi$ on $\mathrm{Sp}_4(\mathbb Z)\backslash\mathbb H_2$, we have $D_F(\phi) \longrightarrow 0$  whenever $k \longrightarrow \infty$. In other words,
\begin{align*}
&\frac1{\|F\|_2^2}\int\limits_{\mathrm{Sp}_4(\mathbb Z)\backslash\mathbb H_2}|F(Z)|^2 \phi(Z)(\det Y)^{k-3}\,\mathrm d X \mathrm d Y\\&\longrightarrow\frac1{\rm{vol}(\mathrm{Sp}_4(\mathbb Z)\backslash\mathbb H_2)}\int\limits_{\mathrm{Sp}_4(\mathbb Z)\backslash\mathbb H_2}\phi(Z)\,(\det Y)^{-3}\mathrm d X \mathrm d Y
\end{align*}
as $k\longrightarrow\infty$.
\end{thm}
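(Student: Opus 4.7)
The strategy is to exploit the fact that a Saito--Kurokawa lift $F$ is built from a half-integral weight form $g\in S_{k-1/2}^+(\Gamma_0(4))$ in order to reduce the mass equidistribution problem on $Y_2$ to analytic inputs concerning $g$ and its Shimura correspondent $f$ on $\GL_2$, where GRH supplies sharp $L$-value bounds. By a standard density argument it suffices to handle $\phi$ ranging over a spanning family of test functions on $Y_2$: the constant function (which yields the normalization), incomplete Eisenstein series attached to the Siegel and Klingen parabolics of $\Sp_4$, and Siegel Hecke--Maass cusp forms. The Petersson norm $\|F\|_2^2=\mu_F(1)$ appearing in the denominator is controlled by the Kohnen--Zagier formula, which expresses it in terms of $\|g\|_2^2$ and a critical central value of $L(s,f)$; under GRH this is bounded from above and below, making the normalization transparent.

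For an incomplete Eisenstein series $\phi$ attached to the Siegel parabolic, the unfolding method turns $\mu_F(\phi)$ into a Mellin transform of (essentially) the standard $L$-function $L(s,F,\std)$; for the Klingen parabolic one instead obtains a Rankin--Selberg type convolution. The decisive feature is that the Saito--Kurokawa correspondence causes all such $L$-functions to factor through $\zeta$ and the $\GL_2$ $L$-function $L(s,f)$, so that GRH controls their size in the critical strip. After dividing by $\|F\|_2^2$ one recovers the expected main term $\vol(Y_2)^{-1}\int_{Y_2}\phi\,d\mu$, with an error that vanishes as $k\to\infty$.

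The essential difficulty lies in the cuspidal case: for a fixed Hecke--Maass cusp form $\phi$ on $Y_2$ one must show $\mu_F(\phi)=o(\|F\|_2^2)$. The key input here is that the Fourier coefficients $a_F(T)$ of a Saito--Kurokawa lift admit an explicit formula as weighted sums of $c(4\det T/d^2)$ over divisors $d$ of the content of $T$, where $c(n)$ are the Fourier coefficients of $g$. Combined with the Fourier expansion of $\phi$ along the Siegel parabolic, this rewrites the triple product $\int|F|^2\phi\,(\det Y)^k\,d\mu$ as bilinear forms in the $c(n)$ twisted by coefficients of $\phi$. Alternatively --- and morally equivalently --- a seesaw identity for the theta correspondence underlying the Saito--Kurokawa lift (with the dual pair involving $\meta_2$ and $\SO(2,3)$) should convert the triple product into an integral on the metaplectic modular curve featuring $|g|^2$ against a theta-lifted function built from $\phi$. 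Either route aims to place the question within the half-integral weight QUE framework of \cite{Lester-Radziwill-2020}, whose GRH-conditional bounds can then be invoked.

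The main obstacles I anticipate are concentrated in this last step: carrying out the archimedean weight matching in the theta/seesaw reduction, controlling possibly non-generic or CAP-type contributions from $\phi$, and (particularly in the Klingen Eisenstein case) handling coefficient sums that mix half-integral weight data with input pulled back from $\SL_2(\Z)\backslash\H$. These are the places where genuinely new analytic and representation-theoretic work seems to be required beyond a direct appeal to \cite{Lester-Radziwill-2020} and the Saito--Kurokawa $L$-function factorization.
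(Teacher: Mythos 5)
Your proposal diverges from the paper at the very first step, and the divergence creates gaps that are not merely technical. You decompose the test function spectrally on $Y_2$ (constants, incomplete Siegel/Klingen Eisenstein series, Siegel Hecke--Maass cusp forms). The paper deliberately avoids this: it instead shows that incomplete Poincar\'e series $P_S^h$ attached to the Siegel parabolic, with factorizable archimedean data $h$, span $C_c^\infty(\Gamma\backslash\H_2)$ uniformly, and then unfolds $\mu_F(P_S^h)$ into a \emph{shifted convolution sum} $\sum_T a(T)\overline{a(T+S)}W_{h,S}(T)$ supported on $m,n,r\asymp k$. Your spectral route runs into exactly the obstructions the authors cite for rejecting it: the continuous spectrum of $\Sp_4(\Z)\backslash\H_2$ involves several parabolics with poorly understood Fourier coefficients, and for a Siegel cusp form $\phi$ there is no Watson-type formula, so $\int|F|^2\phi$ does not reduce to $L$-values; your fallback via Fourier coefficients just reproduces a shifted convolution problem with the extra unknown of $\phi$'s coefficients, and your seesaw alternative is left unexecuted (you acknowledge this yourself).

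Two further concrete errors/gaps. First, unfolding $\int|F|^2 E(Z,s)(\det Y)^k\,d\mu$ against the Siegel Eisenstein series does \emph{not} produce the standard $L$-function $L(s,F,\mathrm{std})$; it produces the Rankin--Selberg convolution of the Koecher--Maass series, $D(s)=\sum_{T}|R(T)|^2/(\varepsilon(T)(4\det T)^s)$, which has no Euler product. Moreover the range $\det T\asymp k^2$ forced by the archimedean weight is too short for a contour-shift/GRH bound to give an asymptotic: the paper must prove a genuine twisted first-moment asymptotic for $L(\tfrac12,f\otimes\chi_d)$ over $|d|\asymp k^2$ (conductor $\asymp k^6$, so no subconvexity is implied) and match it against $\mathrm{Res}_{s=3/2}D(s)$; GRH/GLH only lubricates that computation. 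Second, even in the diagonal case the part of the test function orthogonal to constants is not handled by anything in your sketch: the paper needs Waldspurger's toric period formula together with the Petrow--Young subconvexity bound for $L(\tfrac12,g\otimes\chi_d)$ to extract a power saving $|d|^{-1/12+\varepsilon}$ from the sum over the class group (equidistribution of Heegner points), and for the genuinely off-diagonal shifts $S\neq0$ it needs a GRH bound, via Soundararajan's moment method and a delicate character-sum/Poisson computation, for $\sum_{m,n,r\asymp k}\sqrt{L(\tfrac12,f\otimes\chi_{d_1})L(\tfrac12,f\otimes\chi_{d_2})}$ with shifts in all three matrix entries. Neither ingredient appears in your outline, and the half-integral weight QUE of Lester--Radziwi{\l}{\l} cannot be invoked as a black box because the $T$-sum ranges over class-group orbits with $\disc(T)\asymp k^2$ rather than over integers $n\asymp k$.
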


A consequence of the classical  holomorphic QUE theorem of Holowinsky and Soundararajan is that the set of zeros of a sequence of holomorphic Hecke cusp forms become equidistributed with respect to the hyperbolic measure as the weight tends to infinity \cite{Rudnick-2005}. We are able to prove a similar result in the case $n=2$ as a consequence of Theorem \ref{main-theorem}. For $F\in S_k(\mathrm{Sp}_4(\mathbb Z))$ we let $Z_F$ denote the zero divisor of $F$, which we view as a current of integration (i.e., a distribution on the space of smooth compactly supported differential forms) of bidegree $(2,2)$  on $\Sp_4(\Z)\bs\H_2$.

We let $$\omega:=-\frac i{2\pi}\partial\overline\partial\log(\det Y)$$  be the K\"ahler  differential form of bidegree $(1,1)$ on $\H_2$ associated to the  Bergman metric on $\H_2$. Here $\partial$ and $\overline\partial$ are the Dolbeault operators and we write $Z \in \H_2$ as $Z= X+iY$. It is easy to see that $\omega$ descends to a differential form on $\Sp_4(\Z)\bs \H_2$. As an application of our theorem, we show that if $F$ traverses a sequence of Saito--Kurokawa lifts with weights $k \longrightarrow \infty$, then the currents $\frac1kZ_F$ converge  to $\omega$ weakly in the sense of measures.

\begin{thm}\label{Zero-equidistr.}
Assume GRH. Fix a smooth compactly supported differential form $\eta$ of bidegree $(2,2)$ on $\Sp_4(\Z)\bs\H_2$. Let $F\in S_k(\mathrm{Sp}_4(\mathbb Z))$ traverse a sequence of Hecke eigenforms that are Saito--Kurokawa lifts. Then
\begin{align}\label{equidistribution}
\frac1k\int\limits_{Z_{F}}\eta\longrightarrow\int\limits_{\Sp_4(\Z)\bs\H_2}\omega\wedge\eta
\end{align}
as $k\longrightarrow\infty$.
\end{thm}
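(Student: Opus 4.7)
The plan is to reduce \eqref{equidistribution} to a statement about the $L^1$-size of $u_F(Z) := \log\bigl(|F(Z)|^2 (\det Y)^k\bigr)$ by a Poincar\'e--Lelong/Stokes argument, and then to control this $L^1$-size by combining the $L^2$-normalization of $F$ with an auxiliary negative-moment bound on $|F|^2(\det Y)^k$.

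Since $F$ is holomorphic, the Poincar\'e--Lelong formula $\frac{i}{2\pi}\partial\overline\partial \log|F|^2 = Z_F$ combined with the definition of $\omega$ produces the current-level identity
\[
\frac{i}{2\pi}\partial\overline\partial u_F = Z_F - k\omega
\]
on $\Sp_4(\Z)\bs\H_2$. Here $u_F$ is well-defined as an $L^1_{\mathrm{loc}}$ function, being the sum of the plurisubharmonic $\log|F|^2$ and the smooth function $k\log\det Y$. Pairing both sides with $\eta$ and integrating by parts (Stokes' theorem is valid because $\eta$ has compact support), \eqref{equidistribution} becomes
\[
\frac{1}{2\pi k}\int_{\Sp_4(\Z)\bs\H_2} u_F \cdot i\partial\overline\partial \eta \longrightarrow 0.
\]
Writing $\frac{i}{2\pi}\partial\overline\partial \eta = \psi\, d\mu$ with $\psi$ smooth and compactly supported (and $\int \psi\, d\mu = 0$, again by Stokes), and normalizing $\|F\|_2 = 1$ so that $g := |F|^2 (\det Y)^k$ satisfies $\int g\, d\mu = 1$, the problem becomes
\[
\frac{1}{k}\int_{\Sp_4(\Z)\bs\H_2} (\log g)\, \psi\, d\mu \longrightarrow 0,
\]
for which it suffices to prove $\int_{\Sp_4(\Z)\bs\H_2} |\log g|\, d\mu = O(\log k)$.

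The upper tail of $\log g$ is controlled trivially: the inequality $\log^+ x \le x$ gives $\int (\log g)^+ d\mu \le \int g\, d\mu = 1$. For the lower tail, one inserts the elementary inequality $-\log x \le \delta^{-1} x^{-\delta}$ (valid for $x, \delta > 0$) to reduce matters to a \emph{negative moment estimate}
\[
\int_{\Sp_4(\Z)\bs\H_2} g^{-\delta}\, d\mu \;\ll\; k^{A\delta}
\]
for some absolute constants $A, \delta_0 > 0$, uniformly in $0 < \delta \le \delta_0$. Optimizing at $\delta \asymp 1/\log k$ then yields $\int (\log g)^- d\mu \ll \log k$, and hence the required $L^1$ bound on $u_F$.

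The main obstacle is establishing the negative moment estimate above: one is asking for quantitative lower bounds on $|F|^2(\det Y)^k$ that hold on most of $\Sp_4(\Z)\bs\H_2$ despite the fact that $F$ vanishes on a complex hypersurface. The natural plan, which exploits that $F$ is a Saito--Kurokawa lift, is to use the Fourier--Jacobi expansion to express everything in terms of the associated half-integral weight form $h$, and then reduce the negative moment to a statement about moments of $L$-values (or about values of $h$) amenable to the same subconvexity/moment technology that already underpins Theorem~\ref{main-theorem}. This step plays the role of the corresponding technical lemma in Rudnick's~\cite{Rudnick-2005} treatment of the $\mathrm{SL}_2(\Z)\bs\H$ case.
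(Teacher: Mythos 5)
Your Poincar\'e--Lelong/Stokes reduction is the same as the paper's opening move, and it is fine. The problem is the step you yourself flag as ``the main obstacle'': the negative-moment estimate $\int_{\mathrm{supp}\,\psi} g^{-\delta}\,\mathrm d\mu \ll k^{A\delta}$ (equivalently, the $L^1$ bound $\int |\log g|\,\mathrm d\mu = o(k)$ on the support of $\psi$). This is a genuine gap, not a routine technicality. Such a bound is a strong \emph{quantitative lower bound} on $|F|^2(\det Y)^k$ off its zero divisor, uniform in $k$; it does not follow from mass equidistribution (which only controls where the mass of $g\,\mathrm d\mu$ goes and is perfectly consistent with $g$ being, say, $e^{-k^2}$ on a set of measure $e^{-k}$, which would destroy the negative moment), nor from the sup-norm bound, nor from Waldspurger-type formulas: the Fourier--Jacobi expansion relates \emph{Fourier coefficients} of $F$ to $L$-values, but pointwise or $L^1$ lower bounds for $|F(Z)|$ at generic $Z$ do not reduce to moments of $L$-functions by any known mechanism. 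Note also that Rudnick's treatment of the $\mathrm{SL}_2(\Z)$ case, which you invoke as a model, does \emph{not} prove such a negative-moment lemma either.

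What the paper does instead (following Shiffman--Zelditch and Rudnick) is a soft pluripotential-theoretic argument that needs only the \emph{upper} bound and mass equidistribution. The functions $u_k=\tfrac1k\log|F_k|$ are plurisubharmonic and, by Blomer's sup-norm bound under GRH, locally uniformly bounded above by $-\tfrac12\log\det Y + o(1)$. The compactness theorem for such families says that either $u_k\to-\infty$ uniformly on compacta, or a subsequence converges in $L^1_{\mathrm{loc}}$ to a plurisubharmonic limit $u\le -\tfrac12\log\det Y$; if the desired limit \eqref{tbs} failed, one would have $u<-\tfrac12\log\det Y-\delta$ on a set of positive measure, and Hartogs' lemma upgrades this to $(\det Y)^k|F_k|^2\le e^{-k\delta}$ on a \emph{fixed open set} for large $k$. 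Both alternatives contradict Theorem \ref{main-theorem}. This is exactly the upgrade from ``small in the limit on a positive-measure set'' to ``uniformly exponentially small on a fixed open set'' that your quantitative route tries to replace with an unavailable estimate. To repair your proof you would need to substitute this compactness/Hartogs argument for the negative-moment step.
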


\begin{rem} We remark that the only reason we assume GRH in Theorem \ref{Zero-equidistr.} is that our mass equidistribution result Theorem \ref{main-theorem} requires it. Note that in the proof of Theorem \ref{Zero-equidistr.} we appeal to a sup norm result of Blomer, which is conditional under GRH, but a weaker bound for the sup norm that suffices can be shown unconditionally.
\end{rem}

\subsection{Overview of the proof}

We now sketch the main ideas behind the proof of Theorem \ref{main-theorem}. The starting point is to introduce a collection of incomplete Poincar\'e series on $\mathrm{Sp}_4(\Z)\bs\mathbb H_2$. One can attach such Poincar\'e series to any parabolic subgroup of $\mathrm{Sp}_4(\R)$ but the best choice for our purposes is the Siegel parabolic (because its associated unipotent radical is abelian) which leads to the so-called Siegel-Poincar\'e series. More precisely, let $\Lambda_2$ be the set of 2 by 2 semi-integral symmetric matrices, i.e.,
$\Lambda_2 := \left\{\mat{m}{r/2}{r/2}{n}: m, r, n \in \Z\right\}.$ Given a symmetric semi-integral matrix $S \in \Lambda_2$ and a smooth compactly supported function $h$ on $\GL_2(\R)^+/\SO(2) \simeq \H \times \R^+$, we define an incomplete Siegel-Poincar\'e series $P_S^h \in C_c^\infty(\mathrm{Sp}_4(\Z)\bs\mathbb H_2)$ associated to this data. We show in Section \ref{s:uniform approx} that the uniform span of the functions $P_S^h$ obtained this way (as we vary $S$ and $h$) equals the full space $C_c^\infty(\Sp_4(\Z) \bs \H_2)$. Therefore, proving Theorem \ref{main-theorem} reduces to showing that for each fixed $h$ and $S$ as above, and a sequence of Saito--Kurokawa lifts $F\in S_k(\mathrm{Sp}_4(\mathbb Z))$ that are Hecke eigenforms,
\begin{align*}
\frac1{\|F\|_2^2}\int\limits_{\mathrm{Sp}_4(\Z)\bs\mathbb H_2}P_S^h(Z)|F(Z)|^2(\det Y)^k\,\mathrm d \mu\longrightarrow\int\limits_{\mathrm{Sp}_4(\Z)\bs\mathbb H_2}P_S^h(Z)\,\mathrm d \mu \quad \text{as } k\longrightarrow\infty,
\end{align*}
where $\mathrm d \mu = (\det Y)^{-3}\mathrm d X\,\mathrm d Y$.
We have two cases depending on whether $S$ equals zero or not:
\begin{itemize}

\item (The off-diagonal case) For fixed  $h,S$ with $S\neq 0$, show that as $k\longrightarrow\infty$,
\begin{align}\label{QUE-offdiagonal}
\frac1{\|F\|_2^2}\int\limits_{\mathrm{Sp}_4(\Z)\bs\mathbb H_2}P_S^h(Z)|F(Z)|^2(\det Y)^k\,\mathrm d  \mu\longrightarrow 0.
\end{align}

\item  (The diagonal case)  For fixed  $h$, show that as $k\longrightarrow\infty$,
\begin{align}\label{QUE-diagonal}
\frac1{\|F\|_2^2}\int\limits_{\mathrm{Sp}_4(\Z)\bs\mathbb H_2}P_0^h(Z)|F(Z)|^2(\det Y)^k\,\mathrm d  \mu\longrightarrow \int\limits_{\mathrm{Sp}_4(\Z)\bs\mathbb H_2}P_0^h(Z)\,\mathrm d  \mu.
\end{align}

\end{itemize}
By unfolding the left-hand side of \eqref{QUE-offdiagonal} or \eqref{QUE-diagonal}, we obtain the higher rank \emph{shifted convolution} sum
\begin{align*}
\frac1{\|F\|_2^2}\sum_{T\in\Lambda_2}a(T)a(T+S)W_{h,S}(T),
\end{align*}
where $a(T)$ are the Fourier coefficients of $F$ and $W_{h,S}$ is a weight function which is roughly supported on those $T=\begin{pmatrix} m & r/2\\
r/2 & n \end{pmatrix}$ for which $m,n,r\asymp_S k$. The crux of the proof of Theorem \ref{main-theorem} lies in estimating these sums. To the best of our knowledge, such shifted convolution sums in higher rank where the Fourier coefficients are highly non-multiplicative have not been previously tackled successfully, even when the length of the sum does not depend on the spectral parameters.

We now briefly describe our treatment of the shifted convolution sum in the off-diagonal case. Due to the small range of summation over $T$ there are no tools available that can obtain cancellation among the Fourier coefficients.  However, we can exploit the fact that our form $F$ is a Saito-Kurokawa lift and therefore its Fourier coefficients arise from those of a classical half-integral weight eigenform $\widetilde{f}$ of weight $k-\frac12$ on $\Gamma_0(4)\bs \H$. We forego obtaining cancellation in the shifted convolution problem and use Waldspurger's formula connecting squares of Fourier coefficients of half-integral weight forms with central values of $L$-functions to essentially reduce the problem to showing that
\begin{equation}\label{e:offdiagtoshow}
\frac1{k^3}\sum_{m,n,r\asymp k}\sqrt{L\left(\frac12,f\otimes\chi_{r^2-4mn}\right)L\left(\frac12,f\otimes\chi_{(r+\ell_2)^2-4(m+\ell_1)(n+\ell_3)}\right)}\longrightarrow 0
\end{equation}
as $k\longrightarrow \infty$, where  $f$ is an integral weight Hecke eigenform (of weight $2k-2$) associated to $\widetilde{f}$ by the Shimura correspondence, $S=\begin{pmatrix}\ell_1 & \ell_2/2\\
\ell_2/2 & \ell_3 \end{pmatrix} \neq 0$ is fixed, and $\chi_d$ denotes the quadratic character associated to the discriminant $d$.

Proving the limit \eqref{e:offdiagtoshow} unconditionally currently seems hopeless, as the techniques developed in \cite{Radziwill-Soundararajan-2015} to obtain bounds for fractional moments of central $L$-values require an asymptotic for a first moment that is well out of reach. We assume GRH and succeed in proving this bound under this assumption by using Soundararajan's method for bounding moments. This method involves several delicate and technical steps (including a rather involved character sum computation) which are performed in Section \ref{s:char}.

Next, we outline our treatment of the diagonal case. The left hand side of the sum (\ref{QUE-diagonal}) reduces to a sum
in which the range of $\det T$ is too small to be evaluated asymptotically using a contour shifting argument. In previous works such as \cite{Holowinsky-2010},\cite{Holowinsky-Soundararajan-2010}, \cite{Lester-Radziwill-2020} the analogous problem was resolved by introducing an auxiliary Eisenstein series to increase the length of the sum. This strategy seems hard to implement in our situation because of the complexity of the various types of Eisenstein series of higher rank and their Fourier coefficients.

Instead we introduce a completely new method for showing \eqref{QUE-diagonal}. The first step is to replace $P_0^h$ by an incomplete Eisenstein series by doing an initial summation over $\SL_2(\Z)$. 
By unfolding further and using Waldspurger's formula we are essentially reduced to estimating the sum over negative discriminants
\begin{align}\label{orthogonal-contribution}
\sum_{}h(d)L\left(\tfrac12,f\otimes\chi_d\right)G(d,g;\kappa),
\end{align}
 where $\kappa \in C_c^\infty(\R^+)$, $g \in L^2(\mathrm{SL}_2(\Z)\bs\mathbb H)$ are fixed, $h(d)$ is the cardinality of the class group $\Cl_d$, and $G(d;g,\kappa)$ is a weight function that (up to some simple factors depending only on $k$) is equal to
   $$\frac{|d|^{k-\frac32}}{h(d)}\sum_{T \in \Cl_d}\,\,\int\limits_{Y \in M^{\rm{sym}}_2(\R)^+} g(Y^{1/2} \cdot i)(\det Y)^{k-3} \kappa(\sqrt{\det Y})e^{-4 \pi \Tr(TY)}\,\mathrm d Y$$
(see Section \ref{sec:notation} for clarification on the notation).

Our main term arises from the case where $g=1$, as $G(d,1;\kappa)$ acts as a smooth weight function that localizes the sum to $|d| \asymp k^2$. Since the ratio of the logarithms of the analytic conductor of $L(\tfrac12,f\otimes \chi_d)$ and the length of the sum is $\log (|d|^2k^2)/\log k^2 \sim 3$, the moment estimate we require does not yield a subconvex estimate for the central $L$-values and is amenable to the methods developed in \cite{Soundararajan-2000, Soundararajan-Young-2010}. To implement this, we prove a twisted first moment asymptotic for $L$-functions on $\GL_2$ assuming GLH\footnote{Throughout the article, GLH refers to the Generalized Lindel\"of Hypothesis.} (see Section \ref{s:twisted}) and then combine this result with delicate computations (Section \ref{s:mainterm}) involving the residue of the Rankin--Selberg convolution of the Koecher--Maass series. This enables us to obtain the required limit for \eqref{orthogonal-contribution} in the case of $g=1$.

To estimate \eqref{orthogonal-contribution} in the case where $g$ is orthogonal to the constant function, we develop a new method that morally boils down to appealing to the famous equidistribution of Heegner points as $|d| \longrightarrow \infty$. We use Waldspurger's formula on toric integrals and the subconvexity bound for $L(\tfrac12,g\otimes\chi_d)$ to show that the sum (over the class group elements) that occurs in the definition of $G(d,g;\kappa)$ has a nontrivial cancellation that saves a \emph{power} of $|d|$. More precisely, we prove that for all Hecke eigenforms $g\in L^2(\mathrm{SL}_2(\Z)\bs\mathbb H)$ orthogonal to the constant function, we have
\begin{equation}\label{e:requiredbd}G(d,g;\kappa) \ll_{g, \eps} d^{-\frac{1}{12}+\eps} G(d,1;|\kappa|).\end{equation}  Therefore, the size of $|G(d,g;\kappa)|$ is quite small in comparison to  $G(d,1;|\kappa|)$, which bounds \eqref{orthogonal-contribution} and completes the proof of \eqref{QUE-diagonal} as a consequence of the previously-proved subcase where $g=1$.

\subsection{Comparison with other work}\label{s:otherwork}
Not much was previously known in the direction of Conjecture \ref{conj:quegenintro} in the higher rank setting $n>1$.  Liu \cite{Liu-2017} established the limit $D_F(\phi) \longrightarrow 0$ when the test function $\phi$ is a degenerate Klingen Eisenstein series \emph{and} $F$ traverses a sequence of Ikeda lifts. More recently, Katsurada and Kim \cite{Katsurada-Kim-2022} proved a similar result when the test function $\phi$ is a degenerate Siegel Eisenstein series and $F$ again traverse a sequence of Ikeda lifts, under the additional assumptions that $n\geq 4$ and a certain Dirichlet series is meromorphic. The techniques used in those papers are very different from the ones used in this work.

Arguably the work that is closest in spirit to this paper is that of the second named author and Radziwi{\l}{\l} \cite{Lester-Radziwill-2020} who proved the mass equidistribution for the family of classical half-integral weight Hecke eigenforms on $\Gamma_0(4)\bs \H$ (both in the weight and eigenvalue aspects). For the proof, as in the present paper, they considered a family of incomplete Poincar\'e series and reduce to a shifted convolution sum. However, there are key differences between \cite{Lester-Radziwill-2020} and the present work.

First, as mentioned above, the treatment of the diagonal case in \cite{Lester-Radziwill-2020} was completely different and relied on the tool of auxiliary Eisenstein series which is hard to implement in our case due to the complexity of symplectic Eisenstein series and the lack of precise information about their Fourier coefficients. In the present work, we build upon an adelic version of the equidistribution of Heegner points (Waldspurger's period formula for toric integrals and the subconvex bounds for twisted $L$-functions) to reduce the diagonal case to a special subcase that is proved ultimately by reducing to a twisted first moment asymptotic for $L$-functions attached to twists of holomorphic newforms.

Secondly, in the off-diagonal case, the approach in \cite{Lester-Radziwill-2020} was to reduce to the problem of showing that
\begin{equation}\label{e:offdiagtoshowLR}
\frac1{k}\sum_{d \asymp k}\sqrt{L\left(\frac12,f\otimes\chi_{d}\right)L\left(\frac12,f\otimes\chi_{d+\ell}\right)}\longrightarrow 0
\end{equation}
as $k\longrightarrow \infty$, where  $f$ is an integral weight Hecke eigenform (of weight $2k$),  $\ell\neq 0$ is fixed, and $\chi_d$ denotes the quadratic character associated to the discriminant $d$. The corresponding reduced problem in our case is given by \eqref{e:offdiagtoshow}. Note that in \eqref{e:offdiagtoshow}, there is a shift in each of the matrix entries, rather than just a shift of the discriminant and this leads to a significantly increased complexity in implementing Soundararajan's method to prove \eqref{e:offdiagtoshow} that goes beyond the intricate estimates used to establish \eqref{e:offdiagtoshowLR}. An indication of the difference in the difficulties involved can be seen by comparing \cite[Prop 3.1]{Lester-Radziwill-2020} with the proof of Proposition \ref{prop:poisson-applied}  of this paper.

\subsection{Plan for the paper} Our paper is organized as follows. In Section \ref{s:siegelpoinc}, we develop the theory of Poincar\'e series associated to the Siegel parabolic of $\Sp_4$ and  reduce Theorem \ref{main-theorem}  to the case where the test function $\phi$ is a Poincar\'e series associated to factorizable data. In Section \ref{s:shiftedconvred} we reduce further to proving two assertions involving estimates on higher rank shifted convolution sums. Section \ref{s:3.1} is devoted to the proof of the first of these two assertions, which corresponds to the off-diagonal case. Section \ref{s:3.2} is devoted to the proof of the second assertion corresponding to the diagonal case. Finally, in Section \ref{s:zeroes}, we use Theorem \ref{main-theorem} to deduce Theorem \ref{Zero-equidistr.}.

\subsection{Acknowledgements}We thank Paul Nelson for suggesting we look at the reference \cite{Marshall-2011} for the application to equidistribution of zero divisors and we thank Navid Nabijou for patiently explaining to us various facts about differential forms on complex manifolds relevant for that application. We thank the anonymous referee for helpful comments which improved this paper. This work was supported by the Engineering and Physical Sciences Research Council [grant number EP/T028343/1].
\subsection{Notation}\label{sec:notation}

\subsubsection{General} We use the notation $A\ll_{x,y,z} B$ to signify that there exists a positive constant $C$, depending at most upon $x, y, z$, so that $|A|\leq C|B|$. The symbol $\varepsilon$ will denote a small positive quantity. We write $A(x)=O_y(B(x))$ if there exists a positive real number $M$ (depending on $y$) and a real number $x_0$ such that $|A(x)|\leq M |B(x)|$ for all $x\geq x_0$.

For a smooth orbifold $X$, we let $C_b(X)$ denote the space of bounded continuous functions $X\longrightarrow \C$, $C_c(X)$ denote the space of compactly supported continuous functions $X\longrightarrow \C$, and $C_c^\infty(X)$ denote the space of compactly supported smooth functions $X\longrightarrow \C$. We say a function $g: \tmop{SL}_2(\mathbb Z) \backslash \mathbb H \longrightarrow \C$ is slowly growing if $g(x+iy) \ll_N y^N+y^{-N}$.

We let $\mathcal D$ denotes the set of negative fundamental discriminants. Given an integer $n$ and prime $p$ we write $p^a || n$ if $p^a|n$ and $p^{a+1}\nmid n$. Also, we define $\Omega(n)=\sum_{p^a || n } a$. Additionally, for $a,b \in \mathbb Z$ and $c \in \mathbb N$ we write $a \equiv b \,(c)$ which means $a \equiv b \tmod{c}$.

We let $\R$ denote the reals and let $\R^+$ denote the positive reals. For $\kappa \in C_c^\infty(\R^+)$, we define the Mellin transform (note that our definition is nonstandard)
$$\widetilde{\kappa}(s) := \int\limits_{0}^\infty \kappa(\lambda) \lambda^{-s-1} \mathrm d \lambda$$ so that by the inversion formula  we have for all $\sigma\ge 2$ $$\kappa(y) = \frac{1}{2 \pi i} \int\limits_{(\sigma)}\widetilde{\kappa}(s)y^s \mathrm d s.$$

\noindent Similarly for sufficiently nice $h\in C^\infty(\R)$ we define the Fourier transform
\[\widehat h(\xi):=\int\limits_{-\infty}^\infty h(x)e(-x\xi)\,\mathrm d x,\]
which satisfies the Fourier inversion formula
\[ h(x)=\int\limits_{-\infty}^\infty\widehat h(\xi)e(x\xi)\,\mathrm d \xi.\]
Throughout the article we write $e(x):=e^{2\pi ix }$.

\subsubsection{Matrix groups}For a positive integer $n$ and a commutative ring $R$, we let $M_n(R)$ denote the ring of $n \times n$ matrices over $R$, and $\GL_n(R)$ the multiplicative subgroup of invertible matrices in $M_n(R)$. We let $M_n^{\Sym}(R)$ be the additive subgroup of symmetric matrices in  $M_n(R)$. Let $I_n$ denote the $n$ by $n$ identity matrix. Given $A \in M_n^{\Sym}(\R)$ and $c \in R$ we write $A>c$ (resp. $A \ge c$) if $A-cI_n$ is positive definite (resp., positive semidefinite). Denote by $J_n$ the $2n$ by $2n$ matrix given by
$$
J_n :=
\begin{pmatrix}
0 & I_n\\
-I_n & 0\\
\end{pmatrix}.
$$ Define
\begin{align*}
&\mathrm{GSp}_{2n}(R):=\left\{g\in\mathrm{GL}_{2n}(R)\,:\, {^t}gJ_ng=\mu_n(g)J_n, \ \mu_n(g)\in R^\times\right\}\\
&\mathrm{Sp}_{2n}(R):=\left\{g\in\mathrm{GSp}_{2n}(R)\,:\,\mu_n(g)=1\right\}
\end{align*}
We also set
\begin{align*}
&\mathrm{GL}_{n}(\R)^+:=\left\{g\in\mathrm{GL}_{n}(\R)\,:\, \det(g)>0\right\}\\
&\mathrm{GSp}_{2n}(\R)^+:=\left\{g\in\mathrm{GSp}_{2n}(\R)\,:\,\mu_n(g)>0\right\}\\
&M_n^{\Sym}(\R)^+:=\left\{g\in M_{n}^{\Sym}(\R)\,:\,g \text{ is positive definite} \right\}\end{align*}

\noindent We denote $$G:=\Sp_4(\R), \quad \Gamma:=\Sp_4(\Z).$$ For $X \in M_2^{\Sym}(\R)$, let $n(X) := \mat{I_2}{X}{0}{I_2} \in G$. For $A\in \GL_2(\R)$, let  $m(A) := \mat{A}{0}{0}{{}^t\!A^{-1}} \in G$.
Denote $$N(\R) := \{n(X): X \in M_2^{\Sym}(\R)\},$$  $$M(\R)^+ := \{m(A): A \in \GL_2(\R)^+\}.$$ Let $$\Gamma_\infty := N(\Z) = \{n(X): X \in M_2^{\Sym}(\Z)\},$$ $$P(\Z) := \{n(X)m(A): X \in M_2^{\Sym}(\Z), A\in \SL_2(\Z)\},$$ and note that $\Gamma_\infty \subset P(\Z) \subset \Gamma$. Let $K_\infty$ be the standard maximal compact subgroup of $G$ consisting of all elements of the form $\mat{A}{B}{-B}{A}$; it can be checked that $K_\infty$ is the subgroup of $G$ fixing the point $iI_2$. We have a natural identification $G/K_\infty \simeq \H_2$ sending $g$ to $g \langle iI_2\rangle$, where $\H_2$ and $g\langle Z \rangle$ are as defined in the next subsection. We also have the Iwasawa decomposition $$G=N(\R)M(\R)^+K_\infty.$$

\subsubsection{Modular forms}Let $$\H_n:=\{Z\in M_n(\mathbb C)\,:\,Z={^t}Z,\,\Im(Z)\,\text{is positive definite}\}.$$
We will often write elements $Z\in\mathbb H_n$ as $Z=X+iY$ for  $X \in M_n^{\Sym}(\R)$, $Y \in M_n^{\Sym}(\R)^+$.
For $g = \begin{pmatrix}
A & B\\
C & D
\end{pmatrix}\in \GSp_{2n}(\R)^+$ and $Z \in \H_n$, define
\begin{align*}
g\langle Z\rangle:=(AZ+B)(CZ+D)^{-1}, \quad J(g,Z):=CZ+D.
\end{align*}
We will sometimes shorten $g\langle Z\rangle$ to $g\cdot Z$ or $gZ$ when the meaning is clear from the context.

\noindent The space $S_k(\Gamma)$ consists of holomorphic functions $F: \H_2 \longrightarrow \C$ which satisfy the relation
\begin{align*}
F(\gamma\langle Z\rangle)=\det(J(\gamma,Z))^kF(Z)
\end{align*}
for $\gamma\in\Gamma$, $Z\in\mathbb H_2$, and vanish at all the cusps.

Let $\Lambda_2$ denote the set of $2$ by $2$ semi-integral symmetric matrices, i.e.,
$$\Lambda_2= \left\{\mat{m}{r/2}{r/2}{n}: m, r, n \in \Z\right\},$$ and $\Lambda_2^+$ be the subset of positive definite matrices in $\Lambda_2$. For $S =\mat{m}{r/2}{r/2}{n}  \in \Lambda_2$, we define its discriminant $\disc(S):= -4 \det S = r^2-4mn$ and its content $\cont(S):= \gcd(m,r,n)$. The group $\SL_2(\Z)$ acts on $\Lambda_2$ on the right via $S \mapsto \T{A}SA$ and this action preserves the discriminant and the content. Let $\mathcal D$ denote the set of negative fundamental discriminants.

\subsubsection{Adeles and $L$-functions}We let $\A = \otimes_v' \Q_v$ denote the ring of adeles over $\Q$ and $\A_f= \otimes_p' \Q_p$ the subring of finite adeles. Given a reductive group $G$ such that the centre of $G(\A)$ is isomorphic to $\A^\times$ we let $Z(\A)$ denote the centre of $G(\A)$ (the group $G$ involved should be clear from context). For an automorphic representation $\pi$ of $\GL_n(\A)$ we let $L(s, \pi) = \prod_{p<\infty} L(s, \pi_p)$ denote the finite part of the $L$-function (i.e., without the archimedean factors), and normalized so that it satisfies a functional equation under  $s \mapsto 1-s$. For a positive integer $M$ we denote $L^M(s, \pi)=\prod_{p \nmid M}L(s, \pi_p)$.
\section{Siegel-Poincar\'e series} \label{s:siegelpoinc}

\noindent In this Section we reduce Theorem \ref{main-theorem}  to the case where the test function $\phi$ is a Poincar\'e series associated to the Siegel parabolic subgroup. The key result of this section which will be used in the rest of the paper is Corollary \ref{c:keyreductionpoinc}.

\subsection{Reduction to smooth compactly supported functions}

For $F\in S_k(\Gamma)$ and a function $\phi \in C_b(\Gamma \bs \H_2)$, define \[
\mu_F(\phi)= \int\limits_{\Gamma \bs \H_2 }
\lvert F (Z)\rvert ^2 \phi (Z)(\det Y)^k \, \mathrm d\mu,
\]
where $d \mu= (\det Y)^{-3} d X \, d Y$ denotes the standard
measure on $\Gamma \bs \H_2$.

Let $D_F$ be the linear functional on $C_b(\Gamma \bs \H_2)$ defined by
\[
D_F(\phi)=
\frac{\mu_F(\phi)}{\mu_F(1)}
-
\frac1{\text{vol}(\Gamma \bs \H_2)}\int\limits_{\Gamma \bs \H_2 } \phi(Z)\,\mathrm d\mu.
\]

\begin{prop}\label{p:reductiontosmooth}Let $F_i$ be a sequence consisting of elements in $S_k(\Gamma)$. Suppose that for each $\phi$ in $C_c^\infty(\Gamma \bs \H_2)$ we have $D_{F_i}(\phi) \longrightarrow 0$. Then for each $\phi$ in $C_b(\Gamma \bs \H_2)$ we have $D_{F_i}(\phi) \longrightarrow 0$.
\end{prop}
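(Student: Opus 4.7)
The plan is a standard three-$\eps$ truncation argument. Write $\mu_F^\ast:=\mu_F/\mu_F(1)$ and $\mu^\ast:=\mu/\vol(\Gamma\backslash\H_2)$ for the normalized probability measures, so that $D_F(\phi)=\mu_F^\ast(\phi)-\mu^\ast(\phi)$. Fix $\phi\in C_b(\Gamma\backslash\H_2)$ with $M:=\|\phi\|_\infty$ and $\eps>0$. By inner regularity of the finite measure $\mu^\ast$, I choose a compact $K\subset\Gamma\backslash\H_2$ with $\mu^\ast(K^c)<\eps$, and then by an orbifold Urysohn construction followed by mollification produce $\psi\in C_c^\infty(\Gamma\backslash\H_2)$ with $0\le\psi\le 1$ and $\psi\equiv 1$ on $K$ (so in particular $\mu^\ast(1-\psi)<\eps$). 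The decomposition $\phi=\phi\psi+\phi(1-\psi)$ then splits $D_{F_i}(\phi)$ into a bulk piece and a cuspidal tail piece.

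For the bulk piece, $\phi\psi$ lies in $C_c(\Gamma\backslash\H_2)$ but not a priori in $C_c^\infty$. I would approximate it uniformly by a sequence $\eta_n\in C_c^\infty$ with supports inside a fixed compact neighbourhood of $\tmop{supp}(\psi)$. Since $\mu_{F_i}^\ast$ and $\mu^\ast$ are probability measures, the approximation error is uniformly bounded by $\|\phi\psi-\eta_n\|_\infty$ for both measures; combining this with the hypothesis $D_{F_i}(\eta_n)\to 0$ for each fixed $n$ via a standard triangle-inequality argument yields $\mu_{F_i}^\ast(\phi\psi)-\mu^\ast(\phi\psi)\to 0$.

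For the cuspidal tail piece, $|\mu^\ast(\phi(1-\psi))|\le M\eps$ is immediate from the construction of $\psi$. The key observation is that $\psi$ itself lies in $C_c^\infty$, so the hypothesis applies to it and yields
\[\mu_{F_i}^\ast(1-\psi)=1-\mu_{F_i}^\ast(\psi)\longrightarrow 1-\mu^\ast(\psi)\le\mu^\ast(K^c)<\eps,\]
whence $|\mu_{F_i}^\ast(\phi(1-\psi))|\le M\mu_{F_i}^\ast(1-\psi)<2M\eps$ for $i$ sufficiently large. Summing the three contributions gives $\limsup_i|D_{F_i}(\phi)|\le 3M\eps$, and since $\eps>0$ was arbitrary the proposition follows.

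The only real subtlety is the ``no escape of mass to the cusp'' control on $\mu_{F_i}^\ast$, since $\Gamma\backslash\H_2$ is non-compact; this is the step I would flag as the main (mild) obstacle. It is handled not by any quantitative cusp estimate but by the cheap device of testing the $C_c^\infty$-hypothesis against the bump $\psi$ itself, together with the fact that both $\mu_{F_i}^\ast$ and $\mu^\ast$ have total mass one, which converts a lower bound on $\mu_{F_i}^\ast(\psi)$ into the required upper bound on $\mu_{F_i}^\ast(1-\psi)$.
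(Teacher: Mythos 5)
Your proof is correct and follows essentially the same route as the paper's: a compact truncation, uniform approximation of the compactly supported bulk by $C_c^\infty$ functions, and — the key step in both arguments — testing the hypothesis against a smooth $[0,1]$-valued bump that is close to $1$ in $\mu$-measure, combined with positivity and total mass one of $\mu_{F_i}^\ast$, to rule out escape of mass to the cusp. The only cosmetic difference is that you make the cutoff $\psi$ explicit in the decomposition $\phi=\phi\psi+\phi(1-\psi)$, where the paper writes $\phi=\phi_1+\phi_2$ with the cutoff implicit.
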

\begin{proof}
Since the space $C_c^\infty(\Gamma \bs \H_2)$ is dense (in the uniform topology) in the space $C_c(\Gamma \bs \H_2)$, it follows immediately that \begin{equation}\label{e:smoothque}\text{for each } \phi \in C_c(\Gamma \bs \H_2) \text{ we have } D_{F_i}(\phi) \longrightarrow 0.\end{equation}

\noindent Now fix a function $\phi$ in $C_b(\Gamma \bs \H_2)$. We need to show that $D_{F_i}(\phi) \longrightarrow 0$. Let $\eps > 0$ be arbitrary. Let $D \subset \H_2$ be the standard fundamental domain for $\Gamma\bs \H_2$ as described in \cite[page 30]{Klingen1990}. For $T>0$, let $C_T$ be the compact subset of $\Gamma \bs \H_2$ given by the image of the set $\{X+iY \in D,  \
  Y\le TI_2\}$ in $\Gamma \bs \H_2$ and let $B_T$ be the complement of $C_T$ in $\Gamma \bs \H_2$.
  Choose $T = T(\eps)$ large enough
  that
  $\mu(B_T)/\text{vol}(\Gamma\bs \H_2)< \eps$.
 It is clear that we can write $\phi = \phi_1 + \phi_2$,
  where $\phi_1 \in C_c(\Gamma \bs \H_2)$
  and $\phi_2$ is supported on $B_T$.
  By \eqref{e:smoothque} $|D_{F_i}(\phi_1)| <
  \eps$ eventually.\footnote{Here and in what follows,
    ``eventually'' means ``provided that $i$ large enough''.}
  Choose a smooth $[0,1]$-valued
  function $h$ supported on $C_T$
  that satisfies $\int_{\Gamma \bs \H_2 } h(Z)d\mu > \text{vol}(\Gamma\bs \H_2)(1 - 2 \eps)$.
 Then \eqref{e:smoothque} implies that
  the positive real number $\mu_{F_i}(h)/\mu_{F_i}(1)$
  eventually exceeds
  $1 - 3 \eps$.
  By the nonnegativity of $\mu_{F_i}(\chi)$ for all nonnegative valued functions $\chi$,
  we deduce that $\mu_{F_i}(\chi_{B_T})/\mu_{F_i}(1) < 3 \eps$ eventually, where $\chi_{B_T}$ denotes the characteristic function of $B_T$.
  Let $R$ be the supremum of $|\phi|$.
  Then $|\mu_{F_i}(\phi_2)/\mu_{F_i}(1)| \le R \mu_{F_i}(\chi_{B_T})/\mu_{F_i}(1) \leq
  3 R \eps$
  eventually and $\text{vol}(\Gamma\bs \H_2)^{-1}|\int_{\Gamma \bs \H_2 } \phi(Z)d\mu| \le R \eps$, so that
  $|D_{F_i}(\phi_2)| \leq
  4 R \eps$ eventually.
  Thus $|D_{F_i}(\phi)| < (1 + 4 R) \eps$ eventually.
 This completes the proof.
  \end{proof}

\subsection{Definition of Poincar\'e series}

Let $$\mathcal{M}_2 := \GL_2(\R)^+/\SO(2).$$  Recall that $M_2^\Sym(\R)^+$ is the set of symmetric positive definite $2\times 2$ matrices over $\R$, which we may view as a smooth manifold. We have a diffeomorphism $$\iota: \mathcal{M}_2 \overset{\simeq}{\longrightarrow} M_2^\Sym(\R)^+, \quad \iota(A):= A \ {}^t\!A.$$ Note that for each $Y \in M_2^\Sym(\R)^+$, $\iota^{-1}(Y)$ equals the class of $Y^{1/2}$ in $\mathcal{M}_2$. The basic input for our Poincar\'e series on $\Gamma \bs \H_2$ is a pair $(h, S)$ where $h \in C_c^\infty(\mathcal{M}_2)$ and $S \in \Lambda_2$.

For $h \in C_c^\infty(\mathcal{M}_2)$ and $S \in \Lambda_2$, define the function $h_S$ on $G$ via the Iwasawa decomposition as follows:
$$h_S(n(X)m(A)k) := e(\Tr(SX)) h(A), \quad A \in \GL_2(\R)^+,\, X \in M_2^{\Sym}(\R),\, k \in K_\infty.$$ It is easy to check this is well-defined. Since $h_S$ is right $K_\infty$-invariant, it defines a function on $G/K_\infty \simeq \H_2$ which we also denote as $h_S$. Concretely, for $Z = X+iY \in \H_2$, we have $$h_S(Z) = e(\Tr(SX)) h(\iota^{-1}(Y)).$$
We define the Poincar\'e series  $P_S^h(g)$ on $G$ via \begin{equation}\label{e:defpoincare}P_S^h(g) := \sum_{\gamma\in\Gamma_\infty \bs \Gamma}h_S(\gamma g).\end{equation} The above sum is in fact finite due to the compact support of $h$, as shown in Lemma \ref{l:keyprops} below. It is clear that $P_S^h(g)$ is left $\Gamma$-invariant and right $K_\infty$-invariant, and hence defines a function on $G/K_\infty \simeq \H_2$ and on $\Gamma\bs G/K_\infty \simeq \Gamma \bs \H_2$. By abuse of notation, we will also denote these functions as $P_S^h$.

Note that $\Im((n(X)m(A)k)\cdot iI_2) = \iota(A)$ for all  $A \in \GL_2(\R)^+$, $X \in M_2^{\Sym}(\R)$, and $k \in K_\infty$. From this and the definitions, it follows that for $Z \in \H_2$ we have the formula \begin{equation}\label{e:defpoincare2}P_S^h(Z) = \sum_{\gamma\in\Gamma_\infty \bs \Gamma} h_S(\gamma Z) = \sum_{\gamma\in\Gamma_\infty \bs \Gamma}e(\Tr(S\Re(\gamma Z)))h(\iota^{-1}(\Im(\gamma Z))).\end{equation}
\begin{lem}\label{l:keyprops}For each $h \in C_c^\infty(\mathcal{M}_2)$ and $S \in \Lambda_2$, we have $P_S^h(Z) \in C_c^\infty(\Gamma \bs \H_2)$. Furthermore, if $h$ is supported on some compact set $C$, then there exists a compact subset $D_C$ of $\Gamma \bs \H_2$ and a positive integer $N_C$, with both $D_C$ and $N_C$ depending only on $C$, such that $P_S^h$ is supported on $D_C$, and the sum \eqref{e:defpoincare2} defining  $P_S^h$ has at most $N_C$ nonzero terms.
\end{lem}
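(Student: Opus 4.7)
The proof rests on three facts: $h_S$ is smooth and left $\Gamma_\infty$-invariant on $\H_2$, its support is ``compact modulo $\Gamma_\infty$'', and $\Gamma$ acts properly discontinuously on $\H_2$. First, I would verify the formula $h_S(Z)=e(\Tr(S\Re Z))\,h(\iota^{-1}(\Im Z))$, from which smoothness is immediate (since $\iota$ is a diffeomorphism and $h$ is smooth), and left $\Gamma_\infty$-invariance reduces to checking $\Tr(SX_0)\in\Z$ for $S=\mat{m}{r/2}{r/2}{n}\in\Lambda_2$ and $X_0=\mat{a}{b}{b}{c}\in M_2^{\Sym}(\Z)$, which is the identity $\Tr(SX_0)=ma+rb+nc$.

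Next, fix a compact fundamental domain $R$ for the translation action of $M_2^{\Sym}(\Z)$ on $M_2^{\Sym}(\R)$ and set
\[
\widetilde K := \{X+iY\in\H_2 : X\in R,\ Y\in\iota(C)\},
\]
which is compact in $\H_2$ and satisfies $\Gamma_\infty\cdot\widetilde K=\{Z\in\H_2:\Im Z\in\iota(C)\}=\mathrm{supp}(h_S)$. Let $D_C:=\pi_\Gamma(\widetilde K)\subset\Gamma\bs\H_2$, which is compact as the continuous image of a compact set. If $P_S^h(Z)\neq 0$, then $\gamma Z\in\Gamma_\infty\widetilde K$ for some $\gamma$, hence $[Z]_\Gamma\in\pi_\Gamma(\Gamma_\infty\widetilde K)=\pi_\Gamma(\widetilde K)=D_C$. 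Thus $P_S^h$ is supported on $D_C$.

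The crux of the lemma, and the main obstacle, is the uniform bound $N_C$ on the number of nonzero terms. Note that this count is $\Gamma$-invariant in $Z$ (right multiplication of $\Gamma$ permutes the cosets in $\Gamma_\infty\bs\Gamma$ bijectively) and vanishes outside $D_C$, so it suffices to bound it for $Z\in\widetilde K$. Given $Z\in\widetilde K$ and a coset $[\gamma]\in\Gamma_\infty\bs\Gamma$ with $h_S(\gamma Z)\neq 0$, we have $\gamma Z\in\Gamma_\infty\widetilde K$, so we may choose a representative $\gamma'\in[\gamma]$ with $\gamma'Z\in\widetilde K$; distinct cosets yield distinct representatives. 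For each $Z_0\in\widetilde K$, pick a compact neighborhood $V(Z_0)\subset\H_2$; the proper discontinuity of $\Gamma$ acting on $\H_2$ makes
\[
\Sigma(Z_0) := \{\gamma\in\Gamma : \gamma V(Z_0)\cap\widetilde K\neq\emptyset\}
\]
finite. For any $Z\in V(Z_0)$, the representatives $\gamma'$ above all lie in $\Sigma(Z_0)$, giving the pointwise bound $N(Z)\leq|\Sigma(Z_0)|$. Covering the compact set $\widetilde K$ by finitely many $V(Z_1),\ldots,V(Z_m)$ and setting $N_C:=\max_j|\Sigma(Z_j)|$ completes the uniform bound.

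Finally, smoothness of $P_S^h$ follows because the uniform local bound on the number of nonzero terms makes the sum locally finite; each summand $h_S(\gamma\,\cdot)$ is smooth, so $P_S^h\in C^\infty(\Gamma\bs\H_2)$. The compactness of $\widetilde K$ is essential throughout: without it, infinitely many cosets from $\Gamma_\infty\bs\Gamma$ could contribute at a single point, since this quotient is infinite.
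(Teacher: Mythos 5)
Your proof is correct and follows essentially the same strategy as the paper's: compactness of $\mathrm{supp}(h)$ forces $\Im(\gamma Z)$ into the compact set $\iota(C)$, and proper discontinuity of $\Gamma$ acting on $\H_2$ bounds the number of contributing cosets uniformly. The only divergence is in the support claim, where the paper bounds $\Im(Z_0)$ inside the standard fundamental domain via the explicit cocycle identity for $\Im(\gamma Z)^{-1}$, whereas you take $D_C$ to be the (automatically compact) image of $\widetilde K$ under the projection --- a slightly more direct route to the same conclusion.
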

\begin{proof}Since $\iota(C)$ is compact, there exist positive constants $a_C , b_C$ such that \begin{equation}\label{e:pdef1}0<b_C \le Y \le a_C\end{equation} for all $Y \in \iota(C)$. Let $D \subset \H_2$ be the standard fundamental domain for $\Gamma\bs \H_2$ as described in \cite[page 30]{Klingen1990}, and recall that $\Im(Z) \gg 1$ for all $Z \in D$. We will consider $P_S^h$ as a function on $\H_2$ and consider the support of $P_S^h|_D$.
So, suppose $P_S^h(Z_0) \neq 0$ for $Z_0 = X_0+iY_0\in D$. The expression \eqref{e:defpoincare2} shows that there exists some $\gamma \in \Gamma$ such that $\Im(\gamma Z_0) \in \iota(C)$. Set $\gamma Z_0 = Z= X+iY$ so that $Z_0 = \gamma^{-1} Z$.   Put $\gamma^{-1} = \mat{P}{Q}{R}{S}$.  Now the formula (see \cite[page 8]{Klingen1990}) $$Y_0^{-1} =  (RX+S)(Y^{-1}){}^t\!(RX+S) + RY\ {}^t\!R$$ shows that $Y_0^{-1} \ge SY^{-1}{}^t\!S + RY \ {}^t\!R$.
Since $R, S$ are both integral and not both equal to 0, it follows from \eqref{e:pdef1} that
  $Y_0^{-1} \ge \min (a_C^{-1}, b_C)$, and hence that $Y_0 \le \max (a_C, b_C^{-1}).$ Thus we have shown $Z_0$ is contained in a compact set $C' \subset D$ depending on $C$. Now take $D_C$ to the image of $C'$ in $\Gamma \bs \H_2$; then $D_C$ is compact and $P_S^h(Z_0) = 0$ for $Z_0 \notin D_C$.

Finally, we show that the sum \eqref{e:defpoincare2} defining  $P_S^h$ has at most $N_C$ nonzero terms. Let $R_C \subset \H_2$ be the compact set consisting of all $Z = X+iY$ such that $Y \in \iota(C)$, $-\frac12 \le X_{i,j} \le \frac12$. Because the action of $\Gamma$ on $\H$ is properly discontinuous and because $C'$ and $R_C$ are compact, it follows that the cardinality of the set $$S_C:= \{ \gamma \in \Gamma: \gamma C' \cap R_C \neq \emptyset\}$$ is finite. We let $N_C$ denote the cardinality of $S_C$; the proof follows from the observation that any $\gamma \in \Gamma_\infty \bs \Gamma$ that contributes nontrivially to \eqref{e:defpoincare2} must have a representative in $S_C$.
\end{proof}

\subsection{Uniform approximation by Poincar\'e series}\label{s:uniform approx}
\begin{prop}\label{p:unipoinc}The set of finite linear combinations of Poincar\'e series $P_{S}^{h}$ with $h \in C_c^\infty(\mathcal{M}_2)$ and $S \in \Lambda_2$ is dense in  the space $C_c^\infty(\Gamma \bs \H_2)$ equipped with the uniform topology, i.e., for $\phi \in C_c^\infty(\Gamma \bs \H_2)$ and $\varepsilon >0$, there exists a function $$P_0 = \sum_{i=1}^r a_i P_{S_i}^{h_i}$$ with $h_i \in C_c^\infty(\mathcal{M}_2)$, $S_i \in \Lambda_2$ and $a_i \in \R$ that satisfies \begin{equation}\label{e:unipoinceq}|\phi(Z) - P_0(Z)|<\varepsilon, \text{ for all } Z \in \Gamma \bs \H_2.\end{equation}
\end{prop}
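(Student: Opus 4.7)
The plan is to peel off the outer sum over $\Gamma$ in two stages: first use a partition of unity to reduce the approximation of $\phi \in C_c^\infty(\Gamma\bs\H_2)$ to uniformly approximating a compactly supported smooth function on $\Gamma_\infty\bs\H_2$, then Fourier expand in the $X$-variable exploiting the torus structure on $\Gamma_\infty\bs\H_2$, and finally invoke Lemma \ref{l:keyprops} to control the length of the resulting Poincar\'e sums.

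First I would construct $\psi \in C_c^\infty(\Gamma_\infty\bs\H_2)$ satisfying $\phi(Z) = \sum_{\gamma \in \Gamma_\infty \bs \Gamma} \psi(\gamma Z)$. Pick a smooth compactly supported $\eta_0 \geq 0$ on $\H_2$ whose $\Gamma$-translates cover $\H_2$ (e.g.\ take $\eta_0$ strictly positive on an open neighborhood of a fundamental domain), set $E(Z) := \sum_{\gamma\in\Gamma}\eta_0(\gamma Z)$ (smooth, $\Gamma$-invariant, and everywhere positive, the sum being locally finite by proper discontinuity), and put $\tilde\chi := \eta_0/E \in C_c^\infty(\H_2)$; then $\sum_{\gamma\in\Gamma}\tilde\chi(\gamma Z) = 1$ on $\H_2$. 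Viewing $\phi$ as a $\Gamma$-invariant function on $\H_2$, the function $\tilde\psi := \tilde\chi\cdot\phi \in C_c^\infty(\H_2)$ satisfies $\phi = \sum_{\gamma\in\Gamma}\tilde\psi\circ\gamma$; averaging over $\Gamma_\infty$, which acts on $X+iY$ by translating $X$ only, gives $\psi := \sum_{\delta\in\Gamma_\infty}\tilde\psi\circ\delta \in C_c^\infty(\Gamma_\infty\bs\H_2)$ with the required property.

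Next, since $\Gamma_\infty\bs\H_2 \simeq \bigl(M_2^{\Sym}(\R)/M_2^{\Sym}(\Z)\bigr)\times M_2^{\Sym}(\R)^+$ and the Pontryagin dual of the torus $M_2^{\Sym}(\R)/M_2^{\Sym}(\Z)$ under the pairing $(S,X)\mapsto e(\Tr(SX))$ is exactly $\Lambda_2$, I Fourier expand
\[
\psi(X+iY) = \sum_{S\in\Lambda_2} c_S(Y)\, e(\Tr(SX)), \qquad c_S(Y) := \int\limits_{M_2^{\Sym}(\R)/M_2^{\Sym}(\Z)} \psi(X+iY)\,e(-\Tr(SX))\,\mathrm d X.
\]
Smoothness and compact support of $\psi$, combined with integration by parts, yield $c_S(Y) \ll_N (1+|S|)^{-N}$ uniformly in $Y$. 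Hence for any $\epsilon' > 0$ there is a finite set $\mathcal F \subset \Lambda_2$ with $\|\psi - \psi^{(\mathcal F)}\|_\infty < \epsilon'$, where $\psi^{(\mathcal F)}(X+iY) := \sum_{S\in\mathcal F} c_S(Y)\,e(\Tr(SX))$. Setting $h^{(S)}(A) := c_S(\iota(A))$ yields $h^{(S)} \in C_c^\infty(\mathcal{M}_2)$, with supports (as $S$ ranges over $\Lambda_2$) all contained in a single compact set $C \subset \mathcal{M}_2$ depending only on $\mathrm{supp}_Y(\psi)$; by construction $(h^{(S)})_S(X+iY) = c_S(Y)e(\Tr(SX))$, so that $\psi^{(\mathcal F)} = \sum_{S\in\mathcal F} (h^{(S)})_S$.

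Finally, setting $P_0 := \sum_{S\in\mathcal F} P_S^{h^{(S)}}$ and unfolding,
\[
\phi(Z) - P_0(Z) = \sum_{\gamma\in\Gamma_\infty\bs\Gamma}\bigl(\psi - \psi^{(\mathcal F)}\bigr)(\gamma Z).
\]
Since both $\psi$ and $\psi^{(\mathcal F)}$ vanish whenever $\iota^{-1}(Y) \notin C$, the argument proving Lemma \ref{l:keyprops} bounds the number of nonzero terms uniformly in $Z$ by some $N_C$; therefore $\|\phi - P_0\|_\infty \leq N_C\epsilon'$, and taking $\epsilon' = \epsilon/N_C$ gives \eqref{e:unipoinceq}. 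I do not expect any step to be a serious obstacle: the partition of unity is immediate from proper discontinuity, and the remainder is Fourier analysis on a compact torus combined with the uniform finiteness already provided by Lemma \ref{l:keyprops}. The only mild point requiring care is that the compact set $C$, and therefore the constant $N_C$, can be chosen independently of the truncation level $\mathcal F$, which is why the rapid decay of $c_S(Y)$ is needed only in $S$ and uniformly (not pointwise) in $Y$.
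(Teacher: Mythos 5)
Your proposal is correct in substance and its second half (Fourier expansion of a $\Gamma_\infty$-invariant function in the $X$-variable, rapid decay of the coefficients $c_S(Y)$ uniformly in $Y$, and the uniform bound $N_C$ on the number of nonzero terms from the argument of Lemma \ref{l:keyprops}) is exactly the paper's argument. Where you genuinely diverge is the first reduction. The paper covers $\mathrm{supp}(\phi)$ by neighbourhoods $B_{Z_i}$ sitting inside chosen fundamental domains, takes a partition of unity subordinate to that cover, and must treat separately the points fixed by nontrivial elements of $\bar\Gamma$ (whence the factors $\tfrac12$ and $|\bar\Gamma_{Z_j}|^{-1}$ in \eqref{e:phiexpansionpoincare} and \eqref{e:phiexpansionpoincareell}). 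Your $\Gamma$-equivariant smooth partition of unity $\tilde\chi=\eta_0/E$ followed by averaging over $\Gamma_\infty$ produces the single function $\psi$ with $\phi=\sum_{\gamma\in\Gamma_\infty\bs\Gamma}\psi(\gamma\cdot)$ in one stroke and handles the orbifold points automatically; this is cleaner and avoids the case analysis entirely.

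One step as written, however, is impossible and needs repair: a compactly supported $\eta_0$ on $\H_2$ cannot have $\Gamma$-translates covering $\H_2$, and $E=\sum_{\gamma\in\Gamma}\eta_0\circ\gamma$ cannot be everywhere positive, because $\Gamma\bs\H_2$ is noncompact (the image of $\mathrm{supp}(\eta_0)$ is a compact subset of the quotient). The fix is to use that only $\mathrm{supp}(\phi)$ matters: choose a compact $K\subset\H_2$ whose image contains $\mathrm{supp}(\phi)$, take $\eta_0\ge0$ with $\eta_0>0$ on $K$, and set $\tilde\psi:=\eta_0\,\phi/E$ (extended by $0$ off $\Gamma K$, which is closed by proper discontinuity, so the extension is smooth). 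Since $\phi$ is $\Gamma$-invariant, $\sum_{\gamma\in\Gamma}\tilde\psi(\gamma Z)=\phi(Z)\sum_{\gamma}\eta_0(\gamma Z)/E(Z)=\phi(Z)$ wherever $\phi\neq0$, and both sides vanish elsewhere. With this correction the rest of your argument goes through as stated.
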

\begin{proof}
Let $B_{\phi}$ be a compact subset of $\H_2$ whose image in $\Gamma \bs \H_2$ contains the support of $\phi$.

Let $\bar{\Gamma} := \{\pm 1\} \bs \Gamma$. For each $Z \in B_{\phi}$ that is \emph{not} fixed by any nontrivial element of $\bar{\Gamma }$, pick a fundamental domain $D_Z \subset \H_2$ for the action of $\Gamma$, and an open neighbourhood $B_Z$ of $Z$ satisfying $Z \in B_Z \subset D_Z \subset \H_2$. Let $C_Z$ be the image of $B_Z$ in $\Gamma \bs \H_2$ so that the natural map $B_Z \overset{\simeq}{\longrightarrow}  C_Z$ is a diffeomorphism.

 For each  point  $Z \in B_{\phi}$ that \emph{is} fixed\footnote{There are only finitely many such points.} by a nontrivial element of $\bar{\Gamma}$, let $\bar{\Gamma}_Z$ be the stabilizer of $Z$ in $\bar{\Gamma}$ and pick a fundamental domain $D_Z \subset \H_2$ for the action of $\Gamma$ that contains $Z$ and pick also an open neighbourhood $B_Z \ni Z$ intersecting $D_Z$ and having the property that $B_Z = \cup_{\gamma \in \bar{\Gamma}_Z} \gamma (B_Z \cap D_Z).$ We let $C_Z$ be the image of $B_Z$ in $\Gamma \bs \H_2$ and the note that the natural map $B_Z \longrightarrow C_Z$ induces a diffeomorphism $\bar{\Gamma}_Z \bs B_Z \overset{\simeq}{\longrightarrow}  C_Z.$

Now, by the compactness of $B_\phi$, there exist a finite set of points $Z_i, \ 1\le i \le r$, with $Z_i \in B_\phi$ and $B_\phi \subset \bigcup_{i=1}^r B_{Z_i}$. For brevity, write $B_i:=B_{Z_i}, \ C_i:=C_{Z_i}, D_i:=D_{Z_i}$. By choosing a partition of unity subordinate to the open cover $C_i$, we may write $\phi = \sum_{i=1}^r \phi_i$ with the function $\phi_i \in C_c^\infty(\Gamma \bs \H_2)$ supported on $C_i$. It is sufficient to show that each $\phi_i$ can be uniformly approximated by a finite linear combination of Poincar\'e series. \emph{So for the rest of the proof, we can and will assume that $$\phi = \phi_j \text{ for some fixed } 1\le j \le r.$$}

\medskip

Case I: \emph{$Z_j$ is not fixed by any nontrivial element of $\bar{\Gamma }$. } Let $\phi_0 \in C_c^\infty(\H_2)$ be the function that coincides with $\phi_j$ on $D_j$ and is equal to 0 outside $D_j$ (the smoothness of $\phi_0$ uses the fact that the support $C_j$ of $\phi_j$ is diffeomorphic to the open set $B_j$ contained in the \emph{interior} of $D_j$). We define $$\tilde{\phi_0}(Z) := \sum_{\gamma \in \Gamma_\infty} \phi_0(\gamma Z).$$ Then $\tilde{\phi_0}$ is a smooth $\Gamma_\infty$-invariant function on $\H_2$, and there exists a compact set $C_0 \subset M_2^\Sym(\R)^+$ with the property that $\tilde{\phi_0}(X+iY) =0$ for all $Y \notin C_0$ (we may take $C_0 = \overline{B_j}$).

From the fact that $N(\R)$ is abelian and the fact that $\tilde{\phi_0}$ is a smooth function determined by its values on the compact set $\{X+iY: |X_{k,l}| \le \frac12, Y \in C_0\}$, we obtain a  Fourier expansion converging absolutely and uniformly on $\H_2$:
\begin{equation}\label{e:fe1}\tilde{\phi_0}(Z) = \sum_{S \in \Lambda_2} a_0(S, Y) e(\Tr(SX)),
\end{equation}
where for each $S \in \Lambda_2$, the function $Y \mapsto a_0(S, Y)$ is given by $$a_0(S, Y) := \int\limits_{M_2^{\Sym}(\Z) \bs M_2^{\Sym}(\R)}\tilde{\phi_0}(X +iY)e(-\Tr(SX)) dX.$$ It is clear that $Y \mapsto a_0(S, Y)$ is smooth and supported on $C_0$; moreover (using partial integration) we see that it is rapidly decaying in $S$. Precisely, given any $\mu>0$, we have \begin{equation}\label{e:fe2} \sup_{Y \in C_0}|a_0(S, Y)| \ll_{\phi_0, \mu} (1 +|S|)^{-\mu},\end{equation}
where for $S = \mat{m}{r/2}{r/2}{n}$  we denote $|S| := |m|+|r|+|n|$.

On the other hand, using \eqref{e:fe1}, the definition \eqref{e:defpoincare2} of Poincar\'e series, and the fact that any element in $\H_2$ is contained in $\gamma \cdot D_j$ for exactly 2 elements $\gamma \in \Gamma$, we obtain the absolutely convergent expression
\begin{equation}\label{e:phiexpansionpoincare}\phi_j(Z) = \frac12 \sum_{S \in \Lambda_2} P_S^{\phi_S}(Z),
\end{equation}
where the function $\phi_S$ on $\mathcal{M}_2$ is defined via $$\phi_S(A) := a_0(S, \iota(A)) = a_0(S, A \ {}^t\!A).$$

A priori, the expression \eqref{e:phiexpansionpoincare} converges pointwise for each $Z$, but we need to show that the convergence is uniform. For this, first observe that the functions $\phi_S$ are all supported on the compact set $\phi^{-1}(C_0)$. Then Lemma \ref{l:keyprops} implies that for each $S \in \Lambda_2$, $$|P_S^{\phi_S}(Z)| \le N_{C_0} \sup_{A \in \phi^{-1}(C_0)}|\phi_S(A)| = N_{C_0} \sup_{Y \in C_0}|a_0(S, Y)|,$$ so that
\begin{equation}\label{e:poincineq}\frac12 \sum_{\substack{S\in \Lambda_2 \\ |S|>M}} |P_S^{\phi_S}(Z)| \le \frac12 N_{C_0} \sum_{\substack{S\in \Lambda_2 \\ |S|>M}} \sup_{Y \in C_0}|a_0(S, Y)|.\end{equation} Let $\varepsilon>0$. Using \eqref{e:fe2}, we pick $M_\varepsilon$ such that \begin{equation}\label{e:fe3}\frac12 N_{C_0} \sum_{\substack{S\in \Lambda_2 \\ |S|>M_\varepsilon}} \sup_{Y \in C_0}|a_0(S, Y)| <\varepsilon.\end{equation}  It  follows from \eqref{e:phiexpansionpoincare}, \eqref{e:poincineq} and \eqref{e:fe3} that $$\left|\phi_j(Z) - \frac12 \sum_{\substack{S\in \Lambda_2 \\ |S|\le M_\varepsilon}} P_S^{\phi_S}(Z)\right| <\varepsilon.$$ This completes the proof of \eqref{e:unipoinceq} in this case.

Case II: \emph{$Z_j$ is fixed by some nontrivial element of $\bar{\Gamma }$.} The proof is essentially the same, so we indicate the main changes below. In this case we let $\phi_0 \in C_c^\infty(\H_2)$ be the function that coincides with $\phi_j$ on $\cup_{\gamma \in \bar{\Gamma}_{Z_j}} \gamma D_j$ and is equal to 0 outside it (the smoothness of $\phi_0$ uses the fact that $\phi_0$  is supported on the open set $B_j=  \cup_{\gamma \in \bar{\Gamma}_{Z_j}} \gamma (B_j \cap D_j)$ contained in the \emph{interior} of $\cup_{\gamma \in \bar{\Gamma}_{Z_j}} \gamma D_j$). We again define $$\tilde{\phi_0}(Z) = \sum_{\gamma \in \Gamma_\infty} \phi_0(\gamma Z).$$ Then $\tilde{\phi_0}$ is a smooth $\Gamma_\infty$-invariant function on $\H_2$, and there exists a compact set $C_0 \subset M_2^\Sym(\R)^+$ with the property that $\tilde{\phi_0}(X+iY) =0$ for all $Y \notin C_0$.
As in the previous case we obtain a  Fourier expansion converging absolutely and uniformly on $\H_2$:
\begin{equation}\label{e:fe1ell}\tilde{\phi_0}(Z) = \sum_{S \in \Lambda_2} a_0(S, Y) e(\Tr(SX)),
\end{equation}
and the absolutely convergent expression
\begin{equation}\label{e:phiexpansionpoincareell}\phi_j(Z) = \frac1{2|\bar{\Gamma}_{Z_j}|} \sum_{S \in \Lambda_2} P_S^{\phi_S}(Z).
\end{equation}
The rest of the proof is identical to the previous case.
\end{proof}

As an immediate corollary, we see that it is sufficient to prove Theorem \ref{main-theorem} for test functions  $\phi$ that are equal to $P_S^h$ for some $h, S$.
\begin{cor}\label{c:initialapprox}Let $F_i$ be a sequence consisting of elements in $S_k(\Gamma)$. Suppose that for each $h \in C_c^\infty(\mathcal{M}_2)$ and $S \in \Lambda_2$, we have $D_{F_i}(P_S^h) \longrightarrow 0$ as $i \longrightarrow \infty$. Then for each $\phi$ in $C_b(\Gamma \bs \H_2)$ we have $D_{F_i}(\phi) \longrightarrow 0$  as $i \longrightarrow \infty$.
\end{cor}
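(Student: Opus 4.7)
The plan is to combine Propositions \ref{p:reductiontosmooth} and \ref{p:unipoinc} via a standard approximation argument. First, by Proposition \ref{p:reductiontosmooth}, it suffices to prove that $D_{F_i}(\phi) \longrightarrow 0$ for every $\phi \in C_c^\infty(\Gamma \bs \H_2)$. By Proposition \ref{p:unipoinc}, every such $\phi$ can be uniformly approximated by finite linear combinations of Poincar\'e series $P_S^h$, and the linearity of $D_{F_i}$ in its argument propagates the hypothesis from individual Poincar\'e series to any such linear combination. The glue between these two inputs is the uniform continuity of $\phi \mapsto D_F(\phi)$ in the sup norm, uniformly over $F \in S_k(\Gamma)$.

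To record this last point, writing $D_F(\phi_1) - D_F(\phi_2)$ using the definition of $D_F$, positivity of $\mu_F$, and the triangle inequality yield
$$|D_F(\phi_1) - D_F(\phi_2)| \le \frac{\mu_F(|\phi_1 - \phi_2|)}{\mu_F(1)} + \frac{1}{\mathrm{vol}(\Gamma \bs \H_2)} \int\limits_{\Gamma \bs \H_2} |\phi_1(Z) - \phi_2(Z)|\,\mathrm d \mu \le 2\|\phi_1 - \phi_2\|_\infty$$
for all $\phi_1, \phi_2 \in C_b(\Gamma \bs \H_2)$ and all $F \in S_k(\Gamma)$, since $\mu_F$ is a positive measure of total mass $\mu_F(1)$ and $\mu$ has total mass $\mathrm{vol}(\Gamma \bs \H_2)$ on $\Gamma \bs \H_2$.

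Now, given $\phi \in C_c^\infty(\Gamma \bs \H_2)$ and $\varepsilon>0$, apply Proposition \ref{p:unipoinc} to find scalars $a_1,\ldots,a_r \in \R$, matrices $S_1,\ldots,S_r \in \Lambda_2$, and functions $h_1,\ldots,h_r \in C_c^\infty(\mathcal{M}_2)$ such that the finite linear combination $P_0 := \sum_{l=1}^r a_l P_{S_l}^{h_l}$ obeys $\|\phi - P_0\|_\infty < \varepsilon/4$. The displayed estimate then gives $|D_{F_i}(\phi) - D_{F_i}(P_0)| < \varepsilon/2$ uniformly in $i$. By linearity and the hypothesis, $D_{F_i}(P_0) = \sum_l a_l D_{F_i}(P_{S_l}^{h_l}) \longrightarrow 0$ as $i \longrightarrow \infty$, so $|D_{F_i}(P_0)| < \varepsilon/2$ for all sufficiently large $i$, and hence $|D_{F_i}(\phi)| < \varepsilon$ eventually. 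Since $\varepsilon$ was arbitrary, $D_{F_i}(\phi) \longrightarrow 0$ for every $\phi \in C_c^\infty(\Gamma \bs \H_2)$, and Proposition \ref{p:reductiontosmooth} then promotes this to all of $C_b(\Gamma \bs \H_2)$. There is no substantive obstacle in the corollary itself: all the genuine work has been done in the two preceding propositions, and the corollary is simply the clean statement that packages them together for use in the rest of the paper.
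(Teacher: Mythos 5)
Your proposal is correct and fills in exactly the argument the paper leaves implicit: the paper's own proof reads ``This is immediate from Propositions \ref{p:reductiontosmooth} and \ref{p:unipoinc},'' and the glue you supply (linearity of $D_{F_i}$ together with the bound $|D_F(\phi_1)-D_F(\phi_2)|\le 2\|\phi_1-\phi_2\|_\infty$ coming from positivity of $\mu_F$) is the same uniform-approximation mechanism already used inside the proof of Proposition \ref{p:reductiontosmooth}. No differences of substance.
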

\begin{proof} This is immediate from Propositions \ref{p:reductiontosmooth} and \ref{p:unipoinc}.
\end{proof}

\subsection{Reduction of the proof to test functions coming from Poincar\'e series}
We now refine Corollary \ref{c:initialapprox} by restricting $h$ to certain \emph{factorizable} functions.
To make this precise, we note below two convenient ways to parameterize functions $h\in C_c^\infty(\mathcal{M}_2)$. The first parameterization relies on the isomorphism $\mathcal{M}_2 \simeq \H \times \R^\times$ given by $A \mapsto (A\cdot i, \det(A))$.

\begin{itemize}
\item Given a function $\psi$ on $\H \times \R^+$, we obtain a function $h=h^\psi$ on $\mathcal{M}_2$ via $$h(A) = \psi(A\cdot i, \det(A)).$$ Every function on $\mathcal{M}_2$ arises this way and $h^\psi \in C_c^\infty(\mathcal{M}_2)$ iff $\psi \in C_c^\infty(\H \times \R^+)$.

\item Given a function $\phi$ on $\R^+ \times \R^+ \times \R$, we obtain a function $h=h^\phi$ on $\mathcal{M}_2$ via $$h\left(\mat{1}{u}{}{1}\mat{\sqrt{t_1}}{}{}{\sqrt{t_2}}k\right) = \phi(t_1, t_2, u)$$ for $k \in \SO(2)$.  Every function on $\mathcal{M}_2$ arises this way and $h^\phi \in C_c^\infty(\mathcal{M}_2)$ iff $\psi \in C_c^\infty(\R^+\times \R^+ \times \R)$.
\end{itemize}

\noindent It is easy to go between the two parameterizations. In fact, the two parameterizations are linked via the isomorphism $\H \times \R^+ \simeq \R^+ \times \R^+ \times \R$ given by $(u + iy, \lambda) \mapsto  (\lambda y, \lambda/y, u).$   So given $\phi_0 \in C_c^\infty(\R^+ \times \R^+ \times \R)$, we have that $h_{\phi_0} = h_{\psi_0}$, where the function $\psi_0 \in C_c^\infty(\H \times \R^+)$ is defined by $$\psi_0(u + iy, \lambda)= \phi_0(\lambda y, \lambda/y, u).$$

Let $\psi_1 \in C_c^\infty(\H), \ \psi_2 \in C_c^\infty(\R^+)$. We define $\psi_1 \times \psi_2 \in C_c^\infty(\H \times \R^+)$ to be the product function given by $\psi(z, \lambda) = \psi_1(z)\psi_2(\lambda)$ so that $h^{\psi_1 \times \psi_2} \in C_c^\infty(\mathcal{M}_2)$.
Similarly, let $\phi_1 \in C_c^\infty(\R^+)$, $\phi_2 \in C_c^\infty(\R^+)$, $\phi_3 \in C_c^\infty(\R)$,  We define $\phi_1 \times \phi_2 \times \phi_3 \in C_c^\infty(\R^+ \times \R^+ \times \R)$ be the product function given by $(\phi_1 \times \phi_2 \times \phi_3)(t_1, t_2, u) = \phi_1(t_1)\phi_2(t_2)\phi_3(u)$ so that $h^{\phi_1 \times \phi_2 \times \phi_3} \in C_c^\infty(\mathcal{M}_2)$.
We say that $h \in C_c^\infty(\mathcal{M}_2)$ is factorizable if it is of the form $h^{\psi_1 \times \psi_2}$ or $h^{\phi_1 \times \phi_2 \times \phi_3}$.


We define

\begin{itemize}
\item $R_1:=\{h^{\psi_1 \times \psi_2}:  \psi_1 \in C_c^\infty(\H), \ \psi_2 \in C_c^\infty(\R^+)\}$,
\item $R_2:= \{h^{\phi_1 \times \phi_2 \times \phi_3}: \phi_1 \in C_c^\infty(\R^+), \ \phi_2 \in C_c^\infty(\R^+), \ \phi_3 \in C_c^\infty(\R) \}$.
\end{itemize}

\begin{prop}\label{p:unipoincref}For each $S \in \Lambda_2$, pick $r(S) \in \{1, 2\}.$
Then the set of finite linear combinations of Poincar\'e series $P_{S}^{h}$ with $S \in \Lambda_2$ and $h \in R_{r(S)}$ is dense in  the space $C_c^\infty(\Gamma \bs \H_2)$ equipped with the uniform topology.
\end{prop}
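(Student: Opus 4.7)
My plan is to derive Proposition \ref{p:unipoincref} from Proposition \ref{p:unipoinc} by a secondary approximation argument at the level of the parameter $h \in C_c^\infty(\mathcal{M}_2)$. Given $\phi \in C_c^\infty(\Gamma\bs\H_2)$ and $\varepsilon>0$, Proposition \ref{p:unipoinc} produces a finite linear combination $\sum_i a_i P_{S_i}^{h_i}$ (with $h_i \in C_c^\infty(\mathcal{M}_2)$ unrestricted) that is uniformly $\varepsilon$-close to $\phi$. By the triangle inequality, it suffices to show that each individual $P_{S_i}^{h_i}$ can in turn be uniformly approximated to arbitrary precision by finite linear combinations of series $P_{S_i}^{h'}$ with $h' \in R_{r(S_i)}$.

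To carry this out, I would fix a compact neighbourhood $C \subset \mathcal{M}_2$ of $\mathrm{supp}(h_i)$ and exploit Lemma \ref{l:keyprops}: provided $h'$ is supported in $C$, applying the lemma to the difference $h_i - h'$ gives
\[
\| P_{S_i}^{h_i} - P_{S_i}^{h'}\|_\infty \le N_C \| h_i - h'\|_\infty,
\]
where $N_C$ depends only on $C$. The problem is thereby reduced to the following purely local statement: every $h \in C_c^\infty(\mathcal{M}_2)$ can be uniformly approximated by finite linear combinations of elements of $R_{r(S_i)}$, subject to the additional constraint that all approximants remain supported in the fixed compact set $C$.

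Using the diffeomorphisms $\mathcal{M}_2 \simeq \H \times \R^+$ (for $r(S_i)=1$) or $\mathcal{M}_2 \simeq \R^+\times\R^+\times\R$ (for $r(S_i)=2$), the remaining task becomes the classical one of approximating a smooth compactly supported function on a product manifold by finite sums of tensor products, with support control. I would handle this via the standard recipe: enclose $\mathrm{supp}(h_i)$ in an open product-form box $B_0$ whose closure lies in the interior of a larger product box $B_1 \subset C$; extend $h_i$ by zero and regard it as a smooth periodic function on the torus built from $B_1$; truncate its (rapidly converging) Fourier series in product coordinates, obtaining a finite sum of tensor-product exponentials that approximates $h_i$ uniformly on $B_1$ to any prescribed precision; and finally multiply each term by a fixed product-form smooth cutoff equal to $1$ on $B_0$ and vanishing outside $B_1$, so that each approximant lies in $R_{r(S_i)}$ and is supported in $C$.

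The argument is essentially formal once Proposition \ref{p:unipoinc} and Lemma \ref{l:keyprops} are available, so I do not expect any serious obstacle. The only mild technical point is the simultaneous handling of uniform convergence and support control, which is taken care of by the product-form cutoff in the final step; note that this cutoff must itself be a tensor product in the chosen coordinates so that the truncated Fourier terms, after multiplication, stay in $R_{r(S_i)}$.
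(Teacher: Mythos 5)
Your proposal is correct and follows essentially the same route as the paper: reduce via Proposition \ref{p:unipoinc} to approximating a single $P_S^{h}$, use Lemma \ref{l:keyprops} to convert a uniform bound $\|h-h'\|_\infty<\varepsilon/N_C$ (with support control in a fixed compact product set) into a uniform bound on the Poincar\'e series, and then prove density of factorizable functions with controlled support. The only difference is cosmetic: the paper invokes Stone--Weierstrass on the algebra of smooth factorizable functions followed by a smooth extension, whereas you give the constructive truncated-Fourier-series-plus-product-cutoff version of the same classical fact.
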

\begin{proof}For $m=1,2$, let $\widetilde{R}_m \subset C_c^\infty(\mathcal{M}_2)$ be the set of finite linear combinations of elements of $R_{m}$. In view of Proposition \ref{p:unipoinc}, it suffices to show given $h_1 \in  C_c^\infty(\mathcal{M}_2)$, $S \in \Lambda_2$, and $m \in \{1, 2\}$, there exists $h_2 \in \widetilde{R}_m$ such that \begin{equation}\label{e:unipoinceqred}|P_{S}^{h_1}(Z) - P_{S}^{h_2}(Z)|<\varepsilon, \text{ for all } Z \in \Gamma \bs \H_2.\end{equation}

 To show \eqref{e:unipoinceqred}, we let $C$ be a compact set containing the support of $h_1$ and we choose $C'$ to be a compact set whose interior contains $C$. By enlarging $C$ and $C'$ if needed, we can and will assume that they are both products of compact sets:
\begin{itemize}
\item If $m=1$, then $C=C_1 \times C_2$, $C'=C'_1 \times C'_2$ where $C_1 \subset C_1' \subset \H$ and $C_2 \subset C_2' \subset \R^+$ are compact.

 \item If $m=2$, then $C=C_1 \times C_2 \times C_3$, $C'=C'_1 \times C'_2 \times C'_3$ where $C_1 \subset C_1'\subset \R^+$ and $C_2 \subset C'_2 \subset \R^+$ and $C_3 \subset C_3' \subset \R$ are all compact.
\end{itemize}

Let $N_{C'}$ be as in Lemma \ref{l:keyprops}. By applying the Stone--Weierstrass theorem on the algebra of smooth factorizable functions on $C$, we see that the uniform span of such functions contain $h_1$. By noting that any smooth factorizable function on $C$  can be smoothly extended to an element of $\widetilde{R}_m$ with support in $C'$, it follows that there exists $h_2 \in \widetilde{R}_m$ such that $h_2$ is supported on $C'$ and $\sup|h_2 - h_1| <\frac{\varepsilon}{N_{C'}}$.

Now, using \eqref{e:defpoincare2} and Lemma \ref{l:keyprops}, it follows that
$|P_{S}^{h_1}(Z) - P_{S}^{h_2}(Z)|< {N_{C'}} \cdot \frac{\varepsilon}{N_{C'}} = \varepsilon$, which completes the proof of \eqref{e:unipoinceqred}.
\end{proof}

\begin{cor}\label{c:keyreductionpoinc}Let $F_i$ be a sequence consisting of elements in $S_k(\Gamma)$. For each $S \in \Lambda_2$, let $r(S) \in \{1, 2\}.$
Suppose that for each $S \in \Lambda_2$ and $h \in R_{r(S)}$, we have $D_{F_i}(P_S^h) \longrightarrow 0$ as $i \longrightarrow \infty$. Then for each $\phi$ in $C_b(\Gamma \bs \H_2)$ we have $D_{F_i}(\phi) \longrightarrow 0$  as $i \longrightarrow \infty$.
\end{cor}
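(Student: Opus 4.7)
The plan is to combine Propositions \ref{p:reductiontosmooth} and \ref{p:unipoincref} with an elementary continuity estimate for $D_{F_i}$. Proposition \ref{p:reductiontosmooth} reduces the desired conclusion to showing $D_{F_i}(\phi) \longrightarrow 0$ for each $\phi \in C_c^\infty(\Gamma \bs \H_2)$, and Proposition \ref{p:unipoincref} provides uniform approximation of such $\phi$ by finite linear combinations of Poincar\'e series $P_S^h$ with $h \in R_{r(S)}$; the remaining task is to transfer the convergence from these approximating combinations to $\phi$ itself.

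The key continuity estimate I would first establish is that for any bounded measurable $\phi, \psi$ on $\Gamma \bs \H_2$ and any index $i$,
\begin{equation*}
|D_{F_i}(\phi) - D_{F_i}(\psi)| \le 2 \, \|\phi - \psi\|_\infty .
\end{equation*}
This is immediate from the definition of $D_{F_i}$: the total mass $\mu_{F_i}(1) = \|F_i\|_2^2$ is finite and nonzero since $F_i$ is a cusp form, so $\mu_{F_i}/\mu_{F_i}(1)$ is a probability measure on $\Gamma \bs \H_2$, and $\mathrm{vol}(\Gamma \bs \H_2)^{-1} \mu$ is likewise a probability measure on $\Gamma \bs \H_2$; hence each of the two terms defining $D_{F_i}(\phi - \psi)$ is bounded in absolute value by $\|\phi - \psi\|_\infty$, and the claim follows by linearity.

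With this in hand, fix $\phi \in C_c^\infty(\Gamma \bs \H_2)$ and $\varepsilon > 0$. Proposition \ref{p:unipoincref} produces $P_0 = \sum_{j=1}^r a_j P_{S_j}^{h_j}$ with $S_j \in \Lambda_2$, $a_j \in \R$, and $h_j \in R_{r(S_j)}$ satisfying $\|\phi - P_0\|_\infty < \varepsilon$. By the hypothesis of the corollary, $D_{F_i}(P_{S_j}^{h_j}) \longrightarrow 0$ for each $j$, so by linearity $D_{F_i}(P_0) \longrightarrow 0$ as $i \longrightarrow \infty$. Combined with the continuity estimate, this yields $\limsup_{i \to \infty} |D_{F_i}(\phi)| \le 2 \varepsilon$; since $\varepsilon$ was arbitrary, $D_{F_i}(\phi) \longrightarrow 0$ for every $\phi \in C_c^\infty(\Gamma \bs \H_2)$. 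An application of Proposition \ref{p:reductiontosmooth} then upgrades this to all $\phi \in C_b(\Gamma \bs \H_2)$ and completes the proof. I do not anticipate any genuine obstacle here: the corollary is a formal consequence of the two propositions together with the elementary fact that $D_{F_i}$ is $2$-Lipschitz in the sup norm uniformly in $i$.
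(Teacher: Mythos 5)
Your proof is correct and follows exactly the route the paper intends: the paper's own proof is the one-line "immediate from Propositions \ref{p:reductiontosmooth} and \ref{p:unipoincref}", and your argument simply fills in the standard details (the $2$-Lipschitz bound for $D_{F_i}$ in the sup norm coming from the fact that both normalized measures are probability measures, plus linearity). Nothing to correct.
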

\begin{proof} This is immediate from Propositions \ref{p:reductiontosmooth} and \ref{p:unipoincref}.
\end{proof}

\begin{rem}In the next section, we will end up choosing $r(S)= 1$ if $S = 0$, and $r(S)=2$ if $S \neq 0$.
\end{rem}

\section{Reduction of the proof to estimates of shifted convolution sums}\label{s:shiftedconvred}
\noindent The goal of this section is to reduce the proof of Theorem \ref{main-theorem} to two key results, Proposition \ref{prop:scpbd} and Proposition \ref{QUE-reformulation}, which will be proved in Sections \ref{s:3.1} and \ref{s:3.2}, respectively.
\subsection{Preliminary reduction} Let $F\in S_k(\Gamma)$ be a Saito--Kurokawa lift that is a Hecke eigenform. We note first that Theorem \ref{main-theorem} will follow once we know the following two statements.
\begin{enumerate}
 \item (The off-diagonal case). For fixed $\phi_1 \in C_c^\infty(\R^+), \ \phi_2 \in C_c^\infty(\R^+), \ \phi_3 \in C_c^\infty(\R)$, $L =  \begin{pmatrix}
\ell_1 & \ell_2/2 \\
\ell_2/2 & \ell_3
\end{pmatrix}\in\Lambda_2$ with $(\ell_1, \ell_2, \ell_3) \neq (0,0,0)$ and $h = h^{\phi_1 \times \phi_2 \times \phi_3}$, we have \begin{align}\label{QUE-integral}
\frac1{\|F\|_2^2}\int\limits_{\Gamma\backslash\mathbb H_2}|F(Z)|^2 P_L^h(Z)(\det Y)^k\,\mathrm d  \mu \longrightarrow
0, \quad \text{ as } k \longrightarrow \infty.
\end{align}

 \item (The  diagonal case). For fixed $\psi_1 \in C_c^\infty(\H), \ \psi_2 \in C_c^\infty(\R^+)$ and $h = h^{\psi_1 \times \psi_2}$, we have \begin{align}\label{QUE-integraldiag}
&\frac1{\|F\|_2^2}\int\limits_{\Gamma\backslash\mathbb H_2}|F(Z)|^2 P_0^h(Z)(\det Y)^k\,\mathrm d  \mu \\& \nonumber \longrightarrow
\frac1{\rm{vol}(\Gamma\backslash\mathbb H_2)}\int\limits_{\Gamma\backslash\mathbb H_2}P_0^h(Z)\,\mathrm d \mu, \quad \text{ as } k \longrightarrow \infty.
\end{align}
\end{enumerate}
\begin{proof}[Proof  of Theorem \ref{main-theorem} assuming \eqref{QUE-integral} and \eqref{QUE-integraldiag}] This is an immediate consequence of Corollary \ref{c:keyreductionpoinc} together with the observation that whenever $L =  \begin{pmatrix}
\ell_1 & \ell_2/2 \\
\ell_2/2 & \ell_3
\end{pmatrix} \neq 0$ we have $$\int\limits_{\Gamma\backslash\mathbb H_2}P_L^h(Z)\,\mathrm d\mu =\int\limits_{\Gamma_\infty\bs \H_2}e(\Tr(L\Re(Z)))h(\iota^{-1}(\Im(Z)))\,\mathrm d \mu\\
=0.$$
\end{proof}
\subsection{Some properties of Saito--Kurokawa lifts}\label{s:SK basics}

Let $F \in S_k(\Gamma)$. It has a Fourier expansion
\begin{align*}
F(Z)=\sum_{S\in\Lambda_2}a(S)e(\Tr(SZ)),
\end{align*}
where $a(S)=0$ unless $S \in \Lambda_2^+$. We have the relation
\begin{align*}
a(T)=(\det A)^ka(\T{A}TA)
\end{align*}
for $A\in\mathrm{GL}_2(\mathbb Z)$. In particular, the Fourier coefficient $a(T)$ depends only on the $\mathrm{SL}_2(\mathbb Z)$-equivalence class of $T$. We define the Petersson norm $\|F\|_2$ via
$$\|F\|_2^2 = \int\limits_{\Gamma \bs \H_2} |F(Z)|^2 (\det Y)^k \mathrm d\mu.$$
For each $T \in \Lambda_2$, we let \begin{align*}
R(T):=|\disc (T)|^{-k/2+3/4}a(T)
\end{align*} denote the normalized Fourier coefficient.

Now, suppose that $F\in S_k(\Gamma)$ is a \emph{Saito--Kurokawa lift} and a Hecke eigenform. Then $k$ is even and there exists $\widetilde{f} \in S_{k-\frac12}(\Gamma_0(4))$  which is a classical half-integral weight form that $F$ is lifted from \cite[\S6]{EZ85}. It is known that $\widetilde{f}$ is a newform and lies in the Kohnen plus space. Precisely, if $\widetilde{f}$ has the Fourier expansion \[\widetilde f(z)=\sum_{n \equiv 0,3 \,  (4)} c(n) n^{k/2 - 3/4} e(nz)\] then the normalized Fourier coefficients of $F$ and $\widetilde f$ are related by
\begin{equation}\label{e:RTrelationhalfint}
R(T)=\sum_{j | \cont(T)} \sqrt{j} \,  c\left(\tfrac{|\disc(T)|}{j^2} \right).
\end{equation} We let $f \in S_{2k-2}(\SL_2(\Z))$ be the normalized Hecke eigenform associated to $\widetilde{f}$ via the Shimura correspondence. Define $\langle f, f \rangle = \int_{\SL_2(\Z)\bs \H} |f(z)|^2 y^{2k-2} \frac{dx dy}{y^2}.$ We let $\Pi$ be the automorphic representation of $\GSp_4(\A)$ attached to $F$ and we let $\pi_0$ be the automorphic representation of $\GL_2(\A)$ attached to $f$. From the characterization of Saito--Kurokawa lifts as CAP representations, one has the relation (which we will not need to use) $$L(s, \Pi) = L(s, \pi_0)\zeta(s+1/2)\zeta(s-1/2),$$ where $L(s, \Pi)$ is the (finite part of the) degree 4 $L$-function attached to $\Pi$ and $L(s , \pi_0)$ is the finite part of the degree 2 $L$-function attached to $\pi_0$.

We have the following key relation between the Petersson norms of $F$ and $\widetilde{f}$ (see, e.g., \cite{brown07})
\begin{equation}\label{e:petnormratios}\frac{\|\widetilde{f}\|_2^2}{\|F\|_2^2} = \frac{24 \ \pi^k}{\Gamma(k)L(\frac32, \pi_0)},
\end{equation}
where  $$\|\widetilde{f}\|_2^2= \frac16 \int\limits_{\Gamma_0(4) \bs \H}|\widetilde{f}(z)|^2 y^{k - \frac12} \frac{dx dy}{y^2}.$$
On the other hand, by Waldspurger's formula \cite{Kohnen-Zagier-1981},  we have for each negative fundamental discriminant $d$ that
\begin{equation}\label{e:waldsform}\frac{|c(|d|)|^2}{\|\widetilde{f}\|_2^2} =  \frac{L(\frac12, \pi_0 \otimes \chi_d )}{\langle f, f \rangle } \frac{\Gamma(k-1)}{\pi^{k-1}} =  \frac{L(\frac12, \pi_0 \otimes \chi_d )}{L(1, \sym^2 \pi_0)} \frac{2^{2k-2}\pi^{k+\frac12}}{\Gamma(k-\frac12)}, \end{equation}
where the second equality uses the  duplication formula for the Gamma function and the well-known relation (see, e.g., \cite[(7)]{Nelson-2011})
$$\langle f, f \rangle = 2^{-4k+5} \pi^{-2k+1}\Gamma(2k-2)L(1, \sym^2 \pi_0) = \frac{24\pi c_k}{(k-1)} L(1, \sym^2 \pi_0),$$ where
\begin{equation}\label{e:defck}c_k := \frac {\Gamma(k)\Gamma\left(k-\frac12\right)} {3 \cdot 2^{2k+1} \cdot \pi^{2k + \frac12}}.
\end{equation}
\noindent Combining \eqref{e:RTrelationhalfint}, \eqref{e:petnormratios} and \eqref{e:waldsform}, we see that for each $T\in \Lambda_2$ such that $\disc(T)=d$ is a negative fundamental discriminant, we have
\begin{equation}\label{e:RTckLfn}
c_k\frac{|R(T)|^2}{\|F\|_2^2}= \frac{L(\frac12, \pi_0 \otimes \chi_d )}{L(1, \sym^2 \pi_0)L(\frac32, \pi_0)}.
\end{equation}
Additionally, write $\lambda_f(n)$ for the $n$th Hecke eigenvalue of $f$, normalized so that Deligne's bound implies $|\lambda_f(n)| \le \sum_{d|n} 1$ and let $\mu$ be the M{\"o}bius function. We have for any integer $a \ge 1$ that
\begin{equation} \label{eq:csquare}
c(a^2 |d|)=c(|d|)  \sum_{uv=a} \frac{\mu(u)\chi_d(u)}{\sqrt{u}} \lambda_{f}(v) \ll a^{\varepsilon} |c(|d|)|.
\end{equation}

\subsection{Main results on shifted convolution sums}\label{s:mainresultsscp}
\noindent Let $F\in S_k(\Gamma)$ be a Saito--Kurokawa lift that is a Hecke eigenform. We let $R(T), \ T\in \Lambda_2$ denote the normalized Fourier coefficients of $F$, as defined above. Note that $R(T)$ is supported on $T \in \lambda_2^+$. Let $c_k$ be given by \eqref{e:defck}.
The conditions (\ref{QUE-integral}) and \eqref{QUE-integraldiag} reduce to estimates on shifted convolution sums involving the Fourier coefficients $R(T)$. In particular, as we will show later in this section, they will be implied by the following key propositions.

\begin{prop} \label{prop:scpbd} Let $F\in S_k(\Gamma)$ be a Saito--Kurokawa lift that is a Hecke eigenform and let $R(T)$ denote the normalized Fourier coefficients of $F$. Fix $h_1 \in C_c^\infty(\R^+), \ h_2 \in C_c^\infty(\R^+), \ h_3 \in C_c^\infty(\R)$, $L =  \begin{pmatrix}
\ell_1 & \ell_2/2 \\
\ell_2/2 & \ell_3
\end{pmatrix}\in\Lambda_2$ with $(\ell_1, \ell_2, \ell_3) \neq (0,0,0)$. Assume GRH.
Then for any $\varepsilon>0$,
\begin{align*} &\frac{c_k}{\|F\|_2^2}\sum_{\substack{T =\mat{m}{r/2}{r/2}{n} \in \Lambda_2^+}} |R(T) R(T+L)| \,  h_1\left(\frac mk\right) h_2\left(\frac nk\right) h_3\left(\frac rk\right) \\& \ll_{h_i,L, \varepsilon} \frac{k^3}{(\log k)^{1/28-\varepsilon}}.
\end{align*}
\end{prop}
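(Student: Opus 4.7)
The plan is to convert the Fourier-coefficient sum into a moment of central $L$-values of quadratic twists of $f$ via Waldspurger's formula, and then apply Soundararajan's conditional method for bounding such moments under GRH. First, writing $\disc(T) = d\, t^2$ with $d$ a (negative) fundamental discriminant and $t \in \Z^+$, and using \eqref{e:RTrelationhalfint} together with the divisor-bound estimate \eqref{eq:csquare}, we obtain $|R(T)| \ll_\varepsilon (\cont(T)\, t)^{\varepsilon}\, |c(|d|)|$. After a dyadic decomposition in $\cont(T), \cont(T+L)$ and trivial bounds on tails (using that typically these parameters are small when $m,n,r \asymp k$), we may restrict attention to the bulk range where the normalized Fourier coefficient is essentially $c(|d_T|)$ with $d_T$ the fundamental discriminant of $\disc(T)$.

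\emph{Conversion to $L$-values.} Combining \eqref{e:RTckLfn} with the identity
$$\frac{c_k\, |R(T)\, R(T+L)|}{\|F\|_2^2} \;=\; \sqrt{\frac{c_k\, |R(T)|^2}{\|F\|_2^2}} \cdot \sqrt{\frac{c_k\, |R(T+L)|^2}{\|F\|_2^2}},$$
and the GRH bounds $L(1, \sym^2 \pi_0)^{\pm 1}, L(\tfrac32, \pi_0)^{\pm 1} \ll (\log k)^\varepsilon$, the proposition reduces to proving
$$\frac{1}{k^3}\sum_{m,n,r \asymp k}\sqrt{L\!\big(\tfrac12, f \otimes \chi_{d_1^\flat}\big)\, L\!\big(\tfrac12, f \otimes \chi_{d_2^\flat}\big)}\;\ll\;(\log k)^{-\frac{1}{28}+\varepsilon},$$
where $d_1 := r^2 - 4mn$ and $d_2 := (r+\ell_2)^2 - 4(m+\ell_1)(n+\ell_3)$ (both negative), and $d_i^\flat$ is the fundamental part of $d_i$.

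\emph{Soundararajan's method.} Under GRH one has the pointwise inequality
$$\log L\!\big(\tfrac12, f \otimes \chi_d\big) \;\le\; \Re \sum_{p^a \le x} \frac{\lambda_f(p^a)\, \chi_d(p)^a}{a\, p^{a/2}} + \frac{(1+o(1))\log k}{\log x}$$
for $2 \le x \le k^2$. Applying this to $d_1^\flat, d_2^\flat$, absorbing the bounded contribution of higher prime powers, and following the standard Soundararajan--Harper strategy, one splits the $p$-range into short intervals, truncates the Taylor series of $\exp(\tfrac12 \Re P_x(d_i^\flat))$ at a carefully chosen degree $K$, and reduces the problem to bounding, for squarefree $a, b$ whose prime factors lie below $x$ and with $ab \le k^{o(1)}$, the three-variable character sums
$$\frac{1}{k^3}\sum_{m,n,r \asymp k} \chi_{d_1^\flat}(a)\, \chi_{d_2^\flat}(b)\, \Phi\!\left(\tfrac{m}{k},\tfrac{n}{k},\tfrac{r}{k}\right).$$
Writing $\chi_{d_i^\flat}(\cdot) = \chi_{d_i}(\cdot)$ away from the ramified primes (and handling the exceptional set separately), this becomes a character sum against the quadratic polynomials $d_1, d_2$ in $(m,n,r)$, which we attack via Poisson summation modulo an appropriate multiple of $ab$. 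The zero-frequency contribution is essentially nonvanishing only when $ab$ is a square, while the non-zero frequencies produce three-dimensional Gauss/Sali\'e-type sums that are bounded via Weil-type estimates.

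\emph{Main obstacle.} The technical heart is the character sum analysis. In \cite{Lester-Radziwill-2020} the analogue involves only a single scalar shift of a one-dimensional discriminant, whereas here each entry of the matrix $T$ is shifted independently, so $d_2 - d_1 = 2\ell_2 r - 4\ell_1 n - 4\ell_3 m + \ell_2^2 - 4\ell_1\ell_3$ is a genuinely three-dimensional inhomogeneous linear form. Extracting a power saving in the Poisson dual variables for generic non-square $ab$, while carefully tracking common factors between $ab$ and the shift matrix $L$ and dealing with the ramified primes, requires a delicate three-dimensional Gauss-sum computation substantially more intricate than its one-variable counterpart in \cite[Prop.~3.1]{Lester-Radziwill-2020}. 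This computation, carried out in Section~\ref{s:char}, is where the exponent $1/28$ emerges, via the optimal balance between the moment order $K$, the prime cutoff $x$, and the strength of the Poisson saving.
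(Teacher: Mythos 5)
Your overall route is the same as the paper's: bound $|R(T)|$ via \eqref{e:RTrelationhalfint} and \eqref{eq:csquare}, convert $|c(|d|)|^2$ to $L(\tfrac12,f\otimes\chi_d)$ by Waldspurger, and bound the resulting square-root moment of pairs of twisted central values by Soundararajan's GRH method, with the technical heart being the three-variable character sum in $(m,n,r)$ evaluated by Poisson summation (this is exactly Proposition \ref{prop:lfunctionmoment} and Section \ref{s:char}). However, there is a genuine gap in your reduction to fundamental discriminants, and it propagates into a wrong account of where the exponent $1/28$ comes from. You dismiss the terms where $\disc(T)=h^2d$ has a large square part $h$ with ``trivial bounds on tails,'' but this contribution is not trivially negligible: for fixed $h$ and $d$ the number of classes $T$ with $\disc(T)=h^2d$ is $\asymp h\sqrt{|d|}L(1,\chi_d)$ (Lemma \ref{lem:Rbd}), so summing over $d\asymp k^2/h^2$ already costs a first-moment estimate for $L(\tfrac12,f\otimes\chi_d)$ (Proposition \ref{prop:twistedmoment}, proved under GLH) together with the Rankin--Selberg asymptotic of Proposition \ref{prop:asymptotic}; and because $T$ and $T+L$ have independent square parts one must Cauchy--Schwarz, which halves the saving to $k^3H^{-1/2+\varepsilon}$ for the tail $h>H$ (Lemma \ref{lem:truncation}). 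You also drop a factor $\sqrt{\cont(T)}$ coming from the $\sqrt{j}$ in \eqref{e:RTrelationhalfint}, which is why the paper carries $(h_1h_2)^{1/2+\varepsilon}$ into the main term.

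Consequently the exponent $1/28$ does \emph{not} arise from balancing the moment order $K$, the prime cutoff $x$, and the Poisson saving, as you assert. Soundararajan's method yields the clean bound $k^3(\log k)^{-1/4+\varepsilon}$ for the square-root moment over pairs of fundamental discriminants with fixed square parts $f_1,f_2$ (Proposition \ref{prop:lfunctionmoment}); summing over square parts $h_1,h_2\le H$ multiplies this by $H^{3+\varepsilon}$, and balancing $H^{3}(\log k)^{-1/4}$ against the truncation error $H^{-1/2}$ forces $H=(\log k)^{1/14}$ and gives $(\log k)^{-1/28}$. Without the quantitative truncation lemma your argument cannot produce this exponent, so you should supply the analogue of Lemma \ref{lem:truncation} (and the first-moment inputs it relies on) rather than treat the non-fundamental discriminants as an afterthought. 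Your description of the zero-frequency term as supported on square $ab$ is also an oversimplification: the main term $f(s,t)$ in Proposition \ref{prop:poisson-applied} retains contributions from odd exponents damped by $1/p$, and the case analysis at primes dividing both moduli with both exponents odd (where the answer depends on whether $p$ divides all of $\ell_1,\ell_2,\ell_3$) is precisely what makes the Dirichlet-polynomial moment bounds of Lemma \ref{lem:mixedmoments} go through.
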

\noindent We will prove Proposition \ref{prop:scpbd} in Section \ref{s:3.1}.

Next, for each slowly growing function $g : \SL_2(\Z) \bs \H\longrightarrow\C$, $\kappa \in C_c^\infty(\R^+)$, and $T \in \Lambda_2^+$, define the following quantities which depend only on the class of $T$ in $\Lambda_2^+/\SL_2(\Z)$: $$\varepsilon(T):=|\{A\in\mathrm{SL}_2(\Z):\,\T{A}TA=T\}|$$ and \begin{equation}\label{e:defGTfkappanew}G(T; g, \kappa) := \int\limits_{\substack{\lambda>0\\ \ z=u+iy \in \H}}g(z) \lambda^{2k} \kappa(\lambda) e^{-4 \pi \lambda \Tr(T g_z \T{g_z})}\frac{\mathrm d u\, \mathrm d y\,\mathrm d \lambda}{y^2\lambda^4},\end{equation} where for $z = u+iy$ we write $g_z := \mat{1}{u}{0}{1}\mat{y^{\frac12}}{0}{0}{y^{-\frac12}}$.
Note that $g_z$ takes the point $i$ to the point $z$ and hence for each $A \in \SL_2(\R)$ we have $g_{Az}^{-1} Ag_z  \in \SO(2)$.
\begin{prop}\label{QUE-reformulation}Let $F\in S_k(\Gamma)$ be a Saito--Kurokawa lift that is a Hecke eigenform and let $R(T)$ denote the normalized Fourier coefficients of $F$.
Fix $g \in C_c^\infty(\SL_2(\Z) \bs \H)$ and $\kappa \in C_c^\infty(\R^+)$. Assume GRH. Then as $k\longrightarrow\infty$,
\begin{equation}\label{e:req3} \begin{split}&\frac1{\|F\|_2^2} \sum_{T \in \Lambda_2^+ /\SL_2(\Z)} \frac{|R(T)|^2}{\varepsilon(T)} |\disc(T)|^{k-\frac32} G(T; g, \kappa) \\&\longrightarrow\frac{\widetilde\kappa(3)}{2 \mathrm{vol}(\Gamma \bs \H_2)}\int\limits_{\mathrm{SL}_2(\mathbb Z)\backslash\mathbb H}g(u+iy)\frac{\mathrm d u\,\mathrm d  y}{y^2}.
\end{split} \end{equation}
\end{prop}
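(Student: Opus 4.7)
The plan is to reduce (\ref{e:req3}) to the class-number weighted central $L$-value sum (\ref{orthogonal-contribution}) from the introduction, and then to analyse that sum by decomposing $g$ spectrally into its mean over $\SL_2(\Z)\bs\H$ plus an orthogonal complement. The mean will supply the main term, while the orthogonal complement vanishes in the limit. For the reduction I would organise the left-hand side of (\ref{e:req3}) by the fundamental discriminant $d$ attached to $\disc(T) = d f^2$, where $f = \cont(T)$. The relations (\ref{e:RTrelationhalfint}) and (\ref{eq:csquare}) factor out the non-fundamental part of $R(T)$, while (\ref{e:RTckLfn}) identifies $c_k |R(T)|^2/\|F\|_2^2$ with $L(\tfrac12, \pi_0 \otimes \chi_d)/(L(1, \sym^2 \pi_0) L(\tfrac32, \pi_0))$, a quantity independent of the class of $T$ in $\Cl_d$. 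After collecting terms and absorbing harmless contributions from $\eps(T) \neq 2$ and from non-fundamental discriminants, the sum in (\ref{e:req3}) takes the essential shape
$$\sum_{d \in \mathcal D} \frac{L(\tfrac12, \pi_0 \otimes \chi_d)}{L(1, \sym^2 \pi_0)\, L(\tfrac32, \pi_0)}\, h(d)\, G(d; g, \kappa),$$
where $G(d; g, \kappa) = |d|^{k - 3/2} h(d)^{-1} \sum_{T \in \Cl_d} G(T; g, \kappa)$ matches the quantity in (\ref{orthogonal-contribution}).

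\textbf{Orthogonal part.} Write $g = c_0 + g^\perp$, where $c_0 := \mathrm{vol}(\SL_2(\Z)\bs\H)^{-1} \int g\, \mathrm d u\, \mathrm d y/y^2$ is the mean of $g$ and $g^\perp$ is orthogonal to constants. Spectrally decomposing $g^\perp$ into Hecke eigenforms (Maass forms and the continuous spectrum), the key input (\ref{e:requiredbd}),
$$G(d; g^\perp, \kappa) \ll_{g, \eps} |d|^{-1/12 + \eps}\, G(d; 1, |\kappa|),$$
controls each spectral component. This bound rests on recognising $\sum_{T \in \Cl_d} G(T; g^\perp, \kappa)$, after the unipotent integration built into (\ref{e:defGTfkappanew}), as a toric period of $g^\perp$ along the anisotropic torus attached to $\Q(\sqrt d)$; Waldspurger's formula then bounds the square of this period in terms of $L(\tfrac12, g^\perp \otimes \chi_d)$, and the Conrey--Iwaniec subconvex exponent $\tfrac16 - \eps$ produces the saving. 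Combined with GLH ($L(\tfrac12, \pi_0 \otimes \chi_d) \ll |d|^\eps$), the bound $h(d) \ll |d|^{1/2 + \eps}$, and the concentration of $G(d; 1, |\kappa|)$ at $|d| \asymp k^2$, the $g^\perp$-contribution is $O(k^{-1/6 + \eps})$ relative to the $c_0$-contribution, hence negligible.

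\textbf{Main term from the mean.} For $g \equiv 1$, after integrating over $\SL_2(\Z)\bs\H$ and rescaling $\lambda$, the weight $G(d; 1, \kappa)$ becomes a smooth window localising $|d|$ to the range $\asymp k^2$, with Mellin profile essentially $\widetilde\kappa$. The remaining sum is the twisted first moment $\sum_d h(d)\, L(\tfrac12, \pi_0 \otimes \chi_d)\, w(d)$ with $w$ so localised; under GLH this admits an asymptotic evaluation (Section~\ref{s:twisted}), whose main term is then compared with the residue at $s = 1$ of the Koecher--Maass type Rankin--Selberg convolution $\sum_d h(d)\, L(s, \pi_0 \otimes \chi_d)\, |d|^{-s}$ (Section~\ref{s:mainterm}). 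The archimedean gamma factors, the duplication identity used in (\ref{e:waldsform}), and the volume normalisations on the symplectic side should combine to produce precisely $\widetilde\kappa(3)/(2\, \mathrm{vol}(\Gamma \bs \H_2))$, matching the right-hand side of (\ref{e:req3}).

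The principal obstacle lies in the main term extraction: at the critical scale $|d| \asymp k^2$, the twisted first moment for $L(\tfrac12, \pi_0 \otimes \chi_d)$ sits essentially at the convexity threshold, so the GLH hypothesis is indispensable. Moreover, matching the emergent constant exactly with the geometric factor $\widetilde\kappa(3)/(2\, \mathrm{vol}(\Gamma \bs \H_2))$ demands careful bookkeeping of the arithmetic constants from the Waldspurger identity (\ref{e:RTckLfn}), the Rankin--Selberg residue, and all archimedean and volume normalisations.
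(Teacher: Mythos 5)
Your proposal is correct and follows essentially the same route as the paper: the main term for the mean of $g$ is extracted via the twisted first moment under GLH matched against the residue of the Koecher--Maass Rankin--Selberg convolution, and the orthogonal part is killed by interpreting the class-group average as a toric period, applying Waldspurger's formula and the Conrey--Iwaniec/Petrow--Young subconvex bound to obtain the $|d|^{-1/12+\eps}$ saving. The only cosmetic difference is that the paper reduces a general $g$ to Hecke--Maass cusp forms and incomplete Eisenstein series by uniform approximation rather than by a literal spectral decomposition, but this is the same idea in substance.
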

\noindent We will prove Proposition \ref{QUE-reformulation} in Section \ref{s:3.2}.

\subsection{The off-diagonal case}
In this subsection we show that Proposition \ref{prop:scpbd} implies \eqref{QUE-integral}. We first prove the following auxiliary result.

\begin{lem}\label{lem:matrix-ineq}
Let $A$ and $B$ be symmetric positive-definite $2\times 2$ matrices. Then we have
\[
\sqrt{\det(A)\det (B)}\leq\det\left(\frac{A+B}2\right).\]
\end{lem}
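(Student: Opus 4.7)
The plan is to prove this via simultaneous diagonalization, reducing the matrix inequality to the scalar AM--GM inequality. This is essentially Minkowski's determinant inequality specialized to $n=2$, combined with an AM--GM step.

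First I would use the fact that since $A$ is symmetric positive definite, there exists an invertible real matrix $U$ with $A = U\, \T{U}$. Then $U^{-1}B\, \T{(U^{-1})}$ is symmetric positive definite, so the spectral theorem yields an orthogonal $Q$ and a diagonal matrix $D=\mathrm{diag}(\lambda_1,\lambda_2)$ with $\lambda_1,\lambda_2>0$ such that $U^{-1}B\, \T{(U^{-1})} = Q D\, \T{Q}$. Replacing $U$ by $UQ$ (which still satisfies $A = U\, \T U$ since $Q$ is orthogonal), we may assume simultaneously that
\[
A = U\, \T{U}, \qquad B = U D\, \T{U}.
\]

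Next I would just compute each side directly. We have $\det(A) = \det(U)^2$, $\det(B) = \det(U)^2 \lambda_1\lambda_2$, and
\[
\det\!\left(\frac{A+B}{2}\right) = \det(U)^2 \det\!\left(\frac{I_2+D}{2}\right) = \det(U)^2 \cdot \frac{(1+\lambda_1)(1+\lambda_2)}{4}.
\]
Dividing through by $\det(U)^2 = \det(A) > 0$, the claim reduces to the scalar inequality
\[
4\sqrt{\lambda_1\lambda_2} \leq (1+\lambda_1)(1+\lambda_2).
\]
This follows immediately from AM--GM applied to each factor: $1+\lambda_i \geq 2\sqrt{\lambda_i}$ for $i=1,2$, and multiplying gives the desired bound. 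There is no real obstacle here; the only subtle point is justifying the simultaneous reduction to diagonal form, which is standard linear algebra (Cholesky factorization of $A$ together with diagonalization of $B$ in the inner product induced by $A^{-1}$).
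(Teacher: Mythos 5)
Your proof is correct and follows essentially the same route as the paper's: both reduce the statement to the positivity of the eigenvalues $\lambda_1,\lambda_2$ of $A^{-1}B$ (you via explicit simultaneous congruence to $I$ and $D$, the paper via the identity $\sqrt{\det(A)\det(B)}=\det(A)\sqrt{\det(A^{-1}B)}$ and similarity to $A^{-1/2}BA^{-1/2}$) and then conclude with $1+\lambda_i\geq 2\sqrt{\lambda_i}$. The two write-ups differ only in presentation.
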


\begin{proof}
Note that as $A$ is positive-definite, $\det(A)>0$ and thus $A$ is invertible. We begin with a trivial identity
\begin{align}\label{det-id}
\sqrt{\det(A)\det (B)}&=\sqrt{\det(A^2A^{-1}B)}\\
&=\det(A)\sqrt{\det(A^{-1}B)} \nonumber.
\end{align}
As $A^{-1}B=A^{-1/2}(A^{-1/2}BA^{-1/2})A^{1/2}$, the matrix $A^{-1}B$ is similar to the symmetric positive-definite matrix $A^{-1/2}BA^{-1/2}$ and consequently the eigenvalues $\lambda_1,\lambda_2$ of $A^{-1}B$ are real and positive. We also note that the eigenvalues of $I+A^{-1}B$ are $1+\lambda_1$ and $1+\lambda_2$. Hence, using the easy inequality $2\sqrt\lambda\leq 1+\lambda$ for $\lambda\geq 0$ we have
\begin{align*}
\sqrt{\det(A^{-1}B)}&=\sqrt{\lambda_1\lambda_2}\\
&\leq\frac14(1+\lambda_1)(1+\lambda_2)\\
&=\frac14\det(I+A^{-1}B)\\
&=\det\left(\frac12I+\frac12A^{-1}B\right).
\end{align*}
Combining this with (\ref{det-id}) gives the desired result.
\end{proof}

We are now ready to prove the main result of this section.

\begin{lem}\label{off-diag-lemma}
Fix $\phi_1 \in C_c^\infty(\R^+), \ \phi_2 \in C_c^\infty(\R^+), \ \phi_3 \in C_c^\infty(\R)$, $L =  \begin{pmatrix}
\ell_1 & \ell_2/2 \\
\ell_2/2 & \ell_3
\end{pmatrix}\in\Lambda_2$ with $(\ell_1, \ell_2, \ell_3) \neq (0,0,0)$. Let $h = h^{\phi_1 \times \phi_2 \times \phi_3}$. Assume GRH and assume the truth of Proposition \ref{prop:scpbd}. Then  \eqref{QUE-integral} holds in the stronger form $$
\frac1{\|F\|_2^2}\int\limits_{\Gamma\backslash\mathbb H_2}|F(Z)|^2 P_L^h(Z)(\det Y)^k\,\mathrm d  \mu \ll_{\phi_i, L, \varepsilon}(\log k)^{-1/28+\varepsilon}.
$$ for all sufficiently large $k$.
\end{lem}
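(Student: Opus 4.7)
\medskip

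\noindent\textbf{Proof plan for Lemma \ref{off-diag-lemma}.} The strategy is to unfold the Poincar\'e series, perform the $X$-integral to obtain a shifted convolution sum, explicitly bound the resulting $Y$-integral by a factorized weight, and finally invoke Proposition \ref{prop:scpbd}.

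First I would unfold. Using \eqref{e:defpoincare2} and the standard unfolding trick against the quotient by $\Gamma_\infty$ (the set $\{n(X)\colon X \in M_2^{\Sym}(\Z)\}$ is a fundamental domain for $M_2^{\Sym}(\R)$), the integral over $\Gamma \bs \H_2$ collapses to an integral over $M_2^{\Sym}(\R)/M_2^{\Sym}(\Z) \times M_2^{\Sym}(\R)^+$. Substituting the Fourier expansion $F(Z) = \sum_T a(T) e(\Tr(TZ))$ and integrating $e(\Tr((T_1-T_2+L)X))$ over the $X$-torus selects only $T_2 = T_1+L$. This produces
\[
\int\limits_{\Gamma\bs\H_2}|F(Z)|^2 P_L^h(Z)(\det Y)^k\,\mathrm d\mu \;=\; \sum_{\substack{T_1,T_1+L\in\Lambda_2^+}} a(T_1)\overline{a(T_1+L)}\,J(T_1),
\]
where $J(T_1)=\int_{Y>0} h(\iota^{-1}(Y))(\det Y)^{k-3} e^{-2\pi\Tr((2T_1+L)Y)}\,\mathrm d Y$.

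Next I would evaluate $J(T_1)$ using the Iwasawa-type parameters $Y = \iota(n(u)\diag(\sqrt{t_1},\sqrt{t_2}))$, for which $\det Y = t_1 t_2$, $\mathrm d Y = t_2\,\mathrm d t_1\,\mathrm d t_2\,\mathrm d u$, and (writing $\alpha=2m+\ell_1$, $\beta=2r+\ell_2$, $\gamma=2n+\ell_3$) $\Tr((2T_1+L)Y) = \alpha t_1 + t_2(\alpha u^2+\beta u+\gamma)$. Since $h=h^{\phi_1\times\phi_2\times\phi_3}$ factorizes, $J(T_1)$ becomes a triple integral. Completing the square, $\alpha u^2+\beta u+\gamma = \alpha(u-u_0)^2 + D/\alpha$, with $u_0=-\beta/(2\alpha)$ and $D:=\det(2T_1+L)>0$; the exponent then splits. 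A Gaussian Laplace estimate on the $u$-integral of width $\sim 1/\sqrt{\alpha t_2}$ around $u_0$, together with the two Gamma-function identities
\[
\int_0^\infty \phi(t)t^{k-3}e^{-2\pi\alpha t}\,\mathrm d t = (2\pi\alpha)^{-(k-2)}\Gamma(k-2)\bigl[\phi\bigl(\tfrac{k-3}{2\pi\alpha}\bigr)+O_\phi(1/k)\bigr],
\]
and similarly for the $t_2$-integral with exponent $k-5/2$ after absorbing the $u$-Laplace factor $(2t_2\alpha)^{-1/2}$, will yield
\[
|J(T_1)| \;\le\; C_{\phi,L}\,\Gamma(k-2)\Gamma(k-\tfrac32)\,(2\pi)^{-(2k-7/2)}\,D^{-(k-3/2)}\,\tilde h_1(m/k)\tilde h_2(n/k)\tilde h_3(r/k)
\]
for some $\tilde h_1,\tilde h_2\in C_c^\infty(\R^+)$, $\tilde h_3\in C_c^\infty(\R)$ whose supports are determined by those of $\phi_i$ and $L$ (the stationary points $(k-3)/(2\pi\alpha)$, analogous for $t_2$, and $u_0=-r/(2m)+O(1/k)$ must lie in the respective supports).

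Finally, I would assemble the pieces. Writing $a(T) = |\disc(T)|^{k/2-3/4}R(T)$ and applying Lemma \ref{lem:matrix-ineq} with $A=T_1$, $B=T_1+L$ gives $\sqrt{\det T_1\det(T_1+L)}\le D/4$, hence $|\disc(T_1)\disc(T_1+L)|^{k/2-3/4}\le D^{k-3/2}$; the factors of $D^{\pm(k-3/2)}$ cancel. Using Stirling together with the definition \eqref{e:defck} of $c_k$, the ratios $\Gamma(k-2)/\Gamma(k)\sim k^{-2}$ and $\Gamma(k-3/2)/\Gamma(k-1/2)\sim k^{-1}$ produce the identity $\Gamma(k-2)\Gamma(k-3/2)(2\pi)^{-(2k-7/2)} = c_k\cdot O(k^{-3})$. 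Combining, taking absolute values inside the sum, and invoking Proposition \ref{prop:scpbd} yields
\[
\Bigl|\tfrac{1}{\|F\|_2^2}\!\int\limits_{\Gamma\bs\H_2}\!|F|^2 P_L^h(\det Y)^k\,\mathrm d\mu\Bigr|
\;\ll\; \frac{1}{k^3}\cdot \frac{c_k}{\|F\|_2^2}\sum_{T_1}|R(T_1)R(T_1+L)|\,\tilde h_1(\tfrac{m}{k})\tilde h_2(\tfrac{n}{k})\tilde h_3(\tfrac{r}{k})
\;\ll\; (\log k)^{-1/28+\varepsilon},
\]
as required. The main technical obstacle is executing the Laplace/Stirling bookkeeping in the middle step with enough care to obtain exactly the factor $c_k/k^3$ that matches the normalization on the left-hand side of Proposition \ref{prop:scpbd}; once Lemma \ref{lem:matrix-ineq} is used to absorb the $D^{k-3/2}$, the rest is a clean application of the proposition.
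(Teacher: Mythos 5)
Your route --- unfold against $\Gamma_\infty$, isolate the archimedean weight, evaluate it by Laplace/Mellin asymptotics, cancel the determinant powers via Lemma \ref{lem:matrix-ineq}, match the normalization $c_k/k^3$ by Stirling, and invoke Proposition \ref{prop:scpbd} --- is exactly the paper's. The gap is in the middle step: your claimed majorization
\[
|J(T_1)| \;\le\; C_{\phi,L}\,\Gamma(k-2)\Gamma(k-\tfrac32)\,(2\pi)^{-(2k-7/2)}\,D^{-(k-3/2)}\,\tilde h_1(m/k)\,\tilde h_2(n/k)\,\tilde h_3(r/k)
\]
with \emph{compactly supported} $\tilde h_i$ is false. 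The weight $J(T_1)$ is an integral of a strictly positive kernel and is nonzero for every $T_1$ with $T_1,T_1+L\in\Lambda_2^+$; the $O_\phi(1/k)$ error terms in your Gamma-function identities and in the Gaussian $u$-integral do not vanish when the stationary points leave the supports of the $\phi_i$ --- after dividing out $D^{-(k-3/2)}$ they only decay polynomially in $|v_1|/k$, $|v_2/v_1|$ and $|v_3-v_2^2/(4v_1)|/k$ (this is the content of the paper's estimate \eqref{eq:ebdfix2}). Hence no compactly supported product can dominate $J(T_1)D^{k-3/2}$, and since Proposition \ref{prop:scpbd} applies only to compactly supported weights, it cannot be applied to the full sum as you propose.

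What is missing is a separate treatment of this tail, and it is not a formality. The paper majorizes the error part by $k^{-1+\varepsilon}H(m/k)H(n/k)H(r/k)$ with $H(x)=(1+|x|)^{-A}$, and then needs the pointwise bound $c_k|R(T)R(T+L)|/\|F\|_2^2\ll_{L,\varepsilon}|T|^{1/2+\varepsilon}$ to sum it over all of $\Lambda_2^+$; that bound in turn uses the Saito--Kurokawa relation \eqref{e:RTrelationhalfint}, the Hecke relation \eqref{eq:csquare}, Waldspurger's formula \eqref{e:RTckLfn}, GRH for the central values, and the elementary estimate $\cont(T)\cont(T+L)\ll_L|T|$. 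This yields an acceptable $O(k^{-1/2+\varepsilon})$ for the tail, after which Proposition \ref{prop:scpbd} finishes the (genuinely compactly supported) main part. Your proposal contains no substitute for this step, so the reduction does not close as written.
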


\begin{proof}

By unfolding we obtain
\begin{align}\label{QUE-period}
&\frac1{\|F\|_2^2}\int\limits_{\Gamma\backslash\mathbb H_2}|F(Z)|^2 P_L^h(Z)(\det Y)^k\,\mathrm d  \mu
=\frac1{\|F\|_2^2}\int\limits_{\Gamma_\infty\backslash\mathbb H_2}|F(Z)|^2 h_L(Z)(\det Y)^k\,\mathrm d  \mu \nonumber\\
&=\frac1{\|F\|_2^2}\int\limits_{\Gamma_\infty\backslash\mathbb H_2}\left(\sum_{S_1\in\Lambda_2^+}\sum_{S_2\in\Lambda_2^+}a(S_1)\overline{a(S_2)}
e(\text{Tr}(S_1X)-\text{Tr}(S_2X))e^{-2\pi\Tr((S_1 + S_2)Y)}\right) \\& \qquad h_L(Z)(\det Y)^k\,\mathrm d \mu. \nonumber
\end{align}
The space $\Gamma_\infty \bs \H_2$ may be parameterized by the points $(n(X)m(A))\cdot iI_2$ with  $X:=\begin{pmatrix}
 x_1 & x_2 \\
 x_2 & x_3
 \end{pmatrix}$, $x_i \in \Z \bs \R$, and $A = \mat{1}{u}{}{1}\mat{\sqrt{t_1}}{}{}{\sqrt{t_2}}$ with $t_i \in \R^+$ and $u \in \R$. Note also that we may write $n(X)m(A))\cdot iI_2 = X + i Y_{t_1,t_2, u}$, where we have set $Y_{t_1, t_2,u}:=\begin{pmatrix}
t_1+t_2u^2 & t_2u \\
t_2u & t_2
\end{pmatrix}$. Under the substitution $X+iY \mapsto X+iY_{t_1,t_2,u}$ the measure $d \mu$ is replaced by $t_1^{-3}t_2^{-2}\mathrm d x_1\,\mathrm d x_2\,\mathrm dx_3\,\mathrm d t_1\,\mathrm d t_2\,\mathrm d u$. Finally, we have $h_L(X + i Y_{t_i,u}) = e(\Tr(LX)) h(\iota^{-1}(Y_{t_i,u})) = e(\Tr(LX)) \phi_1(t_1)\phi_2(t_2)\phi_3(u).$

Therefore, after making the above substitutions and executing the $\mathrm d x_i$ integrals, \eqref{QUE-period} reduces to
\begin{align}\label{shifted-convolution}
\frac1{\|F\|_2^2}\sum_{T \in \Lambda_2^+}|\text{disc}(T)\text{disc}(T+L)|^{k/2-3/4}R(T)R(T+L)W_{h,L}(T),
\end{align}
where for $T = \mat{m}{r/2}{r/2}{n}$ we have
\begin{align} \label{eq:theintegral}
&W_{h,L}(T):=\int\limits_{\R}\int\limits_{\R^+}
\int\limits_{\R^+}e^{-2\pi((2m+\ell_1)(t_1+u^2t_2)+(2r+\ell_2)t_2u+(2n+\ell_3)t_2)}
\phi_1(t_1)\phi_2(t_2)\phi_3(u)\frac{\mathrm d  t_1\,\mathrm d  t_2\,\mathrm d  u}{t_1^{3-k}t_2^{2-k}}.
\end{align}
We may assume in \eqref{shifted-convolution} that $T+L \in \Lambda_2^+$ as otherwise $R(T+L)=0$.

Next we derive a more explicit expression for the above weight function. \\

\noindent \textit{Claim.}
Let $v_1:=2m+\ell_1$, $v_2:=2r+\ell_2$, and $v_3:=2n+\ell_3$.
We have
\begin{align}\label{weight-fn-rep}
W_{h,L}(T)=&\frac{\Gamma(k-2)\Gamma\left(k-\frac32\right)}{ \sqrt{2}(2\pi)^{2k-7/2}(v_1v_3-v_2^2/4)^{k-3/2}} \\ \nonumber &\times \bigg( \phi_1
\left(\frac{k-2}{2\pi v_1}\right)
\phi_3\left(-\frac{v_1}{2v_2}\right)\phi_2
\left(\frac{k-\frac32}{2\pi\left(v_3-\frac{v_2^2}{4v_1}\right)}\right)+\mathcal E(v_1, v_2,v_3)\bigg)
\end{align}
for $T=\begin{pmatrix}
m & r/2\\
r/2 & n
\end{pmatrix}\in\Lambda_2^+$, where for any $A \ge 1$
\begin{equation} \label{eq:ebdfix2}
\mathcal E(v_1,v_2,v_3) \ll_{\phi_1,\phi_2,\phi_3,A,\varepsilon} \frac{1}{k^{1-\varepsilon}}\cdot  \frac{1}{1+(|v_1|/k)^A} \cdot \frac{1}{1+|v_2/v_1|^A} \cdot \frac{1}{1+(|v_3-\frac{v_2^2}{4v_1}|/k)^A}.
\end{equation}

We proceed to prove the above claim.
The integral on the right-hand side of \eqref{eq:theintegral} takes the form
\begin{align*}
\int\limits_{\R}\int\limits_{\R^+}\int\limits_{\R^+}
e^{-2\pi(v_1(t_1+u^2t_2)+v_2t_2u+v_3t_2))}\phi_1(t_1)\phi_2(t_2)\phi_3(u)\frac{\mathrm d  t_1\,\mathrm d  t_2\,\mathrm d  u}{t_1^{3-k}t_2^{2-k}}.
\end{align*}
Note that automatically $v_1,v_3>0$, and $v_1v_3 - v_2^2 >0$ where the last inequality comes from $T+L \in \Lambda_2^+$. Let us first treat the $t_1$-integral. Using Mellin inversion we compute
\begin{align}\label{t_1-integral}
\int\limits_0^\infty e^{-2\pi v_1t_1}t_1^{k-3}\phi_1(t_1)d  t_1
&=\frac1{2\pi i}\int\limits_0^\infty\int\limits_{(2)}e^{-2\pi v_1t_1}t_1^{k+s-3}\widetilde{\phi_1}(s)\mathrm d  s\,\mathrm d  t_1\nonumber \\
&=\frac1{2\pi i}\int\limits_{(2)}(2\pi v_1)^{-s-k+2}\widetilde{\phi_1}(s)\Gamma(s+k-2)\mathrm d  s \nonumber\\
&=\frac{\Gamma(k-2)}{2\pi i}\int\limits_{(2)}(2\pi v_1)^{-s-k+2}\widetilde{\phi_1}(s)\frac{\Gamma(s+k-2)}{\Gamma(k-2)}\mathrm d  s.
\end{align}

\noindent
By Stirling's formula, we have for any fixed $A>1$ and $k$ large (compared to $A$) and $|\tmop{Re}(s)| \le A$ that
\[
\frac{\Gamma(k+s-2)}{\Gamma(k-2)}=(k-2)^s(1+R(s;k) )
\]
where $R(s;k)$ is a holomorphic function in $|\tmop{Re}(s)| \le A$ satisfying $R(s;k) \ll_A (|s|^2+1)/k$.
Plugging this into \eqref{t_1-integral} and shifting the line of integration to the line $\Re(s)=-A$ to handle the error term, it follows that the $t_1$-integral is given by
\begin{align*}
&\frac{\Gamma(k-2)}{2\pi i}(2\pi v_1)^{-k+2}\left(\int\limits_{(2)}\widetilde{\phi_1}(s)\left(\frac{k-2}{2\pi v_1}\right)^s\mathrm d s+O_{A,\phi_1,\varepsilon}\left(\frac1{k^{1-\varepsilon}}\cdot\frac1{1+(v_1/k)^A}\right)\right)\\
&=\Gamma(k-2)(2\pi v_1)^{-k+2}\phi_1\left(\frac{k-2}{2\pi v_1}\right)+O_{A,\phi_1,\varepsilon}\left( \frac{\Gamma(k-2)(2\pi v_1)^{-k+2}}{k^{1-\varepsilon}(1+(v_1/k)^A)}\right)
\end{align*}
for any $A>1$.

Next we evaluate the $u$-integral. By Fourier inversion we compute
\begin{align*}
\int\limits_\mathbb R e^{-2\pi t_2(v_1u^2+v_2u)}\phi_3(u)\,\mathrm d  u &=\int\limits_\mathbb R e^{-2\pi t_2(v_1u^2+v_2u)}\int_\mathbb R \widehat \phi_3(y)e(uy)\mathrm d  y\,\mathrm d  u\\
&=\int\limits_\mathbb R \widehat \phi_3(y)\int_\mathbb R  e^{-2\pi t_2(v_1u^2+v_2u)} e(uy)\mathrm d  u\,\mathrm d  y\\
&=\int\limits_\mathbb R\widehat \phi_3(y)\cdot\frac1{\sqrt{t_2v_1}}e^{\frac\pi2\cdot\frac{(iy-v_2t_2)^2}{t_2v_1}}\mathrm d  y\\
&=\frac1{\sqrt{2t_2v_1}}e^{\frac\pi2\cdot\frac{v_2^2t_2}{v_1}}\int\limits_\mathbb R\widehat \phi_3(y)e^{-\pi i\frac{v_2}{v_1}y}e^{-\frac{\pi y^2}{2t_2v_1}}\,\mathrm d  y.
\end{align*}
Observe that
\begin{align*}
\int\limits_\mathbb R\widehat \phi_3(y)e^{-\pi i\frac{v_2y}{v_1}}\,\mathrm d  y=\phi_3\left(-\frac{v_2}{2v_1}\right)
\end{align*}
and $e^{-t}=1+O(t)$ as $t\longrightarrow 0$. Write $l(y)=1-e^{-\frac{\pi y^2}{2t_2v_1}}$. Using the preceding statements and integrating by parts the integral above takes the form
\begin{align*}
\frac1{\sqrt{2t_2v_1}}e^{\frac\pi2\cdot\frac{v_2^2t_2}{v_1}}\left(\phi_3\left(-\frac{v_2}{2v_1}\right)+E(t_2)\right),
\end{align*}
where for any fixed integer $A \ge 0$ and $t_2 \asymp_{\phi_2} 1$ (recall that $\phi_2$ has compact support)
the error term is
\begin{align} \label{eq:ebdfix}
E(t_2)=\bigg(\pi \frac{v_2}{v_1}\bigg)^{-A} \int_{\mathbb R} \sum_{j=0}^A \binom{A}{j} \widehat \phi_3^{(j)}(y) l^{(A-j)}(y) e^{-\pi i \frac{v_2 y}{v_1}} \, \mathrm dy\ll_{\phi_3, \phi_2}\frac1{t_2v_1} \frac{1}{1+|v_2/v_1|^A}
\end{align}
using the fact that $\phi_3$ is a Schwartz function.

The remaining $t_2$-integral can be computed similarly as the $t_1$-integral:
\begin{align*}
&\int\limits_0^\infty e^{-2\pi(v_3t_2-\frac{v_2^2}{4v_1}t_2)}t_2^{k-5/2}\phi_2(t_2)\,\mathrm d  t_2\\&=\frac1{2\pi i}\int\limits_{(2)}\int\limits_0^\infty e^{-2\pi(v_3t_2-\frac{v_2^2}{4v_1}t_2)}t_2^{s+k-5/2}\widetilde{\phi_2}(s)\,\mathrm d  t_2\,\mathrm d  s\\
&=\frac{\Gamma\left(k-\frac32\right)}{2\pi i}\int\limits_{(2)}\left(2\pi\left(v_3-\frac{v_2^2}{4v_1}\right)\right)^{-s-k+3/2}\widetilde{\phi_2}(s)\frac{\Gamma\left(s+k-\frac32\right)}{\Gamma\left(k-\frac32\right)}\,\mathrm d  s.
\end{align*}
Above, we use that $v_3-\frac{v_2^2}{4v_1} >0$, as noted earlier. Using Stirling's formula for $\Gamma(s+k-3/2)/\Gamma(k-3/2)$ and shifting contours, the $t_2$-integral becomes
\begin{align*}\tst
&\frac{\Gamma\left(k-\frac32\right)}{2\pi i}\left(2\pi\left(v_3-\frac{v_2^2}{4v_1}\right)\right)^{-k+3/2}
\Bigg(\int\limits_{(2)}\left(\frac{k-\frac32}{2\pi\left(v_3-\frac{v_2^2}{4v_1}\right)}\right)^s \widetilde{\phi_2}(s)\,\mathrm d s\\
& \qquad +\tst O_{A,\phi_2,\varepsilon}\left(\frac1{k^{1-\varepsilon}}\cdot\frac1{1+((v_3-\frac{v_2^2}{4v_1})/k)^A}\right)\Bigg)\\
&=\Gamma\left(k-\frac32\right)\left(2\pi\left(v_3-\frac{v_2^2}{4v_1}\right)\right)^{-k+3/2}\phi_2\left(\frac{k-\frac32}{2\pi\left(v_3-\frac{v_2^2}{4v_1}\right)}\right)\\
&+O_{A,\phi_2,\varepsilon}\left(\Gamma\left(k-\frac32\right)\left(2\pi\left(v_3-\frac{v_2^2}{4v_1}\right)\right)^{-k+3/2}\frac1{k^{1-\varepsilon}}\cdot\frac1{1+\left(\left(v_3-\frac{v_2^2}{4v_1}\right)/k\right)^A}\right)
\end{align*}
for any $A>1$.

The contribution from the error $E(t_2)$ can be estimated by similar reasoning and using \eqref{eq:ebdfix} to see that it contributes
\[
\ll\Gamma\left(k-\frac32\right)\left(2\pi\left(v_3-\frac{v_2^2}{4v_1}\right)\right)^{-k+3/2}\frac1{k^{1-\varepsilon}}\cdot\frac1{1+\left(\left(v_3-\frac{v_2^2}{4v_1}\right)/k\right)^A} \cdot \frac{1}{1+|v_2/v_1|^A},
\] which completes the proof of the claim (\ref{weight-fn-rep}).

We then use this expression to complete the proof of the lemma. From the compact support of the functions $\phi_i$, it is clear that there exist compact subsets $C_1 \subset \R^+$, $C_2 \subset \R^+$, $C_3 \subset \R$, $C_4 \subset \R^+$ depending on the $\phi_i$ and the $\ell_i$ so that for all sufficiently large $k$, we have that \begin{align*}& \qquad \phi_1\left(\frac{k-2}{2\pi(2m+\ell_1)}\right)
\phi_3\left(-\frac{2m+\ell_1}{2(2r+\ell_2)}\right)\phi_2
\left(\frac{k-\frac32}{2\pi\left((2n+\ell_3)-\frac{(2r+\ell_2)^2}{4(2m+\ell_1)}\right)}\right) \neq 0 \\ &  \Rightarrow  \frac{m}{k} \in C_1, \quad \frac{n}{k} \in C_2, \quad \frac{r}{k} \in C_3, \quad \frac{4mn-r^2}{k^2} \in C_4.
\end{align*}

Pick non-negative valued functions  $h_1 \in C_c^\infty(\R^+), \ h_2 \in C_c^\infty(\R^+), \ h_3 \in C_c^\infty(\R)$, such that $h_i(x) = \|\phi_i\|_\infty$ for $x\in C_i$. We have
\[\tst \left|\phi_1\left(\frac{k-2}{2\pi(2m+\ell_1)}\right)
\phi_3\left(-\frac{2m+\ell_1}{2(2r+\ell_2)}\right)\phi_2\left(\frac{k-\frac32}
{2\pi\left((2n+\ell_3)-\frac{(2r+\ell_2)^2}{4(2m+\ell_1)}\right)}\right)\right|\leq h_1\left(\frac mk\right)h_2\left(\frac nk\right) h_3\left(\frac rk\right)  \]
and furthermore the left side is 0 unless $4mn-r^2 \asymp k^2$. Additionally, write $H(x)=1/(1+|x|)^A$ where $A >1$ is fixed and sufficiently large. From \eqref{eq:ebdfix2} it is not hard to see that
\[
\mathcal E(2m+\ell_1,2r+\ell_2,2n+\ell_3) \ll \frac{1}{k^{1-\varepsilon}} H\left( \frac{m}{k} \right)H\left( \frac{n}{k} \right)H\left( \frac{r}{k}\right)
\]
Therefore, writing $J(x_1,x_2,x_3)=h_1(x_1)h_2(x_2)h_3(x_3)+k^{-1+\varepsilon} H(x_1)H(x_2)H(x_3)$ we see that
\begin{align*}\label{e:whlt}
|W_{h,L}(T)| &\ll \frac{(2\pi)^{-2k}\Gamma(k-2)\Gamma\left(k-\frac32\right)}
{((2m+\ell_1)(2n+\ell_3)-(r+\ell_2/2)^2)^{k-3/2}}J\left(\frac mk, \frac nk, \frac rk\right)\\&\ll \frac{c_k}{k^3 \ |\text{disc}(T+L/2)|^{k-3/2}} \ J\left(\frac mk, \frac nk, \frac rk\right).
\end{align*}
Using Lemma \ref{lem:matrix-ineq} with the choices $A=T$, $B=T+L$, and recalling that $\disc(T)=-4\det T$, (\ref{shifted-convolution}) is
\begin{align*}
&\ll \frac{c_k}{k^3 \ \|F\|_2^2}\sum_{\substack{T =\mat{m}{r/2}{r/2}{n} \in \Lambda_2^+}}|R(T)R(T+L)|J\left(\frac mk, \frac nk, \frac rk\right).
\end{align*}
Applying Proposition \ref{prop:scpbd} we have that
\begin{equation} \label{eq:almostfixed}
\frac{c_k}{k^3 \ \|F\|_2^2}\sum_{\substack{T =\mat{m}{r/2}{r/2}{n} \in \Lambda_2^+}}|R(T)R(T+L)|h_1\left(\frac mk\right)h_2\left(\frac nk\right) h_3\left(\frac rk\right) \ll (\log k)^{-1/28+\varepsilon}.
\end{equation}

To complete the proof we first require a bound for $R(T)R(T+L)$. Recall that for $T =\begin{pmatrix} m & r/2 \\ r/2 &n \end{pmatrix}$ we write $|T|=|m|+|n|+|r|$. Note that for fixed $L$ we have that $\tmop{cont}(T)\tmop{cont}(T+L) \ll_L |T|$; to see this, observe that for $d|m,d|r$ and $e|m+\ell_1,e|r+\ell_2$, one has $e | \ell_1 \frac{r}{d}-\ell_2 \frac{m}{d}$, so $de \ll_{\ell_1,\ell_2} |m|+|r|$. Hence, using this and applying \eqref{e:RTrelationhalfint},  \eqref{e:RTckLfn}, and \eqref{eq:csquare}, along with GRH to bound the $L$-values, we get that
\[
\frac{c_k}{\lVert F \rVert_2^2} |R(T)R(T+L)| \ll  \frac{c_k}{\lVert F \rVert_2^2} \sum_{\substack{j_1 |\tmop{cont}(T) \\ j_2 | \tmop{cont}(T+L)}} \sqrt{j_1j_2} \, \left|c\left(\tfrac{|\tmop{disc}(T)|}{j_1^2} \right) c\left( \tfrac{|\tmop{disc}(T+L)|}{j_1^2}  \right) \right| \ll |T|^{1/2+\varepsilon},
\]
where we also used that $|\tmop{disc}(T)|^{\varepsilon}, \sum_{j|\tmop{cont}(T)} 1 \ll |T|^{\varepsilon}$.
Thus, applying the preceding estimate we conclude that
\[
\begin{split}
& \frac{c_k}{k^{4-\varepsilon} \ \|F\|_2^2}\sum_{\substack{T =\mat{m}{r/2}{r/2}{n} \in \Lambda_2^+}}|R(T)R(T+L)|H\left(\frac mk\right)H\left(\frac nk\right) H\left(\frac rk\right) \\
&\ll \frac{1}{k^{4-\varepsilon}} \sum_{\substack{T =\mat{m}{r/2}{r/2}{n} \in \Lambda_2^+}}|T|^{1/2+\varepsilon}H\left(\frac mk\right)H\left(\frac nk\right) H\left(\frac rk\right) \ll \frac{1}{k^{1/2-\varepsilon}}.
\end{split}
\]
Combining the preceding estimate with \eqref{eq:almostfixed}, which completes the proof.
\end{proof}

\subsection{The diagonal case}In this subsection we show that Proposition \ref{QUE-reformulation} implies \eqref{QUE-integraldiag}.
\begin{lem}Fix $\psi_1 \in C_c^\infty(\H), \ \psi_2 \in C_c^\infty(\R^+)$ and let $h = h^{\psi_1 \times \psi_2}$.  Assume GRH and assume the truth of Proposition \ref{QUE-reformulation}. Then  we have \begin{equation}\label{l:toshow}\frac1{\|F\|_2^2}\int\limits_{\Gamma\backslash\mathbb H_2}|F(Z)|^2 P_0^h(Z)(\det Y)^kd  \mu \longrightarrow \frac1{\rm{vol}(\Gamma\backslash\mathbb H_2)}\int\limits_{\Gamma\backslash\mathbb H_2}P_0^h(Z)\, d \mu \,\text{ as }\,k\longrightarrow \infty.
\end{equation}
\end{lem}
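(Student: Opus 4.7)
The plan is to unfold the Siegel--Poincar\'e series on both sides of \eqref{l:toshow}, identify the left-hand side with the sum appearing in Proposition \ref{QUE-reformulation}, and then match the limit against the right-hand side via a parallel unfolding.

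First, I would unfold $P_0^h$ against $|F|^2(\det Y)^k$, reducing the integral over $\Gamma\backslash\mathbb H_2$ to an integral over $\Gamma_\infty\backslash\mathbb H_2$ with integrand $|F(Z)|^2 h(\iota^{-1}(Y))(\det Y)^{k-3}$. Substituting the Fourier expansion of $F$ and integrating $X$ over $M_2^{\mathrm{sym}}(\Z)\backslash M_2^{\mathrm{sym}}(\R)$ kills all off-diagonal terms by Fourier orthogonality and yields
\[
\frac{1}{\|F\|_2^2}\sum_{T\in\Lambda_2^+}|a(T)|^2\int\limits_{M_2^{\mathrm{sym}}(\R)^+} h(\iota^{-1}(Y))\,e^{-4\pi\Tr(TY)}(\det Y)^{k-3}\,dY.
\]

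Next I would group the $T$-sum into $\SL_2(\Z)$-orbits. The invariance $|a(\T{A}TA)|^2 = |a(T)|^2$ together with the change of variable $Y\mapsto AY\T{A}$ (which preserves $dY$, $\det Y$, and sends $\iota^{-1}(Y)\mapsto A\cdot\iota^{-1}(Y)$) folds the orbit sum into the weight function as an incomplete Eisenstein series. Since $h = h^{\psi_1\times\psi_2}$ corresponds, under $\iota^{-1}(Y)=(z,\lambda)\in\mathbb H\times\R^+$, to $h(B)=\psi_1(B\cdot i)\psi_2(\det B)$, and $\SL_2(\Z)$ acts on $(z,\lambda)$ by acting on $z$ alone, the Eisenstein series that arises is $\Psi(z) := \sum_{A\in\SL_2(\Z)}\psi_1(Az)$; this lies in $C_c^\infty(\SL_2(\Z)\backslash\mathbb H)$ because $\psi_1$ has compact support and because only finitely many $\SL_2(\Z)$-translates of any fixed $z$ in a Siegel set can enter the support of $\psi_1$. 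After introducing the coordinates $(u,y,\lambda)$ on $M_2^{\mathrm{sym}}(\R)^+$ via $Y=\lambda g_z\T{g_z}$ (so that $\det Y=\lambda^2$ and $dY = \tfrac{2\lambda^2}{y^2}du\,dy\,d\lambda$), and using $|a(T)|^2 = |R(T)|^2|\disc(T)|^{k-3/2}$, the left-hand side of \eqref{l:toshow} reduces to
\[
\frac{2}{\|F\|_2^2}\sum_{T\in\Lambda_2^+/\SL_2(\Z)}\frac{|R(T)|^2|\disc(T)|^{k-3/2}}{\varepsilon(T)}\,G(T;\Psi,\psi_2),
\]
with $G(T;\Psi,\psi_2)$ exactly as defined in \eqref{e:defGTfkappanew}. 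This is precisely the sum controlled by Proposition \ref{QUE-reformulation}.

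Proposition \ref{QUE-reformulation} then gives the limit $\frac{\widetilde{\psi_2}(3)}{\mathrm{vol}(\Gamma\backslash\mathbb H_2)}\int_{\SL_2(\Z)\backslash\mathbb H}\Psi(z)\,du\,dy/y^2$. To conclude, I would unfold $\int_{\Gamma\backslash\mathbb H_2}P_0^h\,d\mu$ in exactly the same way: the $X$-integral contributes a factor $1$, the $\lambda$-integral produces $\widetilde{\psi_2}(3)$, and the $z$-integral produces $2\int_\mathbb H\psi_1(z)\,du\,dy/y^2$, which by the standard unfolding identity (with its factor of $2$ from the $\{\pm I\}$ ambiguity) equals $\int_{\SL_2(\Z)\backslash\mathbb H}\Psi(z)\,du\,dy/y^2$. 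The two sides of \eqref{l:toshow} then agree. Beyond this essentially formal bookkeeping there is no substantive difficulty remaining; the real content of the diagonal case is packaged inside Proposition \ref{QUE-reformulation}.
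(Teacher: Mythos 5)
Your argument follows the same path as the paper's: unfold, insert the Fourier expansion, fold the $T$-sum into $\SL_2(\Z)$-orbits so that the orbit sum is absorbed into the incomplete Eisenstein series $\Psi=g_{\psi_1}:=\sum_{A\in\SL_2(\Z)}\psi_1(Az)$, apply Proposition \ref{QUE-reformulation} with $(g,\kappa)=(g_{\psi_1},\psi_2)$, and compare with the unfolded right-hand side; the paper merely performs the two steps in the opposite order, first passing to $P(\Z)\backslash\H_2$ and then unfolding. One bookkeeping point deserves attention: the identity $\int_{\Gamma\backslash\H_2}|F|^2P_0^h(\det Y)^k\,\mathrm d\mu=\int_{\Gamma_\infty\backslash\H_2}|F|^2h_0(\det Y)^k\,\mathrm d\mu$ that opens your computation is off by a factor of $2$, because $-I_4$ lies in $\Gamma$ and acts trivially on $\H_2$ but does not lie in $\Gamma_\infty=N(\Z)$, so the cosets $\Gamma_\infty\gamma$ and $\Gamma_\infty(-\gamma)$ are distinct and each geometric translate is counted twice in $P_0^h$. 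The paper sidesteps this by unfolding to $P(\Z)\backslash\H_2$ (which does contain $-I_4$) and correspondingly arrives at the constant $4$, not $2$, in front of $\sum_{T}\frac{|R(T)|^2}{\varepsilon(T)}|\disc(T)|^{k-3/2}G(T;g_{\psi_1},\psi_2)$; that constant is the one calibrated against the $\frac{1}{2\,\mathrm{vol}(\Gamma\backslash\H_2)}$ in Proposition \ref{QUE-reformulation} and the residue computation of Section \ref{s:mainterm}. Since you apply the very same unfolding to $\int_{\Gamma\backslash\H_2}P_0^h\,\mathrm d\mu$ on the right-hand side, the two missing factors of $2$ cancel and your final conclusion is still valid, but each of your displayed intermediate identities is too small by a factor of $2$ and should be corrected if stated on its own.
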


\begin{proof} Recall that for $g \in G$, $$P_0^h(g) = \sum_{\gamma\in \Gamma_\infty \bs \Gamma}h_0(\gamma g) = \sum_{\gamma \in P(\Z) \bs \Gamma} \sum_{\eta \in \SL_2(\Z)} h_0(m(\eta) \gamma g)= \sum_{\gamma \in P(\Z) \bs \Gamma}h'_0(\gamma g),$$ where we define the function $h_0'$ on $G$ via $$h_0'(g) := \sum_{\eta \in \SL_2(\Z)} h_0(m(\eta) g).$$  Since $h_0'$ is right $K_\infty$-invariant, it defines a function on $G/K_\infty \simeq \H_2$ which we also denote as $h_0'$.
Therefore, for $Z \in \H_2$, we have $P_0^h(Z) = \sum_{\gamma \in P(\Z)\bs \Gamma}h'_0(\gamma Z)$. The space $P(\Z)\bs \H_2$ may be parametrized by the points $(n(X)m(A))\cdot iI_2$ with  $X=\begin{pmatrix}
 x_1 & x_2 \\
 x_2 & x_3
 \end{pmatrix}$, $x_i \in \Z \bs \R$, and $A = \mat{1}{u}{}{1}\mat{y^{1/2}\lambda^{1/2}}{}{}{y^{-1/2}\lambda^{1/2}}$ with $\lambda\in \R^+$ and $u + iy \in \SL_2(\Z) \bs \H$. Note also that we may write $n(X)m(A))\cdot iI_2 = X + i \lambda Y_{u,y},$ where we have set $Y_{u,y}:=\begin{pmatrix}
y+u^2/y & u/y \\
u/y & 1/y
\end{pmatrix}$. Under the substitution $X+iY \mapsto X+i\lambda Y_{u,y}$ the measure $d \mu$ is replaced by $2y^{-2}\lambda^{-4}\mathrm d x_1\,\mathrm d x_2\,\mathrm d x_3\,\mathrm d y\,\mathrm d \lambda\,\mathrm d u$. Finally, an easy calculation shows that $h'_0(X + i\lambda Y_{u,y}) =  g_{\psi_1}(u+iy)\psi_2(\lambda),$ where the function $g_{\psi_1}: \SL_2(\Z)\bs \H \longrightarrow \C$ is defined by $$g_{\psi_1}(z) := \sum_{\eta \in \SL_2(\Z)} \psi_1 (\eta z).$$ Therefore, by unfolding we have
\begin{equation}\begin{split}\label{QUE-perioddiag}
&\int\limits_{\Gamma\backslash\mathbb H_2}|F(Z)|^2 P_0^h(Z)(\det Y)^k\,\mathrm d  \mu =\frac1{\|F\|_2^2}\int\limits_{P(\Z)\backslash\mathbb H_2}|F(Z)|^2 h_0'(Z)(\det Y)^k\,\mathrm d  \mu\\
&=2\int\limits_{u+iy \in \SL_2(\Z) \bs \H}\int\limits_{\lambda>0}\left(\sum_{S\in\Lambda_2^+}|a(S)|^2
e^{-4\pi \lambda\Tr(SY_{u,y})}\right)\\& \quad \qquad \qquad g_{\psi_1}(u+iy)\psi_2(\lambda)\lambda^{2k-4}y^{-2}\,\mathrm d u\,\mathrm d y\,\mathrm d \lambda. \\
\end{split} \end{equation}
Recall that $a(S) = a(\T{A}SA)$ for $A \in \SL_2(\Z)$. For $z = u+iy$, write $g_z = \mat{1}{u}{0}{1}\mat{y^{\frac12}}{0}{0}{y^{-\frac12}}$, so that $Y_{u,y} = g_z \T{g_z}.$  Note that $\Tr(\T{A}SAY_{u,y}) = \Tr(SAg_z \T{g_z}\T{A}) = \Tr(Sg_{Az} \T{g_{Az}}).$ Recall that $\varepsilon(T):=|\{A\in\mathrm{SL}_2(\Z):\,\T{A}TA=T\}|$.
We see that \eqref{QUE-perioddiag} equals
\begin{align*}
&2\int\limits_{z\in \SL_2(\Z) \bs \H}\sum_{A \in \SL_2(\Z)}\int\limits_{\lambda>0}\left(\sum_{S\in\Lambda_2^+/\SL_2(\Z)}\frac{|a(S)|^2}{\eps(S)}
e^{-4\pi \lambda\Tr(Sg_{Az}\T{g_{Az}})}\right)\\& \qquad \qquad \qquad \qquad \times g_{\psi_1}(Az)\psi_2(\lambda)\frac{\lambda^{2k-4}}{y^{2}}\,\mathrm d u\,\mathrm d y\,\mathrm d \lambda\\
&=4\sum_{T \in \Lambda_2^+/\SL_2(\Z)}\frac{|R(T)|^2}{\varepsilon(T)} |\disc(T)|^{k-\frac32} G(T; g_{\psi_1}, \psi_2),
\end{align*} where in the last step we use the fact that $\SL_2(\Z)/\{\pm 1\}$ acts simply transitively on $\H_2$.
On the other hand, we have that
\begin{align*}
\int\limits_{\Gamma\backslash\mathbb H_2}P_0^h(Z)\,\mathrm d  \mu &=\int\limits_{P(\Z)\backslash\mathbb H_2} h_0'(Z)\mathrm d  \mu\\
&\label{QUE-perioddiag}=2\int\limits_{u+iy \in \SL_2(\Z) \bs \H}\int\limits_{\lambda>0}g_{\psi_1}(u+iy)\psi_2(\lambda)\frac{\mathrm d u \mathrm d y \mathrm d \lambda}{y^{2}\lambda^{4}} \nonumber\\&=2 \ \widetilde{\psi_2}(3)\int\limits_{\mathrm{SL}_2(\mathbb Z)\backslash\mathbb H}g_{\psi_1}(u+iy)\frac{\mathrm d u \mathrm d  y}{y^2}.
\end{align*}
Therefore, using Proposition \ref{QUE-reformulation}, we see that \eqref{l:toshow} holds.
\end{proof}
\section{Proof of Proposition \ref{prop:scpbd}}\label{s:3.1}

\noindent Our next objective is to establish Proposition \ref{prop:scpbd}.

Let $f$ be a weight $2k-2$ newform of fixed level\footnote{In this section we work with general level, but our application only requires the case of the full level.} $N$.
Let $F_1,F_2,F_3$ be Schwartz functions. Given a function $G: \mathbb R^2 \longrightarrow \mathbb C$, non-negative integers $f_1,f_2$, and integers $\ell_1,\ell_2,\ell_3$ we write
\begin{align*}
&\sumprime G(d_1,d_2) \\&:= \sumfund_{d_1,d_2 \in \mathbb Z} G(d_1,d_2) \sum_{\substack{r,m,n \in \mathbb Z \\ (r^2-4mn)/f_1^2=d_1 \\ ((r+\ell_1)^2-4(m+\ell_2)(n+\ell_3))/f_2^2=d_2}}  F_1\bigg( \frac{r}{k} \bigg)F_2\bigg( \frac{m}{k} \bigg)F_3\bigg( \frac{n}{k} \bigg),
\end{align*}
where $\sumfund$ denotes that the sum over fundamental discriminants $d_1,d_2$. Given integers $\ell_1,\ell_2,\ell_3$ write $l=\prod_{j : \ell_j \neq 0} \ell_j$.
The following auxiliary result plays a key role in the proof of Proposition \ref{prop:scpbd} and we shall establish this first.
\begin{prop}\label{prop:lfunctionmoment}
Assume GRH. Let $(\ell_1,\ell_2,\ell_3) \in \mathbb Z^3 \setminus \{(0,0,0)\}$ and $f_1,f_2 \in \mathbb N$. Then, we have that
\[
\sumprime
\sqrt{ L(\tfrac12,f\otimes \chi_{d_1}) L(\tfrac12,f\otimes\chi_{d_2})}
\ll k^3 \exp\bigg( \sum_{p|l f_1f_2} \frac{1}{\sqrt{p}} \bigg) \frac{1}{(\log k)^{1/4-\varepsilon}},
\]
where the implied constant depends on $N, F_1,F_2,F_3$, and $\varepsilon$ (but not on in $\ell_1,\ell_2,\ell_3,f_1,f_2$).
\end{prop}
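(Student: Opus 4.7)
The plan is to follow Soundararajan's conditional method for upper bounds on moments of central $L$-values \cite{Soundararajan-2000}. Under GRH, one has the pointwise inequality
\[
\log L(\tfrac12, f\otimes\chi_d) \le \sum_{p\le x} \frac{\lambda_f(p)\chi_d(p)}{\sqrt p}\,\frac{\log(x/p)}{\log x} + O\!\left(\frac{\log k}{\log x}\right),
\]
valid for any $x \le k^{O(1)}$ and fundamental $|d|\le k^{O(1)}$ (with a prime-square contribution absorbed into $O(1)$). Applying this to both $d_1$ and $d_2$, halving and exponentiating produces
\[
\sqrt{L(\tfrac12, f\otimes\chi_{d_1}) L(\tfrac12, f\otimes\chi_{d_2})} \ll (\log k)^{o(1)} \exp\!\left(\tfrac12 \sum_{p\le x}\lambda_f(p)\frac{\chi_{d_1}(p)+\chi_{d_2}(p)}{\sqrt p}\frac{\log(x/p)}{\log x}\right).
\]
The problem is thereby reduced to averaging this exponential, with $(d_1,d_2)$ determined by $(r,m,n)$, against the weights $F_1(r/k)F_2(m/k)F_3(n/k)$.

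Following the Soundararajan--Young / Radziwi\l\l--Soundararajan paradigm \cite{Soundararajan-Young-2010, Radziwill-Soundararajan-2015}, I would next split the prime range dyadically and truncate each exponential by its Taylor polynomial of degree $M \asymp \log k / \log\log k$. The estimate reduces to bounds for polynomial moments of the shape
\[
\sumprime \lambda_f(n_1)\lambda_f(n_2)\, \frac{\chi_{d_1}(n_1)\chi_{d_2}(n_2) + (\text{sign permutations})}{\sqrt{n_1 n_2}},
\]
taken over squarefree $n_1, n_2$ of size at most $x^M$. For each fixed pair $(n_1,n_2)$, I would remove the fundamentality constraint on $d_1, d_2$ by M\"obius inversion, rewrite the Kronecker symbols as Dirichlet characters to modulus $4 n_1 n_2 f_1^2 f_2^2$, and apply Poisson summation in the three variables $r, m, n$. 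The archimedean decay of $\widehat{F_i}$ localises the dual sum to a manageable range, leaving a three-variable complete character sum to be controlled.

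The crux, and the main obstacle, is then evaluating the local sum
\[
S_{p^a}(a_1,a_2,a_3) := \sum_{r,m,n \bmod p^a}\!\bigg(\frac{r^2-4mn}{p^a}\bigg)\bigg(\frac{(r+\ell_1)^2-4(m+\ell_2)(n+\ell_3)}{p^a}\bigg) e\!\left(\frac{a_1 r+a_2 m + a_3 n}{p^a}\right),
\]
for each prime power $p^a$ dividing the modulus (together with additional local twists coming from $n_1, n_2$). For $p \nmid l f_1 f_2$ the two shifted quadratic forms cut out a variety of the expected codimension that is smooth in the relevant sense, so one expects a Deligne/Weil-type bound producing full square-root cancellation of order $p^{3a/2}$. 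At bad primes $p \mid l f_1 f_2$ only near-trivial bounds are available, and collecting these losses through multiplicativity is exactly what produces the exceptional factor $\exp\bigl(\sum_{p \mid l f_1 f_2} p^{-1/2}\bigr)$ appearing in the statement.

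Reassembling --- the Taylor expansion, the Poisson summation, the local character sum bounds, and Cauchy--Schwarz to balance the four sign-combinations of $\chi_{d_1}, \chi_{d_2}$ --- and optimising $M$ and $x$ delivers the claimed saving $(\log k)^{-1/4+\varepsilon}$. The essential new difficulty, compared with \cite[Prop.~3.1]{Lester-Radziwill-2020}, is this three-variable character sum: in the single-shift setting of loc.\ cit., Poisson summation yields a one-dimensional sum whose evaluation is essentially classical, whereas here it yields a three-dimensional sum involving two quadratic forms related by a non-scalar affine shift $(\ell_1,\ell_2,\ell_3)$, and the bookkeeping needed to track exactly when each prime ramifies (and to separate the contributions of primes dividing $l$, $f_1$, $f_2$ from the generic ones) is substantially more intricate --- which is precisely the content of Section~\ref{s:char}.
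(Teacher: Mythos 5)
Your overall architecture is the right one and matches the paper's: a GRH upper bound for $\log L(\tfrac12,f\otimes\chi_d)$ as a short Dirichlet polynomial over primes (Chandee's lemma), high moments of that polynomial computed by splitting $m,n,r$ into residue classes and applying Poisson summation, and a final optimisation giving the exponent $1/4$. But the heart of the argument is mishandled. Because the modulus after Poisson summation is at most $k^{2/3}$ while the variables range over an interval of length $\asymp k$, the rapid decay of $\widehat F_i$ kills \emph{every} nonzero frequency; the sum you write down, $S_{p^a}(a_1,a_2,a_3)$ with a nontrivial additive twist, never needs to be estimated, and the appeal to an unverified Deligne/Weil bound for it is a red herring. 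What actually has to be done — and what you do not address — is the \emph{exact evaluation} of the zero-frequency local sums $\sum_{a,b,c\,(p)}\bigl(\frac{a^2-4bc}{p}\bigr)^{\alpha}\bigl(\frac{(a+\ell_2)^2-4(b+\ell_1)(c+\ell_3)}{p}\bigr)^{\beta}$. The decisive structural fact is that when $\alpha$ and $\beta$ are both odd and $(\ell_1,\ell_2,\ell_3)\not\equiv(0,0,0)\ (p)$ this sum is $O(p^2)$ rather than $\sim p^3$: it is this gain of a factor $p$ in the cross terms that makes $\chi_{d_1}$ and $\chi_{d_2}$ behave independently and produces the variance $\tfrac12\log\log k$ for $\tfrac12(\log L(d_1)+\log L(d_2))$, hence the saving $(\log k)^{-1/4}$. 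The paper proves this elementarily, by computing the discriminant of the argument as a quadratic in one variable and invoking the Berndt--Evans--Williams formula for $\sum_x\bigl(\frac{ax^2+bx+c}{p}\bigr)$; no algebraic-geometric input is needed or used.

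Two further points. First, the factor $\exp\bigl(\sum_{p\mid lf_1f_2}p^{-1/2}\bigr)$ does not come from ``near-trivial bounds at bad primes in the character sum'': the paper simply omits the primes $p\mid lf_1f_2N$ from the Dirichlet polynomial (so that $\chi_{d_i}(p)$ can be rewritten as $\bigl(\frac{f_i^2d_i}{p}\bigr)$ and the local analysis stays clean) and bounds their contribution to $\log L$ trivially by $2\sum_{p\mid lf_1f_2}p^{-1/2}$, which exponentiates to the stated factor. Second, your parameter choices do not close: a single Taylor truncation of degree $M\asymp\log k/\log\log k$ forces $x^{2M}\le k^{2/3}$, i.e.\ $x\le(\log k)^{1/3}$, and then the GRH error term $O(\log k/\log x)$ in the upper bound for $\log L$ exponentiates to something far larger than any power of $\log k$. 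The paper avoids this by working with the distribution function of $P(d_1,d_2;x)$ and choosing $x=k^{1/(\varepsilon V)}$ and the moment order $\ell$ as functions of the deviation $V$ (together with the auxiliary split at $z=x^{1/\log\log k}$), before integrating over $V$. You would need an analogous $V$-dependent (or multi-range) scheme for your version to deliver $(\log k)^{-1/4+\varepsilon}$.
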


\noindent We assume GRH for $L(s,f\otimes \chi_d)$, for all fundamental discriminants, and $L(s,\tmop{sym}^2 f)$.
The argument to prove Proposition \ref{prop:lfunctionmoment} uses Soundararajan's method \cite{Sound-2009} for bounding moments of $L$-functions along with some of the techniques developed in \cite{Lester-Radziwill-2020}, where a similar, yet simpler moment bound is required. We also require the following lemma \cite[Lemma 7]{Radziwill-Soundararajan-2015}.
\begin{lem} \label{lem:Poisson} Let $F$ be a Schwartz function
and $\eta \Mod q$ be a congruence class modulo $q$. Suppose $n$ is an odd integer co-prime to $q$. Then
\[
\sum_{d \equiv \eta \, (q)} \bigg(\frac{d}{n} \bigg) F(d)= \frac{1}{q n} \bigg( \frac{q}{n}\bigg) \sum_{j \in \mathbb Z} \widehat F\bigg( \frac{j}{nq}\bigg) e\bigg( \frac{j \eta \overline n}{q}\bigg) g_j(n),
\]
where for $j \in \mathbb Z$ and $n \in \mathbb N$ we have set
\[
g_j(n) :=\sum_{b \, (n) } \bigg( \frac{b}{n} \bigg) e\bigg( \frac{jn}{b}\bigg).
\]
\end{lem}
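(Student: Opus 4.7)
The plan is a straightforward Chinese Remainder Theorem (CRT) decomposition followed by the classical Poisson summation formula for arithmetic progressions. Since $\gcd(n,q)=1$, I first split the progression according to residues modulo $n$:
\[
\sum_{d \equiv \eta \, (q)} \bigg(\frac{d}{n}\bigg) F(d) = \sum_{b \pmod n} \bigg(\frac{b}{n}\bigg) \sum_{\substack{d \equiv \eta \, (q) \\ d \equiv b \, (n)}} F(d).
\]
By CRT the joint congruence picks out a unique residue $c = c(b) \pmod{nq}$, which can be written explicitly as $c \equiv \eta n \overline{n} + b q \overline{q} \pmod{nq}$, where $\overline{n}$ denotes the inverse of $n$ modulo $q$ and $\overline{q}$ denotes the inverse of $q$ modulo $n$. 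The key CRT identity to record at this stage is the additive decomposition modulo $1$:
\[
\frac{c}{nq} \equiv \frac{\eta \overline{n}}{q} + \frac{\overline{q}\, b}{n} \pmod 1.
\]

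Next, to each inner sum over $d \equiv c \, (nq)$ apply the standard Poisson summation identity
\[
\sum_{d \equiv c \, (nq)} F(d) = \frac{1}{nq} \sum_{j \in \mathbb Z} \widehat{F}\bigg( \frac{j}{nq}\bigg) e\bigg( \frac{jc}{nq}\bigg),
\]
which is valid unconditionally since $F$ is Schwartz. Substituting the additive CRT decomposition factorises the phase as $e(jc/(nq)) = e(j \eta \overline{n}/q)\, e(\overline{q}\, jb/n)$, so that after exchanging the order of summation the right-hand side becomes
\[
\frac{1}{nq}\sum_{j \in \mathbb Z}\widehat{F}\bigg(\frac{j}{nq}\bigg) e\bigg(\frac{j\eta\overline{n}}{q}\bigg)\sum_{b \pmod n}\bigg(\frac{b}{n}\bigg) e\bigg(\frac{\overline{q}\, jb}{n}\bigg).
\]

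Finally, to bring the inner character sum into the form $g_j(n)$, I make the substitution $b \mapsto qb$, which is a bijection on $\mathbb Z/n\mathbb Z$ because $\gcd(q,n)=1$. The multiplicativity of the Jacobi symbol gives $(qb/n) = (q/n)(b/n)$, and the phase $e(\overline{q}\, j\cdot qb/n) = e(jb/n)$ simplifies cleanly, producing the factor $(q/n)$ outside and $g_j(n)$ inside. Combining everything yields the stated formula. There is no real obstacle — the argument is essentially bookkeeping — but two care points deserve attention in the write-up: (i) verifying the congruence $c/(nq) \equiv \eta\overline{n}/q + \overline{q}\, b/n \pmod 1$ precisely so the phase is correct, and (ii) tracking the direction of inverses, so that the symbol $(q/n)$ appears rather than $(\overline{q}/n)$. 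The Schwartz hypothesis on $F$ ensures absolute convergence and legitimises the swap of summations throughout.
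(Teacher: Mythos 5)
Your argument is correct and complete: splitting according to $d \bmod n$, recombining via CRT to a single class $c \bmod nq$ with the additive decomposition $\tfrac{c}{nq} \equiv \tfrac{\eta\overline n}{q} + \tfrac{b\overline q}{n} \pmod 1$, applying Poisson summation in the progression $d \equiv c \ (nq)$, and then renormalising the inner sum by $b \mapsto qb$ to extract the factor $\bigl(\tfrac{q}{n}\bigr)$ is exactly the standard proof, and the interchange of summations is justified by the Schwartz decay as you say. Note that the paper itself gives no proof of this lemma — it is quoted verbatim from Radziwi\l\l--Soundararajan \cite[Lemma 7]{Radziwill-Soundararajan-2015} — and their proof is the same computation you carried out, so there is nothing to contrast. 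One small point worth flagging: your derivation produces the inner sum
\[
g_j(n) = \sum_{b \ (n)} \Bigl(\frac{b}{n}\Bigr) e\Bigl(\frac{jb}{n}\Bigr),
\]
which is the correct (Gauss-sum) definition; the expression $e\bigl(\tfrac{jn}{b}\bigr)$ printed in the statement is a misprint (it is not even well defined as $b$ runs over residue classes mod $n$), so you were right to work with $e(jb/n)$.
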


\subsection{The character sum}\label{s:char}
\noindent  To prove Proposition
\ref{prop:lfunctionmoment} we will need to estimate
a certain intricate character sum.
Let $s,t$ be odd natural numbers with $s,t\leq k^{1/2}$ and $\ell_1,\ell_2,\ell_3$ be fixed integers. Let $F_1,F_2,F_3$ be Schwartz functions.
Define $P_s$ to be the set of primes dividing $s$ but not $t$, $P_t$ be the primes dividing $t$ but not $s$, and $P_{s,t}$ be the primes dividing $s,t$. Also, let $\alpha_p$ be the non-negative integer such that $p^{\alpha_p} || s$ and $\beta_p$ be the non-negative integer such that $p^{\beta_p} || t$. Given $G: \mathbb R^2 \longrightarrow \mathbb C$ and integers $\ell_1,\ell_2,\ell_3$ we use the notation
\begin{align*}
&\sumdprime G(d_1,d_2)\\&:= \sum_{d_1,d_2 \in \mathbb Z} G(d_1,d_2)  \sum_{\substack{r,m,n \in \mathbb Z \\ r^2-4mn=d_1 \\ (r+\ell_1)^2-4(m+\ell_2)(n+\ell_3)=d_2}} F_1\bigg( \frac{r}{k} \bigg)F_2\bigg( \frac{m}{k} \bigg)F_3\bigg( \frac{n}{k} \bigg),
\end{align*}
where we allow $d_1,d_2$ to be any integers (not just fundamental discriminants).
\begin{prop}\label{prop:poisson-applied}
Let $F_1,F_2,F_3$ be Schwartz functions. Then for odd $s,t\le k^{1/3}$ we have that
\begin{equation} \label{eq:charactersum}
\sumdprime \Big(\frac{d_1}{s}\Big) \Big( \frac{d_2}{t}\Big)=k^3 \widehat F_1(0) \widehat F_2(0)\widehat F_3(0) f(s,t)+O(k^{-100}),
\end{equation}
where
\begin{equation} \label{eq:fmultdef}
\begin{split}
f(s,t):=&\prod_{\substack{p \in P_s \\ 2|\alpha_p }} \bigg( 1-\frac1p \bigg) \prod_{\substack{p \in P_s \\ 2 \nmid \alpha_p}} \bigg(\frac1p-\frac{1}{p^2}\bigg) \prod_{\substack{p \in P_t \\ 2|\beta_p }} \bigg( 1-\frac1p \bigg)  \prod_{\substack{p \in P_t \\ 2 \nmid \beta_p}} \bigg(\frac1p-\frac{1}{p^2}\bigg) \\
& \times\prod_{\substack{p \in P_{s,t}  \\ 2|\alpha_p,\beta_p} }\ell_1(p)\prod_{\substack{p \in P_{s,t}  \\ \alpha_p \not\equiv \beta_p\,(2)} }\ell_2(p)  \prod_{\substack{p \in P_{s,t}  \\ 2 \nmid \alpha_p, \beta_p} }\ell_3(p)
\end{split}
\end{equation}
and
\[
\ell_1(p)=1+O\bigg( \frac1p\bigg), \quad \ell_2(p)=O\bigg( \frac{1}{p}\bigg), \quad \ell_{3}(p)=\begin{cases}
1-\frac1p \text{ if } p|\ell_j, \, \forall j \in\{1,2,3\},\\
O\big( \frac{1}{p} \big) \text{ otherwise.}
\end{cases}
\]
\end{prop}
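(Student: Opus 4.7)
The plan is to attack the character sum by Poisson summation in all three variables $r,m,n$ and then reduce the arithmetic part to a product of local sums that can be evaluated prime-by-prime. Writing $Q_1(r,m,n)=r^2-4mn$ and $Q_2(r,m,n)=(r+\ell_1)^2-4(m+\ell_2)(n+\ell_3)$, the inner sum is precisely $\sum_{r,m,n\in\Z}(Q_1/s)(Q_2/t)F_1(r/k)F_2(m/k)F_3(n/k)$. Setting $N=\operatorname{lcm}(s,t)\le st\le k^{2/3}$ and splitting each variable into residue classes mod $N$, Poisson summation converts the inner smooth sum into $(k/N)\sum_{j}\widehat F_i(jk/N)e(j\,\cdot/N)$. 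Because $|jk/N|\ge k^{1/3}$ for every $j\ne 0$, the Schwartz decay of $\widehat F_i$ kills all non-zero frequencies up to $O(k^{-100})$, leaving
\[
\frac{k^3}{N^3}\widehat F_1(0)\widehat F_2(0)\widehat F_3(0)\sum_{\rho,\mu,\nu\,(N)}\Big(\tfrac{Q_1(\rho,\mu,\nu)}{s}\Big)\Big(\tfrac{Q_2(\rho,\mu,\nu)}{t}\Big).
\]

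Next, by the Chinese remainder theorem the normalized character sum $N^{-3}\sum_{\rho,\mu,\nu\,(N)}$ factors as a product over primes $p\mid st$ of local sums modulo $p^{\max(\alpha_p,\beta_p)}$. Since the Jacobi symbol $(\cdot/p^a)$ depends only on the residue modulo $p$ (equalling $(\cdot/p)$ if $a$ is odd and the indicator of $p\nmid\cdot$ if $a$ is even), each local sum lifts down to modulus $p$, reducing everything to sums $p^{-3}\sum_{\rho,\mu,\nu\,(p)}$ of the relevant combination of the two symbols. It remains to evaluate these local sums for the three disjoint families $P_s$, $P_t$, $P_{s,t}$, showing that they combine multiplicatively into the function $f(s,t)$.

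For $p\in P_s$ with $\alpha_p$ even, the local sum counts triples with $p\nmid Q_1$; using $\sum_{\rho\,(p)}\big(\tfrac{\rho^2-4\mu\nu}{p}\big)^2$ or equivalently counting points on the projective conic $Q_1\equiv 0$ in $\mathbb F_p^3$ gives the density $1-1/p$. For $\alpha_p$ odd we instead apply the standard identity $\sum_{\rho\,(p)}\big(\tfrac{\rho^2-a}{p}\big)=p-1$ if $a=0$ and $-1$ otherwise, yielding $(1/p-1/p^2)$. The case $p\in P_t$ is symmetric. For $p\in P_{s,t}$, a case analysis on the parity pair $(\alpha_p\bmod 2,\beta_p\bmod 2)$ reduces the task to joint counts of the $\mathbb F_p^3$-loci of $Q_1$ and $Q_2$, controlled by the shifts $\ell_i\pmod p$. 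In the odd–odd case $\sum(Q_1Q_2/p)$ is the indicator that $Q_2\equiv Q_1\,(p)$ (hence $p\mid\ell_1,\ell_2,\ell_3$) producing $1-1/p$, and is $O(p^2)/p^3=O(1/p)$ otherwise; the even–even case produces $1-1/p+O(1/p)$ uniformly by counting, while the two mixed-parity cases give $O(1/p)$ via the same Legendre sum identity applied in one variable at a time. Collecting these local evaluations reproduces precisely the formula \eqref{eq:fmultdef}.

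The central technical obstacle is the joint local computation at $p\in P_{s,t}$: because $Q_1$ and $Q_2$ are the same quadratic form in variables shifted by $(\ell_1,\ell_2,\ell_3)$, the intersection geometry of the two loci $\{Q_i\equiv 0\}\subset\mathbb F_p^3$ behaves qualitatively differently depending on whether $(\ell_1,\ell_2,\ell_3)\equiv(0,0,0)\pmod p$. Distinguishing these regimes uniformly in $p$, and checking that the four parity patterns produce the claimed sizes of $\ell_1(p),\ell_2(p),\ell_3(p)$ with the exact main terms needed to match $f(s,t)$, is the main bookkeeping step; all other errors are under control because $s,t\le k^{1/3}$ ensures that both the Poisson truncation and the lifting to residues mod $p$ are lossless up to $O(k^{-100})$.
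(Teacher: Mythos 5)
Your overall architecture is the same as the paper's: reduce the character to the primes dividing $st$ via complete multiplicativity of the Jacobi symbol, split $r,m,n$ into residue classes modulo a modulus of size at most $k^{2/3}$, apply Poisson summation so that Schwartz decay kills every nonzero frequency (leaving $k^3\widehat F_1(0)\widehat F_2(0)\widehat F_3(0)$ times a normalized complete character sum), factor that sum by the Chinese remainder theorem into local sums modulo single primes $p$, and evaluate those by a parity case analysis on $(\alpha_p,\beta_p)$. The evaluations you give for $p\in P_s$ and $p\in P_t$, and for the even--even and mixed-parity cases at $p\in P_{s,t}$, are correct and match Lemma \ref{local-sums}.

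The genuine gap is the odd--odd case at $p\in P_{s,t}$ with $(\ell_1,\ell_2,\ell_3)\not\equiv(0,0,0)\pmod p$: you assert that
$\sum_{a,b,c\,(p)}\bigl(\tfrac{a^2-4bc}{p}\bigr)\bigl(\tfrac{(a+\ell_2)^2-4(b+\ell_1)(c+\ell_3)}{p}\bigr)=O(p^2)$
but give no argument, and this is the crux of the proposition (you flag it as "the main bookkeeping step" but bookkeeping is not what is needed). The trivial bound is $p^3$, so one needs a full factor of $p$ of cancellation, and the saving is not automatic: the danger is precisely that the two quadrics are translates of one another, so their product could conceivably behave like a square on a positive proportion of points. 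The paper's argument views the product as a quadratic polynomial $Ac^2+Bc+C$ in $c$, applies the complete one-variable quadratic character sum (Lemma \ref{BEW}), and then must verify that the discriminant, which turns out to be the perfect square $16\bigl(a^2\ell_1-b\ell_2^2+4(b^2\ell_3+b\ell_1\ell_3)-2ab\ell_2\bigr)^2$, vanishes for only $O(p)$ pairs $(a,b)$; this requires a further case analysis according to which of $\ell_1,\ell_3,\ell_2$ is nonzero mod $p$ (in the last case the degenerate locus is $b\equiv 0$ plus one $a$ per remaining $b$), together with a separate treatment of the pairs with $p\mid A$. Without verifying this non-degeneracy of the discriminant, the one-variable sum could return $(p-1)(\tfrac Ap)$ for $\asymp p^2$ pairs and the local sum would be $\asymp p^3$, destroying the claimed $\ell_2(p),\ell_3(p)=O(1/p)$. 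So your proposal is the right route, but the step you defer is exactly the one that carries the content of the proposition.
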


\subsubsection{Local sums}

We will make use of the following easy observation repeatedly.

\begin{lem}\label{main-obs}
Let $p$ be an odd prime and $d$ be a congruence class $\Mod p$. When $a,b,c$ run over congruence classes $\Mod p$, $a^2-4bc$ attains the value $d \Mod p$ for $p^2+(\tfrac{d}{p})p$ triples.
\end{lem}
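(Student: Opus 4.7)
The plan is to fix the first coordinate $a$, count the pairs $(b,c) \pmod p$ solving $4bc \equiv a^2 - d \pmod p$ for each such $a$, and then sum over $a \in \Z/p\Z$.

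First I would use that $p$ is odd so $4$ is invertible mod $p$; the condition $4bc \equiv a^2-d$ is therefore equivalent to $bc \equiv (a^2-d)/4 \pmod p$. For any nonzero target $t \pmod p$, the equation $bc \equiv t$ has exactly $p-1$ solutions (one value of $c$ for each nonzero $b$). For the target $t \equiv 0$, the equation $bc \equiv 0$ has $2p-1$ solutions ($b=0$ or $c=0$, with $(0,0)$ counted once). Thus the pair count depends only on whether $a^2 \equiv d \pmod p$ or not.

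Next I would let $S_d := \{a \in \Z/p\Z : a^2 \equiv d \pmod p\}$. With the convention $\bigl(\tfrac{0}{p}\bigr)=0$, a standard quadratic residue count gives $|S_d| = 1 + \bigl(\tfrac{d}{p}\bigr)$ uniformly in $d$ (two square roots when $d$ is a nonzero square, none when $d$ is a nonresidue, and the single value $a=0$ when $d \equiv 0$).

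Combining the previous two steps, the total number of triples $(a,b,c) \in (\Z/p\Z)^3$ with $a^2-4bc \equiv d \pmod p$ is
\[
|S_d|(2p-1) + (p-|S_d|)(p-1) = p^2 - p + p|S_d| = p^2 + \bigl(\tfrac{d}{p}\bigr)p,
\]
which is the claimed formula. The argument is entirely elementary; the only point requiring a little care is checking that the single closed form $p^2 + \bigl(\tfrac{d}{p}\bigr)p$ correctly handles the degenerate case $d \equiv 0 \pmod p$, which it does precisely because $\bigl(\tfrac{0}{p}\bigr) = 0$ forces $|S_0| = 1$.
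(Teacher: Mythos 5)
Your proof is correct. It takes the dual route to the paper's: you fix $a$ and count the pairs $(b,c)$ with $bc\equiv(a^2-d)/4$, splitting according to whether the target is zero ($2p-1$ pairs) or nonzero ($p-1$ pairs), and then count the $a$ with $a^2\equiv d$ via $|S_d|=1+\bigl(\tfrac{d}{p}\bigr)$. The paper instead fixes $(b,c)$ and counts the $a$ with $a^2\equiv d+4bc$ using the indicator $1+\bigl(\tfrac{d+4bc}{p}\bigr)$; the constant term contributes $p^2$, and the character sum over $c$ vanishes for $b\not\equiv 0$ and equals $\bigl(\tfrac{d}{p}\bigr)p$ for $b\equiv 0$. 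Both arguments are elementary and of comparable length; the paper's phrasing in terms of a complete character sum fits more naturally with the subsequent computations in that section (where the same vanishing-of-complete-character-sums device recurs), while yours isolates the two degenerate counts ($2p-1$ versus $p-1$, and $|S_d|$) explicitly, which arguably makes the verification of the edge case $d\equiv 0$ more transparent. Either proof is acceptable.
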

\begin{proof}
The number of triples $(a,b,c) \in (\mathbb Z/p\mathbb Z)^3$ with $a^2-4bc=d$ equals
\[
\sum_{b,c \, (p)}\bigg(1+ \bigg( \frac{d+4bc}{p} \bigg)\bigg).
\]
Also, for $b \not \equiv 0 \,(p)$ we have that
\[
\sum_{c \, (p)} \bigg( \frac{d+4bc}{p} \bigg)=0
\]
and if $b \equiv 0\,(p)$ the sum above is clearly equal to $(\tfrac{d}{p})p$. Combining the two preceding estimates gives the claim.
\end{proof}

\noindent The first consequence of this is that
\begin{align}\label{easy-bound}
\sum_{a,b,c\,(p)}\left(\frac{a^2-4bc}p\right)=\frac{p-1}2(p^2+p-(p^2-p))=p(p-1)
\end{align}
as there are $(p-1)/2$ quadratic residues and non-residues (mod $p$) each.

This can be used to evaluate more general sums
\begin{align*}
\sum_{a,b,c\,(p)}\left(\frac{a^2-4bc}{p^\ell}\right)
\end{align*}
for $\ell\in\mathbb N$. We note that by the complete multiplicativity of the Legendre symbol the above is
\begin{align*}
\sum_{a,b,c\,(p)}\left(\frac{a^2-4bc}{p}\right)^\ell.
\end{align*}
We consider different cases depending on the parity of $\ell$. If $\ell$ is even we have, using Lemma \ref{main-obs},
\begin{align} \label{eq:local-moment}
\sum_{a,b,c\,(p)}\left(\frac{a^2-4bc}{p}\right)^\ell&=|\{(a,b,c)\in(\mathbb Z/p\mathbb Z)^3:a^2-4bc\not\equiv 0\,(p))\}|=p^3-p^2
\end{align}
On the other hand, if $\ell$ is odd
\begin{align*}
\sum_{a,b,c\,(p)}\left(\frac{a^2-4bc}{p}\right)^\ell=\sum_{a,b,c\,(p)}\left(\frac{a^2-4bc}{p}\right)=p(p-1)
\end{align*}
by (\ref{easy-bound}).

We will also need to consider the following sums, for $\alpha,\beta\in\mathbb N$ (note that the moduli in both Legendre symbols are the same),
\begin{align*}
\sum_{a,b,c\,(p)}\left(\frac{a^2-4bc}p\right)^\alpha\left(\frac{(a+\ell_2)^2-4(b+\ell_1)(c+\ell_3)}p\right)^\beta.
\end{align*}
We again divide into cases. Suppose $\alpha,\beta$ are both even. Then the sum is simply
\begin{align*}
\left|\{(a,b,c)\in(\mathbb Z/p\mathbb Z)^3:\,a^2-4bc\not\equiv 0\,(p),\,(a+\ell_2)^2-4(b+\ell_1)(c+\ell_3)\not\equiv 0\,(p)\}\right|.
\end{align*}
By Lemma \ref{main-obs} this quantity is $p^3+O(p^2)$.

Assume than one of $\alpha,\beta$ (say $\beta$) is even and the other one is odd. Then the sum is simply
\begin{equation} \notag
\sum_{a,b,c\,(p)}\left(\frac{a^2-4bc}{p}\right)\left(\frac{(a+\ell_2)^2-4(b+\ell_1)(c+\ell_3)}p\right)^2\ll p^2,
\end{equation}
where the last step follows from Lemma \ref{main-obs} and (\ref{easy-bound}).

Suppose finally that both $\alpha,\beta$ are odd. Then the sum is given by
\begin{equation} \label{eq:localsum}
\sum_{a,b,c\,(p)}\left(\frac{a^2-4bc}{p}\right)\left(\frac{(a+\ell_2)^2-4(b+\ell_1)(c+\ell_3)}p\right).
\end{equation}
Note that by \eqref{eq:local-moment} this sum equals $p^2(p-1)$ if $(\ell_1,\ell_2,\ell_3)\equiv(0,0,0)\,(p)$. Suppose that this is not the case. We will estimate the sum differently according to which of the $\ell_i$'s is not divisible by $p$. The following lemma 
will be useful.

\begin{lem}[Theorem 2.1.2. in \cite{Berndt-Evans-Williams-1998}]\label{BEW}
Let $p$ be an odd prime and $a,b,c$ be integers with $p\nmid a$. Then
\begin{align*}
\sum_{x\,(p)}\left(\frac{ax^2+bx+c}p\right)=\begin{cases}
-\left(\frac ap\right) &\text{if }b^2-4ac\not\equiv 0\,(p),\\
(p-1)\left(\frac ap\right) & \text{if }b^2-4ac\equiv 0\,(p).
\end{cases}
\end{align*}
\end{lem}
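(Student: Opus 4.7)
The plan is to reduce the sum to an evaluation of $\sum_y \left(\tfrac{y^2-D}{p}\right)$, where $D:=b^2-4ac$, by completing the square. Since $p$ is odd and $p\nmid a$, the integer $4a$ is invertible modulo $p$, so the identity $4a(ax^2+bx+c)=(2ax+b)^2-D$, the multiplicativity of the Legendre symbol, and $\left(\tfrac{4}{p}\right)=1$ yield
\[
\left(\frac{ax^2+bx+c}{p}\right)=\left(\frac{a}{p}\right)\left(\frac{(2ax+b)^2-D}{p}\right).
\]
As $x$ runs through $\mathbb Z/p\mathbb Z$, $y:=2ax+b$ runs through $\mathbb Z/p\mathbb Z$ as well (since $\gcd(2a,p)=1$), so the whole sum equals $\left(\tfrac{a}{p}\right)\sum_{y\,(p)}\left(\tfrac{y^2-D}{p}\right)$.

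Next I would split on the value of $D$. If $D\equiv 0\,(p)$ then $\left(\tfrac{y^2}{p}\right)=1$ for each $y\not\equiv 0\,(p)$ and equals $0$ at $y=0$, so $\sum_y \left(\tfrac{y^2}{p}\right)=p-1$, giving the claimed value $(p-1)\left(\tfrac{a}{p}\right)$. If $D\not\equiv 0\,(p)$, I would apply the fiber count $\#\{y\,(p):y^2\equiv u\}=1+\left(\tfrac{u}{p}\right)$ (with the convention $\left(\tfrac{0}{p}\right)=0$) to rewrite
\[
\sum_{y\,(p)}\left(\frac{y^2-D}{p}\right)=\sum_{u\,(p)}\left(\frac{u-D}{p}\right)+\sum_{u\,(p)}\left(\frac{u(u-D)}{p}\right).
\]
The first sum vanishes by orthogonality of the Legendre symbol. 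For the second, the $u=0$ term is zero, and for $u\neq 0$ the identity $u(u-D)=u^2(1-D/u)$ gives $\left(\tfrac{u(u-D)}{p}\right)=\left(\tfrac{1-D/u}{p}\right)$. Since $D\neq 0$, as $u$ ranges over $(\mathbb Z/p\mathbb Z)^\times$ the quantity $1-D/u$ ranges through every residue except $1$, so
\[
\sum_{u\not\equiv 0\,(p)}\left(\frac{u(u-D)}{p}\right)=\sum_{v\not\equiv 1\,(p)}\left(\frac{v}{p}\right)=-\left(\frac{1}{p}\right)=-1,
\]
which combined with the previous step produces $-\left(\tfrac{a}{p}\right)$, as required.

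The result is classical and entirely elementary (it appears as Theorem~2.1.2 of \cite{Berndt-Evans-Williams-1998}), so there is no genuine obstacle; the only substantive input is the Jacobsthal-type identity $\sum_{u\,(p)}\left(\tfrac{u(u-D)}{p}\right)=-1$ for $D\not\equiv 0\,(p)$, which follows from the one-line substitution $u\mapsto Du^{-1}$ above.
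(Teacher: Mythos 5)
Your proof is correct and complete: the completion of the square $4a(ax^2+bx+c)=(2ax+b)^2-(b^2-4ac)$, the change of variable $y=2ax+b$, and the Jacobsthal-type evaluation $\sum_{u}\left(\frac{u(u-D)}{p}\right)=-1$ for $D\not\equiv 0\,(p)$ are all carried out correctly, and the degenerate case $D\equiv 0\,(p)$ is handled properly. The paper itself offers no proof of this lemma — it simply cites Theorem 2.1.2 of \cite{Berndt-Evans-Williams-1998} — so there is nothing to compare against; your argument is the standard elementary one that underlies that reference.
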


\noindent First consider the case $\ell_1\not\equiv 0\,(p)$. We bound the sum \eqref{eq:localsum} by the triangle inequality as
\begin{align*}
\ll\sum_{a,b\,(p)}\left|\sum_{c\,(p)}\left(\frac{(a^2-4bc)((a+\ell_2)^2-4(b+\ell_1)(c+\ell_3))}p\right)\right|.
\end{align*}
Note that the argument is of degree $2$ as a polynomial in $c$. We write it as $Ac^2+Bc+C$, where the coefficients $A,B,$ and $C$ depend on the numbers $a,b,\ell_1,\ell_2,\ell_3$ and are explicitly given by
\begin{align*}
& A=16b^2+16b\ell_1, \qquad B=16b\ell_1\ell_3+16b^2\ell_3-8a^2b-4a^2\ell_1-8ab\ell_2-4b\ell_2^2, \\
&\qquad\qquad C=a^4+2a^3\ell_2+a^2\ell_2^2-4a^2b\ell_3-4a^2\ell_1\ell_3.
\end{align*}
To apply Lemma \ref{BEW} we need to compute the discriminant $B^2-4AC$. A straightforward calculation shows that
\[
B^2-4AC=16\left(a^2\ell_1-b\ell_2^2+4(b^2\ell_3+b\ell_1\ell_3)-2ab\ell_2\right)^2.\] From this it follows that for a given residue class $b$ (mod $p$) there are at most two residue classes $a$ (mod $p$) for which the discriminant vanishes. Thus Lemma \ref{BEW} yields the bound $\ll p^2$ for the sum \eqref{eq:localsum} in this situation. The case $\ell_3\not\equiv 0\,(p)$ can be dealt with similarly.

Suppose finally that $\ell_2\not\equiv 0\,(p)$ (and we can at the same time assume that $\ell_1,\ell_3\equiv 0\,(p)$). Note that in this case the discriminant $B^2-4AC$ can only vanish identically when $b\equiv 0\,(p)$ and for all the other $b\,(\text{mod }p)$ there is a unique $a\,(\text{mod }p)$ for which $B^2\equiv 4AC\,(p)$. We are again done by Lemma \ref{BEW}.

So to summarize, we have proved the following.
\begin{lem}\label{local-sums}
Let $p$ be an odd prime and $\alpha,\beta$ be natural numbers. Then we have
\begin{align*}
\sum_{a,b,c\,(p)}\left(\frac{a^2-4bc}p\right)^\alpha=\begin{cases}
p^3-p^2 & \text{if }2|\alpha,\\
p^2-p & \text{if }2\nmid\alpha,
\end{cases}
\end{align*}
and
\[
\begin{split}
&\sum_{a,b,c\,(p)}\left(\frac{a^2-4bc}p\right)^\alpha\left(\frac{(a+\ell_2)^2-4(b+\ell_1)(c+\ell_3)}p\right)^\beta\\
&\qquad \qquad \qquad  =\begin{cases}
p^3+O(p^2) & \text{if }2|\alpha,\beta,\\
O\left(p^2\right) & \text{if }2|\alpha,2\nmid\beta\,\text{or }2\nmid\alpha,2|\beta,\\
p^3-p^2 & \text{if }2\nmid\alpha\beta\,\text{and }(\ell_1,\ell_2,\ell_3)\equiv(0,0,0)\,(p),\\
O\left(p^{2}\right) & \text{if }2\nmid\alpha\beta\,\text{and }(\ell_1,\ell_2,\ell_3)\not\equiv(0,0,0)\,(p).\end{cases}
\end{split}
\]
\end{lem}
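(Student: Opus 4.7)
The plan is to split into cases based on the parities of $\alpha$ and $\beta$, using that $\bigl(\tfrac{\cdot}{p}\bigr)^{2k} = \mathbf{1}[\cdot \not\equiv 0\,(p)]$ and $\bigl(\tfrac{\cdot}{p}\bigr)^{2k+1} = \bigl(\tfrac{\cdot}{p}\bigr)$, and then feeding each case into Lemma \ref{main-obs} together with \eqref{easy-bound} and Lemma \ref{BEW}. Most of the verification has already been carried out in the discussion preceding the statement; the task is to assemble it.

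For the single-modulus sum, when $\alpha$ is even the sum counts triples $(a,b,c)\,(p)$ with $a^2-4bc\not\equiv 0\,(p)$, and Lemma \ref{main-obs} (applied with $d=0$) gives exactly $p^2$ triples on the zero locus, yielding $p^3-p^2$. When $\alpha$ is odd, parametrize by $d = a^2-4bc\,(p)$: Lemma \ref{main-obs} says each nonzero residue class is hit $p^2+\bigl(\tfrac{d}{p}\bigr)p$ times, so the sum equals $\sum_{d\,(p)}\bigl(\tfrac{d}{p}\bigr)\bigl(p^2+\bigl(\tfrac{d}{p}\bigr)p\bigr) = p(p-1)$, using orthogonality $\sum_d\bigl(\tfrac{d}{p}\bigr)=0$ and $\bigl(\tfrac{d}{p}\bigr)^2 = \mathbf{1}[d\not\equiv 0]$.

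For the mixed sum the even/even case is immediate from Lemma \ref{main-obs}: the nonvanishing condition on the first discriminant excludes $p^2$ triples, and imposing the second excludes $O(p^2)$ more, giving $p^3+O(p^2)$. The mixed parity case reduces, after dropping the even symbol as an indicator, to an odd single-modulus-type sum with one discriminant restricted to be nonzero, which gives $O(p^2)$ by the odd case computed above. The both-odd case with $(\ell_1,\ell_2,\ell_3)\equiv(0,0,0)\,(p)$ collapses to the single odd sum with exponent $2$, which by the even case yields $p^3-p^2$.

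The main obstacle, and the only case requiring real work, is the both-odd case with some $\ell_j\not\equiv 0\,(p)$. Here the strategy is to fix two of $a,b,c$ and view the remaining variable as the argument of a quadratic inside a single Legendre symbol, then invoke Lemma \ref{BEW}, which contributes $O(1)$ per fiber unless the discriminant of that quadratic vanishes. One must verify that in each subcase ($\ell_1\not\equiv 0$; $\ell_3\not\equiv 0$; $\ell_2\not\equiv 0$ with $\ell_1,\ell_3\equiv 0$) a suitable singled-out variable yields a discriminant which, as a polynomial in the remaining two variables, vanishes on a set of size $O(p)$. Singling out $c$ when $\ell_1\not\equiv 0$, the discriminant computation already displayed in the excerpt gives $B^2-4AC = 16(a^2\ell_1 - b\ell_2^2 + 4b^2\ell_3 + 4b\ell_1\ell_3 - 2ab\ell_2)^2$, which for fixed $b$ is a quadratic in $a$ and so vanishes for at most two values of $a$; this yields $O(p^2)$. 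The case $\ell_3\not\equiv 0$ is symmetric (singling out $a$, say), and the case $\ell_2\not\equiv 0$ with $\ell_1,\ell_3\equiv 0$ is handled by the same discriminant after specialization, giving $O(p^2)$ in every remaining scenario and completing the proof.
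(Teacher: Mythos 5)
Your proposal follows essentially the same route as the paper: the same parity case split, the same use of Lemma \ref{main-obs} and \eqref{easy-bound} for all but the both-odd shifted case, and the same reduction of that case to Lemma \ref{BEW} via the explicit discriminant $B^2-4AC=16(a^2\ell_1-b\ell_2^2+4(b^2\ell_3+b\ell_1\ell_3)-2ab\ell_2)^2$. One correction, though: in the subcase $\ell_3\not\equiv 0\,(p)$ you suggest ``singling out $a$'', but the product $(a^2-4bc)((a+\ell_2)^2-4(b+\ell_1)(c+\ell_3))$ is a \emph{quartic} in $a$, so Lemma \ref{BEW} (which treats quadratics) does not apply to that fiber; the correct symmetric move is to single out $b$, which mirrors the $\ell_1\not\equiv 0$ argument under the swap $b\leftrightarrow c$, $\ell_1\leftrightarrow\ell_3$. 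A second, minor point that both you and the paper leave implicit: Lemma \ref{BEW} requires the leading coefficient to be nonzero mod $p$, and here $A=16b(b+\ell_1)$ vanishes for $O(p)$ pairs $(a,b)$; those degenerate fibers must be bounded trivially by $p$ each, which still contributes $O(p^2)$ and so does not affect the conclusion.
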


\subsubsection{Applying Poisson summation}

\begin{proof}[Proof
of Proposition \ref{prop:poisson-applied}]
Let $s$ and $t$ be odd natural numbers with $s,t \le k^{1/2}$.
Let $\Pi_s:=\prod_{p \in P_s} p$, $\Pi_t:=\prod_{p \in P_t} p$, and $\Pi_{s,t}:=\prod_{p \in P_{s,t}} p$.
To estimate the sum
\begin{equation} \label{eq:thesum}
\sum_{m,n,r}\left(\frac{r^2-4mn}s\right)\left(\frac{(r+\ell_2)^2-4(m+\ell_1)(n+\ell_3)}t\right)F_1\left(\frac mk\right)F_2\left(\frac nk\right)F_3\left(\frac rk\right)
\end{equation}
we divide the summands $m,n,r$ into congruence classes modulo $\Pi_s\Pi_t\Pi_{s,t}$. Recall that $\alpha_p$ is the integer with $p^{\alpha_p} || s$
and $\beta_p$ is the integer with $p^{\beta_p} || t$. By the complete multiplicativity of the Legendre symbol and the Chinese remainder theorem, the sum we are interested in takes the form
\begin{align*}\tst
&\sum_{\substack{a_1,b_1,c_1 \, (\Pi_s) \\
a_2,b_2,c_2 \, (\Pi_t) \\
a_3,b_3,c_3 \, (\Pi_{s,t})}}
\prod_{p \in P_s} \bigg( \frac{a_1^2-4b_1c_1}{p}\bigg)^{\alpha_p}
\prod_{p \in P_t} \bigg( \frac{(a_2+\ell_2)^2-4(b_2+\ell_1)(c_2+\ell_3)}{p}\bigg)^{\beta_p} \\
\quad \times&\prod_{p \in P_{s,t}} \bigg( \frac{a_3^2-4b_3c_3}{p}\bigg)^{\alpha_p} \bigg( \frac{(a_3+\ell_2)^2-4(b_3+\ell_1)(c_3+\ell_3)}{p}\bigg)^{\beta_p}
\\&\quad \times\sum_{\substack{m \equiv \gamma_1 \, (\Pi_s \Pi_t \Pi_{s,t}) \\ n \equiv \gamma_2 \, (\Pi_s \Pi_t \Pi_{s,t}) \\ r \equiv \gamma_3 \, (\Pi_s \Pi_t \Pi_{s,t})  }}
F_1\left(\frac mk\right)F_2\left(\frac nk\right)F_3\left(\frac rk\right),
\end{align*}
where $\gamma_1$ is the unique congruence class $\Mod  {\Pi_s \Pi_t \Pi_{s,t}}$ that corresponds
 to $b_1 \Mod {\Pi_s}$, $b_2 \Mod {\Pi_t}$, $b_3 \Mod {\Pi_{s,t}}$ and $\gamma_2,\gamma_3$ are defined analogously.
Each of the sums over $m,n,r$ can be evaluated by Lemma \ref{lem:Poisson}.
Since $s,t \le k^{1/3}$ , $\Pi_s\Pi_t\Pi_{s,t} \le k^{2/3}$ and we also have that $\widehat F_j(\xi) \ll_A |x|^{-A}$ for each $j=1,2,3$. Hence, we see that the inner sum in the preceding equation is
\begin{align*}
\left(\frac k{\Pi_s\Pi_t\Pi_{s,t}}\right)^3\widehat F_1(0)\widehat F_2(0)\widehat F_3(0)+O_{F_i}(k^{-200}).
\end{align*}
The main term in the preceding equation is
independent of $\gamma_1,\gamma_2,\gamma_3$.
Hence, combining the previous two estimates and using the Chinese remainder theorem we
see that \eqref{eq:thesum} equals
\[
\begin{split}
& \tst
 \prod_{p \in P_{s,t}} \bigg( \sum_{a,b,c \, (p)}  \bigg( \frac{a_3^2-4b_3c_3}{p}\bigg)^{\alpha_p} \bigg( \frac{(a_3+\ell_2)^2-4(b_3+\ell_1)(c_3+\ell_3)}{p}\bigg)^{\beta_p}\bigg)
\\
&\times\tst\prod_{p \in P_s} \bigg(\sum_{a,b,c \, (p)}
 \bigg( \frac{a_1^2-4b_1c_1}{p}\bigg)^{\alpha_p}\bigg)\prod_{p \in P_t} \bigg( \sum_{a,b,c \, (p)}  \bigg( \frac{(a_2+\ell_2)^2-4(b_2+\ell_1)(c_2+\ell_3)}{p}\bigg)^{\beta_p}\bigg)\\&\times\left(\frac k{\Pi_s\Pi_t\Pi_{s,t}}\right)^3\widehat F_1(0)\widehat F_2(0)\widehat F_3(0)+O_{F_i}(k^{-100}).
\end{split}
\]
Now the local sums can be evaluated by Lemma \ref{local-sums} and we get the claimed result. \end{proof}

\subsection{Bounds for large moments of Dirichlet polynomials}
\noindent In this section we will establish upper bounds
for moments of Dirichlet polynomials averaged
over pairs of certain fundamental discriminants and
these bounds will be a main ingredient
in the proof of Proposition \ref{prop:lfunctionmoment}.
We assume from here on that
$(\ell_1,\ell_2,\ell_3) \neq (0,0,0)$ and $F_j: \mathbb R \longrightarrow \mathbb R_{ \ge 0}$. Recall the definition of $f(s,t)$ from \eqref{eq:fmultdef}.

To analyze the function $f(s,t)$ further we let $C\ge 1$ be a sufficiently large absolute constant and define the completely multiplicative function $u$ by $u(p)=1+C/p$. Also, write $s=s_2^2s_1$, $t=t_2^2t_1$, where $t_1,s_1$ are squarefree and observe that $\alpha_p$ is odd if and only if $p|s_1$,
   and $\beta_p$ is odd if and only if $p|t_1$.
Let $g=(s_1,t_1)$ and write $s_1=gs_0$, $t_1=gt_0$ and note that $(g,s_0t_0)=1$ since $s_1,t_1$ are squarefree. We make the following simple observations
\begin{itemize}
    \item $\alpha_p,\beta_p$ are both odd if and only if $p|g$,
    \item $\alpha_p$ is odd and $\beta_p$ is even (possibly zero) if and only if $p|s_0$,
    \item $\alpha_p$ is even (possibly zero) and $\beta_p$ is odd if and only if $p|t_0$.
\end{itemize}
Recall $l=\prod_{j: \ell_j \neq 0} \ell_j$.
Hence, writing $s=s_2^2gs_0, t=s_2^2gt_0$ as above, we have for $(st,l)=1$ that
\begin{equation} \label{eq:fbd}
f(s,t) \ll u(s_2) u(t_2) \frac{C^{\Omega(gs_0t_0)}}{s_0t_0 \sqrt{g}}
\end{equation}
since $C$ is sufficiently large.


We now use Proposition \ref{prop:poisson-applied} to estimate moments of certain Dirichlet polynomials. Let $\{a(p)\}_p \subset \mathbb R$ with $|a(p)|\le 2 p^{1/4-\delta}$ for some fixed $0< \delta \le 1/4$ and $a(p)=0$ if $p|f_1f_2l$.  Define
\[
\mathcal V:= \sum_{p \le x   }  \frac{a(p)^2}{p} +C_2,
\]
where $C_2>0$ is a sufficiently large absolute constant (so $\mathcal V>0$, for example).
\begin{lem}\label{lem:momentbd}
Let $\ell \in \mathbb N$.
Suppose $x^{\ell} \le k^{1/3}$. Then
\[
\sumprime \bigg( \sum_{p \le x} \frac{a(p)(\chi_{d_1}(p)+\chi_{d_2}(p))}{\sqrt{p}}\bigg)^{2\ell} \ll_{F_i} k^3 \frac{(2\ell)!}{2^{\ell} \ell!}\bigg(2\mathcal V+O(1) \bigg)^{\ell}.
\]
\end{lem}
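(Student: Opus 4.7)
The plan is to expand the $2\ell$-th power, interchange summations so as to apply Proposition \ref{prop:poisson-applied} to the inner character sums, and then perform a Gaussian-moment count using the bound \eqref{eq:fbd} on $f(s,t)$. More precisely, binomial expansion gives
\[
\bigg(\sum_{p \le x} \frac{a(p)(\chi_{d_1}(p) + \chi_{d_2}(p))}{\sqrt p}\bigg)^{2\ell} = \sum_{\mathbf p} \frac{a(p_1) \cdots a(p_{2\ell})}{\sqrt{p_1 \cdots p_{2\ell}}} \sum_{S \subseteq [2\ell]} \chi_{d_1}(A_S) \chi_{d_2}(B_S),
\]
where $\mathbf p = (p_1, \ldots, p_{2\ell})$ runs over $2\ell$-tuples of primes $\le x$, $A_S := \prod_{j \in S} p_j$, and $B_S := \prod_{j \notin S} p_j$. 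Since the $2\ell$-th power is non-negative and $a(p) = 0$ whenever $p \mid f_1 f_2 l$ (so that $\chi_{d_i}(p) = \chi_{r^2 - 4mn}(p)$ for the relevant $p$), I may bound $\sumprime$ above by the sum of the $\sumdprime$ type obtained by dropping the fundamental-discriminant restriction. Because $A_S B_S = p_1 \cdots p_{2\ell} \le x^{2\ell} \le k^{1/3}$, Proposition \ref{prop:poisson-applied} applies to each inner sum, giving, up to an acceptable error, the expression
\[
k^3 \widehat F_1(0) \widehat F_2(0) \widehat F_3(0) \sum_{\mathbf p} \frac{a(p_1) \cdots a(p_{2\ell})}{\sqrt{p_1 \cdots p_{2\ell}}} \sum_{S \subseteq [2\ell]} f(A_S, B_S).
\]

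I would next extract the main term from the \emph{diagonal} configurations, meaning tuples $\mathbf p$ for which each prime occurring in $\{p_j : j \in S\}$ appears an even number of times, the same holds for $\{p_j : j \notin S\}$, and the two prime multisets are disjoint. In the notation of \eqref{eq:fbd} these correspond exactly to $g = 1$, $a_0 = 1$, $b_0 = 1$, in which case $f(A_S, B_S) \ll u(a_2) u(b_2)$ is essentially $1$. Such configurations are in bijection with pairs consisting of a perfect matching of the $S$-slots and a perfect matching of the $S^c$-slots, and each pair $(p,p)$ contributes $a(p)^2/p$, summing to at most $\mathcal V$. For $|S| = 2\ell_1$ and $|S^c| = 2\ell_2$ with $\ell_1 + \ell_2 = \ell$, there are $\binom{2\ell}{2\ell_1}$ subsets and $\frac{(2\ell_1)!}{2^{\ell_1} \ell_1!} \cdot \frac{(2\ell_2)!}{2^{\ell_2} \ell_2!}$ matchings, so the diagonal contribution totals
\[
\sum_{\ell_1 + \ell_2 = \ell} \binom{2\ell}{2\ell_1} \frac{(2\ell_1)!(2\ell_2)!}{2^\ell \ell_1! \ell_2!} \mathcal V^\ell = \frac{(2\ell)!}{2^\ell \ell!} (2\mathcal V)^\ell,
\]
using $\sum_{\ell_1 + \ell_2 = \ell} \binom{\ell}{\ell_1} = 2^\ell$, which matches the claimed bound with vanishing $O(1)$.

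The \emph{off-diagonal} configurations, those with $g > 1$, $a_0 > 1$, or $b_0 > 1$, are responsible for the slack between $(2\mathcal V)^\ell$ and $(2\mathcal V + O(1))^\ell$; controlling them is the main technical obstacle. I would stratify $\mathbf p$ by its multiset of distinct primes and the multiplicity pattern across $S$ and $S^c$, and on each stratum exploit the decay $C^{\Omega(g a_0 b_0)}/(a_0 b_0 \sqrt g)$ of \eqref{eq:fbd} together with the hypothesis $|a(p)| \le 2 p^{1/4 - \delta}$ to show that every prime that fails to pair within its side of $S$ contributes a bounded multiplicative factor rather than a full $\mathcal V$. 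Summing all such strata yields a total bounded by the same expression with $\mathcal V$ replaced by $O(1)$, which combined with the diagonal main term gives $(2\mathcal V + O(1))^\ell$. The combinatorial bookkeeping here is analogous to \cite[Sec.~3]{Lester-Radziwill-2020} but is complicated by the extra summation over the subset $S$, so the interaction between unmatched primes and the choice of $S$ requires careful uniform counting across the strata.
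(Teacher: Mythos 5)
Your overall strategy is the same as the paper's: reduce $\sumprime$ to $\sumdprime$ using non-negativity of the $F_i$ and the fact that $a(p)=0$ for $p\mid f_1f_2l$, expand the $2\ell$-th power, evaluate each resulting character sum by Proposition \ref{prop:poisson-applied}, and then run a diagonal/off-diagonal (Gaussian moment) count against the bound \eqref{eq:fbd} on $f(s,t)$. The paper organizes the expansion as a binomial in the number $j$ of factors assigned to $\chi_{d_1}$ and delegates each mixed moment to Lemma \ref{lem:mixedmoments}; your tuple-and-subset expansion is the same decomposition in different notation, and your computation of the fully matched contribution $\frac{(2\ell)!}{2^\ell \ell!}(2\mathcal V)^\ell$ is correct and agrees with the paper's "even terms" computation.

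The gap is in the off-diagonal estimate, which is where essentially all of the paper's work lies (Lemmas \ref{lem:bound}, \ref{lem:sum-est} and \ref{lem:mixedmoments}). Your claim that the off-diagonal strata sum to "the same expression with $\mathcal V$ replaced by $O(1)$", i.e.\ to $\frac{(2\ell)!}{2^\ell\ell!}O(1)^\ell$, is not what one gets and is not how the final bound arises. A configuration with $r$ unmatched primes loses $\mathcal V^{r/2}$ relative to the main term but \emph{gains} a matching-count factor of order $\ell^{r/2}$ (this is the content of the inequality $m^n(m-n)!\ge m!$ used in \eqref{eq:elementary-ineq}); so each unmatched prime costs $\sqrt{\ell/\mathcal V}$ relative to the diagonal, which is large when $\ell\gg\mathcal V$, and is only controlled after summing over $r$ against the $C_1^r/r!$ coming from the convergent sums $\sum_p C/p^{1+\delta}$ in \eqref{eq:fbd}. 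The resulting factor $\exp(C_1\sqrt{\ell/\mathcal V})\le(1+O(1/\mathcal V))^{\lfloor \ell\rfloor}$ is what gets absorbed into the $O(1)$ inside $(2\mathcal V+O(1))^\ell$; the off-diagonal contribution is comparable to the main term, not exponentially smaller. This comparison, uniform in $\ell$ and in the choice of $S$, is the missing step, and asserting that "every unmatched prime contributes a bounded multiplicative factor" does not substitute for it. Two smaller points: $A_SB_S\le x^{2\ell}\le k^{2/3}$, not $k^{1/3}$ (Proposition \ref{prop:poisson-applied} as stated requires $A_S,B_S\le k^{1/3}$ individually, which fails for unbalanced $S$; one must either note that the Poisson error only needs the modulus $\Pi_s\Pi_t\Pi_{s,t}=o(k^{1-\varepsilon})$, or restrict as the paper does); and the main term carries $\widehat F_1(0)\widehat F_2(0)\widehat F_3(0)$, which you should retain in the implied constant.
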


\noindent The preceding lemma will be deduced from the following result.

\begin{lem} \label{lem:mixedmoments}
Let $u,v \in \mathbb N$.
Suppose $x^{\tmop{max}(u,v)} \le k^{1/3}$. Then
\begin{equation} \label{eq:mixedmoments}
\begin{split}
 &\sumdprime\bigg( \sum_{p \le x} \frac{a(p)\Big(\frac{d_1}{p}\Big)}{\sqrt{p}}\bigg)^{u} \bigg( \sum_{p \le x} \frac{a(p)\Big(\frac{d_2}{p}\Big)}{\sqrt{p}}\bigg)^{v} \\&\ll_{F_i} k^3 \frac{u!}{2^{\lfloor \frac{ u}{2} \rfloor}\lfloor \frac{u}{2} \rfloor !}\frac{v!}{ 2^{\lfloor \frac{v}{2} \rfloor }\lfloor \frac{v}{2} \rfloor !}\bigg(\mathcal V+O(1) \bigg)^{\lfloor \frac{u}{2} \rfloor +\lfloor \frac{v}{2} \rfloor }.
\end{split}
\end{equation}
\end{lem}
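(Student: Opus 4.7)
The plan is to open the two powers via the multinomial theorem, apply Proposition~\ref{prop:poisson-applied} to each resulting character sum, and estimate the resulting weighted sum of local factors $f(s,t)$ using the structural bound~\eqref{eq:fbd}. Concretely, expanding yields
\[
\sum_{\substack{p_1,\ldots,p_u \le x\\ q_1,\ldots,q_v \le x}} \frac{a(p_1)\cdots a(p_u)\,a(q_1)\cdots a(q_v)}{\sqrt{p_1\cdots p_u\, q_1\cdots q_v}} \sumdprime \Big(\frac{d_1}{s}\Big)\Big(\frac{d_2}{t}\Big)
\]
with $s=p_1\cdots p_u$ and $t=q_1\cdots q_v$. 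The hypothesis $x^{\max(u,v)} \le k^{1/3}$ ensures $s,t \le k^{1/3}$, so Proposition~\ref{prop:poisson-applied} applies termwise, producing a main term proportional to $k^3 f(s,t)$; the error $O(k^{-100})$ summed over at most $x^{u+v}$ tuples is negligible.

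Next, I would reorganize the surviving main term by grouping the tuples according to the multiplicity vectors $(k_p)_p$ and $(l_p)_p$, where $k_p$ (resp.\ $l_p$) counts occurrences of $p$ in the $u$-tuple (resp.\ $v$-tuple). This converts the sum into
\[
\sum_{\substack{(k_p),(l_p) \\ \sum k_p = u,\,\sum l_p = v}} \frac{u!\,v!}{\prod_p k_p!\,l_p!}\prod_p \frac{a(p)^{k_p+l_p}}{p^{(k_p+l_p)/2}}\, f(s,t),
\]
and inserting \eqref{eq:fbd} produces a prime-local factor depending on the parities of $(k_p,l_p)$: an extra $1/\sqrt{p}$ when both are odd (the prime is in $g$), an extra $1/p$ when exactly one is odd (the prime is in $s_0$ or $t_0$), and only the benign weight $u(p)^{\lfloor k_p/2\rfloor + \lfloor l_p/2\rfloor}$ when both are even.

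The dominant configurations are those with $(k_p,l_p) \in \{(0,0),(2,0),(0,2)\}$ for every $p$ and disjoint prime supports between the two tuples. Each $(2,0)$ or $(0,2)$ prime contributes $a(p)^2(1+C/p)/p$, summing over $p \le x$ to $\mathcal V + O(1)$ (the correction $\sum a(p)^2/p^2$ converging because $|a(p)| \le 2p^{1/4-\delta}$). The count of such pairings is exactly $u!/(2^{\lfloor u/2\rfloor}\lfloor u/2\rfloor!) \cdot v!/(2^{\lfloor v/2\rfloor}\lfloor v/2\rfloor!)$, producing the combinatorial constant on the right. When $u$ is odd, the unavoidable leftover prime picks up $1/\sqrt p$ from the Dirichlet polynomial and additionally $1/\sqrt p$ or $1/p$ from \eqref{eq:fbd} (according to whether it lands in $g$ or $s_0$), a summable contribution absorbed into the $O(1)$. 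Cross terms $(1,1)$ contribute $C a(p)^2/p^{3/2}$ per prime, and higher configurations $k_p+l_p \ge 3$ give even faster decay.

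The main obstacle will be the combinatorial bookkeeping: one must show that the sum over all non-main configurations factors, after bounding $|f(s,t)|$, as a convergent Euler product $\prod_p\bigl(1 + O(a(p)^2/p^{1+\delta})\bigr) = O(1)$ uniformly in $u,v$, and that the extra decay at unpaired or cross-paired primes precisely reconciles with the floor functions $\lfloor u/2 \rfloor, \lfloor v/2 \rfloor$ appearing on the right-hand side. This is an elaboration of the standard Gaussian moment computation, made more delicate here by the three-parameter structure of the character sum encoded in $f(s,t)$ through the interplay of $s_2,t_2,g,s_0,t_0$.
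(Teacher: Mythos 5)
Your plan follows essentially the same route as the paper: expand the powers (your multinomial tuples are the paper's identity \eqref{eq:comb} with the function $\nu$), apply Proposition~\ref{prop:poisson-applied} termwise using $s,t\le k^{1/3}$, and then exploit the decomposition $s=s_2^2gs_0$, $t=t_2^2gt_0$ together with \eqref{eq:fbd} — your multiplicity-parity classification of primes is exactly this decomposition. The one place your sketch is looser than the paper's argument is the final absorption step: the non-main configurations do not contribute a uniform $O(1)$ Euler product independent of $u,v$; rather they produce a factor of the shape $(1+O(1/\mathcal V))^{\lfloor u/2\rfloor+\lfloor v/2\rfloor}$ that must be absorbed into the base $\mathcal V+O(1)$, and the paper achieves this via the elementary inequality $m^n(m-n)!\ge m!$ (applied to trade a deficit of $r$ paired primes against the factorials) and a Cauchy--Schwarz step to decouple the shared-prime parameter $g$ between the two Dirichlet polynomials. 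Since your closing paragraph explicitly identifies this reconciliation with the floor functions as the remaining work, the proposal is a correct outline of the paper's proof with its hardest estimate deferred rather than a different argument.
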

\noindent Using Lemma \ref{lem:mixedmoments} we will quickly deduce Lemma \ref{lem:momentbd}.
\begin{proof}[Proof of Lemma \ref{lem:momentbd}]
Recall that $a(p)=0$ if $p|f_1f_2l$ so that \[a(p)(\chi_{d_1}(p)+\chi_{d_2}(p))=a(p)\bigg(\bigg(\frac{f_1^2 d_1}{p}\bigg)+\bigg(\frac{f_2^2d_2}{p}\bigg)\bigg).\]
Also, recall $F_1,F_2,F_3$ are nonnegative.
Using non-negativity we drop the conditions on the sum over $r,m,n$ that $f_1^2$ divides $r^2-4mn$, $f_2^2$ divides  $(r+\ell_1)^2-4(m+\ell_2)(n+\ell_3)$ and both $(r^2-4mn)/f_1^2, ((r+\ell_1)^2-4(m+\ell_2)(n+\ell_3))/f_2^2$ are fundamental discriminants and use the previous observation to see that
\[
\begin{split}
&\sumprime \bigg( \sum_{p \le x} \frac{a(p)(\chi_{d_1}(p)+\chi_{d_2}(p))}{\sqrt{p}}\bigg)^{2\ell}\\
&\le \sum_{\substack{r,m,n}}
\bigg( \sum_{p \le x} \frac{a(p)((\frac{r^2-4mn}{p})+(\frac{(r+\ell_2)^2-4(m+\ell_1)(n+\ell_3)}{p})))}{\sqrt{p}}\bigg)^{2\ell}
F_1\bigg( \frac rk\bigg)F_2\bigg( \frac mk\bigg) F_3\bigg(\frac nk \bigg).
\end{split}
\]
The right-hand side is
\begin{align*}
\sumdprime &\bigg( \sum_{p \le x} \frac{a(p)((\tfrac{d_1}{p})+(\tfrac{d_2}{p}))}{\sqrt{p}}\bigg)^{2\ell}\\&\ll_{F_i}  k^3\sum_{j=0}^{2\ell} \binom{2\ell}{j} \frac{j!}{2^{\lfloor  \frac{j}{2} \rfloor}\lfloor \frac{j}{2} \rfloor !}\frac{(2\ell-j)!}{ 2^{\lfloor \frac{2\ell-j}{2} \rfloor }\lfloor \frac{2\ell-j}{2} \rfloor !}\bigg(\mathcal V+O(1) \bigg)^{\lfloor \frac{j}{2} \rfloor +\lfloor \frac{2\ell-j}{2} \rfloor }
\end{align*}
by Lemma \ref{lem:mixedmoments}.
The contribution of the even terms to the sum equals
\begin{align*}
&\bigg(\mathcal V+O(1) \bigg)^{\ell}\frac{(2\ell)!}{2^{\ell} }\sum_{j=0}^{\ell} \frac{1}{j!(\ell-j)!} = \bigg(\mathcal V+O(1) \bigg)^{\ell} \frac{(2\ell)!}{2^{\ell} \ell!}\sum_{j=0}^{\ell} \frac{\ell!}{j!(\ell-j)!}\\&=\frac{(2\ell)!}{\ell!} \bigg(\mathcal V+O(1) \bigg)^{\ell}.
\end{align*}
The contribution of the odd terms equals
\begin{align*}
&\bigg( \mathcal V+O(1)\bigg)^{\ell-1} \frac{(2\ell)!}{2^{\ell-1}} \sum_{j=0}^{\ell-1} \frac{1}{j!((\ell-1)-j)!}=  \frac{(2\ell)!}{(\ell-1)!} \bigg( \mathcal V+O(1)\bigg)^{\ell-1} \\&< \frac{(2\ell)!}{\ell!} \bigg( \mathcal V+O(1)\bigg)^{\ell},
\end{align*}
where in the last step we used the inequality $m w < (1+w)^{m}$ (which holds for any $m \ge 1$ and $w>0$) with $m=\ell$, $w=1/\mathcal V$. \end{proof}

\subsection{Preliminary estimates}

We will now use Proposition \ref{prop:poisson-applied} and (\ref{eq:fbd}) to estimate the left-hand side of \eqref{eq:mixedmoments}. To state the next result, let $\nu$ be the multiplicative function with $\nu(p^a)=1/a!$. We note for $m,n \in \mathbb N$ that
\begin{equation} \label{eq:nuest}
    \nu(mn) \le \nu(m)\nu(n) \qquad \text{and} \qquad \nu(n^2) \le \frac{\nu(n)}{2^{\Omega(n)}},
\end{equation}
which we will use later. Given $j \in \mathbb N$ and a completely multiplicative function $b$,  we see that
\begin{equation} \label{eq:comb}
\bigg( \sum_{p \le x} b(p) \bigg)^{j}=\sum_{n \ge 1} b(n) \sum_{\substack{p_1,\ldots, p_j \le x \\ p_1\cdots p_j=n}} 1= j! \sum_{\substack{p|n \Rightarrow p \le x \\ \Omega(n)=j}} b(n) \nu(n).
\end{equation}
\begin{lem} \label{lem:bound}
Let $u,v$ be nonnegative integers. Suppose that $x^{\tmop{max}(u,v)} \le k^{1/3}$. Then
\begin{equation} \label{eq:momenttrivbd}
\begin{split}
&\sumdprime\bigg( \sum_{p \le x} \frac{a(p)\Big( \frac{d_1}{p} \Big)}{\sqrt{p}}\bigg)^{u} \bigg( \sum_{p \le x} \frac{a(p)\Big( \frac{d_2}{p} \Big)}{\sqrt{p}}\bigg)^{v} \ll_{F_i} k^{-50} \\
& +k^3 u!v! \sum_{\substack{p|s_2t_2gs_0t_0 \Rightarrow p \le x \\ \Omega(s_2^2gs_0)=u \\ \Omega(t_2^2gt_0)=v}} \frac{a(s_2)^2a(t_2)^2 \nu(s_2) \nu(t_2) u(s_2)u(t_2)}{2^{\Omega(s_2t_2)}s_2t_2} \frac{(4C)^{\Omega(gs_0t_0)}\nu(g)\nu(s_0)\nu(t_0)}{(gs_0t_0)^{1+\delta}}.
\end{split}
\end{equation}
\end{lem}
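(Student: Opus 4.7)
The strategy is to expand each power using the combinatorial identity \eqref{eq:comb}, apply Proposition \ref{prop:poisson-applied} to the resulting character sum, and then use the bound \eqref{eq:fbd} on $f(s,t)$ together with the parameterization $s = s_2^2 g s_0$, $t = t_2^2 g t_0$.

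\textbf{Step 1 (expansion).} Extending $a$ to a completely multiplicative function supported on squarefree-friendly integers, the identity \eqref{eq:comb} (applied twice, with $b(n)=a(n)\chi_{d_j}(n)/\sqrt{n}$) gives
\[
\bigg( \sum_{p \le x} \tfrac{a(p)(\frac{d_1}{p})}{\sqrt{p}}\bigg)^{u} \bigg( \sum_{p \le x} \tfrac{a(p)(\frac{d_2}{p})}{\sqrt{p}}\bigg)^{v}
= u!\,v! \sum_{\substack{\Omega(s)=u,\,\Omega(t)=v \\ p\mid st \Rightarrow p\le x}} \frac{a(s)a(t)\nu(s)\nu(t)}{\sqrt{st}}\,\Big(\tfrac{d_1}{s}\Big)\Big(\tfrac{d_2}{t}\Big).
\]
Since $a(p)=0$ for $p\mid f_1f_2l$, the primes appearing in $s,t$ are coprime to $f_1f_2l$; the contribution of the prime $p=2$ is handled separately (it gives at most a bounded factor and can be absorbed into the $O(1)$ constants in the final bound), so we may assume $s,t$ are odd and apply Proposition \ref{prop:poisson-applied}.

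\textbf{Step 2 (Poisson).} Because $s\le x^u\le k^{1/3}$ and $t\le x^v\le k^{1/3}$, Proposition \ref{prop:poisson-applied} gives
\[
\sumdprime \Big(\tfrac{d_1}{s}\Big)\Big(\tfrac{d_2}{t}\Big) = k^{3}\,\widehat F_1(0)\widehat F_2(0)\widehat F_3(0)\,f(s,t) + O(k^{-100}).
\]
Interchanging the sums, the error term contributes
\[
\ll u!\,v!\,k^{-100} \sum_{s,t} \frac{|a(s)a(t)|\,\nu(s)\nu(t)}{\sqrt{st}},
\]
which using $|a(n)|\le 2^{\Omega(n)}n^{1/4-\delta}$, $\nu(n)\le 1$, and $\sqrt{st}\ge 1$ is easily bounded by $k^{O(1)}\cdot k^{-100}\ll k^{-50}$.

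\textbf{Step 3 (main term, bookkeeping).} For the main term, insert the bound \eqref{eq:fbd} on $f(s,t)$ and decompose $s=s_2^2 g s_0$, $t=t_2^2 g t_0$ as in the paragraph preceding \eqref{eq:fbd}. Complete multiplicativity of $a$ gives $a(s)=a(s_2)^2a(g)a(s_0)$ (and similarly for $t$), while \eqref{eq:nuest} yields $\nu(s)\le \nu(s_2)2^{-\Omega(s_2)}\nu(g)\nu(s_0)$ (and similarly for $t$). Combining all these, together with $|a(g)|^2\le 4^{\Omega(g)}g^{1/2-2\delta}$, $|a(s_0)|\le 2^{\Omega(s_0)}s_0^{1/4-\delta}$, and the same for $t_0$, the $g$-factor becomes
\[
\frac{4^{\Omega(g)}g^{1/2-2\delta}\cdot C^{\Omega(g)}\nu(g)}{g\cdot\sqrt g}
\;=\;\frac{(4C)^{\Omega(g)}\nu(g)}{g^{1+2\delta}}\;\le\;\frac{(4C)^{\Omega(g)}\nu(g)}{g^{1+\delta}},
\]
and the $s_0$-factor becomes $(4C)^{\Omega(s_0)}\nu(s_0)/s_0^{5/4+\delta}\le(4C)^{\Omega(s_0)}\nu(s_0)/s_0^{1+\delta}$, and similarly for $t_0$. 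The $s_2,t_2$-factors directly give $a(s_2)^2\nu(s_2)u(s_2)/(2^{\Omega(s_2)}s_2)$ and its $t$-analogue. Summing over the decomposition and imposing $\Omega(s_2^2gs_0)=u$, $\Omega(t_2^2gt_0)=v$ yields exactly the right-hand side of \eqref{eq:momenttrivbd}.

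\textbf{Main obstacle.} The proof is essentially bookkeeping once one has Proposition \ref{prop:poisson-applied} and \eqref{eq:fbd}; the only subtlety is tracking the powers of $g,s_0,t_0$ contributed by $a,\nu,\sqrt{st}$, and $f(s,t)$ simultaneously, and verifying that they combine to give a factor at least $(gs_0t_0)^{1+\delta}$ in the denominator. The technical nuisance of the prime $p=2$ (where the Jacobi symbol convention of Proposition \ref{prop:poisson-applied} does not directly apply) is minor and absorbed into the implied constant.
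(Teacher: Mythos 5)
Your proposal is correct and follows essentially the same route as the paper: expand via \eqref{eq:comb}, apply Proposition \ref{prop:poisson-applied}, then decompose $s=s_2^2gs_0$, $t=t_2^2gt_0$ and combine \eqref{eq:fbd}, \eqref{eq:nuest} and $|a(p)|\le 2p^{1/4-\delta}$. Your Step 3 simply spells out the bookkeeping that the paper leaves implicit, and the power counting there checks out.
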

\begin{proof}
Using \eqref{eq:comb} we get that
\[
\bigg( \sum_{p \le x} \frac{a(p)\Big( \frac{d}{p} \Big)}{\sqrt{p}}\bigg)^{u}=u! \sum_{\substack{p|s \Rightarrow p \le x \\ \Omega(s)=u}} \frac{a(s) \nu(s)}{\sqrt{s}} \Big( \frac{s}{p} \Big).
\]
Applying \eqref{eq:charactersum} and noting $x^{u},x^{v} \le k^{1/3}$ we have that the left-hand side of \eqref{eq:momenttrivbd} is
\[
\ll_{F_i}k^3 u!v!\sum_{\substack{p|st \Rightarrow p \le x \\ \Omega(s)=u \\ \Omega(t)=v}} \frac{a(s)a(t) \nu(s) \nu(t)}{\sqrt{st}} f(s,t)+k^{-99}.
\]
We write $s=s_2^2 g s_0$, $t=t_2^2gt_0$ as in \eqref{eq:fbd}. Using \eqref{eq:fbd}, \eqref{eq:nuest} and recalling that $|a(p)|\le 2 p^{1/4-\delta}$ yields the claim.
\end{proof}

\subsection{Sum estimates and the proof of Lemma \ref{lem:mixedmoments}}
Given a nonnegative integer $n$ we let
\[
\eta_n:=\frac{1+(-1)^{n+1}}{2}
\]
and note that $\lfloor n/2\rfloor=(n-\eta_n)/2$.
We will first establish the following bound.
\begin{lem} \label{lem:sum-est}
Let $j \in \mathbb N$. Then
\begin{equation} \label{eq:firststep}
\sum_{\substack{p|s_2s_0 \Rightarrow p \le x \\ \Omega(s_2^2s_0)=j}} \frac{a(s_2)^2 \nu(s_2) u(s_2)}{2^{\Omega(s_2)}s_2} \frac{(4C)^{\Omega(s_0)} \nu(s_0)}{s_0^{1+\delta}} \ll  \frac{(\mathcal V+O(1))^{\lfloor \frac{j}{2}\rfloor}}{2^{\lfloor \frac{j}{2}\rfloor} \lfloor \frac{j}{2}\rfloor!}.
\end{equation}
\end{lem}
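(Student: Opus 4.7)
The plan is to split the sum according to the value of $m = \Omega(s_2)$. Since $\Omega(s_2^2 s_0) = 2m + \Omega(s_0) = j$, this forces $\Omega(s_0) = j - 2m$ and in particular $0 \le m \le \lfloor j/2 \rfloor$; the sum then factors as a product of inner sums over $s_2$ and $s_0$, each over integers with a prescribed number of prime factors. Since $a$ is completely multiplicative, so is the weight $p \mapsto a(p)^2 u(p)/(2p)$; applying identity \eqref{eq:comb} to the $s_2$-sum gives
\[
\sum_{\substack{p|s_2 \Rightarrow p \le x \\ \Omega(s_2) = m}} \frac{a(s_2)^2 \nu(s_2) u(s_2)}{2^{m} s_2} \;=\; \frac{1}{m!}\biggl(\sum_{p \le x} \frac{a(p)^2 u(p)}{2p}\biggr)^{\!m} \;=\; \frac{1}{m!}\biggl(\frac{\mathcal V}{2}+O(1)\biggr)^{\!m},
\]
where I use $u(p) = 1 + C/p$, the bound $\sum_p a(p)^2/p^2 \ll \sum_p p^{-3/2-2\delta} \ll 1$ coming from $|a(p)| \le 2 p^{1/4-\delta}$, and the definition of $\mathcal V$. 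Similarly, \eqref{eq:comb} applied with the weight $p \mapsto 4C/p^{1+\delta}$ bounds the $s_0$-sum by
\[
\sum_{\substack{p|s_0 \Rightarrow p \le x \\ \Omega(s_0) = j-2m}} \frac{(4C)^{j-2m}\nu(s_0)}{s_0^{1+\delta}} \;\le\; \frac{D^{j-2m}}{(j-2m)!}
\]
for some absolute constant $D$.

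Writing $A := \mathcal V/2 + O(1)$, $M := \lfloor j/2 \rfloor$, and $\eta := j - 2M \in \{0,1\}$, the full sum is then majorized by $\sum_{m=0}^{M} A^m D^{j-2m}/(m!(j-2m)!)$. Reindexing by $r = M - m$ and using the elementary bounds $M!/(M-r)! \le M^r$ together with $(\eta+2r)! \ge (2r)!$, this becomes
\[
\frac{A^M D^\eta}{M!} \sum_{r \ge 0} \frac{(M D^2/A)^r}{(2r)!} \;=\; \frac{A^M D^\eta}{M!}\,\cosh\!\bigl(D\sqrt{M/A}\bigr) \;\le\; \frac{A^M D^\eta}{M!}\, e^{D\sqrt{M/A}}.
\]

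The only delicate point is to absorb the factor $D^\eta e^{D\sqrt{M/A}}$ into a multiplicative correction of the form $(1 + O(1)/A)^M$; this would then convert $A^M = ((\mathcal V + O(1))/2)^M$ into $(\mathcal V + O(1))^M/2^M$ as required. Since the absolute constant $C_2$ in the definition of $\mathcal V$ is chosen sufficiently large, I may assume $A$ exceeds any prescribed constant depending on $D$. When $M \le A$ the exponential is $O(1)$ and is absorbed by the implied constant in $\ll$. When $M > A$ one has $M/A \ge \sqrt{M/A}$, so choosing a sufficiently large constant $c = c(D)$ yields
\[
(1 + c/A)^M \;\ge\; e^{c M/(2A)} \;\ge\; e^{D\sqrt{M/A}},
\]
where the first inequality uses $\log(1+x) \ge x/2$ for $0 < x \le 1$ (valid here since $c/A < 1$) and the second uses the case hypothesis $M/A \ge \sqrt{M/A}$. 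This balancing of the two regimes $M \lessgtr A$ is the only subtle step; granting it, the lemma follows.
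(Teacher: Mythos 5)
Your proof is correct and follows essentially the same route as the paper's: the same factorization of the sum via \eqref{eq:comb} over the split $\Omega(s_2^2s_0)=j$, the same elementary inequality $m^n(m-n)!\ge m!$ to pull out the extremal term $(\mathcal V/2+O(1))^{\lfloor j/2\rfloor}/\lfloor j/2\rfloor!$, and the same final absorption of the residual $\exp(O(\sqrt{\lfloor j/2\rfloor/\mathcal V}))$ factor into $(1+O(1/\mathcal V))^{\lfloor j/2\rfloor}$. The only cosmetic differences are that you sum the tail exactly as a $\cosh$ rather than bounding it by an exponential series, and you handle the absorption by a two-case split on $M\lessgtr A$ where the paper uses the one-line inequality $\exp(\sqrt{x})<3\exp(x)$.
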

\begin{proof}
Write $r=\Omega(s_0)$. The left-hand side of \eqref{eq:firststep} is
\begin{equation} \label{eq:expand1}
\begin{split}
&\sum_{\substack{0 \le r \le j \\ 2|j-r}} \bigg( \sum_{\substack{p|s_0 \Rightarrow p \le x \\ \Omega(s_0)=r}} \frac{(4C)^{\Omega(s_0)} \nu(s_0)}{s_0^{1+\delta}} \bigg) \bigg( \sum_{\substack{p|s_2 \Rightarrow p \le x \\ \Omega(s_2)=\frac{j-r}{2}}} \frac{a(s_2)^2 \nu(s_2) u(s_2)}{2^{\Omega(s_2)}s_2} \bigg)\\
&=\sum_{\substack{0 \le r \le j \\ 2|j-r}} \frac{1}{r!} \bigg( \sum_{p \le x} \frac{4C}{p^{1+\delta}} \bigg)^r \frac{1}{(\frac{j-r}{2})!} \bigg( \sum_{p \le x} \frac{a(p)^2 u(p)}{2 p} \bigg)^{\frac{j-r}{2}},
\end{split}
\end{equation}
where we have used \eqref{eq:comb} in the last step. Let $C_1=4C \sum_p p^{-1-\delta}$. Also note that
\[
\sum_{p \le x} \frac{a(p)^2 u(p)}{p}=\mathcal V+O(1).
\]
Apply the inequality $m^n(m-n)! \ge m!$ with $m=(j-\eta_j)/2$, $n=(r-\eta_j)/2$ to get that
\begin{equation} \label{eq:elementary-ineq}
\frac{1}{(\frac{j-r}{2})!}\bigg(
\frac{\mathcal V}{2}+O(1)\bigg)^{\frac{j-r}{2}}
\le \frac{(\mathcal V+O(1))^{\lfloor \frac{j}{2} \rfloor}}{2^{\lfloor \frac{j}{2} \rfloor} \lfloor \frac{j}{2} \rfloor!} \bigg( \frac{j-\eta_j}{2} \cdot \frac{1}{\mathcal V/2+O(1)} \bigg)^{\frac{r-\eta_j}{2}}.
\end{equation}
Hence, the right-hand side of \eqref{eq:expand1} is
\[
\begin{split}
& \le \frac{(\mathcal V+O(1))^{\lfloor \frac{j}{2}\rfloor}}{2^{\lfloor \frac{j}{2}\rfloor} \lfloor \frac{j}{2}\rfloor!}  \sum_{\substack{0 \le r \le j \\ 2|j-r}} \frac{C_1^r}{r!} \bigg(\frac{j-\eta_j}{\mathcal V+O(1)} \bigg)^{\frac{r-\eta_j}{2}} \\& \le C_1 \frac{(\mathcal V+O(1))^{\lfloor \frac{j}{2}\rfloor}}{2^{\lfloor \frac{j}{2}\rfloor} \lfloor \frac{j}{2}\rfloor!} \sum_{r=\eta_j}^{\infty} \frac{\Big(C_1 \sqrt{\frac{j-\eta_j}{\mathcal V+O(1)}} \Big)^{r-\eta_j}}{(r-\eta_j)!} \\
 &=C_1 \frac{(\mathcal V+O(1))^{\lfloor \frac{j}{2}\rfloor}}{2^{\lfloor \frac{j}{2}\rfloor} \lfloor \frac{j}{2}\rfloor!} \exp\bigg(C_1 \sqrt{\frac{j-\eta_j}{\mathcal V+O(1)}} \bigg) \\& \ll  \frac{(\mathcal V+O(1))^{\lfloor \frac{j}{2}\rfloor}}{2^{\lfloor \frac{j}{2}\rfloor} \lfloor \frac{j}{2}\rfloor!}  \exp\bigg( \frac{C_1\sqrt{2}}{\mathcal V+O(1)} \Big\lfloor \frac{j}{2} \Big\rfloor\bigg),
 \end{split}
\]
where in the last step we used that $\exp(\sqrt{x}) < 3 \exp(x)$ for any $x>0$. We complete the proof by noting that
\[
\exp\bigg( \frac{C_1\sqrt{2}}{\mathcal V+O(1)} \Big\lfloor \frac{j}{2} \Big\rfloor\bigg)=\bigg(1+O\bigg(\frac{1}{\mathcal V}\bigg) \bigg)^{\lfloor \frac{j}{2} \rfloor}.
\]
\end{proof}

\noindent We are now ready to prove Lemma \ref{lem:mixedmoments}.

\begin{proof}[Proof of Lemma \ref{lem:mixedmoments}]
Applying Lemma \ref{lem:bound} it suffices to estimate the sum on the right-hand side of \eqref{eq:momenttrivbd}.
Writing $\Omega(g)=r$ we see that the sum on the right-hand side of \eqref{eq:momenttrivbd} equals
\[
\begin{split}
   & \sum_{r=0}^{\min\{u,v\}} \bigg( \sum_{\substack{p|g \Rightarrow p \le x \\ \Omega(g)=r}} \frac{(4C)^{\Omega(g)} \nu(g)}{g^{1+\delta}} \bigg) \bigg( \sum_{\substack{p|s_2s_0 \Rightarrow p \le x \\ \Omega(s_2^2s_0)=u-r}} \frac{a(s_2)^2 \nu(s_2) u(s_2)}{2^{\Omega(s_2)}s_2} \frac{(4C)^{\Omega(s_0)} \nu(s_0)}{s_0^{1+\delta}} \bigg) \\
 &\qquad \qquad \qquad \qquad  \times \bigg( \sum_{\substack{p|t_2t_0 \Rightarrow p \le x \\ \Omega(t_2^2t_0)=v-r}} \frac{a(t_2)^2 \nu(t_2) u(t_2)}{2^{\Omega(t_2)}t_2} \frac{(4C)^{\Omega(t_0)} \nu(t_0)}{t_0^{1+\delta}}\bigg).
\end{split}
\]
As before, write $C_1=4C \sum_{p} p^{-1-\delta}$. Using \eqref{eq:comb} to estimate the first inner sum and Lemma \ref{lem:sum-est} to bound the second and third inner sums, we see that the above is
\begin{equation} \label{eq:midest}
\ll \sum_{r=0}^{\min\{u,v\}} \frac{C_1^r}{r!} \frac{(\mathcal V+O(1))^{\lfloor \frac{u-r}{2}\rfloor+\lfloor \frac{v-r}{2}\rfloor}}{2^{\lfloor \frac{u-r}{2}\rfloor+\lfloor \frac{v-r}{2}\rfloor} \lfloor \frac{u-r}{2}\rfloor! \lfloor \frac{v-r}{2}\rfloor!}.
\end{equation}
Note that $\eta_{u-r}=\eta_u+(-1)^u\eta_r$ so that
\[
\Big\lfloor \frac{u-r}{2} \Big\rfloor=\frac{u-r-(\eta_u+(-1)^u \eta_r)}{2}=\Big \lfloor \frac{u}{2} \Big \rfloor - \frac{r+(-1)^u \eta_r}{2}.
\]
We now apply the inequality $m^n(m-n)! \ge m!$ twice; first with $m=\lfloor u/2 \rfloor$ and $n=(r+(-1)^u \eta_r)/2$, next with  $m=\lfloor v/2 \rfloor$ and $n=(r+(-1)^v \eta_r)/2$ (cf. \eqref{eq:elementary-ineq}) to get that \eqref{eq:midest} is
\begin{equation} \label{eq:sum-est-applied}
\le \frac{(\mathcal V+O(1))^{\lfloor \frac{u}{2}\rfloor+\lfloor \frac{v}{2}\rfloor}}{2^{\lfloor \frac{u}{2}\rfloor+\lfloor \frac{v}{2}\rfloor} \lfloor \frac{u}{2}\rfloor! \lfloor \frac{v}{2}\rfloor!} \sum_{r=0}^{\infty} \frac{C_1^r}{r!} \bigg(\frac{\lfloor \frac{u}{2} \rfloor}{\frac12 \mathcal V+O(1)} \bigg)^{\frac{r+(-1)^u \eta_r}{2}} \bigg(\frac{\lfloor \frac{v}{2} \rfloor}{\frac12 \mathcal V+O(1)} \bigg)^{\frac{r+(-1)^v \eta_r}{2}}.
\end{equation}
To bound the sum, we apply the Cauchy-Schwarz inequality to see that it is
\begin{equation}\label{eq:separate}
\le \bigg(  \sum_{r=0}^{\infty} \frac{C_1^r}{r!} \bigg(\frac{\lfloor \frac{u}{2} \rfloor}{\frac12 \mathcal V+O(1)} \bigg)^{r+(-1)^u \eta_r}  \bigg)^{1/2} \bigg(  \sum_{r=0}^{\infty} \frac{C_1^r}{r!}  \bigg(\frac{\lfloor \frac{v}{2} \rfloor}{\frac12 \mathcal V+O(1)} \bigg)^{r+(-1)^v \eta_r} \bigg)^{1/2}.
\end{equation}
Using that $x \le e^x$ we have
\[
\bigg(\frac{\lfloor \frac u2 \rfloor}{\frac{\mathcal V}{2}+O(1)} \bigg)^{\eta_r}
\le
\exp\bigg(\frac{\lfloor \frac u2 \rfloor}{\frac{\mathcal V}{2}+O(1)} \bigg)
\]
so that
\begin{equation} \notag
\sum_{r=0}^{\infty} \frac{C_1^r}{r!} \bigg(\frac{\lfloor \frac{u}{2} \rfloor}{\frac12 \mathcal V+O(1)} \bigg)^{r+ \eta_r} \le \exp\bigg( \frac{(C_1+1) \lfloor \frac{u}{2}\rfloor}{\frac12 \mathcal V+O(1)} \bigg).
\end{equation}
 Using these bounds in the right-hand side of \eqref{eq:separate}, together with their analogues for the second sum, we get that the right-hand side of \eqref{eq:sum-est-applied} is
\[
\ll \frac{(\mathcal V+O(1))^{\lfloor \frac{u}{2}\rfloor+\lfloor \frac{v}{2}\rfloor}}{2^{\lfloor \frac{u}{2}\rfloor+\lfloor \frac{v}{2}\rfloor} \lfloor \frac{u}{2}\rfloor! \lfloor \frac{v}{2}\rfloor!}  \exp\bigg( \frac{4C_1( \lfloor \frac{u}{2}\rfloor+\lfloor \frac{v}{2}\rfloor)}{\mathcal V+O(1)} \bigg) =\frac{(\mathcal V+O(1))^{\lfloor \frac{u}{2}\rfloor+\lfloor \frac{v}{2}\rfloor}}{2^{\lfloor \frac{u}{2}\rfloor+\lfloor \frac{v}{2}\rfloor} \lfloor \frac{u}{2}\rfloor! \lfloor \frac{v}{2}\rfloor!},
\]
which completes the proof.
\end{proof}

\subsection{Proof of Proposition \ref{prop:lfunctionmoment}}
\noindent Having proved Lemma \ref{lem:momentbd}, we are now in the position to use it to prove Proposition \ref{prop:lfunctionmoment}. We first require a few preliminary lemmas. Recall that $L(s,f)=\prod_{p}(1-\alpha_p p^{-s})^{-1}(1-\beta_p p^{-s})^{-1}$ for $\tmop{Re}(s)>1$, where $\alpha_p,\beta_p$ are the Satake parameters, so that Deligne's bound gives $|\alpha_p|=|\beta_p|=1$ for $(p,N)=1$.
We first require the following bound for the central $L$-values which is due to Chandee.

\begin{lem} \label{lem:chandee}
Assume GRH for $L(s,f\otimes \chi_d)$. Let $d$ be a fundamental discriminant. Then for $x \ge 2$ there exists $C_0>1$ which depends at most on $N$ such that
\[
\log L(\tfrac12,f\otimes \chi_d)  \le \sum_{\substack{p^n \le x \\ p \nmid N}} \frac{(\alpha_p^n+\beta_p^n) \chi_d(p)^n}{n p^{\frac{n}{2}(1+2/\log x)}} \frac{\log x/p^n}{\log x}+C_0 \frac{\log |dk|}{\log x}.
\]
\end{lem}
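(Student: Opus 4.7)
The plan is to apply Chandee's generalization of Soundararajan's method \cite{Sound-2009} for upper-bounding central $L$-values under GRH, specialized to the twisted $L$-function $L(s, f \otimes \chi_d)$. The key general result (due to Chandee) reads: under GRH for a suitable $L$-function $L(s,\pi)$, for $x \ge 2$,
\[
\log|L(\tfrac12,\pi)| \le \sum_{p^n \le x} \frac{b_\pi(p^n)}{n\, p^{\frac{n}{2}(1+ 2/\log x)}} \cdot \frac{\log(x/p^n)}{\log x} + \frac{C_0 \log C(\pi)}{\log x},
\]
where $b_\pi(p^n)$ are the Dirichlet coefficients of $-L'(s,\pi)/L(s,\pi)$ normalized by dividing out $\log p$, and $C(\pi)$ is the analytic conductor.

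The underlying idea is Soundararajan's trick. Via Hadamard factorization under GRH one has
\[
\log L(s,\pi) = -\sum_\rho \log(s - \rho) + \Phi(s),
\]
where $\Phi$ collects the Archimedean and conductor factors and the sum runs over non-trivial zeros $\rho = \tfrac12 + i\gamma$. Shifting contours (or applying an effective explicit formula with a Perron-type smoothing) converts $\log L(\tfrac12,\pi)$ into a truncated sum over prime powers with the smooth weight $\log(x/p^n)/\log x$ arising naturally. The residual contribution of zeros at height $|\gamma| \gg \sqrt{x}$ is $O(\log C(\pi)/\log x)$ by Riemann--von Mangoldt.

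Second, I would specialize to $L(s, f \otimes \chi_d)$ where $f$ is a newform in $S_{2k-2}(\Gamma_0(N))$. For unramified primes $p \nmid N$, the local Euler factor is $(1-\alpha_p\chi_d(p)p^{-s})^{-1}(1-\beta_p\chi_d(p)p^{-s})^{-1}$, so the Dirichlet coefficients of $-L'/L$ at $p^n$ are $(\alpha_p^n + \beta_p^n)\chi_d(p)^n$, matching the statement. The finitely many ramified primes $p|N$ contribute at most $O_N(1/\log x)$ to the truncated sum and can be absorbed into the error term at the cost of enlarging $C_0$; this is why the sum in the statement is restricted to $p \nmid N$.

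Finally, I would verify the analytic conductor bound. The arithmetic conductor of $f\otimes \chi_d$ is $\ll_N |d|^2$, and the Archimedean factor $\Gamma_{\mathbb{C}}(s + k - 3/2)$ contributes size $\asymp k^2$; hence $C(f \otimes \chi_d) \asymp_N |dk|^2$, which gives $\log C(f \otimes \chi_d) \le C_0 \log|dk|$ for a constant $C_0$ depending only on $N$. The main thing to keep track of is the weight dependence through the Archimedean part of the conductor, which is precisely why $k$ (and not merely $|d|$) appears in the error term; this is already built into Chandee's general formulation and requires no additional work here.
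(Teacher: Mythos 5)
Your proposal is correct and takes essentially the same route as the paper: the paper's proof of this lemma is simply a citation of Theorem 2.1 of Chandee \cite{Chandee-2009}, which is exactly the result you sketch (Soundararajan's GRH method applied to $L(s,f\otimes\chi_d)$, with the ramified primes absorbed into the error term and the analytic conductor $\asymp_N |dk|^2$ giving the $\log|dk|/\log x$ term).
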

\begin{rem}
In particular, choosing $x=\log |dk|$ and using Deligne's bound $|\alpha_p|=|\beta_p|=1$ for $p \nmid N$ we have that
\begin{equation} \label{eq:centralbd2}
L(\tfrac12, f \otimes \chi_d) \ll_N \exp\left( \frac{2C_0 \log |dk|}{ \log \log (|dk|+1)} \right),
\end{equation}
which we will use later.
\end{rem}

\begin{proof}
See Theorem 2.1 of Chandee \cite{Chandee-2009}.
\end{proof}

\noindent We next record the following estimate, which follows from a classical argument of Littlewood (see Titchmarsh \cite[Eq. (14.2.2)]{Titchmarsh-1986} or \cite[Lemma 5.3]{Lester-Radziwill-2020}). Assuming GRH for $L(s,\tmop{sym}^2 f)$ we have for $x \ge 2$ that
\begin{equation} \label{eq:littlewood}
\sum_{p \le x} \frac{\lambda_f(p)^2}{p}=\log \log x+O(\log \log \log k).
\end{equation}
Finally, we require the following estimate for large deviations of Dirichlet polynomials. For $x \ge $ 2 and $d_1,d_2$ fundamental discriminants, let
\[
P(d_1,d_2;x):=\sum_{\substack{p \le x \\ p \nmid f_1f_2 lN}} \frac{\lambda_f(p)(\chi_{d_1}(p)+\chi_{d_2}(p))}{p^{1/2+1/\log x}} \frac{\log x/p}{\log x}
\]
(recall $l=\prod_{j: \ell_j \neq 0} \ell_j$). Also, for $V \in \mathbb R$, $x \ge 2$ let
\[
A_k(V;x):=\sumprime 1_{(V, \infty)}(P(d_1,d_2;x)).
\]
\begin{lem} \label{lem:largedev}
Let $\varepsilon>0$ be sufficiently small.
Suppose that $V \ge (\log \log k)^{3/4}$.
Then
 we have that
\[
A_k(V; k^{1/(\varepsilon V)}) \ll
k^3 \, \exp\bigg(\frac{-V^2(1-2\varepsilon)}{4\log \log k} \bigg)+k^3 \, e^{-\frac{\varepsilon}{4} V \log V}.
\]
\end{lem}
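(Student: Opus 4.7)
The plan is to apply Chebyshev's inequality with a high moment, use Lemma~\ref{lem:momentbd} to bound that moment, and then optimize the moment order according to the size of $V$. Set
\[
a(p):=\lambda_f(p)\,p^{-1/\log x}\,\frac{\log(x/p)}{\log x}
\]
for primes $p\le x$ with $p\nmid f_1 f_2 l N$, and $a(p):=0$ otherwise, so that $P(d_1,d_2;x)=\sum_{p\le x}a(p)(\chi_{d_1}(p)+\chi_{d_2}(p))/\sqrt{p}$. Deligne's bound gives $|a(p)|\le 2$, and the inequality $a(p)^2\le \lambda_f(p)^2$ together with \eqref{eq:littlewood} yields $\mathcal V\le \log\log x+O(\log\log\log k)$. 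Taking $x=k^{1/(\varepsilon V)}$ and using $V\ge(\log\log k)^{3/4}$ gives $\log\log x=\log\log k-\log(\varepsilon V)=\log\log k\,(1+o(1))$. Chebyshev's inequality combined with Lemma~\ref{lem:momentbd} and Stirling's approximation $(2\ell)!/(2^\ell \ell!)\ll(2\ell/e)^\ell$ then yields, for any positive integer $\ell$ satisfying the admissibility constraint $\ell\le\varepsilon V/3$ coming from $x^\ell\le k^{1/3}$,
\[
A_k(V;x)\le V^{-2\ell}\sumprime P(d_1,d_2;x)^{2\ell}\ll k^3\bigl(4\ell\log\log k/(eV^2)\bigr)^\ell \exp(o(\ell)).
\]

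It remains to choose $\ell$ optimally according to the size of $V$. In the Gaussian regime $V\le 4\varepsilon\log\log k/3$, I would take $\ell:=\lfloor V^2/(4\log\log k)\rfloor$. This is admissible by the upper bound on $V$, and is at least $\tfrac14(\log\log k)^{1/2}$ since $V\ge(\log\log k)^{3/4}$, so in particular $\ell\to\infty$ with $k$. Substituting into the display above gives $e^{-\ell}=\exp(-V^2/(4\log\log k)(1+o(1)))$, which after absorbing the $o(1)$ losses into an $\varepsilon$-factor yields the Gaussian term $k^3\exp(-V^2(1-2\varepsilon)/(4\log\log k))$. In the Poisson regime $V>4\varepsilon\log\log k/3$, I would take $\ell:=\lfloor\varepsilon V/3\rfloor$, saturating the admissibility constraint. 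The resulting bound is $(4\varepsilon\log\log k/(3eV))^\ell \exp(o(\ell))$, and taking logarithms while using $V/\log\log k\gg 1$ gives $-(\varepsilon/3)V\log V(1+o(1))$, which is at most $-\varepsilon V\log V/4$ for $k$ sufficiently large. Summing the two regimes completes the proof.

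The main obstacle, while not conceptual, is the careful bookkeeping needed to track all lower-order errors (from Stirling, from the integrality of $\ell$, from the $O(\log\log\log k)$ tail in $\mathcal V$, and from the discrepancy between $\log\log x$ and $\log\log k$) and absorb them into the stated constants $1-2\varepsilon$ and $\varepsilon/4$. This is a standard but tedious feature of Soundararajan-type moment arguments; once Lemma~\ref{lem:momentbd} is in hand, no additional idea is needed.
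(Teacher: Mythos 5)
There is a genuine gap: your argument omits the two-scale splitting of the Dirichlet polynomial, and without it the bound fails in exactly the range of $V$ that matters. Applying Lemma \ref{lem:momentbd} to the full polynomial $P(d_1,d_2;x)$ with $x=k^{1/(\varepsilon V)}$ caps the admissible moment order at $\ell\le \varepsilon V/3$. To reach the Gaussian decay $\exp(-V^2/(4\log\log k))$ you must take $\ell\approx V^2/(4\log\log k)$, which is admissible only when $V\le \tfrac{4\varepsilon}{3}\log\log k$ — and you acknowledge this by restricting your ``Gaussian regime'' accordingly. But for small $\varepsilon$ this regime excludes $V\asymp\log\log k$, which is precisely where the Gaussian term is needed in the application (the integral $\int e^{V/2}B_k(V-\log\log k)\,\mathrm dV$ in the proof of Proposition \ref{prop:lfunctionmoment} is dominated by $V\approx\log\log k$). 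In the complementary regime your choice $\ell=\lfloor\varepsilon V/3\rfloor$ gives $\bigl(\tfrac{4\varepsilon\log\log k}{3eV}\bigr)^{\ell}$, whose logarithm is $-\tfrac{\varepsilon V}{3}\log\tfrac{3eV}{4\varepsilon\log\log k}$; for $\tfrac{8\varepsilon}{3}\log\log k\lesssim V\lesssim c(\varepsilon)(\log\log k)^4$ this is weaker than \emph{both} claimed terms (it beats $e^{-\varepsilon V\log V/4}$ only once $V^{1/4}\gtrsim \varepsilon\log\log k$, and it beats the Gaussian term only for $V\lesssim \log\log k\cdot\varepsilon$). At $V=\log\log k$ you would get roughly $(\log k)^{-c\varepsilon\log(1/\varepsilon)}$ instead of the claimed $(\log k)^{-(1-2\varepsilon)/4}$, which is not enough to make the final integral converge to $k^3(\log k)^{-1/4+\varepsilon}$.

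The paper's proof avoids this by writing $P(d_1,d_2;x)=P(d_1,d_2;z)+Q(d_1,d_2;x)$ with $z=x^{1/\log\log k}$, and noting that $P>V$ forces $P(\cdot;z)>(1-\tfrac{\varepsilon}{2})V$ or $Q>\tfrac{\varepsilon}{2}V$. The short polynomial over $p\le z$ admits moments up to $\ell\le\tfrac{\varepsilon}{3}V\log\log k$, so the Gaussian choice $\ell=\lfloor V^2/(4\log\log k)\rfloor$ is available throughout $V\le\varepsilon(\log\log k)^2$; the tail $Q$ has variance only $O(\log\log\log k)$, so even with $\ell=\lfloor\varepsilon V/3\rfloor$ the event $Q>\tfrac{\varepsilon}{2}V$ occurs with frequency $\ll k^3 e^{-c\varepsilon V\log V}$. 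Your bookkeeping of $\mathcal V$, the admissibility check, and the Stirling estimates are all fine, but the decomposition into short and long primes is the essential idea you are missing, not a routine bookkeeping detail.
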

\begin{proof}
Define $x=k^{1/(\varepsilon V)}$, $z=x^{1/\log \log k}$, $V_1=(1-\tfrac \varepsilon2)V$ and $V_2=\tfrac \varepsilon2 V$. Also let $Q(d_1,d_2;x)=P(d_1,d_2;x)-P(d_1,d_2;z)$. Clearly, if $P(d_1,d_2;x)>V$ then $P(d_1,d_2;z)>V_1$ or $Q(d_1,d_2;x)>V_2$. We first bound the frequency with which the former occurs using Markov's inequality and Lemma \ref{lem:momentbd} to get
for $\ell \le \frac{\varepsilon}{3} V \log \log k$ that
\[
\begin{split}
\sumprime 1_{(V_1, \infty)}(P(d_1,d_2;z)) \le &\frac{1}{V_1^{2\ell}} \sumprime P(d_1,d_2;z)^{2\ell} \\
\ll & k^3 \,  \frac{(2\ell)!}{(V_1^2 2)^{\ell} \ell!} \bigg(2 \sum_{p \le k} \frac{\lambda_f(p)^2}{p}+O(1) \bigg)^{\ell},
\end{split}
\]
where we have also extended the inner sum on the right-hand side using nonnegativity. Applying Stirling's formula together with \eqref{eq:littlewood} the right-hand side above is
\[
\ll k^3 \,  \bigg(\frac{4\ell \log \log k}{V_1^2 e}(1+\varepsilon^3) \bigg)^{\ell}.
\]
In the range $V \le \varepsilon (\log \log k)^2$ we choose $\ell =\lfloor V^2/(4\log \log k) \rfloor$, whereas for larger $V$ we take $\ell=\lfloor \varepsilon V/3 \rfloor$.
This gives that
\[
\sumprime 1_{(V_1, \infty)}(P(d_1,d_2;z)) \ll k^3 \,  \exp\bigg(\frac{-V^2(1-2\varepsilon)}{4\log \log k} \bigg)+k^3 \, e^{-\frac{\varepsilon}{7} V \log V}.
\]
To bound how often $Q(d_1,d_2;x)>V_2$ we argue similarly and note that
\[
\sum_{z < p \le x}\frac{\lambda_f(p)^2}{p} \le 4 \log \frac{\log x}{\log z}+o(1).
\]
to see that
for $\ell =\lfloor \frac{\varepsilon}{3}V \rfloor$, the sum $\sumprime 1_{(V_2, \infty)}(Q(d_1,d_2;z))$
\[
 \ll k^3 \,  \frac{(2\ell)!}{(V_2^2 2)^{\ell} \ell!} \bigg(8 \log \log \log k(1+o(1)) \bigg)^{\ell} \ll k^3 \, e^{\frac{-\varepsilon}{7} V \log V },
\]
for $V \ge (\log \log k)^{3/4}$.
\end{proof}

\begin{proof}[Proof of Proposition \ref{prop:lfunctionmoment}]
We first will record a bound for $L(\tfrac12,f\otimes \chi_d)$. In Lemma \ref{lem:chandee}, bounding the contribution from the prime powers with $n \ge 3$ trivially we have that
\begin{equation} \label{eq:lbd1}
\begin{split}
\log L(\tfrac12,f\otimes \chi_d) \le & \sum_{\substack{p \le x \\ p \nmid lNf_1f_2}} \frac{\lambda_f(p) \chi_{d}(p)}{p^{1/2+1/\log x}} \frac{\log x/p}{\log x}+\frac{1}{2} \sum_{\substack{p \le x \\ p \nmid N}} \frac{(\alpha_p^2+\beta_p^2) \chi_{d}(p)^2}{p^{1+2/\log x}} \frac{\log x/p^2}{\log x} \\
&\qquad \qquad \qquad \qquad + 2 \sum_{p|lf_1f_2} \frac{1}{\sqrt{p}} + \frac{C_0 \log  |dk|}{\log x} +O(1),
\end{split}
\end{equation}
where we have used that $\lambda_f(p)=\alpha_p+\beta_p$. Also, we have $\alpha_p^2+\beta_p^2=\lambda_f(p)^2-2$. Using this together with \eqref{eq:littlewood} we get that the second term on the right-hand side above is, for $|d| \le k^3$,
\begin{equation} \label{eq:lbd2}
\begin{split}
& \le \frac12\sum_{p \le k} \frac{\lambda_f(p)^2}{p}- \log \log k+\sum_{x < p \le k} \frac{1}{p}+ \sum_{p|d} \frac{1}{p} +O(1) \\
 & \le -\frac12 \log \log k+\frac{\log k}{\log x}+O(\log \log \log k),
 \end{split}
\end{equation}
where in the previous estimate we also used the inequality $\log t \le t$, for $t > 0$.

Let $L(d_1,d_2):=L(\tfrac12,f\otimes \chi_{d_1})L(\tfrac12,f\otimes\chi_{d_2})\exp(-4\sum_{p|lf_1f_2} \frac{1}{\sqrt{p}})$. Also, let
\[
B_k(V):=\sumprime 1_{(e^V,\infty)} (L(d_1,d_2)).
\]
Observe that we have
\begin{equation} \label{eq:int}
    \begin{split}
\sumprime \sqrt{L(d_1,d_2)} =&\frac12 \int\limits_{\mathbb R}e^{V/2} B_k(V) \, \mathrm d V \\
=& \frac{1}{2(\log k)^{1/2}} \int\limits_{\mathbb R} e^{V/2} B_k(V-\log \log k) \, \mathrm d V.
\end{split}
\end{equation}
Since the contribution from $V \le 2(\log \log k)^{3/4}$ is $O(k^3(\log k)^{-1/2+o(1)})$ and by \eqref{eq:centralbd2} $B_k(V)=O(k^{-100})$ for $V \ge 16 C_0 \log k/\log \log k$ (here we also used that $F_1,F_2,F_3$ decay rapidly), it suffices to restrict to $V$ in the remaining range.
Using \eqref{eq:lbd1} and \eqref{eq:lbd2} we see that for $x \ge 2$ and $|d_1|,|d_2| \le k^3$ that
\begin{equation} \notag
\log L(d_1,d_2)  \le P(d_1,d_2;x)-\log \log k +18C_0\frac{ \log k}{\log x}+O(\log \log \log k),
\end{equation}
so that choosing $x=k^{1/(\varepsilon V)}$ we have that $B_k(V-\log \log k) \le A_k(V(1-19C_0\varepsilon);k^{1/(\varepsilon V)})$. Using this inequality together with the identity
\[
\int\limits_{\mathbb R} e^{-\frac{t^2}{4 \log \log k}+\frac{t}{2}} \, \mathrm d t=2 \sqrt{\pi \log \log k} \, (\log k)^{1/4}
\]
and applying Lemma \ref{lem:largedev} we have that the right-hand side of \eqref{eq:int} is
\begin{align*}
&\ll \frac{k^3}{(\log k)^{1/2}} \int_{2(\log \log k)^{3/4}}^{17C_0 \frac{\log k}{\log \log k}} e^{V/2}\bigg(e^{-\frac{V^2(1-39C_0\varepsilon)}{4 \log \log k}}+e^{-\frac{\varepsilon}{7} V\log V} \bigg)  \, \mathrm d V\\&\qquad+k^3(\log k)^{-1/2+o(1)} \\&  \ll \frac{k^3}{(\log k)^{1/4-\varepsilon}},
\end{align*}
which completes the proof.
\end{proof}

\subsection{Shifted convolution sum for the off-diagonal}

Finally we move to deduce Proposition \ref{prop:scpbd} from Proposition \ref{prop:lfunctionmoment}. Recall that $F\in S_k(\Gamma)$ is a Saito--Kurokawa lift and a Hecke eigenform, and we let $R(T)$ denote the normalized Fourier coefficients of $F$. Let $\widetilde{f} \in S_{k-\frac12}(\Gamma_0(4))$  be the classical half-integral weight form that $F$ is lifted from \cite[\S6]{EZ85}, and let $c(n)$ denote its normalized Fourier coefficients. We let $f \in S_{2k-2}(\SL_2(\Z))$ be the normalized Hecke eigenform associated to $\widetilde{f}$ via the Shimura correspondence.

We first record the following identities. For a negative integer $\ell$, let $w(\ell)=4$ if $\ell=-4$, $w(\ell)=6$ if $\ell=-3$ and $w(\ell)=2$ otherwise.
Given a fundamental discriminant $d<0$ and $a \in \mathbb N$ we have
\begin{equation} \label{eq:hsquare}
h(a^2 d)=\frac{w(a^2 d)}{w(d)}h(d) a\sum_{t|a} \frac{\mu(t)\chi_d(t)}{t} =\frac{a\sqrt{|d|} w(a^2 d)}{2\pi} L(1,\chi_d) \sum_{t|a} \frac{\mu(t)\chi_d(t)}{t},
\end{equation}
see \cite[Remark, p. 233 \& Proposition 5.3.12]{Cohen-1993}.

\noindent Recall that $$c_k= \frac {\Gamma(k)\Gamma\left(k-\frac12\right)} {3 \cdot 2^{2k+1} \cdot \pi^{2k + \frac12}}$$
and that $\mathcal D$ denotes the set of negative fundamental discriminants.

\begin{lem} \label{lem:Rbd}
Let $d \in \mathcal D$ and $h \in \mathbb N$. Then
\begin{equation} \label{eq:Rbd}
\frac{c_k}{\lVert F \rVert_2^2} \sum_{\substack{T \in \Lambda_2^+/\tmop{SL}_2(\mathbb Z) \\ \tmop{disc}(T)= h^2d}} |R(T)|^2 \ll_{\varepsilon} h^{1+\varepsilon} \sqrt{|d|}  \frac{L(1,\chi_d) L(\tfrac12,f \otimes \chi_d)}{L(1,\tmop{sym}^2 f)}.
\end{equation}
\end{lem}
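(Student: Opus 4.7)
The plan is to unpack the sum using the relation between Saito--Kurokawa Fourier coefficients and half-integral weight coefficients, parameterize classes of given discriminant by content, invoke the growth estimate \eqref{eq:csquare}, and finally apply \eqref{e:RTckLfn} together with the Dirichlet class number formula to express the answer in terms of $L$-values.

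First, I would partition $\SL_2(\Z)$-classes $T \in \Lambda_2^+$ with $\disc(T)=h^2 d$ by content. If $g = \cont(T)$ then $T/g$ is a primitive form of discriminant $(h/g)^2 d$, and the map $T \mapsto T/g$ induces a bijection between $\{T : \disc(T)=h^2 d,\ \cont(T)=g\}/\SL_2(\Z)$ and the primitive positive definite classes of discriminant $(h/g)^2 d$; in particular $g$ ranges over divisors of $h$, and the number of classes in each piece is $h((h/g)^2 d)$. Using \eqref{e:RTrelationhalfint} and \eqref{eq:csquare}, for any $T$ of content $g$ with $\disc(T)=h^2 d$ one has
\[
|R(T)| \le \sum_{j \mid g} \sqrt{j}\,|c((h/j)^2 |d|)| \ll_{\varepsilon} h^{\varepsilon}|c(|d|)| \sum_{j \mid g}\sqrt{j} \ll_{\varepsilon} h^{\varepsilon} g^{1/2+\varepsilon}|c(|d|)|,
\]
so that $|R(T)|^2 \ll_{\varepsilon} h^{\varepsilon} g^{1+\varepsilon}|c(|d|)|^2$.

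Next I would bound $h((h/g)^2 d)$ using \eqref{eq:hsquare}, namely $h(m^2 d) \ll_{\varepsilon} m^{1+\varepsilon}h(d)$, and combine the two estimates:
\[
\sum_{\substack{T \in \Lambda_2^+/\SL_2(\Z)\\ \disc(T)=h^2 d}}|R(T)|^2 \ll_\varepsilon h^{\varepsilon}|c(|d|)|^2 \sum_{g\mid h} g^{1+\varepsilon} h((h/g)^2 d) \ll_\varepsilon h^{1+\varepsilon}|c(|d|)|^2 h(d),
\]
since $\sum_{g\mid h} g^{1+\varepsilon}(h/g)^{1+\varepsilon} \ll h^{1+\varepsilon}$. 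The Dirichlet class number formula $h(d) \asymp \sqrt{|d|}\, L(1,\chi_d)$ then converts $h(d)$ into the $L$-value that appears in the claim.

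To finish, I would apply \eqref{e:RTckLfn} to any primitive $T_0$ of (fundamental) discriminant $d$; by \eqref{e:RTrelationhalfint} with $\cont(T_0)=1$ one has $R(T_0)=c(|d|)$, so
\[
\frac{c_k|c(|d|)|^2}{\|F\|_2^2}=\frac{L(\tfrac12,\pi_0\otimes \chi_d)}{L(1,\sym^2 \pi_0)\,L(\tfrac32,\pi_0)} \ll \frac{L(\tfrac12, f\otimes \chi_d)}{L(1,\sym^2 f)},
\]
where $L(\tfrac32,\pi_0) = L(\tfrac32,f)\asymp 1$ by Deligne's bound on the Euler product at $s=3/2$. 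Multiplying the two bounds yields the desired estimate \eqref{eq:Rbd}. I do not expect any serious obstacle: the proof is essentially a careful bookkeeping exercise combining the normalization identities already established in Section \ref{s:SK basics} with the class-counting bijection and the Dirichlet class number formula; the only minor care point is tracking the content divisor $g$ in \eqref{eq:csquare} so that the final power of $h$ comes out as $h^{1+\varepsilon}$ rather than something larger.
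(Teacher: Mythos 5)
Your proposal is correct and follows essentially the same route as the paper: decompose by content $g$, bound $|R(T)|^2 \ll_\varepsilon h^{\varepsilon}g^{1+\varepsilon}|c(|d|)|^2$ via \eqref{e:RTrelationhalfint} and \eqref{eq:csquare}, count classes of each content using \eqref{eq:hsquare} and the class number formula, and convert $|c(|d|)|^2$ to $L$-values by Waldspurger (the paper invokes \eqref{e:waldsform}--\eqref{e:petnormratios} directly rather than \eqref{e:RTckLfn}, and notes the content satisfies $g\mid 2h$ rather than $g\mid h$, but these are cosmetic differences).
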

\begin{proof}
Write $\tau(n):=\sum_{d|n}1$.
Given $T \in \Lambda_2^+/\SL_2(\Z)  $ with $ \tmop{disc}(T)=h^2d$ write $\tmop{cont}(T)=g$. Recalling \eqref{e:RTrelationhalfint} and applying Cauchy-Schwarz we have that
\[
|R(T)|^2 \le \tau(g)  \sum_{j|g} j \Big|c\Big(\tfrac{h^2 |d|}{j^2}\Big)\Big|^2.
\]
Using \eqref{eq:csquare} and the bound $\lambda_f(n) \ll_{\varepsilon} n^{\varepsilon}$ we have that $c(\tfrac{h^2 |d|}{j^2}) \ll h^{\varepsilon} |c(|d|)|$.
We will next apply \eqref{e:waldsform} and note that in the notation of Section $3.2$, the automorphic representation $\pi_0$ is generated by $f$ and so $L(s,\pi_0)=L(s,f)$, $L(s,\pi_0\otimes\chi_d)=L(s,f\otimes\chi_d)$ and $L(s,\sym^2\pi_0)=L(s,\sym^2 f)$. Hence since $g|2h$, applying \eqref{e:waldsform} and \eqref{e:petnormratios} we get that
\begin{equation} \label{eq:usefulRbd}
\frac{c_k}{\lVert F \rVert_2^2} |R(T)|^2 \ll \frac{c_k \tau(g)}{\lVert F \rVert_2^2}\sum_{j|g} j \Big|c\Big(\tfrac{h^2 |d|}{j^2}\Big)\Big|^2 \ll_{\varepsilon} h^{\varepsilon}  \frac{L(\tfrac12, f\otimes \chi_d)}{L(1,\tmop{sym}^2 f)} \sum_{j|g} j .
\end{equation}
This gives that
\begin{equation} \label{eq:waldbd}
\frac{c_k}{\lVert F \rVert_2^2} \sum_{\substack{T \in \Lambda_2^+/\SL_2(\Z)  \\ \tmop{disc}(T)= h^2d}} |R(T)|^2 \ll_{\varepsilon} h^{\varepsilon}  \frac{L(\tfrac12, f\otimes \chi_d)}{L(1,\tmop{sym}^2 f)} \sum_{g|2h} \sum_{j|g} j  \sum_{\substack{T \in \Lambda_2^+/\SL_2(\Z)  \\ \tmop{disc}(T)= h^2d \\ \tmop{cont}(T)=g}} 1.
\end{equation}
In the sum over $T$ above we pass to counting \textit{primitive} $T$, and write $T=gT'$ where $\tmop{cont}(T')=1$ and apply
\eqref{eq:hsquare} to get that
\[
\sum_{\substack{T \in \Lambda_2^+/\SL_2(\Z)  \\ \tmop{disc}(T)=h^2d \\ \tmop{cont}(T)=g}} 1=\sum_{\substack{T' \in \Lambda_2^+/\SL_2(\Z)  \\ \tmop{disc}(T')= h^2d/g^2 \\ \tmop{cont}(T')=1}} 1 \ll_{\varepsilon}  \frac{h^{1+\varepsilon}}{g} \sqrt{|d|} L(1,\chi_d).
\]
Applying the preceding bound in \eqref{eq:waldbd} completes the proof.
\end{proof}
For $T = \mat{m}{r/2}{r/2}{n}$, we write for brevity $$A(T;k,L):=  |R(T)R(T+L)| \, F_1\bigg(\frac{m}{k}\bigg)F_2\bigg(\frac{n}{k}\bigg)F_3\bigg(\frac{r}{k} \bigg).$$
\begin{lem} \label{lem:truncation}
Assume GRH. Let $\varepsilon>0$. For $1 \le H \le k^{1/3}$ we have that
\[
\begin{split}
&\frac{c_k}{\|F\|_2^2}\sum_{\substack{T \in \Lambda_2^+ }} A(T;k,L)\\ = &\frac{c_k}{\|F\|_2^2}\sum_{h_1,h_2 \le H}
\sum_{\substack{T \in \Lambda_2^+  \\ \tmop{disc}(T)\in h_1^2\mathcal{D} \\ \tmop{disc}(T+L)\in h_2^2\mathcal{D}}} A(T;k,L) +O_{F_i,\varepsilon}\bigg( k^3 \frac{(\log k)^{\varepsilon}}{H^{1/2-\varepsilon}} \bigg).
\end{split}
\]

\end{lem}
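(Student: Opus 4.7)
The quantity to estimate is the tail of the sum, consisting of the contribution from pairs $(h_1,h_2)$ with either $h_1 > H$ or $h_2 > H$, where $\tmop{disc}(T) = h_1^2 d_1$ and $\tmop{disc}(T+L) = h_2^2 d_2$ for $d_i \in \mathcal D$. My plan is to apply the union bound and reduce to the contribution $\mathcal T_1$ from $h_1 > H$, since the case $h_2 > H$ follows symmetrically via $T \mapsto T - L$ (which merely replaces the weights $F_j$ by slight translates still supported on $|m|,|n|,|r|\ll k$).

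Next I would apply Cauchy--Schwarz, splitting $|R(T)R(T+L)| = |R(T)|\cdot|R(T+L)|$, to obtain $\mathcal T_1 \le (U_1 U_2)^{1/2}$, where
\[
U_1 := \frac{c_k}{\|F\|_2^2} \sum_{\substack{T \in \Lambda_2^+ \\ h_1 > H}} |R(T)|^2 F_1\!\left(\tfrac mk\right) F_2\!\left(\tfrac nk\right) F_3\!\left(\tfrac rk\right)
\]
and $U_2$ is the analogous unrestricted sum with $|R(T+L)|^2$; substituting $T \mapsto T-L$ in $U_2$ converts it into a sum of the same shape as $U_1$, but with no restriction on $h$. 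Each of $U_1, U_2$ is thus a sum $\frac{c_k}{\|F\|_2^2} \sum_T |R(T)|^2 \widetilde W(T)$ with $\widetilde W$ nonnegative and supported on $|m|,|n|,|r| \ll k$, in particular forcing $|\tmop{disc}(T)| \ll k^2$.

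To estimate such a sum I would parameterize $T$ by $\tmop{disc}(T) = h^2 d$ with $d \in \mathcal D$ (so $|d| \ll k^2/h^2$) and invoke Lemma \ref{lem:Rbd}. Combining it with the mean value bound
\[
\sum_{|d|\le X}\sqrt{|d|}\, L(1,\chi_d) L(\tfrac12,f\otimes \chi_d) \ll X^{3/2}(\log X)^{O(1)},
\]
(obtained under GRH from second-moment estimates for $L(\tfrac12,f\otimes \chi_d)$ and $L(1,\chi_d)$, together with the pointwise bound $L(1,\sym^2 f)^{-1}\ll (\log k)^{O(1)}$), yields
\[
\frac{c_k}{\|F\|_2^2}\sum_{\substack{T \\ \tmop{disc}(T)\in h^2 \mathcal D}}|R(T)|^2\widetilde W(T) \ll_\varepsilon \frac{k^3(\log k)^{O(1)}}{h^{2-\varepsilon}}.
\]
Summing over $h > H$ using $\sum_{h>H}h^{-2+\varepsilon}\ll H^{-1+\varepsilon}$ gives $U_1\ll k^3(\log k)^{O(1)}/H^{1-\varepsilon}$, while summing over all $h\ge 1$ gives $U_2\ll k^3(\log k)^{O(1)}$. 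Hence $\mathcal T_1 \ll k^3(\log k)^{O(1)}/H^{1/2-\varepsilon/2}$, giving the claim after refining the $L$-value estimates to replace the $(\log k)^{O(1)}$ factor by $(\log k)^\varepsilon$.

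The main obstacle is not the truncation argument itself --- a clean Cauchy--Schwarz paired with Lemma \ref{lem:Rbd} --- but the input Lemma \ref{lem:Rbd} and the GRH bounds on the relevant $L$-values, both of which are already in hand. The proof should therefore be short.
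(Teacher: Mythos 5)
Your skeleton is the same as the paper's: reduce by symmetry to the case $h_1>H$, apply Cauchy--Schwarz to separate $|R(T)|$ from $|R(T+L)|$, bound the restricted factor via Lemma \ref{lem:Rbd} after writing $\tmop{disc}(T)=h^2d$, and gain from $\sum_{h>H}h^{-2+\varepsilon}$. The only structural difference is that the paper bounds the unrestricted factor by the Rankin--Selberg asymptotic (Proposition \ref{prop:asymptotic}) rather than by re-running the Lemma \ref{lem:Rbd} argument over all $h$; both work.

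There is, however, one genuine gap: your proposed input for the inner sum over $d$ cannot deliver the error term as stated. You bound $\sum_{|d|\le X}\sqrt{|d|}\,L(1,\chi_d)L(\tfrac12,f\otimes\chi_d)$ by ``second-moment estimates'' (i.e.\ Cauchy--Schwarz against $\sum_d L(\tfrac12,f\otimes\chi_d)^2$). The second moment of this orthogonal family is genuinely of size $X\log X$, so this route produces an unavoidable extra factor $(\log k)^{1/2}$, and the truncation error becomes $k^3(\log k)^{1/2+\varepsilon}/H^{1/2-\varepsilon}$. That loss is not cosmetic: in the proof of Proposition \ref{prop:scpbd} this error is balanced against $H^{3+\varepsilon}k^3(\log k)^{-1/4+\varepsilon}$, and with a $(\log k)^{1/2}$ in the numerator no choice of $H$ makes both terms $o(k^3)$. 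The ``refinement to $(\log k)^{\varepsilon}$'' you defer to is therefore not a refinement of the second-moment bound (which is sharp) but a change of tool: one must use a \emph{first}-moment bound for $L(\tfrac12,f\otimes\chi_d)$ --- either the twisted first-moment asymptotic of Proposition \ref{prop:twistedmoment} (as the paper does for $h\le k^{1/3}$, with the pointwise GLH bound and the power saving in $\sum_{h>k^{1/3}}h^{-2}$ covering larger $h$), or a Soundararajan-type conditional upper bound $\sum_{|d|\le X}L(\tfrac12,f\otimes\chi_d)\ll X(\log(Xk))^{\varepsilon}$, combined with $L(1,\chi_d)\ll\log\log|d|$ pointwise under GRH. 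You should also note that Lemma \ref{lem:Rbd} counts $\SL_2(\Z)$-classes while your sums run over all $T$ with the weights $F_i$; this is fine because only $O(1)$ representatives of each class meet the support of the weights, but it needs saying.
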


\noindent In the proof we will use Propositions \ref{prop:asymptotic} and \ref{prop:twistedmoment}, which are established in Section \ref{s:3.2}. Under GLH the former gives an asymptotic for the sum of $|R(T)|^2$ over all $T \in \Lambda_2^+/\SL_2(\Z)  $ with $\tmop{disc}(T) \le X$ provided $X \ge k^{1+\varepsilon}$. The latter gives an asymptotic for the sum of $L(1/2,f\otimes \chi_d)$
for $d \le X$ with $X \ge k^{1+\varepsilon}$. The argument below only requires nearly sharp upper bounds.
\begin{proof}
We write
\begin{equation} \label{eq:longsum}
\begin{split}
& \frac{c_k}{\|F\|_2^2}\sum_{\substack{T \in \Lambda_2^+ }} |R(T)R(T+L)|  F_1\bigg(\frac{m}{k}\bigg)F_2\bigg(\frac{n}{k}\bigg)F_3\bigg(\frac{r}{k} \bigg)\\
&= \frac{c_k}{\|F\|_2^2}\sum_{h_1,h_2 \in \mathbb N} \sum_{d_1,d_2 \in \mathcal D} \sum_{\substack{T \in \Lambda_2^+  \\ \tmop{disc}(T)=h_1^2 d_1 \\ \tmop{disc}(T+L)=h_2^2 d_2}} A(T;k,L).
\end{split}
\end{equation}
We will bound the contribution of the terms with $h_1 \ge H$ to the sum above. By a similar argument, the terms with $h_2 \ge H$ can be shown to satisfy the same bound.
By the Cauchy-Schwarz inequality this part of the sum above is
\begin{equation} \label{eq:cs-scp}
\begin{split}
&\le \Bigg(\frac{c_k}{\|F\|_2^2} \sum_{\substack{h_1,h_2 \\ h_1 \ge H}}  \sum_{d_1,d_2 \in \mathcal D}  \sum_{\substack{T \in \Lambda_2^+ \\ \tmop{disc}(T)=h_1^2d_1 \\ \tmop{disc}(T+L)=h_2^2 d_2}} |R(T)|^2
 F_1\bigg(\frac{m}{k}\bigg)F_2\bigg(\frac{n}{k}\bigg)F_3\bigg(\frac{r}{k} \bigg) \Bigg)^{1/2} \\
& \qquad \qquad \qquad \times \Bigg( \frac{c_k}{\|F\|_2^2}\sum_{\substack{T \in\Lambda_2^+ }} |R(T+L)|^2  F_1\bigg(\frac{m}{k}\bigg)F_2\bigg(\frac{n}{k}\bigg)F_3\bigg(\frac{r}{k} \bigg)\Bigg)^{1/2}.
\end{split}
\end{equation}
Note that there exists $G \in C_c^{\infty}(\mathbb R_{>0})$ so that for $m,n,r \in \mathbb Z$, \begin{align*}&F_1(m/k)F_2(n/k)F_3(r/k) \\&\ll_L \tmop{min}( G(|r^2-4mn|/k^2), G(|(r+\ell_3)^2-(m+\ell_1)(n+\ell_2)|/k^2)).\end{align*} Note also that for each $T=\mat{m}{r/2}{r/2}{n}$, there are $\ll1$ matrices $A \in \SL_2(\Z)$ such that $ { }^tATA = \mat{m'}{r'/2}{r'/2}{n'}$ satisfies $F_1(m'/k)F_2(n'/k)F_3(r'/k) \neq 0$.  Hence, using Proposition \ref{prop:asymptotic} along with \eqref{eq:ETbd} and noting $L(1,\tmop{sym}^2 f) \gg_{\varepsilon}k^{-\varepsilon}$, the second term in the preceding display is
\begin{equation} \label{eq:secondterm}
\ll\Bigg( \frac{c_k}{\|F\|_2^2}\sum_{\substack{T \in \Lambda_2^+/\SL_2(\Z)  }} \frac{|R(T)|^2}{\varepsilon(T)} G\left(\frac{|\disc(T)|}{k^2}\right)\Bigg)^{1/2}\ll_{\varepsilon} k^{3/2+\varepsilon}.
\end{equation}

\noindent It remains to bound the first factor in \eqref{eq:cs-scp}.
Note that given $L$ and $T \in \Lambda_2^+/\SL_2(\Z) $ the number of $(h_2,d_2) \in \mathbb N \times \mathcal D$ such that $\tmop{disc}(T+L)=h_2^2d_2$ is bounded uniformly with respect to $T$.
Using Lemma \ref{lem:Rbd} and recalling $F_1(m/k)F_2(n/k)F_3(r/k) \ll G(h^2d/k^2)$ we see that the first factor in \eqref{eq:cs-scp} is
\begin{equation} \label{eq:ft-cauchy}
\begin{split}
\ll_\varepsilon k^{1/2} c_k^{1/2} \Bigg( \sum_{h \ge H} h^{\varepsilon}  \sum_{d \in \mathcal D} \frac{L(\tfrac12, f\otimes \chi_d) L(1,\chi_d)}{L(1,\sym^2 f)}  \bigg(\frac{h \sqrt{|d|}}{k}G\bigg(\frac{h^2 |d|}{k^2} \bigg)  \bigg) \Bigg)^{1/2}.
\end{split}
\end{equation}
Under GRH, it is well-known that $L(1,\chi_d) \ll \log \log (|d|+4)$. Additionally, by Proposition \ref{prop:twistedmoment}, say, we have for $h\le k^{1/3}$ under GLH that
\begin{equation}\label{eq:reduction}
\sum_{d \in \mathcal D} L(\tfrac12,f \otimes \chi_d) \bigg( \frac{h \sqrt{|d|}}{k} G\bigg( \frac{h^2|d|}{k^2}\bigg) \bigg) \asymp \frac{k^2}{h^2} L(1,\tmop{sym}^2 f).
\end{equation}
In \eqref{eq:ft-cauchy}, we split the range of the sum over $h$ into two ranges $H \le h \le k^{1/3}$ and $h> k^{1/3}$. In the latter range we can use GLH to bound the inner sum over $\mathcal D$ whereas in the former range of $h$ we use \eqref{eq:reduction}. Applying the resulting bound together with \eqref{eq:secondterm} in \eqref{eq:cs-scp} completes the proof.
\end{proof}

\noindent Now we are finally ready to prove Proposition \ref{prop:scpbd}.

\begin{proof}[Proof of Proposition \ref{prop:scpbd}]
Using Lemma \ref{lem:truncation} and \eqref{eq:usefulRbd} we have that
\[
\begin{split}
&\frac{c_k}{\|F\|_2^2}\sum_{\substack{T\in\Lambda_2^+ }}A(T;k,L)
\ll_{F_i,L,N,\varepsilon} k^3 \frac{(\log k)^{\varepsilon}}{H^{1/2-\varepsilon}} + \sum_{h_1,h_2 \le H} (h_1h_2)^{1/2+\varepsilon}\sum_{d_1,d_2 \in \mathcal D} \\
&   \sum_{\substack{r,m,n \\ (r^2-4mn)/h_1^2=d_1 \\ ((r+\ell_1)^2-4(m+\ell_2)(n+\ell_3))/h_2^2=d_2}} \frac{\sqrt{ L(\tfrac12,f\otimes \chi_{d_1}) L(\tfrac12,f\otimes\chi_{d_2})}}{L(1, \sym^2 f)L(\frac32, f)}  F_1\bigg(\frac{m}{k}\bigg) F_2\bigg(\frac{n}{k}\bigg) F_3\bigg(\frac{r}{k} \bigg).
\end{split}
\]
Using Proposition \ref{prop:lfunctionmoment} with $f_1=h_1,f_2=h_2$ and the fact that $L(1,\sym^2 f)\gg_\eps (\log k)^{-\eps}$ under GRH (see \cite[Theorem 1]{Xiao2016}), we see that the second term on the right-hand side is
\[
\ll_{F_i,L,N,\varepsilon} H^{3+\varepsilon} \frac{k^3}{(\log k)^{1/4-\varepsilon}}.
\]
Taking $H=(\log k)^{1/14}$ balances the error terms and completes the proof.
\end{proof}


\section{Proof of Proposition \ref{QUE-reformulation}}\label{s:3.2}
\noindent In this Section, we prove Proposition \ref{QUE-reformulation}. As a starting point, we carry out an asymptotic evaluation of a twisted first moment of central $L$-values in Section \ref{s:twisted}. By combining the resulting formula with computations involving the Rankin--Selberg convolution of the Koecher--Maass series in Section \ref{s:mainterm}, we obtain the proof of Proposition \ref{QUE-reformulation} when $g=1$. The proof for the case when $g$ is a cusp form or a unitary Eisenstein series requires us to reframe the weight function in terms of a toric period and then use
Waldspurger’s period formula  and subconvex bounds for
twisted $L$-functions; this is done in Sections \ref{s:weightfunctiontoric} and \ref{s:waldsformula}. Finally in Section \ref{s:endgame} we complete the proof of Proposition \ref{QUE-reformulation} by combining the above results with the spectral decomposition of a general $g \in C_c^\infty(\SL_2(\Z) \bs \H)$.
\subsection{A twisted first moment asymptotic}\label{s:twisted}
\noindent  Let $f$ be a newform of weight $2k-2$ and level $1$. We do not need to assume $k$ is even for the next result. Let $\mathcal R=\{1,5,8,9,12,13\}$ be the set of admissible residue classes for fundamental discriminants modulo $16$ and $\eta \in \mathcal R$. Also let $ \eta_1=(-1)^{k-1} \eta$ and
\[
\mathcal D_{\eta}:=\bigg\{ d=(-1)^{k-1} n : n>0, \mu^2\bigg( \frac{n}{(4,n)}\bigg)=1,\, n \equiv \eta_1 \,(16)\bigg\}
\]
and
\[
L_{f,\eta}(s):=\bigg(1-\frac{\lambda_f(2)(\frac{\eta}{2})}{2^s}+\frac{1}{4^s} \bigg)^{-1},
\]
where $(\tfrac{\eta}{2})$ is the Kronecker symbol.
We will use the convention that $(\tfrac{0}{2})^0=1$.
The moment result we need is the following.
\begin{prop} \label{prop:twistedmoment}
Assume GLH. Let $\varepsilon >0$. Let $\phi \in C_c^{\infty}(\mathbb R_{+})$ and $u\in \mathbb N$. Write $u=2^a u_2^2u_1 $ where $2^a||u$ and $u_1$ is squarefree. Then for $\eta \in \mathcal R$ we have that
\begin{equation} \label{eq:momentest2}
\begin{split}
&\sum_{d \in \mathcal D_{\eta}} L(\tfrac12,f\otimes\chi_d) \chi_d(u) \phi\bigg( \frac{|d|}{D}\bigg)\\&=\bigg( \frac{\eta_1}{2}\bigg)^a \frac{D \lambda_f(u_1)}{8 \sqrt{u_1}} \bigg( \int\limits_{0}^{\infty} \phi(\xi) \,\mathrm d \xi\bigg) L_{f,\eta}(\tfrac12) L(1,\tmop{sym}^2 f) \mathcal G(1;u)\\
& \qquad +O_{\phi,\varepsilon}\bigg(  (ukD)^{\varepsilon} \sqrt{u} \, D^{3/4} k^{1/4}\bigg),
\end{split}
\end{equation}
where $\mathcal G(1;\cdot)$ is a multiplicative function satisfying $\mathcal G(1;p^k)=1+O(1/p)$ at prime powers.
\end{prop}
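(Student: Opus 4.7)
\medskip

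The plan is to prove Proposition \ref{prop:twistedmoment} by the standard Soundararajan--Young approach: apply an approximate functional equation for $L(\tfrac12, f\otimes \chi_d)$, swap the order of summation so that the sum over $d$ is executed first, and then apply Poisson summation in the arithmetic progressions describing $\mathcal D_\eta$. The main term arises from the diagonal (``frequency zero'') contribution coming from squares, while the off-diagonal terms are controlled under GLH by the convexity/subconvexity bounds for the resulting Dirichlet series dual to the $d$-sum.

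More concretely, I would first use an approximate functional equation
\[
L(\tfrac12, f\otimes \chi_d) = \sum_{n\ge 1} \frac{\lambda_f(n)\chi_d(n)}{\sqrt{n}} V\!\left(\frac{n}{k |d|}\right) + \varepsilon_d \sum_{n\ge 1} \frac{\lambda_f(n)\chi_d(n)}{\sqrt{n}} V\!\left(\frac{n}{k|d|}\right),
\]
where $V$ is a standard smooth cutoff and the root numbers $\varepsilon_d$ are determined by $\eta_1$ (which is why we fix $d$ modulo $16$). After interchanging summations, the problem reduces to understanding, for each $n$ coprime to $2$ (the powers of $2$ are handled separately using $\chi_d(2) = (\tfrac{\eta_1}{2})$ on $\mathcal D_\eta$),
\[
S(n,u;D) := \sum_{d\in \mathcal D_\eta} \chi_d(nu) \Phi_D(|d|),
\]
where $\Phi_D(x)$ is a smooth weight essentially $\phi(x/D)V(n/(kx))$. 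Writing $\mathcal D_\eta$ via a squarefree sifting identity and a fixed residue class $\eta_1 \Mod{16}$, I would apply Lemma \ref{lem:Poisson} to each resulting arithmetic progression, yielding a dual sum weighted by $\widehat{\Phi_D}(j/(16 nu))$ with Gauss-type factors $g_j(nu)$.

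The zero frequency $j=0$ contributes only when $nu$ is a perfect square; writing $u = 2^a u_2^2 u_1$ and $n = u_1 m^2$ with $(m,2)=1$ (modulo easy sieving for squarefreeness), the local Euler factor at every prime can be computed explicitly. Multiplicativity combined with the identity $\sum_{m\ge 1} \lambda_f(u_1 m^2)/m = L(1,\sym^2 f) \lambda_f(u_1)/\sqrt{u_1} \cdot \mathcal G(1;u)/(\text{suitable Euler factor})$ produces the claimed main term, with the factor $L_{f,\eta}(\tfrac12)$ coming from the $2$-adic local computation once we include the $\chi_d(2^a)=(\tfrac{\eta_1}{2})^a$ twist. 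The nonzero frequencies $j\neq 0$ give a dual sum which, after factoring $g_j(nu)$ as a product of Gauss sums and using Mellin inversion, becomes (schematically) a contour integral of $L(s, f\otimes \chi_j)$ weighted by Mellin transforms of $\Phi_D$ and the root-number-type factor. Shifting to the critical line and using GLH to bound the central values yields the stated error $(ukD)^\varepsilon \sqrt{u}\, D^{3/4} k^{1/4}$.

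The main obstacle, and the most delicate step, is the careful bookkeeping at the prime $2$: restricting to fundamental discriminants in a fixed residue class $\eta \Mod{16}$ forces one to work out the $2$-adic Gauss sum contributions in $g_j(n u)$ by hand, and to identify them correctly with the local factor $L_{f,\eta}(\tfrac12)$ appearing in the main term. A second delicate point is that the range $n \asymp k|d|$ in the approximate functional equation means the dual sum length is $\asymp (k|d|)^{1/2}(nu)^{-1/2}\cdot nu/D = (nu)^{1/2}(k|d|)^{1/2}/D$, so one must shift contours carefully in the dual Mellin integrals and invoke GLH on the convexity line to balance the error at $\sqrt{u}\, D^{3/4} k^{1/4}$; anything weaker than GLH would degrade the range of admissible $u$ and $D$.
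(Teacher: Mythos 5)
Your proposal follows essentially the same route as the paper's proof: approximate functional equation, Möbius sifting to detect fundamental discriminants in the class $\eta \Mod{16}$, Poisson summation via Lemma \ref{lem:Poisson}, the main term extracted from the $j=0$ frequency where $nu$ is a square (with the Euler factor at $2$ giving $L_{f,\eta}(\tfrac12)$ and the remaining Euler product giving $L(1,\sym^2 f)\mathcal G(1;u)$), and GLH applied to the dual $L$-values to control the nonzero frequencies, with the truncation/balancing producing the error $\sqrt{u}\,D^{3/4}k^{1/4}$. The only detail you gloss over is that the balancing in the paper is done through the truncation parameter $Y$ in the sifting sum over $\alpha$, but this is a matter of bookkeeping rather than a different idea.
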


\noindent  An explicit expression for $\mathcal G(1;\cdot)$ is given in \cite[(40)]{jaasaari-lester-saha-2023}.

\begin{proof}
This result follows from an adaptation of the methods developed in \cite{Soundararajan-2000, Soundararajan-Young-2010, Radziwill-Soundararajan-2015}, however there are a few key differences so we will give a detailed sketch. We assume GLH to streamline the proof and so that our argument more closely mirrors \cite{Radziwill-Soundararajan-2015}.
To detect fundamental discriminants we use the following identity, for $d \in \mathcal D_{\eta}$
\begin{equation} \label{eq:detect}
\sum_{\substack{\alpha^2 | d \\ (\alpha,2)=1}} \mu(\alpha)=\begin{cases}
1 & \text{ if } \frac{d}{(4,d)} \text{ is squarefree}\\
0 & \text{ otherwise}
\end{cases}
\end{equation}
since $d \equiv \eta\,(16)$ and $\eta \in \mathcal R$ implies that $4 \nmid \tfrac{d}{(4,d)}$.
 Applying \eqref{eq:detect} and the approximate functional equation for $L(\tfrac12,f\otimes \chi_d)$ we have that
 \begin{equation} \label{eq:afterAFE}\begin{split}
&\sum_{d \in \mathcal D_{\eta}} L(\tfrac12,f\otimes\chi_d) \chi_d(u) \phi\bigg( \frac{|d|}{D}\bigg) \\&=2 \sum_{\substack{n \ge 1 \\ (n, 2)=1}} \frac{\lambda_f(n) }{\sqrt{n}}  \sum_{ d\in \mathcal D_{\eta}} \sum_{\substack{\alpha^2|d \\ (\alpha, 2)=1}} \mu(\alpha) \chi_d(nu) V\bigg( \frac{n}{k|d| }\bigg) \phi\bigg( \frac{|d|}{D}\bigg),\end{split}
\end{equation}
 where for $\xi,c>0$,
\begin{equation}\label{eq:vdef}
V(\xi):=\frac{1}{2\pi i} \int\limits_{(c)} \frac{\Gamma(w+k)}{\Gamma(k)k^w} (2\pi \xi)^{-w} L_{f,\eta}(\tfrac12+w) e^{w^2}\frac{dw}{w}.
 \end{equation}
We first split the sum over $\alpha$ in \eqref{eq:afterAFE} into two ranges $\alpha>\sqrt{Y}$ and $\alpha \le \sqrt{Y}$.

 Following the argument given in \cite[\S 10.1]{Radziwill-Soundararajan-2015}, except that we use GLH in place of the quadratic large sieve\footnote{Since we require estimates that are uniform in $k$, the quadratic large sieve would provide a worse bound here.}  to bound $L(\tfrac12,f\otimes \chi_d)$, we get that
\begin{equation} \label{eq:largealphabd}
\bigg| \sum_{\substack{n \ge 1 \\ (n, 2)=1}} \frac{\lambda_f(n) }{\sqrt{n}}  \sum_{ d\in \mathcal D_{\eta}} \sum_{\substack{\alpha^2|d \\ (\alpha,  2)=1 \\ \alpha> \sqrt{Y}}} \mu(\alpha) \chi_d(nu) V\bigg( \frac{n}{k|d| }\bigg) \bigg| \ll_{\varepsilon} (kD)^{\varepsilon} \frac{D}{\sqrt{Y}}.
\end{equation}

\noindent It remains to estimate the terms with $\alpha \le \sqrt{Y}$.
Write $D=k^{\delta}$. Given $(x,y,z) \in \mathbb R_{>0}^3$ let
\[
F(x;y,z):=\phi\bigg(\frac{x}{y}\bigg) V\bigg( \frac{z}{x y^{1/\delta}}\bigg), \qquad \widehat F(\lambda;y,z)=\int\limits_{\mathbb R} F(t;y,z) e(-\lambda t) \, dt.
\]
Shifting contours in \eqref{eq:vdef} we see that
$V(\xi)=O_A(|\xi|^{-A})$ and conclude $\widehat F(\lambda;y,z)\ll_{A,\phi} y (y^{1+1/\delta}/z)^A$. Also,
we can repeatedly integrate by parts to get  $\widehat F(\lambda;y,z) \ll_{A,\phi}  y^{1+A/\delta} (|\lambda| z)^{-A}$.  Combining these estimates we have for any $A>0$ that
\begin{equation} \label{eq:Fhatbd}
\widehat F(\lambda;y,z) \ll_{\phi,A} y \min\bigg\{\bigg(\frac{y^{1+1/\delta}}{z}\bigg)^A, \bigg(\frac{ y^{1/\delta}}{z|\lambda|}\bigg)^A \bigg\}.
\end{equation}
Write $u=v2^a$ where $2^a || u$. Applying Lemma \ref{lem:Poisson} the terms in \eqref{eq:afterAFE} with $\alpha \le \sqrt{Y}$ are
\begin{equation} \label{eq:afterpoisson}
\begin{split}
&2 \bigg(\frac{\eta_1}{2} \bigg)^a \sum_{\substack{n \ge 1 \\ (n, 2)=1}} \frac{\lambda_f(n) }{\sqrt{n}} \sum_{\substack{\alpha \le \sqrt{Y} \\  (\alpha,  2)=1}} \mu(\alpha)   \bigg(\frac{\alpha^2}{nv} \bigg)  \sum_{ r \equiv \eta_1\,(16)}  \bigg(\frac{r}{nv} \bigg) V\bigg( \frac{n}{k|r| \alpha^2 }\bigg) \phi\bigg(\frac{r\alpha^2}{D} \bigg)\\
&= \frac{(\frac{\eta_1}{2} )^a }{ 8 \sqrt{v}} \sum_{\substack{\alpha \le \sqrt{Y} \\ (\alpha,  2)=1}} \mu(\alpha) \sum_{\substack{n \ge 1 \\ (n, 2)=1}} \frac{\lambda_f(n) }{n} \bigg(\frac{ \alpha^2}{nv} \bigg) \sum_{j \in \mathbb Z} \frac{g_j(nv)}{\sqrt{nv}} e\bigg( \frac{  j \eta_1 \overline{n v}}{ 16}\bigg)
\\& \qquad \qquad\qquad\qquad \qquad \qquad\qquad\qquad\times \widehat F\bigg(\frac{j}{ 16 nv}; \frac{D}{\alpha^2}, \frac{n}{\alpha^{2+2/\delta}} \bigg).
\end{split}
\end{equation}



\noindent The contribution from the term with $j=0$ yields the main term in Proposition \ref{prop:twistedmoment}.
Since $\tau_0(nv)=\varphi(nv)$ if $nv$ is a square and $\tau_0(nv)=0$ otherwise we get the term with $j=0$ in \eqref{eq:afterpoisson} equals
\begin{equation} \label{eq:j0}
\frac{(\frac{\eta_1}{2} )^a }{ 8} \sum_{\substack{\alpha \le \sqrt{Y} \\ (\alpha,  2v)=1}} \mu(\alpha) \sum_{\substack{n \ge 1 \\ (n, 2 \alpha)=1 \\nv=\square} } \frac{\lambda_f(n) }{\sqrt{n}}   \frac{\varphi(nv)}{nv} \widehat F\bigg(0; \frac{D}{\alpha^2}, \frac{n}{\alpha^{2+2/\delta}} \bigg).
\end{equation}
Applying the first bound in \eqref{eq:Fhatbd}, we add back in the terms with $\alpha> \sqrt{Y}$ at the cost of an error term of size $\ll_{\varepsilon ,\phi}(uDk)^{\varepsilon}D/\sqrt{Y}$. Using this estimate and also \eqref{eq:vdef} we conclude that the expression in \eqref{eq:j0} equals
\begin{equation} \label{eq:mainterm}
\begin{split}
&\frac{(\frac{\eta_1}{2} )^a D }{ 8} \frac{1}{2\pi i} \int\limits_{(c)} \bigg(\int\limits_{\mathbb R} \phi(\xi) \xi^s \, \mathrm d \xi \bigg) \frac{\Gamma(s+k)}{\Gamma(k)}   D^s
L_{f,\eta}(s+\tfrac12) \\
& \qquad \qquad \qquad \times \sum_{(\alpha,  2u)=1} \frac{\mu(\alpha)}{\alpha^2}  \sum_{\substack{(n,2\alpha)=1 \\ nv=\square}} \frac{\lambda_f(n)}{n^{s+\frac12}} \frac{\varphi(nv)}{nv}
e^{s^2} \, \frac{ds}{s}+O_{\phi,\varepsilon}\bigg((kD)^{\varepsilon} \frac{D}{\sqrt{Y}}\bigg).
\end{split}
\end{equation}
In \cite[Eq'ns (40)-(41)]{jaasaari-lester-saha-2023} it is shown that
\begin{equation}\label{eq:eulerexpression}
\sum_{(\alpha,  2v)=1} \frac{\mu(\alpha)}{\alpha^2}  \sum_{\substack{(n,2\alpha)=1 \\ nv=\square}} \frac{\lambda_f(n)}{n^{s+\frac12}} \frac{\varphi(nv)}{nv} =\frac{\lambda_f(u_1)}{u_1^{s+\frac12}} L(2s+1, \tmop{sym}^2 f) \mathcal G(2s+1;v),
\end{equation}
where $\mathcal G(2s+1;v)$ is an Euler product that extends to a holomorphic function in the domain $\tmop{Re}(s) > -1/4$ and in this region is bounded by
$\ll_{\varepsilon} v^{\varepsilon}$. Using \eqref{eq:eulerexpression} in the integral in \eqref{eq:mainterm}, shifting contours to $\tmop{Re}(s)=-\tfrac14+\varepsilon$, and using the GLH bound $L(2s+1,\tmop{sym}^2 f)\ll_{\varepsilon} ((1+|s|)k)^{\varepsilon}$, we conclude that the $j=0$ term in \eqref{eq:afterpoisson} equals
\begin{equation} \label{eq:maintermfinal}\begin{split}
&\frac{(\frac{\eta_1}{2} )^a D \lambda_f(u_1)}{ 8 \sqrt{u_1}} \bigg( \int\limits_{\mathbb R} \phi(\xi) \,\mathrm d\xi\bigg) L_{f,\eta}(\tfrac12) L(1,\tmop{sym}^2 f) \mathcal G(1;u)\\&+O_{\varepsilon,\phi}\bigg(  (ukD)^{\varepsilon}\bigg(\frac{D}{\sqrt{Y}}+ \frac{D^{3/4}}{k^{1/4}}\bigg)\bigg).\end{split}
\end{equation}

 It remains to estimate the contribution from the terms with $j \neq 0$ in \eqref{eq:afterpoisson}. We split our estimate into two cases depending on whether $|j|  \ge J$ where $ J:=\alpha^2 16v k (Dk)^{\varepsilon}$.
By  \eqref{eq:Fhatbd}, $\widehat F(\tfrac{j}{ 16nv},\tfrac{D}{\alpha^2},\tfrac{n}{\alpha^{2+2/\delta}})$ decays rapidly when $|j|\ge J$ and adapting the argument given in \cite[\S 10.3]{Radziwill-Soundararajan-2015} we get that the contribution of the terms with with $|j|  \ge J$ to the right-hand side of \eqref{eq:afterpoisson} is $\ll (kD)^{-100}$.

Finally, we consider the terms in \eqref{eq:afterpoisson}  with $0<|j| <J$. First we express the additive character $e(j\eta_1\overline{nv}/ 16)$ in terms of Dirichlet characters modulo $ 16$, using orthogonality of characters as in \cite[p. 1065]{Radziwill-Soundararajan-2015}, to see that these terms are bounded by
\begin{equation} \label{eq:expand2}
\begin{split}
\ll \frac{1}{\sqrt{v}} \sum_{0<|j| <J} \sum_{\substack{\alpha \le \sqrt{Y} }} \sum_{\psi \pamod{ 16}} \bigg|  \sum_{\substack{n \ge 1 \\ (n, 2\alpha)=1}} \frac{\lambda_f(n) }{n}  \frac{g_j(nv)}{\sqrt{nv}} \psi(n) \widehat F\bigg(\frac{j}{ 16 nv}; \frac{D}{\alpha^2}, \frac{n}{\alpha^{2+2/\delta}}  \bigg)  \bigg|.
\end{split}
\end{equation}
Write $\Phi(s):=\int_0^{\infty} \phi(x) x^s \, \mathrm d x$ and let
\begin{equation} \label{eq:fdef}\begin{split}
&\check F(s,j,\alpha^2)=\int\limits_0^{\infty} \widehat F\bigg( \frac{j}{ 16tu}; \frac{D}{\alpha^2}, \frac{t}{\alpha^{2+2/\delta}} \bigg) t^{s-1} \,\mathrm d t\\&=\frac{D^{1+s} k^s}{\alpha^2} \Phi(s) \int\limits_{0}^{\infty} V\bigg( \frac{1}{y} \bigg) e\bigg( \frac{-jy}{ 16v \alpha^2} \bigg) \frac{\mathrm d y}{y^{s+1}}.\end{split}
\end{equation}
Applying Mellin inversion, the sum over $n$ in \eqref{eq:expand2} is
\begin{equation} \label{eq:mellin}
\frac{1}{2\pi i} \int\limits_{(2)} \check{F}(s,j,\alpha^2) \bigg( \sum_{\substack{n \ge 1 \\ (n,2 \alpha)=1}} \frac{\lambda_f(n)}{n^{1+s}} \frac{g_j(nv)}{\sqrt{n}} \psi(n) \bigg) \mathrm d s.
\end{equation}
In \eqref{eq:vdef} shifting the contour to the left, we get $V(\xi)=L_{f,\eta}(\tfrac12)+O_{\varepsilon}(\xi^{1/2-\varepsilon})$ as $\xi \longrightarrow 0$. Using this we see that the function $\check{F}$ admits an analytic continuation to $\tmop{Re}(s)\ge-\tfrac{1}{2}$ and furthermore for $-\tfrac{1}{2}+\varepsilon \le \tmop{Re}(s)\le 2$, any nonnegative integer $A$, and $j \in \mathbb Z\setminus\{0\}$ that
\begin{equation} \label{eq:Fbd}
\check{F}(s,j,\alpha^2) \ll_{\phi,A} \frac{D^{1+\tmop{Re}(s)} k^{\tmop{Re}(s)}}{\alpha^2}\bigg(\bigg(\frac{|j|}{v\alpha^2k}\bigg)^{\tmop{Re}(s)}+1 \bigg) \bigg(\frac{1}{1+|s|}\bigg)^A.
\end{equation}
The sum over $n$ in \eqref{eq:mellin} equals $L(1+s,f\otimes \chi_j \psi)$ times a certain Euler product which is $\ll_{\varepsilon} v (kD)^{\varepsilon}$ for $\tmop{Re}(s) \ge -\tfrac12+\varepsilon$. Additionally,
GLH implies $|L(s, f\otimes\chi_j \psi)| \ll_N ((1+|s|) jkN)^{\varepsilon}$ for $\tmop{Re}(s) \ge \tfrac12$. We now use these bounds along with \eqref{eq:Fbd} and shift the contour of integration in \eqref{eq:mellin} to $\tmop{Re}(s)=-\tfrac12+\varepsilon$ to bound \eqref{eq:mellin} and conclude that the sum over $n$ in \eqref{eq:expand2} is, for each $0<|j| <J$,
\[
\ll_{\phi,\varepsilon}v  (kD)^{\varepsilon} \frac{D^{1/2} k^{-1/2}}{\alpha^2}\bigg( \frac{v\alpha^2k}{|j|}\bigg)^{1/2}.
\]
Hence, we conclude that the contribution from the terms with $0<|j| < J$ to \eqref{eq:afterpoisson} is
\begin{equation} \label{eq:remainingbd}
\ll_{\phi,\varepsilon} (kD)^{\varepsilon}  D^{1/2}k^{-1/2}\sum_{\alpha \le \sqrt{Y}} \frac{1}{\alpha^2} \sum_{0 < |j| \le J} \bigg(\frac{v\alpha^2k}{|j|} \bigg)^{1/2}\ll_{\phi,\varepsilon}  v (DkY)^{1/2+\varepsilon}.
\end{equation}

Combining \eqref{eq:largealphabd}, \eqref{eq:maintermfinal}, and \eqref{eq:remainingbd}, the left-hand side of \eqref{eq:momentest2} equals
\begin{align*}
&\frac{(\frac{\eta}{2} )^a D \lambda_f(u_1)}{ 8 \sqrt{u_1}} \bigg( \int\limits_{0}^{\infty} \phi(\xi) \,\mathrm d \xi\bigg) L_{f,\eta}(\tfrac12) L(1,\tmop{sym}^2 f) \mathcal G(1;u)\\&+O_{\phi,\varepsilon}\bigg(  (ukD)^{\varepsilon}\bigg(\frac{D}{\sqrt{Y}}+ \frac{D^{3/4}}{k^{1/4}}+u(YkD)^{1/2}\bigg)\bigg).
\end{align*}
To balance error terms we take $Y=D^{1/2}/(u k^{1/2})$, which completes the proof.
\end{proof}

\subsection{The case $g=1$ of Proposition \ref{QUE-reformulation}}\label{s:mainterm}
\noindent For the rest of this paper let $F\in S_k(\Gamma)$ traverse a sequence of Saito--Kurokawa lifts that are Hecke eigenforms. We freely use the notations from Sections \ref{s:SK basics} and \ref{s:mainresultsscp}. The goal of this subsection is to prove that
\begin{equation}\label{e:reqf1} \frac1{\|F\|_2^2} \sum_{T \in \Lambda_2^+/\SL_2(\Z) } \frac{|R(T)|^2}{\varepsilon(T)} |\tmop{disc}T|^{k-3/2} G(T; 1, \kappa) \longrightarrow \frac{\text{vol}(\SL_2(\Z) \bs \H)}{2\cdot\text{vol}(\Gamma\bs\H_2)} \widetilde{\kappa}(3)
\end{equation}
as $k \longrightarrow \infty$.

Observe that by Mellin inversion and writing $Y= \lambda g_z \T{g_z}$, we have
\begin{align*}
G(T; 1, \kappa)&=\frac12\int\limits_{M_2^\Sym(\R)^+}\kappa(\sqrt{\det Y})(\det Y)^{k-3}e^{-4\pi\Tr(TY)}\,\mathrm d Y\\
&= \frac12\cdot\frac{1}{2 \pi i}\int\limits_{(\sigma)}\widetilde{\kappa}(s) \int\limits_{M_2^\Sym(\R)^+}(\det Y)^{k-3 + s/2} e^{-4 \pi \Tr(T Y)} \mathrm d Y\,\mathrm d s
\end{align*}
for any $\sigma>2$.

To evaluate the inner integral we recognize it as a value of Siegel's generalized Gamma function \cite[Hilfssatz 3]{siegel35} to see that the integral equals
\[\sqrt\pi\,(\det T)^{-k-s/2+3/2}\left(\frac1{4\pi}\right)^{2k+s-3}\Gamma\left(k+\frac s2-\frac32\right)\Gamma\left(k+\frac s2-2\right).\]
We also change the discriminants to determinants by recalling the relation $\tmop{disc}(T)=-4\det T$. Combining these observations lead to
\begin{align*}
&|\tmop{disc}(T)|^{k-3/2} G(T;1,\kappa) \\
&= \frac{4^{-k+3/2}\pi^{-2k+7/2}}2\cdot\frac1{2 \pi i}\int\limits_{(\sigma)}\widetilde{\kappa}(s) (\pi^{2}\tmop{det} T)^{-s/2} \Gamma\left(k + \frac{s}{2}-2\right)\Gamma\left(k + \frac{s}{2}- \frac 32\right)ds.
\end{align*}
For $s\in \mathbb C$ with $\tmop{Re}(s)>3/2$, we define the Rankin--Selberg convolution of the Koecher--Maass series
\[
D(s):=\sum_{T \in  \Lambda_2^+/\SL_2(\Z)} \frac{|R(T)|^2}{\varepsilon(T) (4 \tmop{det}T)^s}.
\]
It is known \cite{Kalinin} that $D(s)$ has a pole at $s=3/2$ and can be meromorphically continued to the whole complex plane. So \eqref{e:reqf1} is equivalent to showing that
\begin{align}\label{reqf1neq}
&\frac{4^{-k+3/2}\pi^{-2k+7/2}}{2\|F\|_2^2}\cdot \frac{1}{2\pi i} \int\limits_{(\sigma)}\widetilde{\kappa}(s) (4 \pi^2)^{-s/2} D(s/2)\Gamma\left(k + \frac{s}{2} - \frac32\right) \Gamma\left(k+\frac{s}{2} - 2\right) \mathrm d s\\&\nonumber\longrightarrow \frac{\text{vol}(\SL_2(\Z) \bs \H)}{2\cdot\text{vol}(\Gamma\bs\H_2)}\widetilde{\kappa}(3)
\end{align}
as $k\longrightarrow\infty$.

We modify the left-hand side of (\ref{reqf1neq}) by Stirling's formula. Using the approximation for $\Gamma(s/2+k-3/2)/\Gamma(k-3/2)$ and $\Gamma(k+s/2-2)/\Gamma(k-2)$ we see that the left-hand side of (\ref{reqf1neq}) as $k \longrightarrow \infty$ is
\begin{align}\label{e:reqf2neq}
&\sim \frac{4^{-k+3/2}\pi^{-2k+7/2}}{2\|F\|_2^2}\Gamma\left(k-\frac32\right)\Gamma(k-2) \\& \quad\cdot\left(\frac{1}{2\pi i} \int\limits_{(\sigma)}\widetilde{\kappa}(s) (4 \pi^2)^{-s/2} D(s/2)\left(k-\frac32\right)^{s/2}(k-2)^{s/2}\, \mathrm d s\right) \nonumber\\
&=\frac{4^{-k+3/2}\pi^{-2k+7/2}}{2\|F\|_2^2}\Gamma\left(k-\frac32\right)
\Gamma(k-2)\nonumber\\&\times\sum_{T\in\Lambda_2^+/\SL_2(\Z)}\frac{|R(T)|^2}{\varepsilon(T)}\cdot\frac{1}{2\pi i} \int\limits_{(\sigma)}\widetilde{\kappa}(s) (4 \pi^2|\text{disc }(T)|)^{-s/2}\left(k-\frac32\right)^{s/2}(k-2)^{s/2}\, \mathrm d s \nonumber\\
&=\frac{4^{-k+3/2}\pi^{-2k+7/2}}{2\|F\|_2^2}\Gamma\left(k-\frac32\right)
\Gamma(k-2) \nonumber\\& \qquad \times\sum_{T\in\Lambda_2^+/\SL_2(\Z)}\frac{|R(T)|^2}{\varepsilon(T)}\kappa\left(\sqrt{\frac{\left(k-\frac 32\right)\left(k-2\right)}{4\pi^2|\text{disc }(T)|}}\right),\nonumber
\end{align}
where we have used Mellin inversion in the last step. The inner sum will be estimated by the following result.

\begin{prop} \label{prop:asymptotic} Assume GLH. Let $W\in C_c^{\infty}(\mathbb R_{>0})$.
Then for any $\varepsilon>0$ we have that
\begin{equation} \label{eq:propasympmain}
\begin{split}
&\sum_{\substack{T \in \Lambda_2^+/\SL_2(\Z) }} \frac{|R(T)|^2}{\varepsilon(T)}  \,  W\bigg(\frac{|\tmop{disc}(T)|}{D} \bigg) \\
& = \bigg(\tmop{Res}_{s=3/2} D(s)\bigg) \bigg(\int\limits_{0}^{\infty} \sqrt{\xi} W(\xi) \, \mathrm d \xi \bigg) \bigg( D^{3/2}+O_{\varepsilon,W}\bigg( (kD)^{\varepsilon} D^{3/2-1/8}k^{1/8}\bigg)\bigg).
\end{split}
\end{equation}
\end{prop}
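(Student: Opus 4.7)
The plan is to parametrize discriminants as $|\disc(T)| = h^2 |d|$ with $d \in \mathcal{D}$ fundamental and $h \ge 1$, collapse the inner sum over equivalence classes $T$ with fixed $h^2 d$ into a single central $L$-value via Waldspurger's formula, and then apply the twisted first moment asymptotic of Proposition \ref{prop:twistedmoment} to the resulting outer sum over $d$. Concretely, I will use \eqref{e:RTrelationhalfint} together with \eqref{eq:csquare} to express
\[
R(T) = c(|d|) \sum_{j \mid \cont(T)} \sqrt{j} \sum_{uv = h/j} \frac{\mu(u)\chi_d(u)}{\sqrt{u}} \lambda_f(v),
\]
and I will count the primitive $T'$ of each discriminant $h_0^2 d$ using \eqref{eq:hsquare}. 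After squaring and combining with \eqref{e:waldsform} and \eqref{e:petnormratios}, this rewrites the left-hand side of \eqref{eq:propasympmain} as
\[
S(D) \;=\; \mathrm{const}\cdot\sum_{h \ge 1} \sum_{d \in \mathcal{D}} L\!\left(\tfrac12, f \otimes \chi_d\right) \sqrt{|d|}\, L(1, \chi_d) \cdot B_h(d) \cdot W\!\left(\tfrac{h^2|d|}{D}\right),
\]
where $B_h(d) = \sum_u b_h(u)\, \chi_d(u)$ is a finite linear combination over $u$ dividing a bounded power of $h$.

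Next I will expand $L(1, \chi_d) = \sum_n \chi_d(n)/n$ and, for each fixed $h,n$, apply Proposition \ref{prop:twistedmoment} to the sum over $d$ with the test function $\phi_h(\xi) = \sqrt{\xi}\, W(\xi)$ and scale $D/h^2$ in place of $D$, with the twist parameter being $nu$ where $u$ ranges over the support of $B_h$. The main term from Proposition \ref{prop:twistedmoment} produces, for each $h$ and each $u,n$, a contribution proportional to $(D/h^2) \cdot L(1,\sym^2 f) \cdot \lambda_f((nu)_1)\, \mathcal{G}(1; nu)/\sqrt{(nu)_1}$. Summing these contributions over $h, n, u$ assembles into an Euler product whose local factors match, prime by prime, those of the Rankin--Selberg convolution Koecher--Maass series $D(s)$ specialized at $s = 3/2$. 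Thus the consolidated main term is
\[
\left(\Res_{s = 3/2} D(s)\right) \cdot D^{3/2} \int_0^\infty \sqrt{\xi}\, W(\xi)\, \mathrm d\xi,
\]
matching the claimed asymptotic.

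For the error term I will split at a parameter $H$ to be chosen. For $h \le H$, the error from Proposition \ref{prop:twistedmoment} for each individual $(h, n, u)$ is $O_\varepsilon((kD)^\varepsilon \sqrt{nu}\,(D/h^2)^{3/4} k^{1/4})$. Since the $u$-support has size $h^{O(1)}$ and the $n$-sum converges by rapid decay inherited from $W$ (after truncating $n$ at a small power of $D/h^2$), summing over $h \le H$ gives a total error $O((kD)^\varepsilon H^{O(1)} D^{3/4} k^{1/4})$. For $h > H$, I combine Lemma \ref{lem:Rbd} (or \eqref{eq:usefulRbd}) with the GLH bound $L(\tfrac12, f\otimes \chi_d), L(1,\chi_d) \ll (kD)^\varepsilon$ to bound this tail by $O((kD)^\varepsilon D^{3/2}/H^{1/2 - \varepsilon})$. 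Balancing $H \asymp (D/k)^{1/2}$ yields the claimed total error $O_{\varepsilon, W}((kD)^\varepsilon D^{3/2 - 1/8} k^{1/8})$.

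The principal obstacle will be the Euler-product matching in the main term: showing that the product of the combinatorial data $b_h(u)$ (arising from expanding $|R(T)|^2$ and from the class-number factor $L(1,\chi_d)$) with the multiplicative weight $\mathcal{G}(1;nu)\lambda_f((nu)_1)/\sqrt{(nu)_1}$ reproduces exactly the local factors of $\Res_{s=3/2} D(s)$ at each prime. This is a delicate but mechanical check, carried out at good primes using the explicit Euler product of $L(1,\sym^2 f)$ and $\mathcal{G}(1; \cdot)$ from \cite{jaasaari-lester-saha-2023}, and at the ramified prime $p = 2$ and primes $p \mid h$ by direct computation. With this identification in hand, the rest of the argument proceeds by the analytic estimates sketched above.
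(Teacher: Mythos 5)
Your decomposition and analytic skeleton coincide with the paper's: write $|\disc(T)|=h^2|d|$, split at a parameter $H$, bound the tail $h>H$ via Lemma \ref{lem:Rbd} and GLH, and for $h\le H$ expand $|R(T)|^2$ via \eqref{e:RTrelationhalfint}, \eqref{eq:csquare}, \eqref{eq:hsquare}, convert to $L(\tfrac12,f\otimes\chi_d)$ by Waldspurger, truncate the Dirichlet series for $L(1,\chi_d)$, and apply Proposition \ref{prop:twistedmoment} with $\phi(\xi)=\sqrt{\xi}\,W(\xi)$. The one genuine divergence is how the main-term constant is identified. You propose to verify directly, prime by prime, that the combinatorial sum over $h,g,j_1,j_2,u_i,v_i,t,a$ weighted by $\lambda_f((nu)_1)\mathcal G(1;nu)/\sqrt{(nu)_1}$ reproduces the local factors of $\Res_{s=3/2}D(s)$. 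The paper deliberately avoids this computation: it observes that the main term it obtains is of the form $C_f\cdot D^{3/2}\int\sqrt{\xi}W(\xi)\,\mathrm d\xi$ with $C_f$ independent of $D$, and that Kalinin's Rankin--Selberg asymptotic \eqref{eq:RS} for the Koecher--Maass convolution already gives the same shape with constant $\Res_{s=3/2}D(s)$; fixing $f$ and letting $D\to\infty$ forces the constants to agree. This soft comparison buys you the identity for free, whereas your route requires an Euler-product verification that is the hardest part of the argument and that you have only asserted, not carried out. Your plan is not wrong, but as written the crucial identification is a placeholder.

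Two quantitative points also need repair. First, the sum over $n$ in your expansion of $L(1,\chi_d)$ does not converge in the error analysis: after applying Proposition \ref{prop:twistedmoment}, the error per term carries a factor $\sqrt{nu}$, and $\sum_{n\le N_0} n^{-1}\sqrt{n}\asymp N_0^{1/2}$ grows with the truncation length (the rapid decay of $W$ controls the $d$-range, not the $n$-range). This is why the paper truncates at $a\le H^2$ and absorbs the resulting factor into the $H$-dependence. Second, your choice $H\asymp(D/k)^{1/2}$ is incompatible with an error of the form $H^{O(1)}D^{3/4}k^{1/4}$ unless the exponent of $H$ is tracked precisely; the paper's bookkeeping yields errors $D^{3/2}/H$ and $HD^{5/4}k^{1/4}$, balanced at $H=(D/k)^{1/8}$, which is what produces the stated exponent $D^{3/2-1/8}k^{1/8}$. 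With the truncation and the exponent of $H$ made explicit, your error analysis goes through along the same lines.
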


\noindent

\noindent We prove Proposition \ref{prop:asymptotic} further below. Also, note that from the proofs of Propositions \ref{eq:momentest2} and \ref{prop:asymptotic} it is clear that the error term in \eqref{eq:propasympmain} only depends on at most $\varepsilon$ and $\lVert W^{(j)} \rVert_{\infty}$, $j=0,1,\ldots$.  Assuming the truth of Proposition \ref{prop:asymptotic} for now, we apply the result with the choices $D=k^2$ and
\[W(\xi)=\kappa\left(\sqrt{\frac{\left(k-\frac 32\right)\left(k-2\right)}{4\pi^2\xi k^2}}\right),\] so that $\lVert W^{(j)}\rVert_{\infty} \ll \lVert \kappa^{(j)}\rVert_{\infty} \ll_{j,\kappa} 1$. We also have that
\begin{align*}
\int\limits_0^\infty\sqrt{\xi}W(\xi)\,\mathrm d \xi
&=\frac{(k-2)^{3/2}\left(k-\frac 32\right)^{3/2}}{4\pi^3 k^{3}}\widetilde\kappa(3),
\end{align*}
and conclude that the sum (\ref{e:reqf2neq}) equals
\begin{align*}
&\frac{4^{-k+3/2}\pi^{-2k+7/2}}{2\|F\|_2^2}\Gamma\left(k-\frac32\right)\Gamma(k-2)\bigg(\tmop{Res}_{s=3/2} D(s)\bigg) \\&\times \frac{(k-2)^{3/2}\left(k-\frac 32\right)^{3/2}}{4\pi^3 k^{3}}\widetilde\kappa(3)\cdot \left(k^{3}+O\left(k^{23/8}\right)\right).
\end{align*}
Using Stirling's approximation this simplifies further to
\begin{equation}\label{e:DSlast}\frac{4^{-k}\pi^{-2k+1/2}}{\|F\|_2^2}\Gamma\left(k-\frac12\right)\Gamma(k)\bigg(\tmop{Res}_{s=3/2} D(s)\bigg)\widetilde\kappa(3)\left(1+O\left(\frac 1{k^{1/8}}\right)\right).\end{equation}
Noting that
\[D(s)=4^{-k-s+3/2}\sum_{T\in\Lambda_2^+/\SL_2(\Z)}\frac{|a(T)|^2}{\varepsilon(T)(\det T)^{s+k-3/2}}\]
we compute the residue of $D(s)$ at the simple pole $s=3/2$ from previous works of Kalinin \cite{Kalinin} and Katsurada--Kim \cite{Katsurada-Kim-2022}. From these papers it follows that
\begin{align}\label{eq:Dresidue}
\tmop{Res}_{s=3/2} D(s)&=4^{-k}\cdot\frac{\pi^{-3/2}\Gamma(3/2)\zeta(3)\|F\|_2^2}{2^{1-4k}\pi^{-2k-3}\Gamma(k)\Gamma(k-1/2)\Gamma(3/2)\Gamma(2)\zeta(3)\zeta(4)}\\
&=\frac{4^{k-1/2}\pi^{2k+3/2}\|F\|_2^2}{\Gamma(k)\Gamma\left(k-\frac12\right)\zeta(4)}.
\end{align}
Combining the above computations gives that \eqref{e:DSlast} is
\begin{align*}
\sim\frac{4^{-k}\pi^{-2k+1/2}}{\|F\|_2^2}\Gamma\left(k-\frac12\right)\Gamma(k)\frac{4^{k-1/2}
\pi^{2k+3/2}\|F\|_2^2}{\Gamma(k)\Gamma\left(k-\frac12\right)\zeta(4)}\widetilde\kappa(3)
=\frac{45}{\pi^2}\widetilde\kappa(3)
\end{align*}
as $k\longrightarrow\infty$.

Now to get (\ref{reqf1neq}) it is enough to note that
\[\frac{\text{vol}(\SL_2(\Z) \bs \H)}{2\cdot\text{vol}(\Gamma\bs\H_2)}=\frac{\pi/3}{2\cdot\pi^3/270}=\frac{45}{\pi^2},\]
where we have used \cite{Siegel1943} to compute the volume of the Siegel modular variety.

We finish the analysis of the constant term contribution by proving Proposition \ref{prop:asymptotic}.

\begin{proof}[Proof of Proposition \ref{prop:asymptotic}]
Assume $D \ge k$, since for $D<k$ the result is an easy consequence of Lemma \ref{lem:Rbd} under GLH. Also, since $W \in C_c^{\infty}(\mathbb R_{>0})$ we will restrict to $T$ with $\varepsilon(T)=2$ as this is true for all $T$ with $|\disc(T)|>4$.
Given $T \in \Lambda_2^+/\SL_2(\Z) $ we write $\tmop{disc}(T)=h^2d$ with $d \in \mathcal D$.
We have that
\begin{equation} \label{eq:applytrunc}
\begin{split}
\sum_{\substack{T \in \Lambda_2^+/\SL_2(\Z)  }}|R(T)|^2 W\bigg(\frac{|\tmop{disc}(T)|}{D} \bigg) =\sum_{h \in \mathbb N} \sum_{d \in \mathcal D}  \sum_{\substack{T \in \Lambda_2^+/\SL_2(\Z)  \\  \tmop{disc}(T)=h^2d  }} |R(T)|^2  W\bigg(\frac{h^2 |d|}{D} \bigg).
\end{split}
\end{equation}
Let $1 \le H \le (D/k)^{1/2}$.
Using Lemma \ref{lem:Rbd} and applying GLH to $L(s,f\otimes\chi_d)$ we get that the contribution from the terms with $h \ge H$ is $\ll_{\varepsilon,W} (Dk)^{\varepsilon} \lVert F \rVert_2^2 D^{3/2}/(c_kH)$, where we also used the bound $L(1,\tmop{sym}^2 f) \gg k^{-\varepsilon}$ due to Hoffstein and Lockhart \cite{HL94}.

For the terms with $1 \le h \le H$, we write $T=gT'$ where $g=\tmop{cont}(T)=(m,n,r)$ so that $\tmop{cont}(T')=(m/g,n/g,r/g)=1$. Applying the definition of $R$ given in \eqref{e:RTrelationhalfint} and \eqref{eq:hsquare} with $a=h/g$ we have that
\begin{equation} \label{eq:Rexpand}
\begin{split}
&\sum_{1 \le h \le H} \sum_{d \in \mathcal D}  \sum_{\substack{T \in \Lambda_2^+/\SL_2(\Z)  \\  \tmop{disc}(T)=h^2d  }} |R(T)|^2  W\bigg(\frac{h^2 |d|}{D} \bigg)\\
&=\sum_{1 \le h \le H} \sum_{d \in \mathcal D} \sum_{g | h} \bigg|\sum_{j|g} c\bigg(\frac{h^2|d|}{j^2} \bigg) \sqrt{j}\bigg|^2 \frac{\sqrt{|d|} h}{g \pi} L(1,\chi_d) \sum_{t|(h/g)} \frac{\mu(t)\chi_d(t)}{t} W\bigg(\frac{h^2 |d|}{D} \bigg).
\end{split}
\end{equation}

By \eqref{e:waldsform} we may write $$|c(|d|)|^2=C_{\tilde{f}} L(\tfrac12,f\otimes \chi_d),$$ where $$C_{\tilde{f}} = \frac{\|\widetilde{f}\|_2^2}{L(1, \sym^2 f)} \frac{2^{2k-2}\pi^{k+\frac12}}{\Gamma(k-\frac12)} \asymp \frac{\lVert F\rVert_2^2}{c_k L(1,\tmop{sym}^2 f)}$$ and $c_k$ is given in \eqref{e:defck}.
Using this along with \eqref{eq:csquare} we have that
\begin{equation} \label{eq:waldsq}
\begin{split}
&\bigg|\sum_{j|g} c\bigg(\frac{h^2|d|}{j^2} \bigg) \sqrt{j}\bigg|^2\\
&=C_{\tilde{f}} L(\tfrac12,f\otimes \chi_d) \sum_{[j_1,j_2]|g} \sqrt{j_1j_2} \sum_{u_1v_1=h/j_1}\frac{\mu(u_1)\chi_{d}(u_1)}{\sqrt{u_1}} \lambda_f(v_1) \sum_{u_2v_2=h/j_2}\frac{\mu(u_2)\chi_{d}(u_2)}{\sqrt{u_2}} \lambda_f(v_2).
\end{split}
\end{equation}
Also, it is not hard to see that GLH for $L(s,\chi_d)$ implies that
\begin{equation} \label{eq:l1lindelof}
L(1,\chi_d)=\sum_{a \le H^2} \frac{\chi_d(a)}{a}+O_{\varepsilon}( |d|^{\varepsilon}H^{-1+\varepsilon}).
\end{equation}
Recall $\mathcal R=\{1,5,8,9,12,13\}$. Using \eqref{eq:waldsq} and \eqref{eq:l1lindelof} we get that the right-hand side of \eqref{eq:Rexpand} is
\begin{equation} \label{eq:expand3}
\begin{split}
&\frac{C_{\tilde{f}} \sqrt{D}}{\pi} \sum_{\eta \in \mathcal R}  \sum_{1\le h \le H}  \sum_{g|h} \frac1g \sum_{[j_1,j_2]|g} \sqrt{j_1j_2} \sum_{u_1v_1=h/j_1}\frac{\mu(u_1)}{\sqrt{u_1}} \lambda_f(v_1) \sum_{u_2v_2=h/j_2}\frac{\mu(u_2)}{\sqrt{u_2}} \lambda_f(v_2)  \sum_{a \le H^2} \frac{1}{a}\\
&\times \sum_{t|(h/g)} \frac{\mu(t)}{t}
\sum_{d \in \mathcal D_{\eta}}  L(\tfrac12,f \otimes \chi_d)\chi_d(tu_1u_2a) \bigg(\frac{h \sqrt{|d|}}{\sqrt{D}} W\bigg( \frac{h^2 |d|}{D}\bigg) \bigg)+O_{\varepsilon,W}\bigg(C_{\tilde{f}} (Dk)^{\varepsilon}\frac{D^{3/2}}{H^{1-\varepsilon}} \bigg).
\end{split}
\end{equation}

\noindent Applying Proposition \ref{prop:twistedmoment} with $\phi(\xi)=\sqrt{\xi}W(\xi)$ and writing $b=tu_1u_2a=2^c b_1b_2^2$, where $b_1$ is odd and squarefree, we have that the innermost sum in \eqref{eq:expand3} equals
\begin{align*}
&\bigg( \frac{-\eta}{2}\bigg)^c \frac{D \lambda_f(b_1)}{8 h^2 \sqrt{b_1}} \bigg( \int\limits_{0}^{\infty}\sqrt{\xi}W(\xi) \,\mathrm d \xi\bigg) L_{f,\eta}(\tfrac12) L(1,\tmop{sym}^2 f) \mathcal G(1;b)\\&+O_{\varepsilon,W}\bigg( (bkD)^{\varepsilon} \sqrt{b} \frac{D^{3/4}}{h^{3/2}} k^{1/4}\bigg).
\end{align*}
We now use this formula in \eqref{eq:expand3} and complete the sums over $h,a$ in the main term to sums over all positive integers at the cost of an error term of size $\ll_{\varepsilon,W}C_{\tilde{f}}(Dk)^{\varepsilon}D^{3/2}/H$ to get that the right-hand side of \eqref{eq:Rexpand} is
\begin{equation} \label{eq:maintermnearfinal}
\begin{split}
&\frac{C_{\tilde{f}} D^{3/2} L(1,\tmop{sym}^2 f)}{8\pi}  \bigg( \int\limits_{0}^{\infty} \sqrt{\xi} W(\xi) \, \mathrm d \xi \bigg) \sum_{\eta \in \mathcal R} L_{f,\eta}(\tfrac12)  \sum_{h \ge 1 }\frac{1}{h^2}  \sum_{g| h} \sum_{[j_1,j_2]|g} \sqrt{j_1j_2} \\
&  \times \sum_{u_1v_1=h/j_1}\frac{\mu(u_1)}{\sqrt{u_1}} \lambda_f(v_1) \sum_{u_2v_2=h/j_2}\frac{\mu(u_2)}{\sqrt{u_2}} \lambda_f(v_2) \sum_{t|(h/g)} \frac{\mu(t)}{t}  \sum_{a\ge 1} \frac{\lambda_f(b_1)\mathcal G(1;b)}{a\sqrt{b_1}}\bigg(\frac{-\eta}{2} \bigg)^c \\
&+O_{\varepsilon,W}\bigg(C_{\tilde{f}}(Dk)^{\varepsilon}\bigg(\frac{D^{3/2}}{H}+H D^{5/4}k^{1/4}\bigg)\bigg).
\end{split}
\end{equation}
We now choose $H=(D/k)^{1/8}$ so that the error term is $\ll_{\varepsilon,W} C_{\tilde{f}} (Dk)^{\varepsilon} D^{3/2-1/8} k^{1/8}$.

To complete the proof, note by \cite{Kalinin} that $D(s)$ has a simple pole at $s=3/2$ and admits a meromorphic continuation
to the complex plane,
 furthermore as $D \longrightarrow \infty$
\begin{equation} \label{eq:RS}
\sum_{\substack{T \in  \Lambda_2^+ /\tmop{SL}_2(\mathbb Z)}} \frac{|R(T)|^2}{\varepsilon(T)}  W\bigg(\frac{\tmop{disc(T)}}{D} \bigg) \sim  \bigg(\tmop{Res}_{s=3/2} D(s)\bigg) \bigg( \int\limits_{0}^{\infty} \sqrt{\xi} W(\xi) \, \mathrm d \xi  \bigg) D^{3/2},
\end{equation}
(assuming $W$ is not identically $0$).
Comparing the main term in \eqref{eq:maintermnearfinal} with \eqref{eq:RS}, i.e. fix $f$ and take $D \longrightarrow \infty$ in \eqref{eq:maintermnearfinal}, the leading order constants must match for any given $f$ so we conclude that
\begin{equation} \label{eq:MTdone}
\begin{split}
\sum_{\substack{T \in \Lambda_2^+/\SL_2(\Z)  }}\frac{|R(T)|^2}{\varepsilon(T)} W\bigg(\frac{|\tmop{disc}(T)|}{D} \bigg)=&\bigg(\tmop{Res}_{s=3/2} D(s)\bigg) \bigg( \int\limits_{0}^{\infty} \sqrt{\xi} W(\xi) \, \mathrm d \xi  \bigg) D^{3/2}\\
&+O_{\varepsilon,W}\bigg(C_{\tilde{f}} (Dk)^{\varepsilon} D^{3/2-1/8} k^{1/8} \bigg).
\end{split}
\end{equation}
Also, since $C_{\tilde{f}} \asymp \lVert F \rVert_2^2/(c_k L(1,\tmop{sym}^2 f))$ we have by \eqref{eq:Dresidue} and \eqref{e:defck} that
\begin{equation}\label{eq:ETbd}
\bigg(\tmop{Res}_{s=3/2} D(s)\bigg) \asymp L(1,\tmop{sym}^2 f) C_{\tilde{f}}.
\end{equation}
Recalling that $L(1,\tmop{sym}^2 f)\gg_{\varepsilon} k^{-\varepsilon}$ and using \eqref{eq:ETbd} in \eqref{eq:MTdone} completes the proof.  \end{proof}

\noindent As stated above, this finishes the proof of \eqref{e:reqf1}.


\subsection{The weight function as a  period integral}\label{s:weightfunctiontoric}
We now embark on the task of proving Proposition \ref{QUE-reformulation} for functions $g$ that are orthogonal to the constant function. For this, we will take an average of the weight function over a class group and then reinterpret part of the resulting integral as a period over a non-split torus.

Let $D<0$ be a discriminant. For a positive integer $L$, we let $H(D; L)$ denote the set of $\SL_2(\Z)$-equivalence classes of matrices in $\Lambda_2$ such that $\cont(T)=L$ and $\disc(T) = DL^2$.
It is easy to see that the map $S \mapsto L^{-1}S$ gives a bijection $H(D;L) \simeq H(D;1)$  and it is a classical fact going back to Gauss that the latter set can be naturally identified with the class group of the unique order of discriminant $D$ in $\Q(\sqrt{d})$. We denote $$h(D) = |H(D;L)| =  |H(D; 1)|.$$ In particular, if $D=d$ is a fundamental discriminant, then $h(d)$ is the class number of $\Q(\sqrt{d})$. If $D$ is not fundamental, we may write $D= dM^2$ with $d \in \mathcal D$ a fundamental discriminant and in this case we have the formula~\cite[p.\ 217]{Co} $$h(D)= \frac{M}{u(d)}\,h(d) \prod_{p|M} \left( 1 -  \Big(\frac{d}p\Big) p^{-1} \right),$$ where $u(-3)=3$, $u(-4)=2$ and $u(d)=1$ for other $d$.

Recall that the quantity $G(S; g, \kappa)$ defined in \eqref{e:defGTfkappanew} depends only on the $\SL_2(\Z)$-equivalence class of $S$. Therefore, for $D,L$ as above, a slowly growing function $g: \SL_2(\Z) \bs \H \longrightarrow \C$, and $\kappa \in C_c^\infty(\R^+)$ the following is well-defined:
\begin{equation}\label{GDdef}\begin{split}G(D;L;g, \kappa)&:=  \frac{|DL^2|^{k-\frac32}}{h(D)} \sum_{\substack{S \in H(D; L)}} G(S; g, \kappa) \\ &= \frac{|DL^2|^{k-\frac32}}{h(D)} \sum_{\substack{S \in H(D; L)}}\int\limits_0^\infty\int\limits_{z=u+iy\in \H}g(z) \lambda^{2k} \kappa(\lambda) e^{-4 \pi \lambda \Tr(S g_z \T{g_z})}\frac{\mathrm d u\,\mathrm d y\,\mathrm d \lambda}{y^2 \lambda^4}.\end{split}\end{equation}
Moreover, since $R(T)$ depends only on $\cont(T)$ and $\disc(T)$, and $\eps(T)$ depends only on $D$, we define $$R(D;L):= R(T), \quad \eps(D) = \eps(T)$$ for any  $T$  satisfying $\cont(T)=L, \ \disc(T)=DL^2.$ We note for future reference that \begin{equation}\label{koecherkeyclassgpreduction}\begin{split}
&\sum_{T \in \Lambda_2 /\SL_2(\Z)} \frac{|R(T)|^2}{\varepsilon(T)} |\disc(T)|^{k-\frac32} G(T; g, \kappa)\\ &= \sum_{L, D }\frac{|R(D;L)|^2}{\eps(D)}|DL^2|^{k-\frac32}\sum_{T \in H(D;L)}  G(T; g, \kappa)\\&= \sum_{L, D }\frac{h(D)|R(D;L)|^2}{\eps(D)} G(D;L; g, \kappa),
\end{split}\end{equation}
where  $L$ ranges over the positive integers and $D$ ranges over the set of negative discriminants, i.e., $D < 0, \ D \equiv 0, 1\,(4)$.

For each discriminant $D=dM^2$ where $d<0$ is a fundamental discriminant, we will now rewrite $G(D;L; g, \kappa)$ as a certain period integral. Let
\begin{equation}\label{e:defS}
S_d= \mat{a_d}{b_d}{b_d}{1} :=  \begin{cases} \left[\begin{smallmatrix}
  \frac{-d}{4} & 0\\
 0 & 1\\\end{smallmatrix}\right] & \text{ if } d\equiv 0\,(4), \\[2ex]
 \left[\begin{smallmatrix} \frac{1-d}{4} & \frac12\\\frac12 & 1\\
 \end{smallmatrix}\right] & \text{ if } d\equiv 1\,(4).\end{cases}
\end{equation}
Given $S_d$ as above, one obtains a non-split torus $T_d$ embedded in $\GL_2$. Precisely, for each ring $R$, we set
\begin{equation}\label{localBessellemmaeq0b}
 T_d(R):=\{g\in\GL_2(R):\:^tgS_dg=\det(g)S_d\}.
\end{equation}
We have $T_d(\Q)\simeq K^\times$ where $K=\Q(\sqrt{d})$  via \begin{equation}\label{ALisoeq}
\mat{x+yb_d/2}{y}{-ya_d}{x-yb_d/2}\longmapsto x + y \frac{\sqrt{d}}{2}.
\end{equation}
We define
$$
 \Cl_D = T_d(\A) /T_d(\Q)T_d(\R)U_T(M),
$$
where $U_T(M):=\prod_{p<\infty}U_{T_d,p}(m_p)$ with $M= \prod_{p<\infty}p^{m_p}$ and the subgroup  $U_{T_d,p}(m) \subset T_d(\Z_p)$ is defined via $$U_{T_d,p}(m) := \left\{g \in T_d(\Z_p): g\equiv \mat{\lambda}{}{}{\lambda} \,(p^{m})\,\text{ for some } \lambda\in\Z_p^\times \right\}.$$
For each $c\in \Cl_D$, pick $t_c \in \prod_{p<\infty} T_d(\Q_p)$ such that
$$T_d(\A) = \bigsqcup_{c \in \Cl_D}t_c T_d(\Q)T_d(\R)U_{T}(M).$$
By strong approximation, write $t_c = \gamma_{c}m_{c}\kappa_{c}$ with $\gamma_{c} \in \GL_2(\Q)$, $m_{c} \in \GL_2(\R)^+$, and $\kappa_{c}\in U_T(M)$; note that $(\gamma_c)_\infty = m_c^{-1}$. The matrices $$S_{c} = (\det \gamma_{c})^{-1}\ {}^t\gamma_{c}S\gamma_{c}$$ satisfy $\cont(S_c) = 1$, $\disc(S_c)=d$. Also, the lower right entry of $S_c$ is $1$ modulo $M$. For any positive integer $L$, we define
\begin{equation}\label{philmdef}
 \phi_{L,M}(c) :=\mat{L}{}{}{L}\mat{M}{}{}{1}S_c\mat{M}{}{}{1}.
\end{equation}
It follows that $\cont(\phi_{L,M}(c))=L$, $\disc(\phi_{L,M}(c))=DL^2$. By Prop 5.3 of \cite{pssmb} the map $c \mapsto [\phi_{L,M}(c)]$ gives a bijection from $\Cl_D$ to $H(D; L)$.

For $z=u+iy$ put $dz = \frac{du dy}{y^2}$ and write $a_M = \mat{M^{1/2}}{}{}{M^{-1/2}}$. Using the above discussion, we can write \eqref{GDdef} as
\begin{align*}&G(D;L;g, \kappa)\\&=\frac{|DL^2|^{k-\frac32}}{h(D)} \sum_{S \in H(D;L)} \,\int\limits_{\R^+}\int\limits_{\H}g(z) \lambda^{2k} \kappa(\lambda) e^{-4 \pi \lambda \Tr(S g_z \T{g_z})} \frac{\mathrm d  z\,\mathrm d  \lambda}{\lambda^4} \\ &= \frac{|DL^2|^{k-\frac32}}{h(D)} \sum_{c\in \Cl_D} \, \int\limits_{\R^+}\int\limits_{\H}g(z) \lambda^{2k} \kappa(\lambda) e^{-4 \pi \lambda L M\Tr(\det(\gamma_{c})^{-1}\ a_M (\T{\gamma_{c}})S_d\gamma_{c}a_M g_z \T{g_z})} \frac{\mathrm d  z\,\mathrm d  \lambda}{\lambda^4}\\ &= \frac{|DL^2|^{k-\frac32}}{h(D)} \sum_{c\in \Cl_D} \, \int\limits_{\R^+}\int\limits_{\H}g(z) \lambda^{2k} \kappa(\lambda) e^{-4 \pi \lambda L M\Tr(\det(\gamma_{c})^{-1}\ S_d\gamma_{c}a_M g_z \T{g_z}a_M (\T{\gamma_{c}}))} \frac{\mathrm d  z\,\mathrm d  \lambda}{\lambda^4}\\ &= \frac{|DL^2|^{k-\frac32}}{h(D)} \sum_{c\in \Cl_D} \, \int\limits_{\R^+}\int\limits_{\H}g(z) \lambda^{2k} \kappa(\lambda) e^{-4 \pi \lambda L M\Tr(\det(\gamma_{c})^{-1}\ S_d g_{(\gamma_c M z)} \T{g_{(\gamma_c Mz)}}a_M (\T{\gamma_{c}}))} \frac{\mathrm d  z\,\mathrm d  \lambda}{\lambda^4}\\ &= \frac{|DL^2|^{k-\frac32}}{h(D)}  \int\limits_{\R^+}\int\limits_{\H}\sum_{c\in \Cl_D} g(M^{-1}\gamma_c^{-1} z) \lambda^{2k} \kappa(\lambda) e^{-4 \pi \lambda LM \Tr(S_dg_z \T{g_z})}  \frac{\mathrm d  z\,\mathrm d  \lambda}{\lambda^4}.
\end{align*}

\noindent Let $T_d^1(\R) := \{g \in T_d(\R): \det(g)=1\}.$ Then we have an isomorphism $\{\pm 1\} \bs T_d^1(\R) \simeq \R^\times \bs T(\R)$. Writing elements of $\H$ as $t z_0$ where $t \in T_d^1(\R)$ and $z_0 \in T_d^1(\R)\bs \H$, we have $G(D;L;g, \kappa)$ equals
\begin{align*}|DL^2|^{k-\frac32}&\int\limits_{\R^+}\int\limits_{T_d^1(\R) \bs \H}\sum_{c\in \Cl_D} \frac{1}{h(D)} \\ \qquad \qquad & \times \int\limits_{T_d^1(\R)}g(M^{-1}\gamma_c^{-1} tz_0) \lambda^{2k} \kappa(\lambda) e^{-4 \pi \lambda LM \Tr(S_dg_{z_0} \T{g_{z_0}})} \frac{\mathrm d  t\,\mathrm d  z_0\,\mathrm d \lambda}{\lambda^4},\end{align*} where we have used crucially the fact that $\T{t}S_d t = S_d$ for all $t \in T_d^1(\R)$.

The upshot is that \begin{equation}\label{gddef2}G(D;L; g, \kappa) = |DL^2|^{k-\frac32} \int\limits_{\R^+}\lambda^{2k} \kappa(\lambda) \int\limits_{T_d^1(\R) \bs \H} W(g;D, z_0)e^{-4 \pi \lambda LM \Tr(S_dg_{z_0} \T{g_{z_0}})}\frac{\mathrm dz_0\,\mathrm d\lambda}{\lambda^4},\end{equation} where $$W(g;D, z_0) := \frac{1}{h(D)}\sum_{c\in \Cl_D}\int\limits_{T_d^1(\R)}g(M^{-1}\gamma_c^{-1} tz_0)\, \mathrm d  t.$$

\noindent We now let $\phi_g$ be the adelization of $g$, i.e., $\phi_g$ is the unique function on $\GL_2(\A)$ satisfying \begin{equation}\label{e:refadelizeg}\phi_g(zh_\Q h_\infty k)= g(h_\infty i)\end{equation} for all $z \in Z(\A)$, $h_\Q \in \GL_2(\Q)$, $k \in \SO(2)\prod_p\GL_2(\Z_p)$ and $h_\infty \in \GL_2(\R)^+$. Let $k^{(M)} =(k^{(M)}_p)_{p<\infty} \in \GL_2(\A_f)$ be given by
\begin{equation} \label{eq:kMdef}
k^{(M)}_p:=\begin{cases}\left(\begin{smallmatrix}M&\\&1\end{smallmatrix}\right)&\text{if } p\mid M\\ 1 & \text{if }p\nmid M\mbox{ or } p=\infty\end{cases}
\end{equation}
We let $\phi_g^{(M)}$ be the function on $\GL_2(\A)$ by right-translation of $\phi_g$ by $k^{(M)}$, i.e., $\phi_g^{(M)}(h) = \phi_g(h k^{(M)})$.
It is easy to see that $$\phi_g^{(M)}(h_\infty) = g(M^{-1}h_\infty i), \qquad h_\infty \in \GL_2(\R)^+.$$
The next key lemma reinterprets $W(g;D, z_0)$ as a toric period of $\phi_g^{(M)}$.
\begin{lem}\label{p:besselfourier}
 Let $g: \SL_2(\Z) \bs \H \longrightarrow \C$ be slowly growing and $\phi_g^{(M)}$ be defined as above. For $z_0 \in \H$,  let $g_{z_0} \in \GL_2(\R)^+$ be such that $g_{z_0} i = z_0$. Let $d<0$ be a fundamental discriminant and $D=dM^2$. We have
 \begin{equation}
 \int\limits_{\A^\times T_d(\Q)\bs T_d(\A)}  \phi_g^{(M)}(tg_{z_0})\, \mathrm d t= \frac{\vol(\A^\times T_d(\Q) \bs T_d(\A))}{\vol(T_d^1(\R))} W(g;D, z_0).
 \end{equation}
\end{lem}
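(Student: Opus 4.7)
The plan is to identify each side as a sum over $\Cl_D$ using the given double coset decomposition, simplify the integrand on the left-hand side explicitly, and then verify that the measures agree. The main identity to establish is: for any $c \in \Cl_D$, $t_\infty \in T_d(\R)$, and $u \in U_T(M)$,
\begin{equation}\label{e:bfpullback}
\phi_g^{(M)}(t_c t_\infty u \, g_{z_0}) = g(M^{-1} \gamma_c^{-1} t_\infty z_0).
\end{equation}

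To prove \eqref{e:bfpullback}, I will first use left $\GL_2(\Q)$-invariance of $\phi_g$ via $\gamma_c^{-1}$. Combining $t_c = \gamma_c m_c \kappa_c$ with $(\gamma_c)_\infty = m_c^{-1}$, the archimedean component of the argument (after right-translating by $k^{(M)}$, as in the definition of $\phi_g^{(M)}$) becomes $m_c t_\infty g_{z_0} \in \GL_2(\R)^+$, and at each finite prime $p$ the component reduces to $(\kappa_c u)_p k^{(M)}_p$. Next, since $(\kappa_c u)_p \in U_{T_d,p}(m_p)$ is congruent to a scalar modulo $p^{m_p}$, a direct computation shows $(k^{(M)}_p)^{-1}(\kappa_c u)_p k^{(M)}_p \in \GL_2(\Z_p)$, so by right-$\prod_p\GL_2(\Z_p)$-invariance of $\phi_g$ this factor can be absorbed. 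This reduces $\phi_g$ to the element with archimedean part $m_c t_\infty g_{z_0}$ and finite part $k^{(M)}$; the defining formula $\phi_g^{(M)}(h_\infty) = g(M^{-1} h_\infty i)$ for $h_\infty \in \GL_2(\R)^+$, together with $m_c = \gamma_c^{-1}$ as a Möbius transformation, yields \eqref{e:bfpullback}. A small check is required when $\det t_\infty < 0$, but there one can pre-multiply by a suitable sign matrix from $\GL_2(\Q)$ to reduce to $\GL_2(\R)^+$.

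Second, I will match the measures. Since $T_d$ is abelian, the double coset decomposition descends to a measure-preserving bijection of $\A^\times T_d(\Q)\bs T_d(\A)$ with a disjoint union over $c \in \Cl_D$ of copies of $\R^\times U(D)\bs T_d(\R) \times U_T(M)$, where $U(D) = T_d(\Q) \cap T_d(\R) U_T(M)$ is the finite unit group of the order of discriminant $D$. Writing $T_d(\R) = \R^\times \cdot T_d^1(\R)$ and using that the integrand in \eqref{e:bfpullback} is invariant under $\{\pm 1\} = T_d^1(\R) \cap \R^\times$ (since $-I$ acts trivially on $\H$), the per-class integral equals
\[
\frac{\vol(U_T(M))}{2\,|U(D)/\{\pm 1\}|}\int_{T_d^1(\R)} g(M^{-1}\gamma_c^{-1} t_\infty z_0)\,\mathrm d t_\infty.
\]
Summing over $\Cl_D$ and using the identity $\vol(\A^\times T_d(\Q)\bs T_d(\A)) = h(D)\vol(U_T(M))\vol(T_d^1(\R))/(2|U(D)/\{\pm 1\}|)$ gives exactly the claimed formula.

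The main obstacle is the careful bookkeeping of the finite stabilizer $U(D)$ and the factor of two from $\{\pm 1\} \subset T_d^1(\R)$; these cancel cleanly between the numerator $\vol(\A^\times T_d(\Q)\bs T_d(\A))$ and the normalization $1/h(D)$ in $W(g; D, z_0)$, but tracking them through the adelic measure requires attention. The substantive content, however, is really the identity \eqref{e:bfpullback}, whose proof relies on the crucial observation that $k^{(M)}$ conjugates $U_T(M)$ into $\prod_p \GL_2(\Z_p)$.
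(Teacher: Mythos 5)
Your proof is correct and follows essentially the same route as the paper: decompose the toric quotient into $h(D)$ cosets indexed by $\Cl_D$, use left $\GL_2(\Q)$-invariance together with the fact that $k^{(M)}$ conjugates $U_T(M)$ into $\prod_p\GL_2(\Z_p)$ to reduce each integrand to $g(M^{-1}\gamma_c^{-1}t_\infty z_0)$, and then match volumes (the paper phrases the stabilizer bookkeeping implicitly as ratios of volumes, and note that for $d<0$ every element of $T_d(\R)$ already has positive determinant, so your sign caveat is vacuous). No gaps.
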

\begin{proof} Noting that $\phi_g^{(M)}(t_1tg_{z_0}) = \phi_g^{(M)}(t_1g_{z_0})$ for all   $t_1 \in T_d(\A)$, $t\ \in \A^\times T_d(\Q)U_T(M)$ we obtain
\begin{align*}
&\int\limits_{\A^\times T_d(\Q)\bs T_d(\A)} \phi^{(M)}_g(tg_{z_0})\,\mathrm d t \\&= \sum\limits_{c \in \Cl_D} \,\, \int\limits_{\A^\times T_d(\Q)\bs T_d(\Q)T_d(\R)U_{T}(M)} \phi^{(M)}_g(t_ctg_{z_0})\,\mathrm d t \\
&=  \frac{\text{vol}(\A^\times T_d(\Q)\bs T_d(\Q)T_d(\R)U_{T}(M))}{\text{vol}(\R^\times \bs T_d(\R))} \sum\limits_{c \in \Cl_D} \,\,\int\limits_{\R^\times \bs T_d(\R)} \phi^{(M)}_g(t_ctg_{z_0})\,\mathrm d t\\
&=  \frac{\text{vol}(\A^\times T_d(\Q)\bs T_d(\Q)T_d(\R)U_{T}(M))}{\text{vol}(T^1_d(\R))} \sum\limits_{c \in \Cl_D} \,\,\int\limits_{T^1_d(\R)} \phi^{(M)}_g((\gamma_c)_\infty^{-1} tg_{z_0})\,\mathrm d t\\&= \frac{\text{vol}(\A^\times T_d(\Q) \bs T_d(\A))}{\text{vol}(T^1_d(\R))} \frac{1}{h(D)}\sum_{c \in \Cl_D}\, \int\limits_{T^1_d(\R)}g(M^{-1}(\gamma_c)^{-1} tz_0)\,\mathrm d t
\end{align*}
as required.
\end{proof}

\noindent We give $\A^\times T_d(\Q)\bs T_d(\A)$ the Tamagawa measure as usual, which gives it total volume 2. We summarize the results so far.
\begin{prop}\label{p:gdfinal}Let $L$, $M$ be positive integers and let $d<0$ be a fundamental discriminant; set $D=dM^2$. Let $g: \SL_2(\Z) \bs \H \longrightarrow \C$ be slowly growing, and $\kappa \in C_c^\infty(\R^+)$. Let the adelization $\phi_g$ of $g$ be given by \eqref{e:refadelizeg} and let $\phi_g^{(M)}(h) = \phi_g(h k^{(M)})$ for all $h \in \GL_2(\A)$. The quantity $G(D;L; g, \kappa)$ equals \begin{align*}&|DL^2|^{k-\frac32} \frac{\vol(T^1_d(\R))}{2} \int\limits_{\R^+}\lambda^{2k} \kappa(\lambda) \\&\int\limits_{T_d^1(\R) \bs \H} \left(\,\int\limits_{\A^\times T_d(\Q) \bs T_d(\A)} \phi^{(M)}_g(tg_{z_0})\, \mathrm d t\right) e^{-4 \pi \lambda LM \Tr(S_dg_{z_0} \T{g_{z_0}})}\frac{\mathrm d z_0\,\mathrm d \lambda}{\lambda^4}.\end{align*}
\end{prop}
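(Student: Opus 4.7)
The proposition is essentially a summary: it combines the two key identities already established, namely the rewritten expression \eqref{gddef2} for $G(D;L;g,\kappa)$ and the toric reinterpretation of $W(g;D,z_0)$ proved in Lemma \ref{p:besselfourier}. My plan is therefore to assemble these two inputs, paying attention only to the constant.

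First I would recall that the derivation leading to \eqref{gddef2} uses the bijection $c\mapsto[\phi_{L,M}(c)]$ between $\Cl_D$ and $H(D;L)$, together with the change of variables $z\mapsto t z_0$ on $\H$ arising from writing $z = tz_0$ with $t\in T_d^1(\R)$ and $z_0\in T_d^1(\R)\bs\H$; the key property used is $\T{t}S_d t=S_d$ for $t\in T_d^1(\R)$, which is what allows the exponential factor to be pulled outside the inner integral. This identity has already been established, so I take \eqref{gddef2} as given.

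Next, Lemma \ref{p:besselfourier} yields the identity
\[
\int\limits_{\A^\times T_d(\Q)\bs T_d(\A)} \phi_g^{(M)}(tg_{z_0})\,\mathrm d t
= \frac{\vol(\A^\times T_d(\Q)\bs T_d(\A))}{\vol(T_d^1(\R))}\,W(g;D,z_0),
\]
which I can solve for $W(g;D,z_0)$. Substituting this expression for $W(g;D,z_0)$ into the right-hand side of \eqref{gddef2} and using the normalization of the Tamagawa measure (so that $\vol(\A^\times T_d(\Q)\bs T_d(\A))=2$) yields exactly the claimed formula, with the constant $\vol(T_d^1(\R))/2$ appearing as $\vol(T_d^1(\R))/\vol(\A^\times T_d(\Q)\bs T_d(\A))$. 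There is no real obstacle here; the only point to be careful about is that the Tamagawa volume is indeed $2$, which is standard (this is the Tamagawa number of the norm-one torus $T_d^1$). Thus the proof reduces to a one-line substitution after recalling the Tamagawa normalization.
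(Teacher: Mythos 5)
Your proposal is correct and matches the paper's own treatment exactly: the paper proves this proposition by simply substituting the toric-period identity of Lemma \ref{p:besselfourier} into \eqref{gddef2} and invoking the Tamagawa normalization $\vol(\A^\times T_d(\Q)\bs T_d(\A))=2$, which is precisely your one-line substitution. The constant $\vol(T_d^1(\R))/2$ comes out as you describe, so there is nothing to add.
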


\subsection{Waldspurger's formula for the toric period and subconvexity}\label{s:waldsformula}
We begin by defining some purely local quantities. Let $p= \infty$ or let $p$ be a prime dividing $M$. Let $\pi_p$ be an irreducible, admissible unitary representation of $\GL_2(\Q_p)$. Fix some (unique up to multiples) invariant inner product $\langle\cdot,\cdot \rangle_p$ on $\pi_p$. If $p|M$, normalize the Haar measure on the subgroup $\Q_p^\times \bs T_d(\Q_p)$ so that $\vol(\Z_p^\times \bs T_d(\Z_p))=1$. Let $v_p$ be a (unique up to multiples) spherical vector in the space of $\pi_p$ and let $k_p^{(M)}$ be as defined in \eqref{eq:kMdef}.

If $p|M$, define
$$J_p^{(M)} := \int\limits_{\Q_p^\times \bs T_d(\Q_p)}\frac{\langle t  k_p^{(M)} v_p , k_p^{(M)} v_p \rangle_p}{\langle  v_p ,  v_p \rangle_p}\,\mathrm d t.$$

\noindent If $p=\infty$, and $z_0 \in \H$, define
$$J_\infty^{(z_0)} := \frac{1}{\text{vol}(T_d^1(\R))}\int\limits_{T_d^1(\R)}\frac{\langle t g_{z_0} v_\infty , g_{z_0} v_\infty \rangle_\infty}{\langle  v_\infty ,  v_\infty \rangle_\infty} \mathrm d t.$$

\subsubsection{The toric period in the case that $g$ is a cusp form} Let $g$ be a Hecke--Maass cusp form and let $\pi = \otimes_v \pi_v$ be the irreducible, cuspidal automorphic representation of $\GL_2(\A)$ generated by $\phi_g$.  Let $\langle g , g \rangle$ be the usual Petersson inner product. For each $\phi$ in the space of $\pi$, a famous formula of Waldspurger relates $|\left(\int_{\A^\times T_d(\Q) \bs T_d(\A)} \phi(t) dt\right)|^2$ to the central value $L(1/2, \pi)L(1/2, \pi \otimes \chi_d)$ times some local factors.

We apply Waldspurger's formula \cite[Proposition 7]{Waldspurger1985} to the automorphic form $$\phi(h)= \phi^{(M)}_g(hg_{z_0})  = \phi_g(h g_{z_0} k^{(M)}).$$ This gives us the identity
\begin{equation}\label{e:walds}\begin{split}&\frac{\left|\int_{\A^\times T_d(\Q) \bs T_d(\A)} \phi^{(M)}_g(tg_{z_0})\, \mathrm d t\right|^2}{\langle g, g\rangle} \\&=C|d|^{-1/2} \frac{L^M(1/2, \pi)L^M(1/2, \pi \otimes \chi_d)}{L^M(1, \ad\, \pi)L^M(1, \chi_d)^2} J_\infty^{(z_0)} \prod_{p|M} J_p^{(M)},\end{split}
\end{equation}
where $C>0$ is an absolute constant (to check that the constants appearing above are as required by Waldspurger's formula, we note \cite[Sec. 3.4]{DPSS15} that the constant $C_T$ that relates the global Tamagawa measure and the product measure on $\A^\times T_d(\Q)\bs T_d(\A)$ is given by $\frac{2w(K)}{h(d) \text{vol}(T_d^1(\R))} = \frac{4\pi}{|d|^{1/2}\text{vol}(T_d^1(\R)) L(1, \chi_d)},$ where $w(K)$ is the number of roots of unity in $K$). Recall here that $L^M(1/2, \ldots)$ denotes the $L$-function with the factors at $p|M$ omitted, and changing these factors only has a mild effect (in particular at most $M^\eps$) on the bounds we get for the central $L$-values.

We now claim that
\begin{equation}\label{claimJINFTY}|J_\infty^{(z_0)}| \ll 1, \quad \text{and}
\end{equation}
\begin{equation}\label{claimJp}J_p^{(M)} \ll_\eps 1 \quad \text{for all }p|M.
\end{equation}
To see \eqref{claimJINFTY} we just use the trivial bound $\left| \frac{\langle t g_{z_0} v_\infty , g_{z_0} v_\infty \rangle_\infty}{\langle  v_\infty ,  v_\infty \rangle_\infty}\right| \le 1$.
To see \eqref{claimJp}, note first that if $p$ is inert or ramified in $\Q(\sqrt{d})$ then $\Q_p^\times \bs T_d(\Q_p)$ is compact with volume $\asymp 1$, so $J_p^{(M)} \ll 1$ trivially in that case. Now, suppose that $p|M$ splits in $\Q(\sqrt{d})$. For brevity put $\Phi_p(g):=  \frac{\langle g v_p , v_p \rangle_p}{\langle  v_p ,  v_p \rangle_p}$ and let $m$ be the highest power of $p$ dividing $M$. Note that the function on $T_d(\Q_p)$ given by $t \mapsto \Phi_p((k_p^{(M)})^{-1}t  k_p^{(M)}) = \frac{\langle t  k_p^{(M)} v_p , k_p^{(M)} v_p \rangle_p}{\langle  v_p ,  v_p \rangle_p}$ is $U_{T_d,p}(m)$-invariant. So using the definition of $J_p^{(M)}$, we see that $$J_p^{(M)} \asymp p^{-m} \sum_{r \in \Q_p^\times U_{T_d,p}(m) \bs T_d(\Q_p)} \Phi_p((k_p^{(M)})^{-1} r k_p^{(M)} ).$$ For a set of representatives of $\Q_p^\times U_{T_d,p}(m) \bs T_d(\Q_p)$ we can take the set described in Lemma 2.3 of \cite{CMS23}. Using Macdonald's formula \cite[Thm. 4.6.6]{Bump} for $\Phi_p$ and bounding trivially, we obtain \eqref{claimJp}.

Combining \eqref{e:walds}, \eqref{claimJINFTY}, \eqref{claimJp} with the subconvexity bound for $L(1/2, \pi \otimes \chi_d)$ due to Petrow and Young \cite{PY19} (see also earlier work of Conrey and Iwaniec \cite{Con-Iw} for $d$ odd) and the bound $L(1, \chi_d) \gg_\varepsilon d^{-\varepsilon}$ we deduce  that $$\left|\int_{\A^\times T_d(\Q) \bs T_d(\A)} \phi^{(M)}_g(tg_{z_0}) dt\right|^2 \ll_{g, \eps} M^\eps |d|^{-\frac{1}{6} + \eps}.$$ Therefore, combining the above with Proposition \ref{p:gdfinal}  we have
\begin{equation}\label{eq:Ggbd}\begin{split}&G(D;L;g, \kappa) \\&\ll_{g, \eps}|DL^2|^{k-\frac32+\eps} |d|^{-\frac{1}{6}} \frac{\vol(T^1_d(\R))}{2} \int\limits_{\R^+}\lambda^{2k} |\kappa(\lambda)| \int\limits_{T_d^1(\R) \bs \H}  e^{-4 \pi \lambda LM \Tr(S_dg_{z_0} \T{g_{z_0}})}\frac{\mathrm d z_0\, \mathrm d \lambda}{\lambda^4}\\&= |DL^2|^{k-\frac32+\eps} |d|^{-\frac{1}{6}} \frac{1}{2} \int\limits_{\R^+}\lambda^{2k} |\kappa(\lambda)| \int_{T_d^1(\R)} 1 \int\limits_{T_d^1(\R) \bs \H}  e^{-4 \pi \lambda LM \Tr(S_dg_{z_0} \T{g_{z_0}})}\frac{\mathrm d z_0\, \mathrm d \lambda}{\lambda^4}
\\&\ll_{\kappa} |DL^2|^{k-\frac32+\eps} |d|^{-\frac{1}{12}}  \int\limits_{\R^+}\lambda^{2k} \int\limits_{ \H}  e^{-4 \pi \lambda LM \Tr(S_dg_{z_0} \T{g_{z_0}})}\frac{\mathrm d z_0\,\mathrm d \lambda}{\lambda^4} \\&\asymp 4^{-k} \pi^{-2k} \Gamma(k - 3/2) \Gamma(k-2) |DL^2|^{\eps} |d|^{-\frac{1}{12}}, \end{split}\end{equation} where in the last step we have used the formula for Siegel's generalized Gamma function \cite[Hilfssatz 3]{siegel35}.

\subsubsection{The toric period in the case that $g$ is a Eisenstein series} Recall that the Eisenstein series $E(s, z)$ is defined by $$ E(s,z) := \sum_{\gamma \in \left\{\left(\begin{smallmatrix}1&*\\0&1\end{smallmatrix}\right) \backslash \SL_2(\Z)\right\}}
  \Im(\gamma z)^s$$
 for $\Re(s) > 1$ and by
meromorphic continuation to the rest of the complex plane. If $\Re(s) = 1/2$, these are called the unitary Eisenstein series. For every $s$ away from the poles of the Eisenstein series, the function $E(s, \cdot )$ is a slowly growing function on $\SL_2(\Z)\bs \H$, and its adelization $\E(s, \cdot)$ is given by $$\E(s, h) := \sum_{\gamma\in P_{\GL_2}(\Q) \bs \GL_2(\Q)} f_\phi(s, \gamma h),$$ where $P_{\GL_2}$ is the usual parabolic subgroup of $\GL_2$ and $f_\phi(s, \cdot):\GL_2(\A) \longrightarrow \C^\times$ is the unique function satisfying $f_\phi(s,pk) = |a/b|_\A^{s}$ for all $p = \mat{a}{*}{}{b}  \in P_{\GL_2}(\A)$, $k \in \SO(2)\prod_{p<\infty}\GL_2(\Z_p)$.

We now consider the inner integral in the expression given by Proposition \ref{p:gdfinal} when $g = E(1/2+ir,z)$ for some $r \in \R$. By a standard unfolding argument and bounds on local Tate integrals at infinity and the primes dividing $M$ (see the proof of Prop. 12.5 of \cite{BBK23}) we obtain that
\[\left|\int_{\A^\times T_d(\Q) \bs T_d(\A)} \E(1/2+ir, tg_{z_0}k^{(M)}) \,\mathrm d r\right| \ll_{r, \eps} |d|^{-1/4} M^{\eps} \frac{|L^M(1/2+ir , \chi_d)|}{L^M(1, \chi_d)},
\]
where the dependance of the constant on $r$ is polynomial. Using subconvex bounds \cite{PY2023} on $L(1/2+ir , \chi_d)$ (see also earlier work of Conrey and Iwaniec \cite{Con-Iw} for $d$ odd), we arrive at
\begin{equation}\label{e:eistoric}\left|\int_{\A^\times T_d(\Q) \bs T_d(\A)} \E\left(1/2+ir, tg_{z_0}k^{(M)}\right) \,\mathrm d r\right| \ll_{r, \eps} |d|^{-1/12+\eps} M^{\eps}.
\end{equation}
Combining \eqref{e:eistoric} with Proposition \ref{p:gdfinal} and arguing as in \eqref{eq:Ggbd} gives us $$G(D;L; E(1/2+ir,\cdot), \kappa) \ll_{r, \eps, \kappa} 4^{-k} \pi^{-2k} \Gamma(k - 3/2) \Gamma(k-2) |DL^2|^{\eps} |d|^{-\frac{1}{12}}.$$
\subsubsection{Conclusion} Recall the definition of $c_k$ from \eqref{e:defck}. We may summarize our results proved above as follows: that for $g$ equal to either a Hecke--Maass cusp form or a unitary Eisenstein series, we have
\begin{equation}\label{e:conclusiontoric}G(D;L;g, \kappa) \ll_{g, \kappa, \eps} k^{-3} |DL^2|^{\eps} |d|^{-\frac{1}{12}} c_k
\end{equation}
and the dependance is polynomial in $r$ if $g = E(1/2+ir,\cdot)$. Note that we have proved the bound \eqref{e:conclusiontoric} unconditionally; in particular, we did not assume GRH or GLH for the proof of \eqref{e:conclusiontoric}.
\subsection{The endgame}\label{s:endgame}
First we briefly recall the definition and basic properties of the incomplete Eisenstein series.
For each $\Psi \in C_c^\infty(\mathbb{R}^+)$  the incomplete Eisenstein series is defined by
\begin{equation}\label{eq:incompleteeis}
  E(\Psi,z) := \sum_{\gamma \in \left\{\left(\begin{smallmatrix}1&*\\0&1\end{smallmatrix}\right) \backslash \SL_2(\Z)\right\}}
  \Psi( \Im(\gamma z)).
\end{equation}
By Mellin inversion and Cauchy's theorem, we have
\begin{equation}\label{eq:incompleteunitaryrel}
  E(\Psi,z)
   =
  \frac{\widetilde{\Psi}(1)}{\vol(\SL_2(\Z)\backslash \mathbb{H})}
  + \int\limits_{-\infty}^\infty \widetilde{\Psi}(1/2+it) E(1/2+it,z) \, \frac{\mathrm d t}{ 2 \pi i}.
\end{equation}
Let $g = E(\Psi,\cdot)$ be an incomplete Eisenstein series.
Then we have $$\int\limits_{\mathrm{SL}_2(\mathbb Z)\backslash\mathbb H}g(u+iy)\frac{\mathrm d u\,\mathrm d y}{y^2} = \widetilde{\Psi}(1).$$

We now complete the proof of Proposition \ref{QUE-reformulation}. We need to show that \eqref{e:req3} holds for each fixed $g \in C_c^\infty(\SL_2(\Z) \bs \H)$ and $\kappa \in C_c^\infty(\R^+)$. In the next lemma we reduce to the case that $g$ is a Hecke--Maass cusp form or an incomplete Eisenstein series.
\begin{lem}Let $F\in S_k(\Gamma)$ traverse a family of  Hecke eigenforms that are Saito--Kurokawa lifts and let $\kappa \in C_c^\infty(\R^+)$ be fixed. Suppose that for each fixed $g$ that is equal to either
a Hecke--Maass cusp form or an incomplete Eisenstein series on $\SL_2(\Z) \bs \H$, the limit \eqref{e:req3} is true. Then \eqref{e:req3} is true for each fixed $g \in C_c^\infty(\SL_2(\Z) \bs \H)$.
\end{lem}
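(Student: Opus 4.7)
Both the left-hand side and the right-hand side of \eqref{e:req3} are linear functionals in $g$; denote them by $L_k(g)$ and $L_\infty(g)$ respectively. The proof proceeds in three steps: a uniform sup-norm bound on these functionals, a spectral approximation of $g$ by Hecke--Maass cusp forms and incomplete Eisenstein series, and an $\eps/3$-style conclusion.

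My first step would be to establish a uniform operator bound of the form $|L_k(g)| \ll_\kappa \|g\|_\infty$, with constant independent of $k$. Choose an auxiliary $\kappa_0 \in C_c^\infty(\R^+)$ satisfying $\kappa_0 \ge |\kappa|$ pointwise. Directly from the definition \eqref{e:defGTfkappanew} one has the pointwise inequality $|G(T;g,\kappa)| \le \|g\|_\infty \, G(T;1,\kappa_0)$. Summing this estimate and dividing by $\|F\|_2^2$ gives $|L_k(g)| \le \|g\|_\infty \, \ell_k$, where $\ell_k$ is the left-hand side of \eqref{e:req3} attached to the data $(g,\kappa)=(1,\kappa_0)$. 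By the already-established case $g=1$ of Proposition \ref{QUE-reformulation} (Section \ref{s:mainterm}), $\ell_k$ converges to a finite constant, so $\ell_k = O_\kappa(1)$ uniformly in $k$. The functional $L_\infty$ clearly satisfies the same bound.

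My second step is to approximate a general $g \in C_c^\infty(\SL_2(\Z)\bs\H)$ by a finite linear combination of Hecke--Maass cusp forms and incomplete Eisenstein series. Using the spectral theorem on $\SL_2(\Z)\bs\H$ I would write
\[
g(z) = \frac{\langle g,1\rangle}{\vol(\SL_2(\Z)\bs\H)} + \sum_{j} \langle g,\phi_j\rangle \phi_j(z) + \frac{1}{4\pi}\int_{-\infty}^\infty \langle g,E(\tfrac12+it,\cdot)\rangle\, E(\tfrac12+it,z)\,dt,
\]
where $\{\phi_j\}$ are the Hecke--Maass cusp forms. Smoothness and compact support of $g$ yield, via integration by parts against the Laplacian, rapid decay of $\langle g,\phi_j\rangle$ in $\lambda_j$ and Schwartz decay of $t \mapsto \langle g,E(\tfrac12+it,\cdot)\rangle$. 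Truncating the cuspidal sum to $\lambda_j \le N$ produces an error that is small in $\|\cdot\|_\infty$ by standard Sobolev/sup-norm bounds for Maass forms. For the constant plus Eisenstein part I would then invoke the identity \eqref{eq:incompleteunitaryrel} in the reverse direction: by Mellin inversion, one can construct $\Psi \in C_c^\infty(\R^+)$ such that $\widetilde\Psi(1)$ matches $\langle g,1\rangle$ and $\widetilde\Psi(\tfrac12+it)$ matches the required Eisenstein data on a large interval $|t|\le N$ (with Schwartz decay outside), so that $E(\Psi,\cdot)$ realizes the constant plus truncated Eisenstein part of $g$. Let $g'$ be the resulting finite linear combination.

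With these tools in hand, the conclusion is immediate:
\[
|L_k(g)-L_\infty(g)| \le |L_k(g-g')| + |L_k(g')-L_\infty(g')| + |L_\infty(g'-g)| \le 2C_\kappa\|g-g'\|_\infty + |L_k(g')-L_\infty(g')|.
\]
By linearity and the assumed limit for cusp forms and incomplete Eisenstein series, the middle term tends to zero as $k\to\infty$; letting the approximation improve then closes the argument. The main anticipated obstacle is the uniform sup-norm control of the spectral tails on the non-compact quotient $\SL_2(\Z)\bs\H$: while cusp forms decay at the cusp, truncated Eisenstein integrals grow like $y^{1/2}$, so strict sup-norm approximation globally is delicate. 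This can be circumvented by observing that the Gaussian factor $e^{-4\pi\lambda\Tr(Tg_z\T{g_z})}$ inside $G(T;g,\kappa)$ provides rapid decay at both ends of the cusp, so the bound in Step~1 in fact tolerates polynomial growth of the error at the cusp; concretely, one can upgrade Step~1 to $|L_k(g)| \ll_\kappa \sup_{z}(1+y+y^{-1})^{-A}|g(z)|$ for a suitable $A$, and the approximation above does achieve smallness in this weaker weighted norm.
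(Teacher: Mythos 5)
Your Steps 1 and 3 reproduce the paper's argument exactly: the pointwise bound $|G(T;g,\kappa)|\le \|g\|_\infty\, G(T;1,|\kappa|)$ combined with the already-proved $g=1$ case of Proposition \ref{QUE-reformulation} gives the uniform bound $|L_k(g)|\ll_\kappa \|g\|_\infty$, and the conclusion is the same triangle inequality. The divergence is in Step 2, and that is where your proposal has a gap. The paper does not construct the approximant: it simply cites the known fact that $C_c^\infty(\SL_2(\Z)\backslash\H)$ lies in the \emph{uniform} (sup-norm) span of Hecke--Maass cusp forms and incomplete Eisenstein series, and stops there. You instead try to build the approximant from the spectral expansion, correctly flag that the truncated continuous-spectrum part need not be sup-norm close to $g$ near the cusp, and then retreat to a weighted norm. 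But the weighted-norm repair is not justified as stated: to run the $\eps/3$ argument you would need the uniform-in-$k$ bound
\begin{equation*}
\frac{1}{\|F\|_2^2}\sum_{T}\frac{|R(T)|^2}{\varepsilon(T)}|\disc(T)|^{k-\frac32}\int (1+y+y^{-1})^{A}\,\lambda^{2k}|\kappa(\lambda)|\,e^{-4\pi\lambda\Tr(Tg_z\T{g_z})}\,\frac{\mathrm d u\,\mathrm d y\,\mathrm d\lambda}{y^{2}\lambda^{4}}=O_{A,\kappa}(1),
\end{equation*}
and this does not follow from the $g=1$ case: the weight $(1+y+y^{-1})^{A}$ is not $\SL_2(\Z)$-invariant, so the inner integral is not even a class function of $T$ (whereas \eqref{e:req3} sums over $\Lambda_2^+/\SL_2(\Z)$), and after majorizing by the genuine Eisenstein series $E(A,z)$ you would need a fresh diagonal asymptotic for that slowly growing weight — i.e., a rerun of the Koecher--Maass/contour-shift analysis at a shifted point — which you have not carried out. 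The simplest fixes are either to cite the standard density result as the paper does, or to note that the cuspidal part of the expansion already converges uniformly on the full noncompact quotient (rapid decay of $\langle g,\phi_j\rangle$ against polynomial sup-norm bounds for the $\phi_j$), and to approximate the constant-plus-continuous part in sup norm by a single incomplete Eisenstein series via a Paley--Wiener argument; either route removes the need for weighted norms entirely.
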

\begin{proof}Let $g \in C_c^\infty(\SL_2(\Z) \bs \H)$ and let $\eps>0$. It is known that the class $C_c^\infty(\SL_2(\Z) \bs \H)$
is contained in the uniform span of the Hecke--Maass cusp forms
and incomplete Eisenstein series
(see~\cite{MR1942691}).
 So we can find a finite set of $g_i \in C_c^\infty(\SL_2(\Z) \bs \H), 1\le i \le r$ each of which is either a Hecke--Maass cusp form or an incomplete Eisenstein series such that \begin{equation}\label{e:end1}\left\|g - \sum_{i=1}^r g_i\right\|_\infty <\eps.\end{equation}

 For brevity, put $g_0 = \sum_{i=1}^r g_i$. Since \eqref{e:req3} holds for each $g_i$ by assumption, it follows that \begin{equation}\label{e:end2}\begin{split}&\bigg|\frac1{\|F\|_2^2} \sum_{T \in \Lambda_2 /\SL_2(\Z)} \frac{|R(T)|^2}{\varepsilon(T)} |\disc(T)|^{k-\frac32} G(T; g_0, \kappa)\\& - \frac{\widetilde\kappa(3)}{2 \mathrm{vol}(\Gamma \bs \H_2)}\int\limits_{\mathrm{SL}_2(\mathbb Z)\backslash\mathbb H}g_0(u+iy)\frac{\mathrm d u\,\mathrm d y}{y^2}\bigg| < \eps\end{split}\end{equation} for sufficiently large $k$. By combining \eqref{e:end1} and  \eqref{e:end2}, and recalling \eqref{e:reqf1}, it follows that for all sufficiently large $k$ we have
 \begin{align*}&\left|\frac1{\|F\|_2^2} \sum_{T \in \Lambda_2 /\SL_2(\Z)} \frac{|R(T)|^2}{\varepsilon(T)} |\disc(T)|^{k-\frac32} G(T; g, \kappa) - \frac{\widetilde\kappa(3)}{2 \mathrm{vol}(\Gamma \bs \H_2)}\int\limits_{\mathrm{SL}_2(\mathbb Z)\backslash\mathbb H}g(u+iy)\frac{\mathrm d u\,\mathrm d y}{y^2}\right| \\ & < \eps \left(1 + \left|\frac1{\|F\|_2^2} \sum_{T \in \Lambda_2 /\SL_2(\Z)} \frac{|R(T)|^2}{\varepsilon(T)} |\disc(T)|^{k-\frac32} G(T; 1, |\kappa|)\right| + \left| \frac{\widetilde\kappa(3)\mathrm{vol}(\SL_2(\Z) \bs \H)}{2 \mathrm{vol}(\Gamma \bs \H_2)} \right| \right) \\&\ll_\kappa \eps. \end{align*} By taking $\eps$ arbitrarily small, the proof of the lemma is complete.
\end{proof}
\noindent So to finish the proof of Proposition \ref{QUE-reformulation} we need to show that  for each fixed $\kappa \in C_c^\infty(\R^+)$ and each fixed $g$ equal to either a Hecke--Maass cusp form or an incomplete Eisenstein series, the limit \eqref{e:req3} holds. Moreover, the argument leading up to \eqref{e:reqf2neq} shows that  in the left hand side of \eqref{e:req3} we may restrict to the terms corresponding to $\disc(T) \asymp_\kappa k^2$; we are implicitly using here that $g$ is a fixed bounded function and so we may write $g(z) \ll_g 1$.

We first consider the case that $g$ is a Hecke--Maass cusp form where we need to show that (see \eqref{koecherkeyclassgpreduction}) \begin{equation}\label{e:requiredlimitgcusp}\frac{1}{\| F \|_2^2}\sum_{\substack{L, D \\ 0>D \equiv 0, 1 (4)\\DL^2 \asymp_\kappa k^2}}h(D)|R(D;L)|^2 G(D;L; g, \kappa) \longrightarrow 0 \end{equation} as $k\longrightarrow \infty$. Using \eqref{e:conclusiontoric} we obtain
\begin{align*}&\frac{1}{\| F \|_2^2}\sum_{\substack{L, D \\ 0>D \equiv 0, 1 (4)\\DL^2 \asymp_\kappa k^2}}h(D)|R(D;L)|^2 |G(D;L; g, \kappa)| \\&\ll_{g, \kappa, \eps}  k^{-3} \frac{c_k}{\| F \|_2^2}\sum_{\substack{L, M, d \\d \in \mathcal{D}\\dL^2M^2 \asymp_\kappa k^2}}(|d|LM)^\eps |d|^{-\frac{1}{12}}h(dM^2)|R(dM^2;L)|^2 \\&=k^{-3}\frac{c_k}{\| F \|_2^2}\sum_{d \in \mathcal{D}} |d|^{-\frac{1}{12}}  \sum_{\substack{L, M\\L^2M^2 \asymp_\kappa \frac{k^2}{|d|}}}(LM)^\eps h(dM^2)|R(dM^2;L)|^2.
\end{align*}
By Lemma \ref{lem:Rbd}, we have under GLH for each positive integer $N$,
$$\frac{c_k}{\| F \|_2^2} \sum_{LM=N} (LM)^\eps h(dM^2)|R(dM^2;L)|^2 \ll_\eps (k|d|N)^\eps |d|^{1/2} N.$$
So \begin{align*}&\frac{1}{\| F \|_2^2}\sum_{\substack{L, D \\ 0>D \equiv 0, 1 (4)\\DL^2 \asymp_\kappa k^2}}h(D)|R(D;L)|^2 G(D;L; g, \kappa)\\&\ll_{g, \kappa, \eps} k^{-3 + \eps}\sum_{\substack{N, d \\d \in \mathcal{D}\\|d|N^2 \asymp_\kappa k^2}} (|d|N^2)^{1/2 + \eps} |d|^{-\frac{1}{12}} \\& \ll_\eps k^{-1/6 + \eps},
\end{align*}
which completes the proof of \eqref{e:requiredlimitgcusp}.

We next consider the case that $g=  E(\Psi,\cdot)$ is an incomplete Eisenstein series. By \eqref{koecherkeyclassgpreduction}, in this case we need to show  that \begin{equation}\label{e:requiredlimitgeis}\frac{1}{\| F \|_2^2}\sum_{\substack{L, D}}h(D)\frac{|R(D;L)|^2}{\eps(D)} G(D;L; g, \kappa) - \frac{\widetilde{\Psi}(1)}{2\cdot\text{vol}(\Gamma\bs\H_2)} \widetilde{\kappa}(3) \longrightarrow 0 \end{equation} as $k\longrightarrow \infty$.
Using \eqref{eq:incompleteunitaryrel}, we can write the expression above as $L_1 + L_2$, where
$$L_1 := \frac{\widetilde{\Psi}(1)}{\vol(\SL_2(\Z)\backslash \mathbb{H})\| F \|_2^2}\sum_{\substack{L, D}}h(D)\frac{|R(D;L)|^2}{\eps(D)} G(D;L; 1, \kappa) - \frac{\widetilde{\Psi}(1)}{2\cdot\text{vol}(\Gamma\bs\H_2)} \widetilde{\kappa}(3)$$
and $$L_2 := \int\limits_{-\infty}^\infty \widetilde{\Psi}(1/2+it) \, \frac{1}{\| F \|_2^2}\sum_{\substack{L, D}}h(D)\frac{|R(D;L)|^2}{\eps(D)} G(D;L; E(1/2+it,z), \kappa)  \frac{\mathrm d t}{ 2 \pi i} $$
By \eqref{e:reqf1}, which treated the case $g=1$, we have $L_1 \longrightarrow 0$ as $k\longrightarrow \infty$.
On the other hand, using \eqref{e:conclusiontoric} and following an identical argument to the cusp form case treated above, we get that $L_2 \longrightarrow 0$ as $k\longrightarrow \infty$. This completes the proof of \eqref{e:requiredlimitgeis}.

The proof of Theorem \ref{QUE-reformulation} is complete.
\section{Equidistribution of zero divisors}\label{s:zeroes}

\noindent One consequence of the mass equidistribution for classical holomorphic modular forms is that the zeros of such forms become equidistributed with respect to hyperbolic measure as the weight tends to infinity. This has been proved by Shiffman and Zelditch \cite{S-Z} for compact hyperbolic surfaces and extended to the non-compact case of the modular surface by Rudnick \cite{Rudnick-2005}. Methods of these papers have also been applied by Marshall \cite{Marshall-2011} to show the analogous statement about the equidistribution of the smooth parts of zero divisors of holomorphic modular forms of cohomological type on $\mathrm{GL}_2$. As an application of our mass equidistribution result, we will derive a similar equidistribution result for Saito-Kurokawa lifts under GRH. The method of proof closely follows the previous works, but we shall provide a self-contained proof for the sake of completeness as the set-up is slightly different compared to the aforementioned papers.

To put our result into the context of \cite{S-Z}, it is well-known that $Y_2=\Gamma\bs\H_2$ is the moduli space of principally polarized abelian varieties of dimension two and that it carries a universal principally polarized abelian variety $\pi:\,\mathcal X_2\longrightarrow Y_2$. This provides $Y_2$ with a natural vector bundle, called the Hodge bundle, defined as
\begin{align*}
E:=\pi_*\left(\Omega_{\mathcal X_2/Y_2}^1\right),
\end{align*}
where $\pi_*$ is the pushforward and $\Omega_{\mathcal X_2/Y_2}^1$ is the sheaf of relative differentials. Each irreducible representation $\rho$ of the Levi subgroup $\mathrm{GL}_2$ of $\mathrm{GSp}_4$ equips $Y_2$ with a new vector bundle\footnote{The Hodge bundle corresponds to the standard representation.} $E_\rho$ by applying $\rho$ to the transition maps of $E$. In particular, $\rho=\det$ gives the determinant bundle denoted by $L$. It is well-known that classical Siegel modular forms of weight $k$ and full level for the group $\mathrm{GSp}_4$ are sections of $L^{\otimes k}$. 

Let $Z_F$ be the zero divisor of a holomorphic function $F$ on $Y_2$, that is,
\begin{align*}
Z_F:=\sum_i\text{ord}_{V_i}(F)V_i,
\end{align*}
where $V_i$ are the irreducible subvarieties of $F^{-1}(0)$ and $\text{ord}_{V_i}(F)$ is the order of vanishing of $F$ on $V_i$. The zero divisor defines a distribution, called the current of integration, on the space of smooth compactly supported differential forms on $Y_2$ via
\begin{align*}
[Z_F]:\quad\eta\mapsto\int\limits_{Z_F}\eta:=\sum_i\text{ord}_{V_i}(F)\int\limits_{V_i}\eta.
\end{align*}
As a consequence of the mass equidistribution we show the equidistribution of zero divisors on the Siegel modular variety, which may be interpreted as saying that for Saito-Kurokawa lifts the subvarieties $V_i$ become equidistributed as Lelong $(2,2)$-currents (or more simply as measures of integration) with respect to the induced K\"ahler form $\omega$ on $Y_2$ under GRH as the weight tends to infinity.

The proof utilizes basic compactness properties of plurisubharmonic functions, which are collected in the following two lemmas.

\begin{lem}\label{compactness}
Let $\{u_j\}$ be a family of plurisubharmonic functions on $\Omega$ which are locally uniformly bounded from above. Then either
\begin{enumerate}
\item $u_j\longrightarrow-\infty$ uniformly on compact sets\\
or
\item There exists a subsequence $\{u_{j_k}\}$ such that $u_{j_k}\longrightarrow u$ for some plurisubharmonic function $u$. In this case $\limsup_{j\longrightarrow\infty} u_j\leq u$ and $\limsup_{j\longrightarrow\infty} u_j=u$ almost everywhere.
\end{enumerate}
\end{lem}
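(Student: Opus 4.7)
\emph{Proposal.} The plan is to prove this classical compactness statement from pluripotential theory by setting up a dichotomy based on the local $L^1$ behavior of the sequence, extracting a subsequence convergent in $L^1_{\mathrm{loc}}$, and then identifying the limit via the upper semicontinuous regularization of the pointwise $\limsup$. The mode of convergence in alternative (ii) should be understood as convergence in $L^1_{\mathrm{loc}}(\Omega)$.

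First I would fix an exhaustion of $\Omega$ by relatively compact open sets and use the sub-mean value inequality for plurisubharmonic functions on balls $B(z,r)\subset\Omega$ to translate a lower bound on the integral $\int_{B(z,r)} u_j \, dV$ into a pointwise lower bound for $u_j$ on a slightly smaller ball. This yields the basic dichotomy: either $\int_K u_j \, dV \longrightarrow -\infty$ for every compact $K \subset \Omega$, in which case the sub-mean inequality combined with the hypothesized uniform upper bound forces $u_j \longrightarrow -\infty$ uniformly on compact subsets (this is alternative (i)); or else there exist a compact $K_0 \subset \Omega$, a constant $C_0 > 0$, and a subsequence $\{u_{j_k}\}$ with $\int_{K_0} u_{j_k} \, dV \geq -C_0$ for all $k$. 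In the latter case, propagating the lower bound through the connected component of $K_0$ by chaining overlapping balls and reapplying the sub-mean inequality shows that $\{u_{j_k}\}$ is uniformly bounded in $L^1_{\mathrm{loc}}(\Omega)$.

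Since $\{u_{j_k}\}$ is bounded above and bounded in $L^1_{\mathrm{loc}}$, a weak compactness argument (applied after subtracting a uniform upper bound so that the functions become nonpositive) yields a further subsequence, which I relabel $\{u_{j_k}\}$, converging in $L^1_{\mathrm{loc}}(\Omega)$ to some limit. I then define $u(z) := (\limsup_{k \to \infty} u_{j_k}(z))^{*}$, the upper semicontinuous regularization of the pointwise $\limsup$. Standard facts from pluripotential theory (see, e.g., H\"ormander's \emph{Notions of Convexity} or Klimek's \emph{Pluripotential Theory}) then identify $u$ as a plurisubharmonic function, show $\limsup_{k} u_{j_k} \leq u$ pointwise, and give equality off a pluripolar set, and in particular almost everywhere. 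The main subtlety of the argument lies in this last comparison between the $L^1_{\mathrm{loc}}$ limit and the pointwise $\limsup$: I would handle it by applying Fatou's lemma to ball averages of $u_{j_k}$ and exploiting the sub-mean inequality to relate $\limsup_k u_{j_k}(z)$ to the averages of the $L^1_{\mathrm{loc}}$ limit, which is the standard technique for establishing such upper envelope identities.
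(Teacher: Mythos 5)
The paper does not actually prove this lemma: it remarks that the statement can be proved verbatim as its subharmonic counterpart and cites H\"ormander, Ransford and Klimek. What you have written is, in outline, exactly the standard proof behind those citations (dichotomy via local $L^1$ behaviour, sub-mean value inequality, $L^1_{\mathrm{loc}}$ compactness, and identification of the limit with the upper semicontinuous regularization of the pointwise $\limsup$, with the exceptional set pluripolar and hence Lebesgue-null). So you are supplying the argument the paper outsources, not deviating from it. Two steps, however, are stated in a form that is literally false and need to be repaired.

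First, the sub-mean value inequality does \emph{not} translate a lower bound on $\int_{B(z,r)}u_j\,dV$ into a pointwise lower bound for $u_j$ on a smaller ball: the function $\log|\zeta-w|$ has finite local $L^1$ norm but equals $-\infty$ at $w$. The estimate runs in the opposite direction, and that is what the dichotomy actually uses: if $u_{j}(z_0)\ge -C$ at a single point, then $\int_{B(z_0,\rho)}u_{j}\ge -C\,|B(z_0,\rho)|$; dually, setting $h_j:=M-u_j\ge 0$ (superharmonic, where $M$ is the local upper bound), the inequality $h_j(w)\ge |B(w,\rho)|^{-1}\int_{B(w,\rho)}h_j$ combined with $h_j\ge 0$ shows that $\int_{B(z,r)}u_j\longrightarrow-\infty$ forces $\sup_{B(z,r/2)}u_j\longrightarrow-\infty$. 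With the estimate oriented this way, your branch (A) does yield alternative (i), and branch (B) yields, via Chebyshev and your chaining argument, $L^1$-boundedness on the connected component (connectedness of $\Omega$ is genuinely needed for the propagation; it holds for $\H_2$ in the paper's application). Second, ``bounded above and bounded in $L^1_{\mathrm{loc}}$'' does not by itself produce an $L^1_{\mathrm{loc}}$-convergent subsequence; weak-$*$ compactness only gives convergence as measures, possibly to a limit with a singular part. One must invoke subharmonicity again, e.g.\ via the Riesz decomposition: the masses of the Riesz measures $\Delta u_{j_k}$ are locally bounded, a subsequence of these measures converges weak-$*$, the associated potentials converge in $L^1_{\mathrm{loc}}$ because the Newton kernel is locally integrable, and the harmonic remainders form a normal family. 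Your final step (Fatou applied to $M-u_{j_k}$ on ball averages, together with the Brelot--Cartan theorem) is correct as described; note only that the $\limsup$ in the conclusion should be read along the extracted subsequence, which is how the paper uses the lemma.
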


\begin{lem}[Hartog's lemma]\label{Hartogs}
If $\{u_j\}$ is a family of plurisubharmonic functions on $\Omega$ which are locally uniformly bounded from above and there exists a continuous map $\varphi:\Omega\longrightarrow\C$ so that $\limsup_{j\longrightarrow\infty} u_j\leq\varphi$, then $\max(u_j,\varphi)\longrightarrow \varphi$ locally uniformly on $\Omega$.
\end{lem}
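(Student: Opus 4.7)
The stated result is the classical Hartogs' lemma from pluripotential theory. Since $\max(u_j,\varphi)\ge \varphi$ pointwise, the desired conclusion is equivalent to the assertion that for every compact $K\subset \Omega$ and every $\varepsilon>0$, one has $u_j(z)\le \varphi(z)+\varepsilon$ for every $z\in K$ and every sufficiently large $j$. The plan is to prove this equivalent form by combining the sub-mean-value inequality for plurisubharmonic functions with Fatou's lemma, and then upgrading pointwise control to uniform control by a compactness/contradiction argument.

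Fix $K$ and $\varepsilon$, and choose a compact neighbourhood $K'$ of $K$ with $K'\subset \Omega$, together with a uniform upper bound $M$ for the $u_j$ on $K'$. It is convenient to replace $u_j$ by $v_j:=u_j-M$, which is still plurisubharmonic and satisfies $v_j\le 0$ on $K'$; the hypothesis becomes $\limsup_{j\longrightarrow\infty} v_j\le \varphi-M$. The sub-mean-value inequality (which follows from plurisubharmonicity via subharmonicity along complex lines, and integration in the radius) gives
\[
v_j(w)\le \frac{1}{|B(w,r)|}\int\limits_{B(w,r)} v_j\,\mathrm d V
\]
for every $w\in K$ and every $r>0$ with $\overline{B(w,r)}\subset K'$. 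Since $-v_j\ge 0$, Fatou's lemma yields
\[
\limsup_{j\longrightarrow\infty}\frac{1}{|B(w,r)|}\int\limits_{B(w,r)} v_j\,\mathrm d V\le \frac{1}{|B(w,r)|}\int\limits_{B(w,r)} (\varphi-M)\,\mathrm d V,
\]
and by uniform continuity of $\varphi$ on $K'$ the right-hand side converges to $\varphi(w)-M$ as $r\longrightarrow 0$, uniformly in $w\in K$. Thus $\limsup_{j\longrightarrow\infty} u_j(w)\le \varphi(w)$ pointwise on $K$.

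To promote this to uniform control, I argue by contradiction: suppose there exist $\varepsilon_0>0$, a subsequence $j_k\longrightarrow\infty$ and $w_k\in K$ with $u_{j_k}(w_k)>\varphi(w_k)+\varepsilon_0$. By compactness we may assume $w_k\longrightarrow w_\infty\in K$. Fix $r>0$ small and $\eta>0$, set $\rho=r(1+\eta)$, and take $k$ large enough that $B(w_\infty,r)\subset B(w_k,\rho)\subset K'$. Because $v_{j_k}\le 0$, enlarging the domain of integration only decreases the integral:
\[
\int\limits_{B(w_k,\rho)} v_{j_k}\,\mathrm d V\le \int\limits_{B(w_\infty,r)} v_{j_k}\,\mathrm d V.
\]
Combining this with the sub-mean-value inequality at $w_k$ on $B(w_k,\rho)$ gives
\[
u_{j_k}(w_k)-M\le \frac{|B(w_\infty,r)|}{|B(w_k,\rho)|}\cdot \frac{1}{|B(w_\infty,r)|}\int\limits_{B(w_\infty,r)} v_{j_k}\,\mathrm d V=(1+\eta)^{-2n}\bigl(J_{j_k}(r)-M\bigr),
\]
where $J_j(r):=|B(w_\infty,r)|^{-1}\int_{B(w_\infty,r)} u_j\,\mathrm d V$ and $n$ is the complex dimension of $\Omega$. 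Taking $\limsup_k$ (Fatou applied to $-v_{j_k}$), then $r\longrightarrow 0$ (using that the Lebesgue average of $\varphi$ over $B(w_\infty,r)$ tends to $\varphi(w_\infty)$), and finally $\eta\longrightarrow 0$, yields $\limsup_k u_{j_k}(w_k)\le \varphi(w_\infty)$; combined with $\varphi(w_k)\longrightarrow \varphi(w_\infty)$ this contradicts the assumed strict lower bound.

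The main delicacy is the triple-limit manoeuvre at the end: one must fix $\eta$ and $r$ first and take $k\longrightarrow\infty$ (both for Fatou to apply and for the eventual inclusion $B(w_\infty,r)\subset B(w_k,\rho)$ to hold), and only afterwards send $r\longrightarrow 0$ and then $\eta\longrightarrow 0$. The upper boundedness of the $u_j$, which ensures $v_j\le 0$, is indispensable twice over: for Fatou's lemma in its one-sided form, and for the monotonicity of the integral under enlargement of the domain. No deeper input beyond these standard tools of subharmonic function theory is required.
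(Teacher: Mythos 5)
Your argument is correct, and it is the standard proof of Hartogs' lemma: the sub-mean-value inequality compared over balls of slightly different radii, reverse Fatou applied to the nonpositive functions $v_j$, and a compactness/contradiction step with the limits taken in the order $k\longrightarrow\infty$, then $r\longrightarrow 0$, then $\eta\longrightarrow 0$ — which is exactly the argument in the sources the paper relies on. Note that the paper does not prove Lemma \ref{Hartogs} at all; it only cites H\"ormander, Ransford and Klimek and remarks that the subharmonic proof carries over verbatim to the plurisubharmonic case, so your write-up supplies what the paper delegates to the literature. Two small remarks: the conclusion of your first paragraph, $\limsup_{j\longrightarrow\infty}u_j\leq\varphi$ pointwise, is literally the hypothesis, so that paragraph is redundant and can be removed; and $\varphi$ must of course be real-valued (the codomain $\C$ in the statement is a typo), as your argument implicitly assumes.
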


\noindent Both of these results can be proven verbatim as their counterparts for subharmonic functions. For these see \cite[Theorem 4.1.9]{Hormander} and \cite[Theorem 3.4.3]{Ransford}, respectively. For the second statement, see also \cite[Theorem 2.9.14 (ii)]{Klimek1991}.

We also need the Poincar\'e-Lelong formula from complex analytic geometry \cite[Chapter 2]{Demailly}, which is formulated in a special case below.

\begin{lem}[Poincar\'e-Lelong formula]\label{Poincare-Lelong}
For a holomorphic function $F$ on $\H_2$ we have the equality
\begin{align*}
\frac i\pi\log(|F|)\partial\overline\partial=[Z_F]
\end{align*}
as currents of integration of bidegree $(2,2)$.
\end{lem}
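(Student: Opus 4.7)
The statement is purely local, so my plan is to verify it in a coordinate chart $U \subset \H_2$ against arbitrary smooth compactly supported test $(2,2)$-forms $\eta$. Away from $\{F=0\}$ the function $\log|F| = \tfrac12(\log F + \log \bar F)$ is locally pluriharmonic (since $F$ is holomorphic and $\bar F$ antiholomorphic), so $\partial\overline\partial \log|F| = 0$ there, and $[Z_F]$ also vanishes on that open set. Hence all the content is concentrated on the zero set of $F$.

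To approach the zeros I would regularize. Set $u_\varepsilon := \tfrac12 \log(|F|^2+\varepsilon^2)$, which is smooth and plurisubharmonic on $U$ and decreases to $\log|F|$ as $\varepsilon \longrightarrow 0^+$. A short computation using $\partial\bar F = \overline\partial F = 0$ gives
\[
\partial\overline\partial u_\varepsilon \;=\; \frac{\varepsilon^2 \,\partial F \wedge \overline\partial \bar F}{(|F|^2+\varepsilon^2)^2}.
\]
Since $u_\varepsilon \longrightarrow \log|F|$ in $L^1_{\rm loc}(U)$, one has the weak convergence $\partial\overline\partial u_\varepsilon \longrightarrow \partial\overline\partial \log|F|$ in the sense of currents. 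So it suffices to identify the limit of $\tfrac{i}{\pi}\int_U (\partial\overline\partial u_\varepsilon) \wedge \eta$ with $\int_{Z_F} \eta$.

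To identify the limit I would first use local Weierstrass preparation: near any point of $U$ we can factor $F = u \prod_i F_i^{m_i}$ with $u$ nowhere vanishing holomorphic and the $F_i$ irreducible germs. By linearity of $\log$ and the pluriharmonicity of $\log|u|$, this reduces to the case $F = F_i$ irreducible of multiplicity one. On the smooth part $V_i^{\rm reg}$ of $V_i := \{F_i = 0\}$ one can choose local holomorphic coordinates $(w_1,w_2,w_3)$ with $F_i = w_1$, and the problem then factors through the classical one-variable identity $\tfrac{i}{\pi}\partial\overline\partial \log|w| = \delta_0$ on $\mathbb C$ (equivalently $\tfrac{1}{2\pi}\Delta \log|w| = \delta_0$), which is a Green's function computation using the divergence theorem on the annulus $\{\varepsilon < |w| < R\}$. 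Integrating out the transverse coordinates gives convergence $\tfrac{i}{\pi}\partial\overline\partial u_\varepsilon \longrightarrow [V_i^{\rm reg}]$ against test forms with support in the regular stratum.

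The main obstacle is the singular locus $V_i^{\rm sing}$, whose complex codimension in $\H_2$ is at least two. The cleanest way to handle it is a support/positivity argument: $\tfrac{i}{\pi}\partial\overline\partial \log|F|$ is a positive $d$-closed $(1,1)$-current (positivity coming from plurisubharmonicity of $\log|F|$), and so is $[Z_F]$; their difference is supported on $V_i^{\rm sing}$, a set of real Hausdorff dimension at most $\,2\,(\dim_{\mathbb C}\H_2 - 2) = 2$, and a closed positive $(1,1)$-current supported on a set of real dimension $\le 2$ must vanish by a standard support theorem (cf. Federer). An alternative route is to pull back along an embedded resolution $\pi : \widetilde U \to U$ of $\{F=0\}$, apply the already-established smooth case on $\widetilde U$, and push forward. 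This is exactly the line of argument in Demailly \cite[Chapter 2]{Demailly}, which I would cite rather than reproduce in full.
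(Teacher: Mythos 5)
Your proof is the standard Lelong--Poincar\'e argument (regularize $\log|F|$, use Weierstrass factorization to reduce to the one-variable identity $\tfrac{i}{\pi}\partial\overline\partial\log|w|=\delta_0$ on the smooth stratum, and discard the singular locus by a support theorem), and this matches the paper, which gives no proof of its own and simply cites Demailly, Chapter 2 --- exactly the reference you fall back on at the end. Two minor touch-ups: your displayed formula for $\partial\overline\partial u_\varepsilon$ is missing a factor of $\tfrac12$, and the difference $\tfrac{i}{\pi}\partial\overline\partial\log|F|-[Z_F]$ is not obviously positive, so the singular locus should be handled by the support theorem for closed currents of order zero (normal currents), as in Federer or Demailly, rather than the version for positive currents.
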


\noindent Now we have all the necessary tools to prove Theorem \ref{Zero-equidistr.}.

\begin{proof}[Proof of Theorem \ref{Zero-equidistr.}]
By standard approximation argument it suffices to prove the statement for $\eta$ replaced by its symmetrized form
\begin{align*}
F_\eta:=\sum_{\gamma\in\Gamma}\gamma^*\eta
\end{align*}
By unfolding we have
\begin{align*}
\int\limits_{Z_{F_k}}F_\eta=\int\limits_{\widetilde{Z_{F_k}}}\eta,
\end{align*}
where $\widetilde{Z_{F_k}}$ is the zero divisor of $\Gamma$-periodic extension of $F_k$ to $\H_2$. By Lemma \ref{Poincare-Lelong} we have
\begin{align*}
\int\limits_{\widetilde{Z_{F_k}}}\eta&=\frac i{\pi}\int\limits_{\H_2}\log(|F_k|)\partial\overline\partial \eta\\
&=-\frac i\pi\int\limits_{\H_2}\log \left((\det Y)^{k/2}\right)\partial\overline\partial\eta+\frac i\pi\int\limits_{\H_2}\log\left((\det Y)^{k/2}|F_k|\right)\partial\overline\partial\eta.
\end{align*}
Integrating by parts and refolding the first term on the right-hand side is
\begin{align*}
-\frac k2\cdot\frac i{\pi}\int\limits_{\H_2}\eta\partial\overline\partial\log\left((\det Y)\right)=k\int\limits_{\H_2}\omega\wedge\eta=k\int\limits_{\Gamma\bs\H_2}\omega\wedge F_\eta.
\end{align*}
Combining these computations yields
\begin{align*}
\frac1k\int\limits_{Z_{F_k}}F_\eta=\int\limits_{\Gamma\bs\H_2}\omega\wedge F_\eta+\frac i{\pi k}\int\limits_{\H_2}\log\left((\det Y)^{k/2}|F_k|\right)\partial\overline\partial\eta.
\end{align*}
\noindent Hence it suffices to show that
\begin{align*}
\frac1k\int\limits_{\H_2}\log\left((\det Y)^{k/2}|F_k|\right)\partial\overline\partial\eta\longrightarrow 0
\end{align*}
as $k\longrightarrow\infty$, or equivalently
\begin{align}\label{tbs}
\frac1k\int\limits_{\H_2}\log(|F_k|)\partial\overline\partial\eta\longrightarrow -\frac12\int\limits_{\H_2}\log(\det Y)\partial\overline\partial\eta.
\end{align}
Suppose otherwise: there exists some smooth compactly supported differential form $\eta_0$ of bidegree $(2,2)$ on $\Gamma\bs\H_2$ and a sequence of Saito-Kurokawa lifts $\{F_k\}$ so that (\ref{tbs}) does not hold. We make two crucial observations:
\begin{enumerate}
\item Functions $\frac 1k\log(|F_k|)$ are plurisubharmonic on $\Gamma\bs\H_2$.
\item ${\limsup}_{k\longrightarrow\infty} \frac 1k\log(|F_k|)\leq -\frac12\log(\det Y)$ locally uniformly.
\end{enumerate}
The first observation is a well-known fact as the forms $F_k$ are holomorphic. For the second one, Blomer \cite{Blomer} has shown that $\|(\det Y)^{k/2}F_k\|_\infty\ll_\varepsilon k^{3/4+\varepsilon}$ uniformly on compact sets for Saito-Kurokawa lifts $F_k$ under GRH. Thus
\begin{align*}
\frac 1k\log(|F_k|)\leq -\frac12\log(\det Y)+\frac{\left(\frac34+\varepsilon\right)\log k+O(1)}k,
\end{align*}
which gives the desired estimate.

The conclusion is that $\{\frac 1k\log(|F_k|)\}$ is a family of plurisubharmonic functions, which are locally uniformly bounded from above. Thus Lemma \ref{compactness} tells that either $\frac 1k\log(|F_k|)\longrightarrow-\infty$ uniformly on compact sets or that there exists a subsequence of $\{\frac 1k\log(|F_k|)\}$ converging to some plurisubharmonic function. We will derive a contradiction in both cases.

Case 1. Suppose that $\frac 1k\log(|F_k|)\longrightarrow-\infty$ uniformly on compact sets. Then in particular $\frac 1k\log(|F_k|)\longrightarrow -\infty$ uniformly on the support of $\eta_0$. Hence, there exists $K>0$ so that for $k\geq K$ and $Z\in\text{supp }\eta_0$ we have $\frac 1k\log(|F_k(Z)|)\leq -H$, where $H:=\max\{\det Y\,:\,Z\in\text{supp }\eta_0\}$, or equivalently $|F_k(Z)|^2\leq e^{-2kH}$. This means that for all smooth differential forms $\eta$ of bidegree $(2,2)$ with $\text{supp }\eta\subset\text{supp }\eta_0$ we have
\begin{align*}
\int\limits_{Y_2}|F_k|^2(\det Y)^k\omega\wedge\eta\longrightarrow 0
\end{align*}
as $k\longrightarrow\infty$, which is impossible by the mass equidistribution.

Case 2. Suppose that $\frac 1k\log(|F_k|)\longrightarrow u$ for some plurisubharmonic function $u$ along a subsequence which is still denoted by $\{\frac 1k\log(|F_k|)\}$. We know that $\limsup_{k\longrightarrow\infty} \frac 1k\log(|F_k|)\leq u$ and  $\limsup_{k\longrightarrow\infty} \frac 1k\log(|F_k|)=u$ almost everywhere. From ii) we have $u(Z)\leq -\frac12\log(\det Y)$ almost everywhere. From our counter-assumption to (\ref{tbs}) we have $u(Z)\neq -\frac12\log(\det Y)$ in a set of positive measure. Thus, there exists $\delta>0$ so that $u(Z)<-\frac12\log(\det Y)-\delta$ on some compact open subset $U$. By Lemma \ref{Hartogs} there exists $K=K(\delta,U)$ so that for all $k\geq K$ we have $\frac 1k\log(|F_k(Z)|)<-\frac12\log(\det Y)-\delta/2$ on $U$ and consequently $(\det Y)^{k}|F_k(Z)|^2\leq e^{-k\delta}$ on $U$. This obviously contradicts the mass equidistribution as in the previous case.

We conclude that (\ref{tbs}) holds and the proof is completed.
\end{proof}

\bibliography{siegel-que}{}
\bibliographystyle{alpha}

\end{document}